\newtheorem{theorem}{Theorem}[section]
\newtheorem{proposition}[theorem]{Proposition}
\newtheorem{lemma}[theorem]{Lemma}
\newtheorem{corollary}[theorem]{Corollary}
\theoremstyle{definition}
\newtheorem*{definition}{Definition}
\numberwithin{equation}{section}
\numberwithin{figure}{section}
\newcommand{\SL}{\textnormal{\textbf{SL}}_{\mathbf{2}}}
\newcommand{\FR}{\textnormal{\textbf{FR}}_{\mathbf{n}}}
\newcommand{\FRS}{\textnormal{\textbf{FR}}_{\mathbf{n}}^{\mathbf{\ast}}}
\newcommand{\FRT}{\textnormal{\textbf{FR}}_{\mathbf{n}}^{\mathbf{\dagger}}}
\DeclareMathOperator{\cha}{\textnormal{char}}
\DeclareMathOperator{\diam}{\textnormal{diam}}
\DeclareMathOperator{\Ima}{\textnormal{Im}}
\newcommand{\tpmod}[1]{{\@displayfalse\pmod{#1}}}
\definecolor{col1}{RGB}{48,65,93} % indigo
\definecolor{col2}{RGB}{207,103,102} % crimson red
\definecolor{col3}{RGB}{3,20,36} % ultra marine
\definecolor{col4}{RGB}{142,174,189} % light blue
\definecolor{grey}{RGB}{210,210,210}
\definecolor{darkgrey}{RGB}{129,132,136}
\definecolor{col5}{RGB}{140, 0, 26}
\definecolor{col6}{RGB}{165, 0, 33} % red colour
\definecolor{col7}{RGB}{27, 77, 62}% green colour
\renewcommand{\geq}{\geqslant}
\renewcommand{\leq}{\leqslant}
\tikzstyle arrowstyle=[scale=1]
\tikzstyle directed=[postaction={decorate,decoration={markings,
		mark=at position 0.5 with {\arrow[arrowstyle]{{latex}}}}}]
\tikzstyle reverse directed=[postaction={decorate,decoration={markings,
		mark=at position 0.5 with {\arrow[arrowstyle]{{latex reversed}}}}}]
\tikzset{pointer/.style 2 args={draw,fill,single arrow,
    single arrow tip angle=45,
    single arrow head extend=#1,
    single arrow head indent=0pt,
    inner sep=0pt,
    rotate=#2}}
\def\fareyrecur[#1]#2#3#4#5#6#7{
  \recurdepth=#6
  \ifnum\the\recurdepth>1\relax
    \advance\recurdepth by-1\relax
    \edef\tempnum{\number\numexpr#2+#4\relax}% a+b
    \edef\tempden{\number\numexpr#3+#5\relax}% c+d
    \pgfmathparse{\tempnum/\tempden}\edef\temp{\pgfmathresult}% (a+b)/(c+d)

    \ifnum \the\recurdepth >3 % half-plane
      \ifnum\tempnum>0
        \node[below=1pt,scale=1]at({(\temp)},0){$\frac{\tempnum}{\tempden}$};
      \else 
        \ifnum\the\recurdepth>4\relax % putting this extra line in to stop negative fractions overlapping. 	
          \edef\abstempnum{\number\numexpr -\tempnum\relax}
          \node[below=1pt,scale=1]at({(\temp)},0){$-\frac{\abstempnum}{\tempden}\phantom{-}$};
        \fi 
      \fi 
      \draw[#1] ({(\temp)},0) arc (180:0:{((#4/#5)-\temp)*0.5});
      \draw[#1] ({(\temp)},0) arc (0:180:{(\temp-(#2/#3))*0.5});
    \fi

    \ifnum \the\recurdepth <4 % half-plane no labels
      \draw[#1] ({(\temp)},0) arc (180:0:{((#4/#5)-\temp)*0.5});
      \draw[#1] ({(\temp)},0) arc (0:180:{(\temp-(#2/#3))*0.5});
    \fi

    \begingroup
      \edef\ttempup{\noexpand\fareyrecur[#1]{\tempnum}{\tempden}{#4}{#5}{\the\recurdepth}{#7}}
      \edef\ttempdown{\noexpand\fareyrecur[#1]{#2}{#3}{\tempnum}{\tempden}{\the\recurdepth}{#7}}
      \ttempup\ttempdown
    \endgroup
  \fi
}
\def\fareygraph[#1]#2#3#4{
  \draw[#1] ({#2},0) -- ({#3+1},0);
  \foreach \n in {#2,...,#3}{
    \draw[#1] ({\n},0.75) -- ({\n},0);
    \ifnum #4=0 
      \node[below=1pt,scale=1,black] at ({\n},0) {$\frac{\n}{1}$};	
    \fi
    \draw[#1] ({(\n)},0) arc (180:0:0.5);
    \fareyrecur[#1]{\n}{1}{\n+1}{1}{7}{#4}	
    % draw in some more edges to make it look better:
    \foreach \k in {8,...,25}{
      \draw[#1] ({\n},0) arc (0:180:{0.5/\k}); \draw[#1] ({\n-1/\k},0) arc(0:180:{0.5/((\k-1)*\k)});
      \draw[#1] ({\n},0) arc (180:0:{0.5/\k}); \draw[#1] ({\n+1/\k},0) arc(180:0:{0.5/((\k-1)*\k)});
    }
  }
  \edef\nplusone{\number\numexpr#3+1\relax}
  \draw[#1] ({\nplusone},0.75) -- ({\nplusone},0) node[below=4pt,scale=1] {$\frac{\nplusone}{1}$};	
}
\setlist[enumerate]{leftmargin=20pt,itemsep=0pt,topsep=0pt}
\setlist[enumerate,1]{label=\emph{(\roman*)}}
\setlist[itemize]{leftmargin=20pt,itemsep=0pt,topsep=0pt}
\renewcommand\section{\@startsection {section}{1}{\z@}%
                                   {-3.5ex \@plus -1ex \@minus -.2ex}%
                                   {1.3ex \@plus.2ex}%
                                   {\normalfont\large}}
\title{ \vspace{-6ex}\bf \large Frieze patterns and Farey complexes\footnotetext{
		\noindent 2010 Mathematics Subject Classification: Primary 05E16; Secondary 11B57.
		
		Key words: frieze, Farey complex, $\text{SL}_2$-tiling. 
		
		Research supported by EPSRC grants EP/W002817/1 (IS and MvS) and EP/W524098/1 (AZ).
		
		There is no data associated with this article.}}
\author{\normalsize Ian Short, Matty Van Son, and Andrei Zabolotskii
}
\date{\vspace{-3ex}}
\begin{document}

\maketitle

\begin{abstract}
Frieze patterns have attracted significant attention recently, motivated by their relationship with cluster algebras. A longstanding open problem has been to provide a combinatorial model for frieze patterns over the ring of integers modulo $N$ akin to Conway and Coxeter's  celebrated model for positive integer frieze patterns. Here we solve this problem using the Farey complex of the ring of integers modulo $N$; in fact, using more general Farey complexes we provide combinatorial models for frieze patterns over any rings whatsoever.
    
Our strategy generalises that of the first author and of Morier-Genoud et al. for integers and that of Felikson et al. for Eisenstein integers. We also generalise results of Singerman and Strudwick on diameters of Farey graphs, we recover a theorem of Morier-Genoud on enumerating friezes over finite fields, and we classify those  frieze patterns modulo $N$ that lift to frieze patterns over the integers in terms of the topology of the corresponding Farey complexes.
\end{abstract}

\section{Introduction}\label{section1}

The purpose of this paper is to provide a combinatorial model for classifying $\text{SL}_2$-tilings and frieze patterns over any ring. In particular, we solve the outstanding problem of describing such a model for frieze patterns over the ring of integers modulo $N$ in a manner similar to that used by Conway and Coxeter for positive integer frieze patterns.

Frieze patterns (or, more concisely, friezes) are bi-infinite arrays of numbers of the type shown in Figure~\ref{fig11}, in which any diamond of four entries $a$, $b$, $c$, and $d$ satisfies the rule $ad-bc=1$. 

\begin{figure}[ht]
	\centering
	\begin{subfigure}[b]{0.7\textwidth}\centering	
	\centering
	\(
		\vcenter{\small
			\xymatrix @-0.9pc @!0 {
				& 0 && 0 && 0 && 0 && 0 && 0 && 0 && 0 && 0 && 0 & \\
				&& 1 && 1 && 1 && 1 && 1 && 1 && 1 && 1 && 1 && \\      
\raisebox{-5pt}{\ldots}\,  & 1 && 3 && 1 && 2 && 2 && 1 && 3 && 1 && 2 && 2 &   \,\raisebox{-5pt}{\ldots}  \\
				  && 2 && 2 && 1 && 3 && 1 && 2 && 2 && 1 && 3 &&   \\
				& 1 && 1 && 1 && 1 && 1 && 1 && 1 && 1 && 1 && 1 \\
				&& 0 && 0 && 0 && 0 && 0 && 0 && 0 && 0 && 0 &&    \\
			}
		}
	\)
	\end{subfigure}
	\begin{subfigure}[b]{0.2\textwidth}\centering	
\(\vcenter{\
	\xymatrix @-1.0pc @!0 {
		&b &\\
		a && d\\
		& c &
	}
}\)
	\end{subfigure}
	\caption{A frieze (left) and a diamond of four entries (right)}
	\label{fig11}
\end{figure}

This frieze  has integer entries; however, we will consider friezes with entries in any ring, where we assume, throughout, that all our rings are commutative and contain a multiplicative identity element~1.

To facilitate our formal definition of a frieze, it is helpful to label the entries as follows.
\[
\vcenter{
	\xymatrix @-0.1pc @!0 {
		&m_{-3,-3} && m_{-2,-2} && m_{-1,-1} && m_{0,0} && m_{1,1} && m_{2,2} && m_{3,3}& \\
		  && m_{-2,-3}	&& m_{-1,-2} && m_{0,-1} && m_{1,0} && m_{2,1} && m_{3,2}  && \\
		 \dotsb\quad& m_{-2,-4} && m_{-1,-3} && m_{0,-2} && m_{1,-1} && m_{2,0} && m_{3,1} && m_{4,2}&\dotsb \\
	&	&&  &&  && \vdots &&  && &&    	     &  \\ 
 	&	m_{n-5,-5} && m_{n-4,-4} && m_{n-3,-3} && m_{n-2,-2} && m_{n-1,-1} && m_{n,0} && m_{n+1,1}   & 	
	}
}
\]

This frieze has $n+1$ rows and is said to have width $n$ (where $n\geq 2$ throughout).

\begin{definition}
A \emph{frieze} of width $n$ over a ring $R$  is a function 
\[
\mathbf{F}\colon \{(i,j)\in\mathbb{Z}\times\mathbb{Z}:0\leq i-j\leq n\}\longrightarrow R
\]
with entries $m_{i,j}=\mathbf{F}(i,j)$ such that  
\begin{itemize}
\item $m_{i,i}=m_{n+i,i}=0$, for $i\in\mathbb{Z}$ (top and bottom rows of zeros),
\item $m_{i,j}m_{i+1,j+1}-m_{i,j+1}m_{i+1,j}=1$, for $0< i-j<n$ (diamond rule).
\end{itemize}
\end{definition}

The friezes originally considered by Coxeter \cite{Co1971}  had positive integer entries, except for the top and bottom rows of zeros. With these restrictions, it is straightforward to see from the diamond rule that the second and second-last rows of Coxeter's original friezes were bi-infinite sequences of 1's. In general, we will refer to a frieze over a ring $R$ as a \emph{regular frieze} if the second and second-last rows comprise 1's only (that is, $m_{i+1,i}=m_{n+i-1,i}=1$, for $i\in\mathbb{Z}$), and we refer to a frieze as a \emph{semiregular frieze} if the second row comprises 1's only but there is no restriction on the second-last row. We note that some other works choose $n-1$ or $n-3$ rather than $n$ to be the width of a frieze, and some consider a frieze to be what we have called a regular frieze. There are significant differences between the classes of semiregular and regular friezes, the most striking being that, in the former class, there are tame friezes that are \emph{not} periodic -- see Figure~\ref{fig76} for an example (the definition of tame will follow shortly).

\begin{figure}[ht]
	\centering
		\(
		\vcenter{\small
			\xymatrix @-0.8pc @!0 {
				& 0 && 0 && 0 && 0 && 0 && 0 && 0 && 0 && 0 && 0 & \\
				&& 1 && 1 && 1 && 1 && 1 && 1 && 1 && 1 && 1 && \\
				\cdots  & 4 && \tfrac34 && 2 && \tfrac32 && 1 && 3 && \tfrac12 && 6 && \tfrac14 && 12 & \cdots  \\
				&& 2 && \tfrac12 && 2 && \tfrac12 && 2 && \tfrac12 && 2 && \tfrac12 && 2 && \\      	
				& 0 && 0 && 0 && 0 && 0 && 0 && 0 && 0 && 0 && 0 & \\
			}
		}
		\)
	\caption{A tame semiregular frieze over $\mathbb{Q}$}
	\label{fig76}
\end{figure}

Closely connected to friezes are $\text{SL}_2$-tilings, and both objects relate to  the special linear group of degree 2 over $R$, namely 
\[
\text{SL}_2(R) = \left\{ \begin{pmatrix}a&b\\ c& d\end{pmatrix} : a,b,c,d\in R,\, ad-bc=1\right\}.
\]
Informally speaking, an $\text{SL}_2$-tiling over $R$ is a bi-infinite matrix in which any 2-by-2 submatrix of contiguous entries belongs to $\text{SL}_2(R)$. A formal definition follows.

\begin{definition}
An \emph{$\text{SL}_2$-tiling} over a ring $R$ is a function $\mathbf{M}\colon \mathbb{Z}\times\mathbb{Z}\longrightarrow R$ such that
	\[
	\begin{pmatrix}m_{i,j} & m_{i,j+1}\\ m_{i+1,j} & m_{i+1,j+1} \end{pmatrix}\in \text{SL}_2(R),\quad\text{for }i,j\in\mathbb{Z},
	\]
	where $m_{i,j}=\mathbf{M}(i,j)$.
\end{definition}

We think of $m_{i,j}$ as the entry of $\mathbf{M}$ in the $i$-th row and $j$-th column, with the row index increasing downwards and the column index increasing from left to right -- the usual conventions for matrices.

Any frieze $\mathbf{F}$ can be embedded in an $\text{SL}_2$-tiling $\mathbf{M}$ by defining $\mathbf{M}(i,j)=\textbf{F}(i,j)$, for $0\leq i-j\leq n$, and then completing the remaining entries of $\mathbf{M}$ in some suitable manner. There are many ways to complete $\mathbf{M}$, however, only one method is of interest to us, which we will come to shortly. Informally speaking, it helps to visualise an embedding of $\mathbf{F}$ in $\mathbf{M}$ by rotating $\mathbf{F}$ through $45^\circ$ clockwise to form a diagonal strip  in $\mathbf{M}$, with the top row of $\mathbf{F}$ on the leading diagonal of~$\mathbf{M}$.

The next definition is widely used in $\text{SL}_2$-tilings theory; it concerns $\text{SL}_2$-tilings with the property that every 3-by-3 submatrix of contiguous entries has determinant 0.

\begin{definition}
	An $\textnormal{SL}_2$-tiling $\mathbf{M}$ is \emph{tame} if
	\[
	\det\!\begin{pmatrix}m_{i-1,j-1} & m_{i-1,j} & m_{i-1,j+1}\\ m_{i,j-1} & m_{i,j} & m_{i,j+1}\\  m_{i+1,j-1} & m_{i+1,j} & m_{i+1,j+1}\end{pmatrix}=0,
	\]
	for $i,j\in\mathbb{Z}$.
\end{definition}

Similarly, we say that a frieze $\mathbf{F}$ of width $n$ is tame if each of these 3-by-3 determinants is 0, for $1<i-j<n-1$. In Section~\ref{section8} we will show that for each tame frieze $\mathbf{F}$ there is a unique tame $\text{SL}_2$-tiling $\mathbf{M}$ with $\mathbf{M}(i,j)=\textbf{F}(i,j)$, for $0\leq i-j\leq n$; we call $\mathbf{M}$ the \emph{extension} of $\mathbf{F}$. 

\begin{figure}[H]
	\begin{subfigure}[b]{0.5\textwidth}
		\begingroup
		{\footnotesize
			\begin{tikzpicture}[x=.55cm,y=.55cm]
				
				\foreach \i in {0,...,12}
				\foreach \j in {0,...,12}
				\coordinate (X\i\j) at (\i,12-\j);
				
				\coordinate (dd11) at (1,11.5);
				\coordinate (dd12) at (1.5,12);
				\coordinate (dd13) at (12,1.5);
				\coordinate (dd14) at (11.5,1);

				\coordinate (dd21) at (1,6.5);
				\coordinate (dd22) at (1.5,7);
				\coordinate (dd23) at (7,1.5);
				\coordinate (dd24) at (6.5,1);
				
				\coordinate (dd31) at (6,11.5);
				\coordinate (dd32) at (6.5,12);
				\coordinate (dd33) at (12,6.5);
				\coordinate (dd34) at (11.5,6);
				
				\coordinate (c11) at (1.15,1.15);
				\coordinate (c12) at (1.15,1.85);
				\coordinate (c13) at (1.85,1.85);
				\coordinate (c14) at (1.85,1.15);
				
				\coordinate (c21) at (11.15,11.15);
				\coordinate (c22) at (11.15,11.85);
				\coordinate (c23) at (11.85,11.85);
				\coordinate (c24) at (11.85,11.15);

				\fill[gray!30,rounded corners] (dd11) -- (dd12) --(dd13)--(dd14)--(dd11)--(dd12);
				\fill[gray!30,rounded corners] (dd21) -- (dd22) --(dd23)--(dd24)--(dd21)--(dd22);
				\fill[gray!30,rounded corners] (dd31) -- (dd32) --(dd33)--(dd34)--(dd31)--(dd32);
				\fill[gray!30,rounded corners] (c11) -- (c12) --(c13)--(c14)--(c11)--(c12);
				\fill[gray!30,rounded corners] (c21) -- (c22) --(c23)--(c24)--(c21)--(c22);

				\node (D) at ($(X612)+(.5,.8)$) {$\vdots$};
				\node (U) at ($(X60)+(.5,.7)$) {$\vdots$};
				\node (L) at ($(X06)+(.5,.5)$) {$\cdots$};
				\node (R) at ($(X126)+(.5,.5)$) {$\cdots$};

				\node at ($(11,11)+(.5,.5)$) {$0$};
				\node at ($(1,1)+(.5,.5)$) {$0$};
				
				\node at ($(10,11)+(.5,.5)$) {$1$};
				\node at ($(11,10)+(.5,.5)$) {$1$};
				\node at ($(1,2)+(.35,.5)$) {$-1$};
				\node at ($(2,1)+(.35,.5)$) {$-1$};
				
				\foreach \i in {1,...,10}{
					\node at ($(X\i\i)+(.5,.5)$) {$0$};
					\node at ($(X\i\i)+(1.35,.5)$) {$-1$};
					\node at ($(X\i\i)+(.5,-.5)$) {$1$};
				};
				\node at ($(X1111)+(.5,.5)$) {$0$};

				\foreach \i in {1,...,5}{
					\node at ($(X\i\i)+(5.5,.5)$) {$0$};
					\node at ($(X\i\i)+(6.5,.5)$) {$1$};
					\node at ($(X\i\i)+(5.35,-.5)$) {$-1$};
				};
				\node at ($(X116)+(.5,.5)$) {$0$};
				\node at ($(X117)+(.35,.5)$) {$-1$};
				\node at ($(X51)+(.35,.5)$) {$-1$};

				\foreach \i in {1,...,5}{
					\node at ($(X\i\i)+(.5,-4.5)$) {$0$};
					\node at ($(X\i\i)+(.5,-3.5)$) {$1$};
					\node at ($(X\i\i)+(.35,-5.5)$) {$-1$};
				};
				\node at ($(X611)+(.5,.5)$) {$0$};
				\node at ($(X610)+(.5,.5)$) {$1$};
				\node at ($(X711)+(.5,.5)$) {$1$};
				
				\newcommand*\List{{"$1$", "$3$", "$1$", "$2$", "$2$", "$1$", "$3$", "$1$", "$2$"}}
				\newcommand*\Listneg{{"$-1$", "$-3$", "$-1$", "$-2$", "$-2$", "$-1$", "$-3$", "$-1$", "$-2$"}}
				
				\newcommand*\Lists{{"$2$", "$2$", "$1$", "$3$", "$1$", "$2$", "$2$", "$1$", "$3$", "$1$", "$2$"}}
				\newcommand*\Listsneg{{"$-2$", "$-2$", "$-1$", "$-3$", "$-1$", "$-2$", "$-2$", "$-1$", "$-3$", "$-1$", "$-2$"}}
				
				\foreach \i in {1,...,9}{
					\node at ($(X\i\i)+(2.35,.5)$) {\pgfmathprint{\Listneg[\i-1]}};
					\node at ($(X\i\i)+(.5,-1.5)$) {\pgfmathprint{\List[\i-1]}};
				};
				
				\foreach \i in {1,...,8}{
					\node at ($(X\i\i)+(3.35,.5)$) {\pgfmathprint{\Listsneg[\i-1]}};
					\node at ($(X\i\i)+(.5,-2.5)$) {\pgfmathprint{\Lists[\i-1]}};
				};
				
				\foreach \i in {1,...,4}{
					\node at ($(X\i\i)+(7.5,.5)$) {\pgfmathprint{\List[\i-1]}};
					\node at ($(X\i\i)+(.35,-6.5)$) {\pgfmathprint{\Listneg[\i-1]}};
				};
				
				\foreach \i in {1,...,3}{
					\node at ($(X\i\i)+(8.5,.5)$) {\pgfmathprint{\Lists[\i-1]}};
					\node at ($(X\i\i)+(.35,-7.5)$) {\pgfmathprint{\Listsneg[\i-1]}};
				};
				
			\end{tikzpicture}
		}
		\endgroup
		\vspace*{-0.6cm}
		\caption{}
		\label{fig5a}
	\end{subfigure}
	\begin{subfigure}[b]{0.5\textwidth}
		\begingroup
		{\footnotesize
			$
			\vcenter{
				\xymatrix @-0.7pc @!0 {
					& &  &  &&& \vdots & && &  & &\\
					& 2& 0 & 3 & 1 & 4 & 2 & 0 & 3 & 1 & 4 &2&\\
					&  1  &  3 & 0 & 2 & 4 & 1 &3 & 0 & 2 & 4 & 1&\\
					&2    &  2  &  2 & 2 & 2 & 2 & 2 &2 & 2 & 2 & 2 &\\
					& 2& 0 & 3 & 1 & 4 & 2 & 0 & 3 & 1 & 4 &2&\\
					&  1  &  3 & 0 & 2 & 4 & 1 &3 & 0 & 2 & 4 &1& \\
					\dotsb   &2    &  2  &  2 & 2 & 2 & 2 & 2 &2 & 2 & 2 & 2 &\dotsb \\
					& 2& 0 & 3 & 1 & 4 & 2 & 0 & 3 & 1 & 4 &2&\\
					&  1  &  3 & 0 & 2 & 4 & 1 &3 & 0 & 2 & 4 & 1&\\
					&2    &  2  &  2 & 2 & 2 & 2 & 2 &2 & 2 & 2 & 2 &\\
					& 2& 0 & 3 & 1 & 4 & 2 & 0 & 3 & 1 & 4 &2&\\
					&  1  &  3 & 0 & 2 & 4 & 1 &3 & 0 & 2 & 4 &1& \\
					&  &  &  &&&   \vdots &  &  & &&& 
				}
			}
			$
		}
		\endgroup
		\vspace*{-0.6cm}
		\caption{}
		\label{fig5b}
	\end{subfigure}
	\caption{Tame $\text{SL}_2$-tilings over $\mathbb{Z}$ (left) and $\mathbb{Z}/5\mathbb{Z}$ (right)}
	\label{fig2}
\end{figure}

Figure~\ref{fig2}(a) illustrates the tame $\text{SL}_2$-tiling over $\mathbb{Z}$ that is the extension of the frieze $\mathbf{F}$ of Figure~\ref{fig11}. It comprises alternating positive and negative diagonal strips of the frieze $\mathbf{F}$. The backgrounds of the diagonals of 0s are shaded grey to distinguish the strips. Figure~\ref{fig2}(b) illustrates a tame $\text{SL}_2$-tiling over the ring $\mathbb{Z}/5\mathbb{Z}$ of integers modulo $5$; in this tiling the entry 4 (say) represents the congruence class of the integer~4. 

There is a rich literature on friezes and cognate subjects; we briefly review this literature and refer the reader to Baur's introduction \cite{Ba2021} and Morier-Genoud's survey \cite{Mo2015} for further exploration. Friezes were first studied by Coxeter \cite{Co1971} and Conway and Coxeter \cite{CoCo1973} in the 1970s. They described a correspondence between positive integer friezes and triangulated polygons, which has been at the heart of  research in the field ever since. Interest in friezes took off in the 2000s following work of Fomin and Zelevinsky \cite{FoZe2003}, Caldero and Chapoton \cite{CaCh2006}, and Assem, Reutenauer, and Smith \cite{AsReSm2010} relating friezes to cluster algebras and cluster categories. The last three authors considered $\text{SL}_2$-tilings, and Bergeron and Reutenauer developed the theory of $\text{SL}_k$-tilings in \cite{BeRe2010}. The theory was advanced by Morier-Genoud et al.\ \cite{MoOvScTa2014} who established a relationship between complex friezes, moduli spaces of points in projective space, and spaces of linear difference equations. In \cite{BeHoJo2017}, Bessenrodt, Holm, and J\o rgensen offered a combinatorial classification of all positive integer $\text{SL}_2$-tilings, and Baur, Parsons, and Tschabold provided a classification of infinite friezes in \cite{BaPaTs2016}. Guo introduced tropical friezes in \cite{Gu2013} and, more recently, Morier-Genoud and Ovsienko initiated the study of $q$-deformed friezes \cite{MoOv2020} leading to a solution (with Veselov) of a problem on Bureau representations of the braid group on three strands \cite{MoOvVe2024}.

For works complementary to this one, we highlight that of Holm and J\o rgensen \cite{HoJo2020} on friezes associated to Hecke groups, that of Cuntz, Holm, and J\o rgensen \cite{CuHoJo2020} on friezes with coefficients,  and that of Cuntz, Holm, and Pagano \cite{CuHoPa2023} on friezes over certain rings of algebraic numbers. The geometric approach we follow here was inspired by work of Morier-Genoud, Ovsienko, and Tabachnikov \cite{MoOvTa2015} on representing friezes by paths in the Farey graph in the hyperbolic plane. This approach was championed by the first author in \cite{Sh2023} and developed in three dimensions by Felikson et al.\ in \cite{FeKaSeTu2023}.

To classify friezes and $\text{SL}_2$-tilings over arbitrary rings, we introduce the Farey complex of a ring. Recall that a \emph{unit} in a ring $R$ is an invertible element of $R$. The collection of units $R^\times$ in $R$ forms a group under multiplication. A pair of elements $(a,b)$ in $R\times R$ is said to be a \emph{unimodular pair} if $aR+bR=R$ (or, equivalently, if there exist $x,y\in R$ with $ax+by=1$). Any subgroup $U$ of $R^\times$ acts on the collection of unimodular pairs by the rule $(a,b)\longmapsto (\lambda a,\lambda b)$, where $\lambda \in U$. We write $\lambda (a,b)$ for $(\lambda a,\lambda b)$, and we also use the notation
\[
U(a,b) =\{\lambda (a,b):\lambda \in U\}
\]
for the orbit of $(a,b)$ under $U$.

\begin{definition}
Let $R$ be a ring and let $U$ be a group of units in $R$. The \emph{Farey complex} $\mathscr{F}_{R,U}$ of $R$ with units $U$ is the directed graph with the following vertices and edges.
\begin{itemize}
\item The vertices are orbits $U(a,b)$, where $(a,b)$ is a unimodular pair in $R\times R$. We write vertices as formal fractions $a/b$. 
\item There is a directed edge from $a/b$ to $c/d$ if $ad-bc\in U$.
\end{itemize}
\end{definition}

We represent the directed edge from $a/b$ to $c/d$ by $a/b\to c/d$. Observe that there are no directed edges in $\mathscr{F}_{R,U}$ with the same source and sink, and there is at most one directed edge from one vertex to another.

The Farey complexes of most interest to us are those with $U=\{1\}$ and $U=\{\pm 1\}$. For brevity, we write $\mathscr{E}_R$ in place of $\mathscr{F}_{R,\{1\}}$ and $\mathscr{F}_R$ in place of $\mathscr{F}_{R,\{\pm1\}}$. The vertices of $\mathscr{E}_R$ are simply unimodular pairs from $R\times R$ (so we write them as pairs rather than formal fractions). 

When $-1\in U$, there is a directed edge from vertex $u$ to vertex $v$ if and only if there is a directed edge from $v$ to $u$. Thus, directed edges come in inverse pairs, and we can think of each pair as a single undirected edge. In this case, the Farey complex $\mathscr{F}_{R,U}$ is the quotient of $\mathscr{F}_{R}$ under the action of $U$. We define a \emph{face} of $\mathscr{F}_{R}$ to consist of a triple of vertices $u$, $v$, and $w$ that are adjacent in pairs (that is, they form a triangle). We then define the faces of $\mathscr{F}_{R,U}$ to be the images of the faces of $\mathscr{F}_{R}$ under the quotient map. The Farey complex $\mathscr{F}_{R,U}$ is then a 2-complex, and we can examine its geometric properties, as we shall see in due course.

The most familiar Farey complex is $\mathscr{F}_{\mathbb{Z}}$, the Farey complex of the integers (with units $\{\pm 1\}$), shown in Figure~\ref{fig1}. As usual, this complex is illustrated in the upper half-plane, with edges represented by hyperbolic lines. They are displayed as undirected edges because $-1\in U$.

\begin{figure}[ht]
\centering
\begin{tikzpicture}[scale=4]
\begin{scope}
    \clip(-0.9,-0.2) rectangle (2.3,0.75);
	\fareygraph[thin]{-1}{2}{0}
\end{scope}
\end{tikzpicture}
\caption{Part of the Farey complex $\mathscr{F}_{\mathbb{Z}}$}
\label{fig1}
\end{figure}

Another familiar Farey complex is $\mathscr{F}_{\mathbb{Z}[i],U}$, where  $\mathbb{Z}[i]$ is the ring of Gaussian integers and $U=\{\pm1, \pm i\}$, the full collection of units in $\mathbb{Z}[i]$. The vertices of this Farey complex are the Gaussian rationals. The 1-skeleton of $\mathscr{F}_{\mathbb{Z}[i],U}$ gives rise to a tessellation of hyperbolic 3-space by ideal octahedra. (The 2-skeleton of $\mathscr{F}_{\mathbb{Z}[i],U}$ differs from that of the octahedral tessellation because, for convenience, in defining a Farey complex we restricted the triangles in the complex that are designated as faces). A related Farey complex is $\mathscr{F}_{\mathbb{Z}[\omega],U}$, where  $U$ is now the cyclic group generated by $\omega=e^{\pi i/3}$; this complex gives a tessellation of hyperbolic 3-space by ideal tetrahedra. It was considered in the context of $\text{SL}_2$-tilings by Felikson et al.\ in \cite{FeKaSeTu2023}. More generally, when $R$ is the ring of integers of the imaginary quadratic field $\mathbb{Q}(\sqrt{-d})$ with $d=1,2,3,7,11$, the Farey complex $\mathscr{F}_{R,R^\times}$ gives rise to a tessellation of hyperbolic 3-space by ideal polyhedra associated with Bianchi groups. These tessellations were described by Hatcher \cite{Ha1983} (among others); see \cite{Vu1999} for a more general approach.

Much of this work concerns Farey complexes over finite rings, with focus on the ring of integers modulo $N$, denoted by $\mathbb{Z}/N\mathbb{Z}=\{0,1,\dots,N-1\}$. The Farey complexes $\mathscr{F}_{\mathbb{Z}/N\mathbb{Z}}$ (or, more briefly, $\mathscr{F}_N$), for $N=2,3,4,5,6$, are illustrated in Figure~\ref{fig29}. These are, in order, a triangle, tetrahedron, octahedron, icosahedron, and a triangulation of the hexagonal torus (opposite sides are identified). These complexes were studied by Ivrissimtzis, Singerman, and Strudwick in \cites{IvSi2005,SiSt2020} (and other works) as quotients of $\mathscr{F}_\mathbb{Z}$; we return to this later. By contrast, the complexes  $\mathscr{E}_{\mathbb{Z}/N\mathbb{Z}}$ do not in general arise naturally as surface complexes; for example, the underlying undirected graph of $\mathscr{E}_{\mathbb{Z}/3\mathbb{Z}}$ is the 1-skeleton of the regular polytope known as the 16-cell, which is the 4-dimensional analogue of the octahedron.

\begin{figure}[ht] %
	\begin{tikzpicture}[scale=0.6,line join=bevel,z=-5.5,font=\footnotesize,inner sep=0.2mm]
	
	% F(2) triangle
	\begin{scope}[xshift=-8cm]
	\pgfmathsetmacro{\r}{2.5}
	\coordinate (A) at (90:\r);
	\coordinate (B) at (210:\r);
	\coordinate (C) at (-30:\r);

	\draw (A) node[anchor=south]{$\tfrac10$} -- (B) node[anchor=east]{$\tfrac01$}-- (C) node[anchor=west]{$\tfrac11$} -- cycle;	
	
	\node at (0,-0.8*\r) {\normalsize$\mathscr{F}_2$};
	\end{scope}
	
	%F(3) tetrahedron
	\tdplotsetmaincoords{0}{0}
	\begin{scope}[tdplot_main_coords]
	\pgfmathsetmacro{\r}{2.6}
	\pgfmathsetmacro{\s}{\r/sqrt(2)}
	
	\coordinate (A) at (\r,0,-\s);
	\coordinate (B) at (-\r,0,-\s);
	\coordinate (C) at (0,\r,\s);
	\coordinate (D) at (0,-\r,\s);
	
	\draw (D) node[anchor=north]{$\tfrac01$} -- (B) node[anchor=east]{$\tfrac11$}-- (C) node[anchor=south]{$\tfrac10$} -- cycle;
	\draw (C) -- (A) node[anchor=west]{$\tfrac21$}-- (D);
	\draw[dashed,gray](A)  -- (B);

	\node at (0,-1.5*\r,0) {\normalsize$\mathscr{F}_3$};
	\end{scope}

	%F(4) octahedron
	\tdplotsetmaincoords{70}{165}
	\begin{scope}[tdplot_main_coords,xshift=8cm]
	\pgfmathsetmacro{\r}{3}
	
	\tdplotsetcoord{A}{\r}{90}{0}    % cartesian (1,0,0)
	\tdplotsetcoord{B}{\r}{90}{90}   % cartesian (0,1,0)
	\tdplotsetcoord{C}{\r}{90}{180}  % cartesian (-1,0,0)
	\tdplotsetcoord{D}{\r}{90}{270}  % cartesian (0,-1,0)
	\tdplotsetcoord{E}{\r}{0}{0}     % cartesian (0,0,1)
	\tdplotsetcoord{F}{\r}{180}{0}   % cartesian (0,0,-1)
	
	\draw (A) node[anchor=east]{$\tfrac01$} -- (B) node[anchor=north west]{$\tfrac11$}-- (C) node[anchor=west]{$\tfrac21$};
	\draw (E) node[anchor=south]{$\tfrac10$} -- (A) -- (F) node[anchor=north]{$\tfrac12$};
	\draw (E) -- (B) -- (F);
	\draw (E) -- (C) -- (F);
	\draw[dashed,gray] (C) -- (D) node[anchor=south east]{\textcolor{black}{$\tfrac31$}} -- (A);
	\draw[dashed,gray](E)  -- (D) -- (F);
	
	\node at (0,0,-1.4*\r) {\normalsize$\mathscr{F}_4$};
	\end{scope}

	%F(5) icosahedron
	\tdplotsetmaincoords{50}{90}
	\begin{scope}[tdplot_main_coords,xshift=-4cm,yshift=-9cm]
	\pgfmathsetmacro{\a}{2}
	\pgfmathsetmacro{\phi}{\a*(1+sqrt(5))/2}
	\path 
	coordinate[label=right:{{$\tfrac21$}}](A) at (0,\phi,\a)
	coordinate[label=right:{{$\tfrac02$}}](B) at (0,\phi,-\a)
	coordinate[label=left:{{$\tfrac01$}}](C) at (0,-\phi,\a)
	coordinate[label=left:{{$\tfrac42$}}](D) at (0,-\phi,-\a)
	coordinate[label={[yshift=2pt]above right:{{$\tfrac11$}}}](E) at (\a,0,\phi)
	coordinate[label={[yshift=15.5pt,xshift=-3pt]below:{{$\tfrac20$}}}](F) at (\a,0,-\phi)
	coordinate[label=above:{{$\tfrac10$}}](G) at (-\a,0,\phi)
	coordinate[label={[yshift=-2pt]below right:{{$\tfrac22$}}}](H) at (-\a,0,-\phi)
	coordinate[label={[xshift=-5pt]above left:{{$\tfrac32$}}}](I) at (\phi,\a,0)
	coordinate[label={[xshift=5pt]above right:{{$\tfrac12$}}}](J) at (\phi,-\a,0)
	coordinate[label={[xshift=-4pt]left:{{$\tfrac31$}}}](K) at (-\phi,\a,0)
	coordinate[label={[xshift=4pt]right:{{$\tfrac41$}}}](L) at (-\phi,-\a,0); 
	\draw[dashed,gray]    (B) -- (H) -- (K) -- cycle 
	(D) -- (L) -- (H) --cycle 
	(K) -- (L) -- (G) --cycle
	(A)--(K) (L)--(C) (F)--(H);
	
	\draw(A) -- (I) -- (B) --cycle 
	(F) -- (I) -- (B) --cycle 
	(F) -- (I) -- (J) --cycle
	(F) -- (D) -- (J) --cycle
	(C) -- (D) -- (J) --cycle
	(C) -- (E) -- (J) --cycle
	(I) -- (E) -- (J) --cycle
	(I) -- (E) -- (A) --cycle
	(G) -- (E) -- (A) --cycle
	(G) -- (E) -- (C) --cycle; 
	
	\node at (1.23*\a,0,-1.23*\phi) {\normalsize$\mathscr{F}_5$};
	\end{scope}
	
	% F(5)
	\begin{scope}[xshift=6.0cm,yshift=-9cm,scale=2]
	
	\coordinate[label={[shift={(2:0.33)}]$\tfrac10$}] (Z) at (0,0);
	\coordinate[label={[shift={(2:0.33)}]$\tfrac01$}](A) at (0:1);
	\coordinate[label=right:{$\tfrac13$}](B) at (0:2);
	\coordinate[label={[right,shift={(35:0.08)}]{{$\tfrac12$}}}](C) at (30:1.73);
	\coordinate[label={[shift={(2:0.33)}]$\tfrac11$}](D) at (60:1);
	\coordinate[label={[above,shift={(90:0.05)}]{{$\tfrac23$}}}](E) at (60:2);
	\coordinate[label={[above,shift={(90:0.05)}]{{$\tfrac32$}}}](F) at (90:1.73);
	\coordinate[label={[shift={(178:0.33)}]$\tfrac21$}](G) at (120:1);
	\coordinate[label={[above,shift={(90:0.05)}]{{$\tfrac13$}}}](H) at (120:2);
	\coordinate[label={[left,shift={(145:0.08)}]{{$\tfrac14$}}}](I) at (150:1.73);
	\coordinate[label={[shift={(178:0.33)}]$\tfrac31$}](J) at (180:1);
	\coordinate[label=left:\textcolor{col1}{$\tfrac23$}](K) at (180:2);
	\coordinate[label={[left,shift={(215:0.08)}]{{$\tfrac12$}}}](L) at (210:1.73);
	\coordinate[label={[shift={(235:0.53)}]$\tfrac41$}](M) at (240:1);
	\coordinate[label={[below,shift={(-90:0.05)}]{{$\tfrac13$}}}](N) at (240:2);
	\coordinate[label={[below,shift={(-90:0.05)}]{{$\tfrac32$}}}](O) at (270:1.73);
	\coordinate[label={[shift={(305:0.53)}]$\tfrac51$}](P) at (300:1);
	\coordinate[label={[below,shift={(-90:0.05)}]{{$\tfrac23$}}}](Q) at (300:2);
	\coordinate[label={[right,shift={(-35:0.08)}]{{$\tfrac14$}}}](R) at (330:1.73);
	
	\draw (E)--(H) (C)--(I) (B)--(K) (R)--(L) (Q)--(N)
	(Q)--(B) (O)--(C) (N)--(E) (L)--(F) (K)--(H)
	(E)--(B) (F)--(R) (H)--(Q) (I)--(O) (K)--(N);
	
	\node at (0,-2.4) {\normalsize$\mathscr{F}_6$};
	\end{scope}
	\end{tikzpicture}
	\caption{Farey complexes $\mathscr{F}_2$, $\mathscr{F}_3$, $\mathscr{F}_4$, $\mathscr{F}_5$, and $\mathscr{F}_6$}
	\label{fig29}
\end{figure}

Two further Farey complexes are illustrated in Figure~\ref{fig3}. Shown in Figure~\ref{fig3}(a) is the Farey complex $\mathscr{F}_R$, where $R$ is the field of order 4, which can be represented conveniently on a projective icosidodecahedron (opposite vertices on the outside are identified). In Figure~\ref{fig3}(b) is the Farey complex $\mathscr{F}_{R,U}$ for $R=\mathbb{Z}[i]/3\mathbb{Z}[i]$, which is the field of order 9, with $U=\{\pm 1, \pm i\}$. For clarity, the edges from $1/0$ to the other black vertices are omitted, as are the edges from $(1+i)/0$ to the other white vertices. By using the full collection of 120 triangles as faces we obtain a 3-complex comprising 30 octahedral facets with 9 incident to any given vertex, 4 incident to any given edge, and 2 incident to any given triangular face. With this representation, the complex (excluding its vertices) can be realised as a 3-manifold; in fact, it can be shown that this 3-manifold is $\text{SL}_2(\mathbb{Z}[i],3\mathbb{Z}[i])\backslash \mathbb{H}^3$, where $\text{SL}_2(\mathbb{Z}[i],3\mathbb{Z}[i])$ is the principal congruence subgroup of $\text{SL}_2(\mathbb{Z}[i])$ for the ideal $3\mathbb{Z}[i]$. We refer the reader to \cite{BaRe2014} for more on these connections to hyperbolic 3-manifolds and for a proof (see \cite{BaRe2014}*{Theorem~1.1}) that $\text{SL}_2(\mathbb{Z}[i],3\mathbb{Z}[i])\backslash \mathbb{H}^3$ is topologically a 20-component link complement in the three-sphere.

\begin{figure}[ht]
     \centering
          %icosidodecahedron
     \begin{subfigure}[b]{0.4\textwidth}
         \centering
         \begin{tikzpicture}[scale=1.3,line join=bevel,z=-5.5,font=\footnotesize,inner sep=0.2mm]
	   	\pgfmathsetmacro{\L}{1.7}
		\pgfmathsetmacro{\M}{2.2}
		\pgfmathsetmacro{\G}{13}
		
		\coordinate (A) at (18:1);
		\coordinate (B) at (90:1);
		\coordinate (C) at (162:1);
		\coordinate (D) at (234:1);
		\coordinate (E) at (306:1);
		
		\coordinate (F) at (54:\L);
		\coordinate (G) at (126:\L);
		\coordinate (H) at (198:\L);
		\coordinate (I) at (270:\L);
		\coordinate (J) at (342:\L);

		\coordinate (K) at (90-\G:\M);
		\coordinate (L) at (90+\G:\M);
		\coordinate (M) at (162-\G:\M);
		\coordinate (N) at (162+\G:\M);
		\coordinate (O) at (234-\G:\M);
		\coordinate (P) at (234+\G:\M);
		\coordinate (Q) at (306-\G:\M);
		\coordinate (R) at (306+\G:\M);
		\coordinate (S) at (18-\G:\M);
		\coordinate (T) at (18+\G:\M);
	
		\draw (A) -- (B) -- (C) -- (D) -- (E) -- (A) -- (F) -- (B) -- (G) -- (C) -- (H) -- (D) -- (I) -- (E) -- (J) -- (A);
		
		\draw (K) -- (L) -- (M) -- (N) -- (O) -- (P) -- (Q) -- (R) -- (S) -- (T) -- (K);

		\draw (L) -- (G) -- (M); 
		\draw (T) -- (F) -- (K);
		\draw (P) -- (I) -- (Q);	
		\draw (N) -- (H) -- (O);
		\draw (R) -- (J) -- (S);
					
		\draw (A) node[anchor=south west,yshift=-3pt]{$\tfrac{a}{1}$};
		\draw (B) node[anchor=south,yshift=1pt]{$\tfrac{a}{a}$};
		\draw (C) node[anchor=south east,yshift=-3pt]{$\tfrac{1}{a}$};
		\draw (D) node[anchor=north east,xshift=1.5pt]{$\tfrac{0}{1}$};
		\draw (E) node[anchor=north west,xshift=-1.5pt]{$\tfrac{1}{0}$};
		\draw (F) node[anchor=south west,xshift=-3pt,yshift=0pt]{$\tfrac{0}{b}$};
		\draw (G) node[anchor=south east,xshift=3pt,yshift=0pt]{$\tfrac{b}{0}$};	
		\draw (H) node[anchor=east,yshift=-2pt]{$\tfrac{1}{b}$};
		\draw (I) node[anchor=south,yshift=2pt]{$\tfrac{1}{1}$};		
		\draw (J) node[anchor=west,yshift=-2pt]{$\tfrac{b}{1}$};
		\draw (K) node[anchor=south,xshift=2pt]{$\tfrac{a}{b}$};
		\draw (L) node[anchor=south,xshift=-2pt]{$\tfrac{b}{a}$};	
		\draw (M) node[anchor=east,yshift=1pt]{$\tfrac{0}{a}$};
		\draw (N) node[anchor=east]{$\tfrac{b}{b}$};	
		\draw (O) node[anchor=east,yshift=-1pt]{$\tfrac{a}{0}$};
		\draw (P) node[anchor=north east,xshift=2pt]{$\tfrac{a}{b}$};		
		\draw (Q) node[anchor=north west,xshift=-2pt]{$\tfrac{b}{a}$};
		\draw (R) node[anchor=west,yshift=-1pt]{$\tfrac{0}{a}$};
		\draw (S) node[anchor=west,xshift=0pt]{$\tfrac{b}{b}$};	
		\draw (T) node[anchor=west,yshift=1pt]{$\tfrac{a}{0}$};
		
	   \end{tikzpicture}
         \caption{$\mathscr{F}_{R}$, where $R$ is the field of order 4}
      \end{subfigure}
	\hfill
     \begin{subfigure}[b]{0.5\textwidth}
         \centering
         % octhaedral hyperbolic 3-manifold
         \begin{tikzpicture}[scale=1.2,line join=bevel,font=\scriptsize,inner sep=0.2mm,blackdot/.style={circle,draw,thick,fill=black,minimum size=3},whitedot/.style={circle,draw,thick,fill=white,minimum size=3}]
         	\pgfmathsetmacro{\d}{1.4}
         	\foreach \i in {0,...,3} 
	   	{
	   	   \pgfmathsetmacro{\ip}{\i+0.5}
	   	   \draw[gray] (\i*\d,0) --++(0,3*\d);
	   	   \draw[gray] (0,\i*\d) --++(3*\d,0);
	   	   \draw[gray] (\ip*\d,0.5*\d) --++(0,3*\d);
	   	   \draw[gray] (0.5*\d,\ip*\d) --++(3*\d,0);
	   	   \draw[gray] (0,{(3-\i)*\d}) --++ ({(\i+0.5)*\d},{(\i+0.5)*\d}) --++({(3-\i)*\d},{-(3-\i)*\d})--++ ({-(\i+0.5)*\d},{-(\i+0.5)*\d})--++({-(3-\i)*\d},{(3-\i)*\d}) ;
        	}
         	
	   	\foreach \i in {0,...,3} 
	   	   \foreach \j in {0,...,3} 
       	   {
       	   	\pgfmathsetmacro{\ip}{\i+0.5}
       	   	\pgfmathsetmacro{\jp}{\j+0.5}
       		\node [blackdot]  at (\i*\d,\j*\d) {};
       		\node [whitedot]  at (\ip*\d,\jp*\d) {};
       	   } 
		\draw (0,0) node[anchor=north east,xshift=0pt,yshift=0pt]{$\tfrac{0}{1}$};
       	\draw (\d,0) node[anchor=north east,xshift=0pt,yshift=0pt]{$\tfrac{1}{1}$};
       	\draw (2*\d,0) node[anchor=north east,xshift=0pt,yshift=0pt]{$\tfrac{2}{1}$};
       	\draw (3*\d,0) node[anchor=north east,xshift=0pt,yshift=0pt]{$\tfrac{0}{1}$};
       	
       	\draw (0,\d) node[anchor=north east,xshift=0pt,yshift=0pt]{$\tfrac{i}{1}$};
       	\draw (\d,\d) node[anchor=north east,xshift=-4pt,yshift=0pt]{$\tfrac{1+i}{1}$};
       	\draw (2*\d,\d) node[anchor=north east,xshift=-4pt,yshift=0pt]{$\tfrac{2+i}{1}$};
       	\draw (3*\d,\d) node[anchor=north east,xshift=-6pt,yshift=0pt]{$\tfrac{i}{1}$};
       	
       	\draw (0,2*\d) node[anchor=north east,xshift=0pt,yshift=0pt]{$\tfrac{2i}{1}$};
       	\draw (\d,2*\d) node[anchor=north east,xshift=-4pt,yshift=0pt]{$\tfrac{1+2i}{1}$};
       	\draw (2*\d,2*\d) node[anchor=north east,xshift=-4pt,yshift=0pt]{$\tfrac{2+2i}{1}$};
       	\draw (3*\d,2*\d) node[anchor=north east,xshift=-4.5pt,yshift=0pt]{$\tfrac{2i}{1}$};
       	
       	\draw (0,3*\d) node[anchor=north east,xshift=0pt,yshift=0pt]{$\tfrac{0}{1}$};
       	\draw (\d,3*\d) node[anchor=north east,xshift=-6pt,yshift=0pt]{$\tfrac{1}{1}$};
       	\draw (2*\d,3*\d) node[anchor=north east,xshift=-6pt,yshift=0pt]{$\tfrac{2}{1}$};
       	\draw (3*\d,3*\d) node[anchor=north east,xshift=-6pt,yshift=0pt]{$\tfrac{0}{1}$};
       	
       	\begin{scope}[shift={(0.5*\d,0.5*\d)}]
	       	\draw (0,0) node[anchor=north west,xshift=6pt,yshift=0pt]{$\tfrac{i}{1+i}$};
	       	\draw (\d,0) node[anchor=north west,xshift=5.5pt,yshift=0pt]{$\tfrac{1+2i}{1+i}$};
	       	\draw (2*\d,0) node[anchor=north west,xshift=6pt,yshift=0pt]{$\tfrac{2}{1+i}$};
	       	\draw (3*\d,0) node[anchor=north west,xshift=0pt,yshift=0pt]{$\tfrac{i}{1+i}$};
	       	
	       	\draw (0,\d) node[anchor=north west,xshift=6pt,yshift=0pt]{$\tfrac{2+2i}{1+i}$};
	       	\draw (\d,\d) node[anchor=north west,xshift=6pt,yshift=0pt]{$\tfrac{0}{1+i}$};
	       	\draw (2*\d,\d) node[anchor=north west,xshift=6pt,yshift=0pt]{$\tfrac{1+i}{1+i}$};
	       	\draw (3*\d,\d) node[anchor=north west,xshift=0pt,yshift=0pt]{$\tfrac{2+2i}{1+i}$};
	       	
	       	\draw (0,2*\d) node[anchor=north west,xshift=6pt,yshift=0pt]{$\tfrac{1}{1+i}$};
	       	\draw (\d,2*\d) node[anchor=north west,xshift=6pt,yshift=0pt]{$\tfrac{2+i}{1+i}$};
	       	\draw (2*\d,2*\d) node[anchor=north west,xshift=6pt,yshift=0pt]{$\tfrac{2i}{1+i}$};
	       	\draw (3*\d,2*\d) node[anchor=north west,xshift=0pt,yshift=0pt]{$\tfrac{1}{1+i}$};
	       	
	       	\draw (0,3*\d) node[anchor=south west,xshift=0pt,yshift=0pt]{$\tfrac{i}{1+i}$};
	       	\draw (\d,3*\d) node[anchor=south west,xshift=0pt,yshift=0pt]{$\tfrac{1+2i}{1+i}$};
	       	\draw (2*\d,3*\d) node[anchor=south west,xshift=0pt,yshift=0pt]{$\tfrac{2}{1+i}$};
	       	\draw (3*\d,3*\d) node[anchor=south west,xshift=0pt,yshift=0pt]{$\tfrac{i}{1+i}$};
        	 \end{scope}   
        	 
	    \node [blackdot,label={[below,yshift=-5pt]:{$\tfrac10$}}]  at (-0.5*\d,1.5*\d) {};
	    \node [whitedot,label={[below,yshift=-5pt]:{$\tfrac{1+i}0$}}]  at (4*\d,2*\d) {};

	   \end{tikzpicture}
         \caption{$\mathscr{F}_{R,U}$, where $R=\mathbb{Z}[i]/3\mathbb{Z}[i]$ and $U=\{\pm 1,\pm i\}$}
         \label{fig:five over x}
     \end{subfigure}
        \caption{Farey complexes of the fields of orders 4 and 9}
        \label{fig3}
\end{figure}

For any Farey complex $\mathscr{F}_{R,U}$, there is a left action of $\text{SL}_2(R)$ on $\mathscr{F}_{R,U}$ given by 
\[
\frac{x}{y}\longmapsto \frac{ax+by}{cx+dy},\quad\text{where }A=\begin{pmatrix}a &b \\ c&d\end{pmatrix}.
\]
This action is transitive on directed edges.  We denote the image of a vertex $v$ (and matrix $A$) under the action by $Av$. 

The Farey complex $\mathscr{F}_{R,U}$ is the quotient of $\mathscr{E}_R$ under the action of $U$ (ignoring the faces, which are undefined for $\mathscr{E}_R$). Let $\pi_{U}\colon\mathscr{E}_{R}\longrightarrow\mathscr{F}_{R,U}$ be the associated quotient map, a covering map, given by $(x,y)\longmapsto U(x,y)$. This map is equivariant under the action of $\text{SL}_2(R)$, in the sense that, for any $A\in\text{SL}_2(R)$, the following diagram commutes.
\[
\begin{tikzcd}[row sep=4.5em,column sep=5.5em]
\mathscr{E}_{R} \arrow[r, "A",pos=0.5] \arrow[d,"\pi_{U}" left]
& \mathscr{E}_{R} \arrow[d,"\pi_{U}" right] \\
\mathscr{F}_{R,U} \arrow[r,"A" below]
& \mathscr{F}_{R,U}
\end{tikzcd}
\]

A \emph{bi-infinite path} in $\mathscr{F}_{R,U}$ is a bi-infinite sequence of vertices $\dotsc,v_{-1},v_0,v_1,v_2,\dotsc$ such that $v_{i-1}\to v_i$ is a directed edge in $\mathscr{F}_{R,U}$, for $i\in\mathbb{Z}$. We denote this path by $\langle\, \dotsc,v_{-1},v_0,v_1,v_2,\dotsc\rangle$. We will also use finite paths, usually referred to simply as `paths', with corresponding notation. The \emph{length} of the finite path $\langle v_0,v_1,\dots,v_n\rangle$ is $n$. Given any path $\gamma$ in $\mathscr{F}_{R,U}$ -- whether finite or bi-infinite -- there is a \emph{lift} of this path to $\mathscr{E}_R$; that is, there is a path $\tilde{\gamma}$ in $\mathscr{E}_R$ with $\pi_{U}(\tilde{\gamma})=\gamma$, and usually there are many lifts (these are specified in Lemma~\ref{lem61}). 

We sketch proofs of these facts about Farey complexes and group actions in Sections~\ref{section2} and~\ref{section3}. We will prove (in Section~\ref{section3}) that there is a one-to-one correspondence between bi-infinite paths in $\mathscr{E}_R$ up to $\text{SL}_2(R)$ equivalence and bi-infinite sequences in $R$. In Section~\ref{section4} we prove that between any two vertices of a Farey complex over a finite ring there is a path of length at most 3. This generalises \cite{SiSt2020}*{Theorem~12} in which the same result was established for the Farey complexes $\mathscr{F}_N$ (of $\mathbb{Z}/N\mathbb{Z}$ with units $\{\pm 1\}$). Then in Section~\ref{section5} we prove that the Farey complexes $\mathscr{F}_N$ (some of which were illustrated in Figure~\ref{fig29}) are the \emph{only} Farey complexes of finite rings that topologically are surfaces.

The chief objective of this paper is to use Farey complexes to provide combinatorial models for tame $\text{SL}_2$-tilings and friezes (over any ring), and the next three theorems, which are the principal results of this paper, achieve this objective, starting with $\text{SL}_2$-tilings.

The method we employ is motivated by that of \cite{Sh2023} for $\text{SL}_2$-tilings over the integers. Let $\mathscr{P}$ denote the collection of bi-infinite paths in $\mathscr{E}_R$, and let $\SL$ denote the collection of tame $\text{SL}_2$-tilings over $R$. We define a function $\widetilde{\Phi}\colon \mathscr{P}\times \mathscr{P}\longrightarrow \SL$ as follows. Given bi-infinite paths $\gamma$ and $\delta$ in  $\mathscr{P}$, with vertices $(a_i,b_i)$ and $(c_j,d_j)$, respectively, we define $\widetilde{\Phi}(\gamma,\delta)$ to be the $\text{SL}_2$-tiling $\mathbf{M}$ with entries
\[
m_{i,j}=a_id_j-b_ic_j, \quad\text{for $i,j\in\mathbb{Z}$}.
\]
We will prove that $\mathbf{M}$ is a tame $\text{SL}_2$-tiling in Section~\ref{section6}, where we will also see that $\widetilde{\Phi}(A{\gamma},A{\delta})=\widetilde{\Phi}(\gamma,{\delta})$, for $A\in\text{SL}_2(R)$.

Consider now some subgroup $U$ of $R^\times$. Let $\mathscr{P}_U$ denote the collection of bi-infinite paths in $\mathscr{F}_{R,U}$ (so $\mathscr{P}=\mathscr{P}_{\{1\}}$). The group $\text{SL}_2(R)$ acts on $\mathscr{P}_U\times\mathscr{P}_U$ by the rule $(\gamma,\delta)\longmapsto (A\gamma,A\delta)$, where $A\in\text{SL}_2(R)$. There is also an action of the group $U\times U$ on $\SL$ given by 
\[
m_{i,j} \longmapsto \lambda^{(-1)^{i}}\mu^{(-1)^{j}}m_{i,j},
\]
where $(\lambda,\mu)\in U\times U$. Under this action, informally speaking, we multiply even columns of $\mathbf{M}$ by $\lambda$ and odd columns by $\lambda^{-1}$, and we multiply even rows of $\mathbf{M}$ by $\mu$ and odd rows by $\mu^{-1}$. This action is not usually faithful; it has kernel $\{(\lambda,\lambda):\lambda^2=1\}$.

Let $(\gamma,\delta)$ and $(\gamma',\delta')$ be two pairs in $\mathscr{P}_U\times\mathscr{P}_U$ that are equivalent under the action of $\text{SL}_2(R)$. We will see in Section~\ref{section6} that the $\widetilde{\Phi}$-images of  any two lifts to $\mathscr{P}\times \mathscr{P}$ of two such pairs are equivalent in $\SL$ under the action of $U\times U$. Hence $\widetilde{\Phi}$ induces a map  $\Phi_U\colon \text{SL}_2(R)\backslash(\mathscr{P}_U\times\mathscr{P}_U)\longrightarrow (U\times U)\backslash \SL$ such that the following diagram commutes. 

\begin{tikzcd}[row sep=4.5em,column sep=5.5em]
\mathscr{P}\times\mathscr{P} \arrow[r, "\widetilde{\Phi}",pos=0.5,,shorten= 1.0ex,shorten >=1.0ex] \arrow[d]
& \SL \arrow[d] \\
\text{SL}_2(R)\backslash(\mathscr{P}_U\times\mathscr{P}_U) \arrow[r,"\Phi_U" below]
& (U\times U)\backslash \SL
\end{tikzcd}

Our first main result is that $\Phi_U$ is a bijection.

\begin{theorem}\label{thm3}
The map $\Phi_U$ is a one-to-one correspondence between
\[
\textnormal{SL}_2(R)\Big\backslash\mleft\{\parbox{2.8cm}{\centering\textnormal{pairs~of~bi-infinite paths~in~$\mathscr{F}_{R,U}$}}\mright\}\quad \longleftrightarrow\quad (U\times U)\Big\backslash\mleft\{\parbox{2.6cm}{\centering\textnormal{tame~$\text{SL}_2$-tilings over~$R$}}\mright\}.
\]
\end{theorem}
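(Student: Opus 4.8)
The plan is to establish that $\Phi_U$ is both surjective and injective, in each case first reducing to the case $U=\{1\}$ (so that we may work with $\widetilde{\Phi}$ on $\mathscr{E}_R$ directly) and then exploiting the determinantal structure of tame $\text{SL}_2$-tilings. For surjectivity it suffices, given a tame $\text{SL}_2$-tiling $\mathbf{M}=(m_{i,j})$, to produce bi-infinite paths $\gamma,\delta$ in $\mathscr{E}_R$ with $\widetilde{\Phi}(\gamma,\delta)=\mathbf{M}$; then $\pi_U\gamma$ and $\pi_U\delta$ witness $\Phi_U([\pi_U\gamma,\pi_U\delta])=[\mathbf{M}]$. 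The structural input is a factorization lemma: tameness forces every row of $\mathbf{M}$ to be an $R$-linear combination of its rows $0$ and $1$, i.e.\ there are unique $\alpha_i,\beta_i\in R$ with $m_{i,j}=\alpha_i m_{0,j}+\beta_i m_{1,j}$ for all $j$, with $(\alpha_0,\beta_0)=(1,0)$ and $(\alpha_1,\beta_1)=(0,1)$. I would prove this by working in columns $0,1$ (where the $2\times 2$ minor of rows $0,1$ equals $1$, hence is a unit), solving there for $\alpha_i,\beta_i$, and propagating the identity to all columns by induction, using the vanishing of the $3\times 3$ determinant on each consecutive column triple together with the elementary fact that, over a commutative ring, a $3\times 3$ matrix with zero determinant whose top two rows contain a unit $2\times 2$ minor has its third row in the $R$-span of the first two.

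With the lemma in hand I set $\delta$ to have $j$-th vertex $(m_{0,j},m_{1,j})$ and $\gamma$ to have $i$-th vertex $(\beta_i,-\alpha_i)$. The $\text{SL}_2$-tiling condition on rows $0$ and $1$ shows consecutive vertices of $\delta$ have determinant $1$, so $\delta$ is a genuine path in $\mathscr{E}_R$; comparing $2\times 2$ minors in columns $0,1$ gives $\det\!\bigl(\begin{smallmatrix}\alpha_i&\beta_i\\\alpha_{i+1}&\beta_{i+1}\end{smallmatrix}\bigr)=1$, so $\gamma$ is a path as well; and a direct substitution into the defining formula for $\widetilde{\Phi}$ yields $\widetilde{\Phi}(\gamma,\delta)=\mathbf{M}$.

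For injectivity I first remove the $U\times U$ twist. If $\widetilde{\Phi}(\gamma',\delta')$ equals $\widetilde{\Phi}(\gamma,\delta)$ twisted by $(\lambda,\mu)\in U\times U$, then replacing the $i$-th vertex of $\gamma$ by $\lambda^{(-1)^i}$ times itself, and the $j$-th vertex of $\delta$ by $\mu^{(-1)^j}$ times itself, yields a new lift $(\gamma'',\delta'')$ of $(\pi_U\gamma,\pi_U\delta)$: the scalars $\lambda^{(-1)^i}$ and $\lambda^{(-1)^{i-1}}$ on adjacent vertices have product $1$, so the edge relations survive, and $\pi_U$ is unchanged. A further direct computation gives $\widetilde{\Phi}(\gamma'',\delta'')=\widetilde{\Phi}(\gamma',\delta')$. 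Hence it is enough to show: $\widetilde{\Phi}(\gamma_1,\delta_1)=\widetilde{\Phi}(\gamma_2,\delta_2)$ implies $(\gamma_1,\delta_1)$ and $(\gamma_2,\delta_2)$ are $\text{SL}_2(R)$-equivalent (this then descends along the $\text{SL}_2(R)$-equivariant map $\pi_U$). For this I use that $\text{SL}_2(R)$ acts transitively on directed edges of $\mathscr{E}_R$: applying a suitable element I may assume the edge $v_0\to v_1$ of $\gamma_1$ is the standard edge $(1,0)\to(0,1)$, and likewise for $\gamma_2$, which leaves the $\widetilde{\Phi}$-images unchanged. But once $\gamma$ is so normalized, rows $0$ and $1$ of $\widetilde{\Phi}(\gamma,\delta)$ read off the vertices of $\delta$ exactly, and columns $0,1$ of the tiling (whose $2\times2$ minor is again $1$) then recover the vertices of $\gamma$; so a normalized pair is entirely determined by its $\widetilde{\Phi}$-image. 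Thus the two normalized pairs coincide, and the original pairs differ by an element of $\text{SL}_2(R)$.

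The step I expect to be the main obstacle is the factorization lemma underlying surjectivity: one has to upgrade the hypothesis of vanishing $3\times 3$ determinants on \emph{consecutive} windows to a genuine global rank-two structure, and to do this over an arbitrary commutative ring rather than a field, where rank arguments are unavailable and the linear algebra must instead be run by hand via unit $2\times 2$ minors. The remainder is bookkeeping: verifying that the constructed vertex sequences satisfy the unimodularity/edge conditions defining paths in $\mathscr{E}_R$, and matching the $U\times U$-action on tilings with the rescaling freedom among lifts of paths.
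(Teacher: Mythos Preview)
Your overall architecture matches the paper's: reduce to $U=\{1\}$, absorb the $U\times U$-twist by rescaling lifts (this is the content of Lemma~\ref{lem61}), and then show that $\widetilde{\Phi}$ is surjective and injective modulo $\text{SL}_2(R)$. Your injectivity argument is correct and in fact slightly more direct than the paper's: you normalize the \emph{first} path to start at $(1,0)\to(0,1)$ and read both paths off rows and columns $0,1$ of the tiling, whereas the paper (Lemma~\ref{lem45}) normalizes the \emph{second} path and then appeals to the uniqueness clause of its recurrence characterization (Theorem~\ref{thm19}) together with Lemma~\ref{lem17} on itineraries.

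There is, however, a gap in your outline of the factorization lemma. You propose to fix $i$, solve for $\alpha_i,\beta_i$ in columns $0,1$, and then propagate $m_{i,j}=\alpha_i m_{0,j}+\beta_i m_{1,j}$ to column $j+1$ using a vanishing $3\times 3$ determinant on columns $j-1,j,j+1$. But tameness only asserts vanishing for $3\times 3$ blocks with \emph{consecutive rows}; the block with rows $0,1,i$ (for $i>2$) and consecutive columns is not covered. The fix is to run your ``elementary fact'' on consecutive rows first: for each $i$, the block with rows $i-1,i,i+1$ and any three consecutive columns has determinant zero and a unit top $2\times 2$ minor (by the $\text{SL}_2$-rule), so row $i+1$ lies in the $R$-span of rows $i-1,i$; one must then check the coefficients are independent of the column window, which is where your column propagation genuinely applies. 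A second induction on $i$ then puts every row in the span of rows $0,1$. This three-term recurrence is exactly the paper's Theorem~\ref{thm19}, proved there via Dodgson condensation (Lemmas~\ref{lemK} and~\ref{lemL}); your ``elementary fact'' is essentially Lemma~\ref{lemL} in a different guise. Once the lemma is repaired, your explicit construction of $\gamma,\delta$ from rows $0,1$ is arguably a cleaner route to surjectivity than the paper's, which builds the paths via their itineraries.
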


This result generalises \cite{Sh2023}*{Theorem~1.1}, which is the special case $R=\mathbb{Z}$ and $U=\{\pm 1\}$, and \cite{FeKaSeTu2023}*{Theorem~5.20}, which is the case $R=\mathbb{Z}[\omega]$ and $U=\{\omega^j:j=0,1,\dots,5\}$. It is proven in Section~\ref{section6}, and a similar theorem for infinite friezes is proven in Section~\ref{section7}.

To illustrate Theorem~\ref{thm3}, consider the periodic bi-infinite paths $\gamma=\langle\,\dotsc,1/0,1/1,2/1,\dotsc\rangle$ (of period 3) and $\delta=\langle\,\dotsc,1/2,3/2,0/2,2/2,4/2,\dotsc\rangle$ (of period 5) shown in Figure~\ref{fig5}. The paths $\langle\,\dotsc,(1,0),(1,1),(3,4),\dotsc\rangle$ and $\langle\,\dotsc,(1,2), (3,2), (0,2), (2,2), (4,2),\dotsc\rangle$ are lifts of $\gamma$ and $\delta$ to $\mathscr{E}_5$. Applying the map $\widetilde{\Phi}$ to this pair of lifted paths gives the $\text{SL}_2$-tiling over $\mathbb{Z}/5\mathbb{Z}$ shown in Figure~\ref{fig2}(b), up to equivalence.

\begin{figure}[H]
	\centering
	\begin{tikzpicture}[scale=0.7,line join=bevel,z=-5.5,font=\footnotesize,inner sep=0.2mm]
	\tdplotsetmaincoords{50}{90}
	
	% F(5)
	\begin{scope}[tdplot_main_coords]
	\pgfmathsetmacro{\a}{2}
	\pgfmathsetmacro{\phi}{\a*(1+sqrt(5))/2}
	\path 
	coordinate[label=right:{\textcolor{black}{$\tfrac21$}}](A) at (0,\phi,\a)
	coordinate[label=right:{\textcolor{black}{$\tfrac02$}}](B) at (0,\phi,-\a)
	coordinate[label=left:{\textcolor{black}{$\tfrac01$}}](C) at (0,-\phi,\a)
	coordinate[label=left:{\textcolor{black}{$\tfrac42$}}](D) at (0,-\phi,-\a)
	coordinate[label={[yshift=2pt]above right:{\textcolor{black}{$\tfrac11$}}}](E) at (\a,0,\phi)
	coordinate[label={[yshift=15.5pt,xshift=-3pt]below:{\textcolor{black}{$\tfrac20$}}}](F) at (\a,0,-\phi)
	coordinate[label=above:{\textcolor{black}{$\tfrac10$}}](G) at (-\a,0,\phi)
	coordinate[label={[yshift=-2pt]below right:{\textcolor{black}{$\tfrac22$}}}](H) at (-\a,0,-\phi)
	coordinate[label={[xshift=-6pt,yshift=0.5pt]above left:{\textcolor{black}{$\tfrac32$}}}](I) at (\phi,\a,0)
	coordinate[label={[xshift=6pt,yshift=0.5pt]above right:{\textcolor{black}{$\tfrac12$}}}](J) at (\phi,-\a,0)
	coordinate[label={[xshift=-4pt]left:{\textcolor{black}{$\tfrac31$}}}](K) at (-\phi,\a,0)
	coordinate[label={[xshift=4pt]right:{\textcolor{black}{$\tfrac41$}}}](L) at (-\phi,-\a,0); 
	\draw[dashed,gray]    (B) -- (H) -- (K) -- cycle 
	(D) -- (L) -- (H) --cycle 
	(K) -- (L) -- (G) --cycle
	(A)--(K) (L)--(C) (F)--(H);
	
	\draw(A) -- (I) -- (B) --cycle 
	(F) -- (I) -- (B) --cycle 
	(F) -- (I) -- (J) --cycle
	(F) -- (D) -- (J) --cycle
	(C) -- (D) -- (J) --cycle
	(C) -- (E) -- (J) --cycle
	(I) -- (E) -- (J) --cycle
	(I) -- (E) -- (A) --cycle
	(G) -- (E) -- (A) --cycle
	(G) -- (E) -- (C) --cycle; 
	\end{scope}
		
	\draw[very thick=2,draw=col2,postaction={decorate,decoration={markings,mark=at position 0.6 with {\arrow[arrowstyle]{{latex}}}}}] (G) -- (E);
	\draw[very thick=2,draw=col2,postaction={decorate,decoration={markings,mark=at position 0.6 with {\arrow[arrowstyle]{{latex}}}}}] (E) -- (A);
	\draw[very thick=2,draw=col2,postaction={decorate,decoration={markings,mark=at position 0.6 with {\arrow[arrowstyle]{{latex}}}}}] (A) -- (G) node[col2,midway,above right]{$\gamma$};

	\draw[very thick=2,draw=col1,postaction={decorate,decoration={markings,mark=at position 0.55 with {\arrow[arrowstyle]{{latex}}}}}] (J) -- (I) node[midway,below,col1,xshift=-3pt,yshift=-2pt]{$\delta$};
	\draw[very thick=2,draw=col1,postaction={decorate,decoration={markings,mark=at position 0.55 with {\arrow[arrowstyle]{{latex}}}}}] (I) -- (B); 
	\draw[very thick=2,draw=col1,postaction={decorate,decoration={markings,mark=at position 0.55 with {\arrow[arrowstyle]{{latex}}}}}] (B) -- (H);
	\draw[very thick=2,draw=col1,postaction={decorate,decoration={markings,mark=at position 0.55 with {\arrow[arrowstyle]{{latex}}}}}] (H) -- (D);
	\draw[very thick=2,draw=col1,postaction={decorate,decoration={markings,mark=at position 0.55 with {\arrow[arrowstyle]{{latex}}}}}] (D) -- (J);
	
	\end{tikzpicture}
	\caption{Two periodic bi-infinite paths on $\mathscr{F}_5$}
	\label{fig5}
\end{figure}

Next we discuss two results of a similar type to Theorem~\ref{thm3} for friezes rather than $\text{SL}_2$-tilings.

We will use the function $\widetilde{\Psi}\colon\mathscr{P}\longrightarrow\SL$ obtained by composing $\widetilde{\Phi}$ with the map from $\mathscr{P}\longrightarrow\mathscr{P}\times\mathscr{P}$ given by $\gamma\longmapsto (\gamma,-\gamma)$. Explicitly, $\widetilde{\Psi}$ sends the path with vertices $(a_i,b_i)$ to the $\text{SL}_2$-tiling with entries $m_{i,j}=a_jb_i-b_ja_i$. 

Consider now some subgroup $U$ of $R^\times$, and let $n$ be a positive integer, at least 2. We say that two vertices $u$ and $v$ of $\mathscr{F}_{R,U}$ are \emph{equivalent} if there exists $\lambda\in R^\times$ such that $v=\lambda u$. We define $\mathscr{C}_{n,U}$ to be the set of paths of length $n$ in $\mathscr{F}_{R,U}$ with initial and final vertices that are equivalent. We write $\mathscr{C}_n$ for $\mathscr{C}_{n,\{1\}}$. There is an embedding of $\mathscr{C}_{n,U}$ into $\mathscr{P}_U$ in which the path $\langle v_0,v_1,\dots,v_n\rangle$ (where $v_n=\lambda v_0$, for some $\lambda \in R^\times$) is identified with the bi-infinite path with vertices $v_i$ satisfying $v_{i+n}=\lambda^{(-1)^i} v_i$, for $i\in\mathbb{Z}$. To reduce notation, we write $\mathscr{C}_{n,U}$ for the image under this embedding; thus, we may, for example, specify an element of $\mathscr{C}_{n,U}$ by a finite path but consider it to be a bi-infinite path. 

Let $\FR$ denote the subset of $\SL$ of tame friezes of width $n$, and let $\FRS$ denote the subset of tame semiregular friezes (second row 1's) of width $n$, where we identify a tame frieze with its extension to a tame $\text{SL}_2$-tiling. Consider the map from $\FR$ onto $\FRS$ given by $m_{i,j}\longmapsto \alpha^{(-1)^{i}}m_{i,j}$, where $\alpha=m_{1,0}$ (multiply even rows by $\alpha$ and odd rows by $\alpha^{-1}$). This induces a one-to-one correspondence $R^\times\backslash\FR\longrightarrow \FRS$. In this manner, every tame frieze of width $n$ can be normalised to give a tame semiregular frieze of width $n$.

We will see in Section~\ref{section8} that by restricting the domain of definition of $\widetilde{\Psi}$ to $\mathscr{C}_n$ we obtain a map $\widetilde{\Psi}\colon \mathscr{C}_n\longrightarrow \FRS$. We will also see that the $\widetilde{\Psi}$-images of any two lifts to $\mathscr{C}_n$ of two $\text{SL}_2(R)$-equivalent paths in $\mathscr{C}_{n,U}$ are $U$-equivalent in $\FRS$ under the action 
\[
m_{i,j}\longmapsto \lambda^{(-1)^{i}+(-1)^{j}}m_{i,j},
\]
where $\lambda\in U$ (this map preserves the second row of 1's). As a consequence, we obtain an induced map $\Psi_U\colon  \text{SL}_2(R)\backslash \mathscr{C}_{n,U}\longrightarrow U\backslash\FRS$ such that the following diagram commutes. 

\[
\begin{tikzcd}[row sep=4.5em,column sep=5.5em]
\mathscr{C}_n \arrow[r, "\widetilde{\Psi}",pos=0.5,shorten= 1.5ex,shorten >=1.5ex] \arrow[d]
& \FRS \arrow[d] \\
\text{SL}_2(R)\backslash\mathscr{C}_{n,U} \arrow[r,"\Psi_U" below]
& U\backslash \FRS
\end{tikzcd}
\]

Our second main result is that $\Psi_U$ is a bijection.

\begin{theorem}\label{thm4}
The map $\Psi_U$ is a one-to-one correspondence between
\[
\hspace*{-10pt}\textnormal{SL}_2(R)\Big\backslash\mleft\{\parbox{4.25cm}{\centering\textnormal{paths~of length~$n$~between equivalent~vertices~in~$\mathscr{F}_{R,U}$}}\mright\}\quad \longleftrightarrow\quad U\Big\backslash\mleft\{\parbox{3.85cm}{\centering\textnormal{tame~semiregular~friezes over~$R$~of~width~$n$}}\mright\}.
\]
\end{theorem}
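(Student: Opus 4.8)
The plan is to deduce Theorem~\ref{thm4} from Theorem~\ref{thm3}. Recall that $\widetilde{\Psi}$ is $\widetilde{\Phi}$ precomposed with the ``doubling'' map $\iota\colon\gamma\longmapsto(\gamma,-\gamma)$ from $\mathscr{P}$ to $\mathscr{P}\times\mathscr{P}$, and likewise write $\iota_U$ for the analogous map on $\mathscr{P}_U$. Two observations make the reduction run. First, since $\text{SL}_2(R)$ acts linearly on vertices, $A(\gamma,-\gamma)=(A\gamma,-A\gamma)$, and the involution $v\longmapsto-v$ is well defined on $\mathscr{F}_{R,U}$ and commutes with the $\text{SL}_2(R)$-action there; hence $\iota$ and $\iota_U$ induce injections on $\text{SL}_2(R)$-orbits, and a doubled pair $(\eta,-\eta)$ is $\text{SL}_2(R)$-equivalent to $(\eta',-\eta')$ exactly when $\eta$ is $\text{SL}_2(R)$-equivalent to $\eta'$. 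Second, if $(\lambda,\mu)\in U\times U$ carries a semiregular frieze to a semiregular frieze then it fixes the second row of $1$'s, which forces $\lambda=\mu$; thus the $(U\times U)$-action on $\SL$ restricts on $\FRS$ to exactly the diagonal action $m_{i,j}\longmapsto\lambda^{(-1)^i+(-1)^j}m_{i,j}$ of Theorem~\ref{thm4}. I would also recall from Section~\ref{section8} that $\widetilde{\Psi}$ really does land in $\FRS$: $\widetilde{\Phi}$-images are tame (Section~\ref{section6}), the diagonal entries $m_{i,i}=a_ib_i-b_ia_i$ vanish, the second-row entries $m_{i+1,i}=a_ib_{i+1}-b_ia_{i+1}$ equal $1$ because $v_i\to v_{i+1}$ is an edge of $\mathscr{E}_R$, and $m_{n+i,i}=0$ for $\gamma\in\mathscr{C}_n$ since $v_{n+i}$ is then a unit multiple of $v_i$; uniqueness of the tame extension identifies the resulting tiling with a genuine element of $\FRS$.

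For injectivity of $\Psi_U$, let $\gamma,\gamma'\in\mathscr{C}_{n,U}$ with $\Psi_U([\gamma])=\Psi_U([\gamma'])$ and pick lifts $\tilde\gamma,\tilde\gamma'\in\mathscr{C}_n$. Then $\widetilde{\Psi}(\tilde\gamma)=\widetilde{\Phi}(\tilde\gamma,-\tilde\gamma)$ and $\widetilde{\Psi}(\tilde\gamma')=\widetilde{\Phi}(\tilde\gamma',-\tilde\gamma')$ are $U$-equivalent, hence $(U\times U)$-equivalent by the second observation. Chasing the defining square of $\Phi_U$, these $\widetilde{\Phi}$-images represent $\Phi_U([(\pi_U\tilde\gamma,-\pi_U\tilde\gamma)])$ and $\Phi_U([(\pi_U\tilde\gamma',-\pi_U\tilde\gamma')])$, so injectivity of $\Phi_U$ (Theorem~\ref{thm3}) yields $A\in\text{SL}_2(R)$ with $(A\pi_U\tilde\gamma,-A\pi_U\tilde\gamma)=(\pi_U\tilde\gamma',-\pi_U\tilde\gamma')$, whence $A\gamma=\gamma'$ by the first observation. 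Since $A$ preserves equivalence of vertices, $A\gamma$ again lies in $\mathscr{C}_{n,U}$, so $[\gamma]=[\gamma']$ in $\text{SL}_2(R)\backslash\mathscr{C}_{n,U}$.

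For surjectivity, take a tame semiregular frieze $\mathbf{F}$ of width $n$, identified with its tame extension $\mathbf{M}$. By Theorem~\ref{thm3}, $\mathbf{M}=\widetilde{\Phi}(\gamma,\delta)$ for bi-infinite paths $\gamma,\delta$ in $\mathscr{E}_R$ with vertices $(a_i,b_i)$ and $(c_i,d_i)$. The relations $m_{i,i}=a_id_i-b_ic_i=0$ together with unimodularity force $(c_i,d_i)=\mu_i(a_i,b_i)$ for units $\mu_i$ (two unimodular pairs with vanishing determinant are unit multiples of each other); then $1=m_{i+1,i}=\mu_i(a_{i+1}b_i-b_{i+1}a_i)=-\mu_i$, so $\mu_i=-1$ for all $i$, i.e.\ $\delta=-\gamma$ and $\mathbf{M}=\widetilde{\Psi}(\gamma)$. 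Next, $m_{n+i,i}=0$ forces $v_{n+i}=\lambda_i v_i$ for units $\lambda_i$, and comparing $\det(v_{n+i},v_{n+i+1})=\lambda_i\lambda_{i+1}$ with the edge relation $\det(v_{n+i},v_{n+i+1})=1$ gives $\lambda_{i+1}=\lambda_i^{-1}$; hence $v_{n+i}=\lambda^{(-1)^i}v_i$ with $\lambda=\lambda_0$, which is precisely the condition defining $\mathscr{C}_n$. Projecting, $\pi_U\gamma\in\mathscr{C}_{n,U}$ and $\Psi_U([\pi_U\gamma])=[\mathbf{F}]$, so $\Psi_U$ is onto $U\backslash\FRS$.

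The part needing the most care is not either half in isolation but the gluing: one must check that a path of $\mathscr{C}_{n,U}$ can be lifted to $\mathscr{E}_R$ inside $\mathscr{C}_n$ (a lift is rigid once the lift of a single vertex is chosen, so the endpoint relation $v_n=\Lambda v_0$ over $\mathscr{F}_{R,U}$ pulls back to a relation of the same shape over $R$), and that the defining squares of $\Psi_U$ and $\Phi_U$ together with the doubling maps $\iota,\iota_U$ all commute, so that equivalences can be transported back and forth between $\mathscr{C}_n$, $\mathscr{P}\times\mathscr{P}$, and their $U$-quotients. None of these verifications is deep, but keeping the three simplifications straight at once — the quotient by $\text{SL}_2(R)$, the quotient by $U$ versus $U\times U$, and the covering $\pi_U\colon\mathscr{E}_R\to\mathscr{F}_{R,U}$ — is where an argument of this shape most easily slips.
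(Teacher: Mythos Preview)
Your proof is correct and follows essentially the same route as the paper's Section~\ref{section8}: surjectivity is obtained exactly as in Lemma~\ref{lem51} (via the $\delta=-\gamma$ argument using Lemma~\ref{lem66} and the $\lambda_i\lambda_{i+1}=1$ computation), and injectivity is the paper's argument repackaged---where the paper adjusts the lift $\tilde\gamma'$ so that $\widetilde{\Psi}(\tilde\gamma)=\widetilde{\Psi}(\tilde\gamma')$ exactly and then invokes Lemma~\ref{lem45}, you instead pass to the $(U\times U)$-quotient and invoke the bijectivity of $\Phi_U$ from Theorem~\ref{thm3}, whose proof is itself the lift-adjustment plus Lemma~\ref{lem45}. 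The only cosmetic difference is your explicit framing via the doubling map $\iota$ and the observation that the $(U\times U)$-action restricts to the diagonal $U$-action on $\FRS$; the paper leaves this implicit.
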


This result generalises \cite{Sh2023}*{Theorem~1.6}, which is the special case $R=\mathbb{Z}$ and $U=\{\pm 1\}$.

To illustrate Theorem~\ref{thm4}, consider the tame semiregular frieze over $\mathbb{Z}/5\mathbb{Z}$ shown in Figure~\ref{fig56}.

\begin{figure}[ht]
	\centering
	\begin{tikzpicture}
	\node at (0,0) {
		\(
		\vcenter{\footnotesize
			\xymatrix @-0.8pc @!0 {
				&&  &&  &&  &&  &&  &&  &&  && \textcolor{darkgrey}{4} &&  && \\      
				& 0 && 0 && 0 && 0 && 0 && 0 && 0 && 0 && 0 && 0 & \\
				&& 1 && 1 && 1 && 1 && 1 && 1 && 1 && 1 && 1 && \\
				\cdots  & 4 && 1 && 3 && 3 && 1 && 4 && 2 && 2 && 4 && 1 & \cdots  \\
				&& 3 && 2 && 3 && 2 && 3 && 2 && 3 && 2 && 3 && \\      	
				& 0 && 0 && 0 && 0 && 0 && 0 && 0 && 0 && 0 && 0 & \\
			}
		}
		\)
	};
	\draw[rounded corners=3pt,shift={(1.5,-1.6)},rotate=45,draw=col2] (0,0) rectangle (3.375,1.2);
	\draw[rounded corners=3pt,draw=col4] (-3.8,-0.5) rectangle (3.8,0);
	
	\end{tikzpicture}	
	\caption{A tame semiregular frieze over $\mathbb{Z}/5\mathbb{Z}$}
	\label{fig56}
\end{figure}

The additional entry 4 above the frieze is a single entry from the extension of the frieze to an $\text{SL}_2$-tiling. Consider now the slanted box. By placing the upper row over the lower row from this box we obtain the path
\[
\frac20 \to \frac23 \to \frac12 \to \frac01 \to \frac40
\]
in $\mathscr{F}_5$, which is illustrated in Figure~\ref{fig4}. Note that $4/0$ and $1/0$ are equal in $\mathscr{F}_5$, as are $2/3$ and~$3/2$.

\begin{figure}[ht]
	\centering
	\begin{tikzpicture}[scale=0.7,line join=bevel,z=-5.5,font=\footnotesize,inner sep=0.2mm]
	\tdplotsetmaincoords{50}{90}
	
	% F(5)
	\begin{scope}[tdplot_main_coords]
	\pgfmathsetmacro{\a}{2}
	\pgfmathsetmacro{\phi}{\a*(1+sqrt(5))/2}
	\path 
	coordinate[label=right:{\textcolor{col1}{$\tfrac21$}}](A) at (0,\phi,\a)
	coordinate[label=right:{\textcolor{col1}{$\tfrac02$}}](B) at (0,\phi,-\a)
	coordinate[label=left:{\textcolor{col1}{$\tfrac01$}}](C) at (0,-\phi,\a)
	coordinate[label=left:{\textcolor{col1}{$\tfrac42$}}](D) at (0,-\phi,-\a)
	coordinate[label={[yshift=2pt]above right:{\textcolor{col1}{$\tfrac11$}}}](E) at (\a,0,\phi)
	coordinate[label={[yshift=15.5pt,xshift=-3pt]below:{\textcolor{col1}{$\tfrac20$}}}](F) at (\a,0,-\phi)
	coordinate[label=above:{\textcolor{col1}{$\tfrac10$}}](G) at (-\a,0,\phi)
	coordinate[label={[yshift=-2pt]below right:{\textcolor{col1}{$\tfrac22$}}}](H) at (-\a,0,-\phi)
	coordinate[label={[xshift=-6pt,yshift=0.5pt]above left:{\textcolor{col1}{$\tfrac32$}}}](I) at (\phi,\a,0)
	coordinate[label={[xshift=6pt,yshift=0.5pt]above right:{\textcolor{col1}{$\tfrac12$}}}](J) at (\phi,-\a,0)
	coordinate[label={[xshift=-4pt]left:{\textcolor{col1}{$\tfrac31$}}}](K) at (-\phi,\a,0)
	coordinate[label={[xshift=4pt]right:{\textcolor{col1}{$\tfrac41$}}}](L) at (-\phi,-\a,0); 
	\draw[dashed,gray]    (B) -- (H) -- (K) -- cycle 
	(D) -- (L) -- (H) --cycle 
	(K) -- (L) -- (G) --cycle
	(A)--(K) (L)--(C) (F)--(H);
	
	\draw(A) -- (I) -- (B) --cycle 
	(F) -- (I) -- (B) --cycle 
	(F) -- (I) -- (J) --cycle
	(F) -- (D) -- (J) --cycle
	(C) -- (D) -- (J) --cycle
	(C) -- (E) -- (J) --cycle
	(I) -- (E) -- (J) --cycle
	(I) -- (E) -- (A) --cycle
	(G) -- (E) -- (A) --cycle
	(G) -- (E) -- (C) --cycle; 
	\end{scope}
		
	\draw[very thick=2,draw=col2,postaction={decorate,decoration={markings,mark=at position 0.6 with {\arrow[arrowstyle]{{latex}}}}}] (C) -- (G);
	\draw[very thick=2,draw=col2,postaction={decorate,decoration={markings,mark=at position 0.55 with {\arrow[arrowstyle]{{latex}}}}}] (J) -- (C);
	\draw[very thick=2,draw=col2,postaction={decorate,decoration={markings,mark=at position 0.6 with {\arrow[arrowstyle]{{latex}}}}}] (I) -- (J);
	\draw[very thick=2,draw=col2,postaction={decorate,decoration={markings,mark=at position 0.55 with {\arrow[arrowstyle]{{latex}}}}}] (F) -- (I);
	
	\end{tikzpicture}
	\caption{A path between equivalent vertices on $\mathscr{F}_5$}
	\label{fig4}
\end{figure}

The path $\gamma$ of Figure~\ref{fig4} corresponds to the frieze $\mathbf{F}$ of Figure~\ref{fig56} under $\Psi_U$ (with $U=\{\pm 1\}$). The path has length 4, and accordingly $\mathbf{F}$ has width 4. Considered as a bi-infinite path $\gamma$ satisfies $v_{i+4}=2^{(-1)^i}v_i$, for $i\in\mathbb{Z}$, so $v_{i+8}=-v_i=v_i$. Therefore $\gamma$ has period 8 and, correspondingly, the frieze $\mathbf{F}$ also has period 8 (indicated by the horizontal box in Figure~\ref{fig56}).

 We can obtain a similar result to Theorem~\ref{thm4} for tame \emph{regular} friezes over $R$; however, for these, single paths in $\mathscr{F}_{R,U}$ cannot alone be used to distinguish regular and non-regular friezes, unless $U=\{1\}$. Consequently, we state our third main result using the Farey complex $\mathscr{E}_R$. We define a \emph{semiclosed} path in $\mathscr{E}_R$ to be a path with initial vertex $v$ and final vertex $-v$, for some vertex $v$ in $\mathscr{E}_R$. The semiclosed paths form a subcollection of $\mathscr{C}_n$, and by restricting $\Psi$ (where $\Psi=\Psi_{\{1\}}$) to this subcollection we obtain the following theorem, proven in Section~\ref{section9}.

\begin{theorem}\label{thm5}
The map $\Psi$ is a one-to-one correspondence between
\[
\textnormal{SL}_2(R)\Big\backslash\mleft\{\parbox{3.1cm}{\centering\textnormal{semiclosed~paths~of length~$n$~in~$\mathscr{E}_R$}}\mright\}\quad \longleftrightarrow\quad \mleft\{\parbox{3.2cm}{\centering\textnormal{tame~regular~friezes over~$R$~of~width~$n$}}\mright\}.
\]
\end{theorem}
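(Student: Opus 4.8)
The plan is to derive Theorem~\ref{thm5} from Theorem~\ref{thm4} specialised to $U=\{1\}$. Since $\mathscr{F}_{R,\{1\}}=\mathscr{E}_R$, $\mathscr{C}_{n,\{1\}}=\mathscr{C}_n$, and the action of $U=\{1\}$ on $\FRS$ is trivial, Theorem~\ref{thm4} already provides a one-to-one correspondence $\Psi=\Psi_{\{1\}}\colon\text{SL}_2(R)\backslash\mathscr{C}_n\to\FRS$, induced by $\widetilde{\Psi}$ (so $\Psi([\gamma])=\widetilde{\Psi}(\gamma)$). It will then suffice to establish two points: (i) the subcollection of semiclosed paths in $\mathscr{E}_R$ is invariant under $\text{SL}_2(R)$, so that it descends to a subset of $\text{SL}_2(R)\backslash\mathscr{C}_n$; and (ii) for $\gamma\in\mathscr{C}_n$, the frieze $\widetilde{\Psi}(\gamma)$ is regular if and only if $\gamma$ is semiclosed. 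Granting these, the conclusion will be immediate: $\Psi$ restricts to a bijection between the $\text{SL}_2(R)$-classes of semiclosed paths and the tame regular friezes of width $n$, with injectivity inherited and surjectivity because every tame regular frieze of width $n$ is in particular semiregular, hence of the form $\widetilde{\Psi}(\gamma)$ for some $\gamma\in\mathscr{C}_n$, which by (ii) is semiclosed.

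Point (i) will be easy: the action of $\text{SL}_2(R)$ on $\mathscr{E}_R$ is linear — $A$ sends the vertex $(x,y)$ to $(ax+by,cx+dy)$ — so $A(-v)=-Av$, and thus if $\gamma$ runs from $v$ to $-v$ then $A\gamma$ runs from $Av$ to $-Av$.

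For point (ii), I would fix $\gamma\in\mathscr{C}_n$ with vertices $(a_i,b_i)$ and let $\lambda\in R^\times$ be the unique scalar with $(a_n,b_n)=\lambda(a_0,b_0)$ (uniqueness holds since $(a_0,b_0)$ is unimodular). Under the embedding $\mathscr{C}_n\hookrightarrow\mathscr{P}$ the associated bi-infinite path satisfies $(a_{i+n},b_{i+n})=\lambda^{(-1)^i}(a_i,b_i)$ for all $i\in\mathbb{Z}$. Recalling that $\widetilde{\Psi}(\gamma)$ has entries $m_{i,j}=a_jb_i-b_ja_i$, its second row is all $1$'s because $(a_i,b_i)\to(a_{i+1},b_{i+1})$ is an edge of $\mathscr{E}_R$, so $m_{i+1,i}=a_ib_{i+1}-b_ia_{i+1}=1$; this merely recovers semiregularity. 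For the second-last row, using $(a_{i+n-1},b_{i+n-1})=\lambda^{(-1)^{i-1}}(a_{i-1},b_{i-1})$ and the edge relation $a_{i-1}b_i-b_{i-1}a_i=1$ one computes
\[
m_{n+i-1,i}=a_ib_{n+i-1}-b_ia_{n+i-1}=\lambda^{(-1)^{i-1}}\bigl(a_ib_{i-1}-b_ia_{i-1}\bigr)=-\lambda^{(-1)^{i-1}},
\]
so the second-last row of $\widetilde{\Psi}(\gamma)$ is the alternating bi-infinite sequence with terms $-\lambda$ and $-\lambda^{-1}$. This row is identically $1$ precisely when $\lambda=-1$; since a tame frieze whose second row is $1$'s is regular exactly when its second-last row is also $1$'s, this gives (ii).

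Modulo Theorem~\ref{thm4} the argument is short, and the only genuine work is the scalar bookkeeping in the displayed identity: one must track $\lambda$ carefully through the embedding convention $(a_{i+n},b_{i+n})=\lambda^{(-1)^i}(a_i,b_i)$ and through the formula for $\widetilde{\Psi}$, and one must use that $\widetilde{\Psi}(\gamma)$ has width exactly $n$ so that its second-last row really is the row of entries $m_{n+i-1,i}$ — both of which are recorded in Section~\ref{section8}. I do not anticipate any further obstacle.
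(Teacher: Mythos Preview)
Your proof is correct and follows essentially the same route as the paper's own argument in Section~\ref{section9}: both specialise Theorem~\ref{thm4} at $U=\{1\}$ and then verify, via the computation $m_{n+i-1,i}=-\lambda^{(-1)^{i-1}}$, that $\widetilde{\Psi}(\gamma)$ is regular precisely when $\lambda=-1$, so that $\Psi$ restricts to the desired bijection. Your explicit check of $\text{SL}_2(R)$-invariance of semiclosed paths is a minor addition the paper leaves implicit, but otherwise the arguments coincide.
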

 
The strength of Theorems~\ref{thm4} and~\ref{thm5} is that they allow us to represent tame friezes by paths, for which we can take advantage of the algebraic, combinatorial, and geometric properties of Farey complexes. We give three applications; the first is about quiddity cycles, introduced by Coxeter in \cite{Co1971}. 

Consider any tame semiregular frieze $\mathbf{F}$ over a finite ring $R$. By applying Theorem~\ref{thm4} we can find a path $\gamma$ corresponding to $\mathbf{F}$ with vertices $v_i$ that satisfy $v_{i+n}=\lambda^{(-1)^i} v_i$, for $i\in\mathbb{Z}$ and $\lambda\in R^\times$. Since $R$ is finite, the path $\gamma$ is periodic, so $\mathbf{F}$ is also periodic (by contrast, the frieze of Figure~\ref{fig76} over the infinite ring $\mathbb{Q}$ is not periodic). A \emph{quiddity sequence} for $\mathbf{F}$ is a period $a_1,a_2,\dots,a_k$ from the third row $m_{i+1,i-1}$, for $i\in\mathbb{Z}$, of $\mathbf{F}$. For example, a quiddity sequence for the frieze of Figure~\ref{fig56} is indicated by the horizontal box.  The \emph{quiddity cycle} of $\mathbf{F}$ is the collection of all cyclic permutations of a quiddity sequence $a_1,a_2,\dots,a_k$. % Quiddity cycles are important in the theory of friezes because they encapsulate most of the information necessary to specify a tame semiregular frieze.

% Removing following sentence from paragraph above because it's obvious.
%Quiddity sequences are never unique because $a_1,a_2,\dots,a_k,a_1,a_2,\dots,a_k$ is also a quiddity sequence for $\mathbf{F}$ and so is $a_2,a_3,\dots,a_k,a_1$.

Conway and Coxeter's celebrated observation \cite{CoCo1973} was that the quiddity cycles of tame friezes with positive integer entries are in correspondence with the collection of triangulated polygons. Here we prove that, for a finite ring, \emph{any} finite sequence is a quiddity sequence of some frieze.

\begin{theorem}\label{thm34}
Any finite sequence in a finite ring $R$ is a quiddity sequence for some tame semiregular frieze over $R$.
\end{theorem}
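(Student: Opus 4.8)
The plan is to realise the given sequence as the quiddity sequence of the tame semiregular frieze $\widetilde{\Psi}(\gamma)$ attached by Theorem~\ref{thm4} (with $U=\{1\}$) to a suitably chosen closed path $\gamma$ in $\mathscr{E}_R$. The first step is to set up the dictionary between the quiddity entries of $\widetilde{\Psi}(\gamma)$ and a linear recurrence satisfied by the vertices of $\gamma$. If $v_{i-1},v_i,v_{i+1}$ are three consecutive vertices of a path in $\mathscr{E}_R$, then $v_{i-1},v_i$ is a unimodular basis of $R^2$, so $v_{i+1}=\alpha v_{i-1}+\beta v_i$ for unique $\alpha,\beta\in R$, and the edge conditions $\det(v_{i-1},v_i)=\det(v_i,v_{i+1})=1$ force $\alpha=-1$. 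Writing $q_i:=\beta$, so that $v_{i+1}=q_iv_i-v_{i-1}$, a one-line computation with the formula $m_{i,j}=a_jb_i-b_ja_i$ defining $\widetilde{\Psi}$ gives $m_{i+1,i-1}=\det(v_{i-1},v_{i+1})=q_i$; that is, the third row of $\widetilde{\Psi}(\gamma)$ records exactly the recurrence coefficients of $\gamma$.

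It therefore suffices to build a bi-infinite path in $\mathscr{E}_R$ whose recurrence coefficients are $k$-periodic and run through $a_1,\dots,a_k$, and which closes up after finitely many steps. I would extend $a_1,\dots,a_k$ to a $k$-periodic sequence $(a_i)_{i\in\mathbb{Z}}$, set $v_0=(1,0)$ and $v_1=(0,1)$, and define $v_{i+1}=a_iv_i-v_{i-1}$ for all $i\in\mathbb{Z}$. Since $\det(v_0,v_1)=1$ and $\det(v_i,v_{i+1})=\det\bigl(v_i,a_iv_i-v_{i-1}\bigr)=\det(v_{i-1},v_i)$, induction gives $\det(v_i,v_{i+1})=1$ for every $i$, so each $v_i$ is a unimodular pair and each consecutive pair spans a directed edge of $\mathscr{E}_R$; thus $(v_i)_{i\in\mathbb{Z}}$ is a bi-infinite path. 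To close it up I would use the finiteness of $R$: regarding $v_i$ as a row vector, put $V_i=\left(\begin{smallmatrix}v_i\\ v_{i-1}\end{smallmatrix}\right)$ and $M_i=\left(\begin{smallmatrix}a_i&-1\\ 1&0\end{smallmatrix}\right)$, so that $M_i\in\text{SL}_2(R)$ and the recurrence reads $V_{i+1}=M_iV_i$. By $k$-periodicity $V_{mk+1}=Q^mV_1$, where $Q=M_kM_{k-1}\cdots M_1\in\text{SL}_2(R)$. Since $R$ is finite the group $\text{SL}_2(R)$ is finite, so $Q$ has finite order, say $d$; I would then put $n=dk$. Now $V_{n+1}=Q^dV_1=V_1$, so $v_n=v_0$, and because the coefficients $(a_i)$ are $n$-periodic the whole path $(v_i)_{i\in\mathbb{Z}}$ is $n$-periodic.

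With this in hand, $\gamma=\langle v_0,v_1,\dots,v_n\rangle$ is a path of length $n$ in $\mathscr{E}_R=\mathscr{F}_{R,\{1\}}$ whose endpoints are equal, hence equivalent, so $\gamma\in\mathscr{C}_n$, and the bi-infinite path associated to $\gamma$ under the embedding $\mathscr{C}_n\hookrightarrow\mathscr{P}$ (taking the unit $\lambda=1$) is precisely $(v_i)_{i\in\mathbb{Z}}$. By the construction of the map $\widetilde{\Psi}\colon\mathscr{C}_n\to\FRS$, the frieze $\mathbf{F}=\widetilde{\Psi}(\gamma)$ is a tame semiregular frieze over $R$ of width $n$, and by the first paragraph its third row is the $k$-periodic sequence $(a_i)_{i\in\mathbb{Z}}$; hence $a_1,\dots,a_k$ is a quiddity sequence for $\mathbf{F}$, which is what we want. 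The remaining point to verify is the standing requirement $n\geq 2$: if $k\geq 2$ then $n=dk\geq 2$ automatically, while if $k=1$ then $Q=M_1=\left(\begin{smallmatrix}a_1&-1\\ 1&0\end{smallmatrix}\right)$ equals the identity only when $1=0$ in $R$, so $d\geq 2$ and $n\geq 2$ (the zero ring being a trivial special case).

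I do not expect any single step to be genuinely hard; the main work is in carefully assembling the three-way dictionary relating the recurrence and its transfer matrices, the edge and unimodularity combinatorics of $\mathscr{E}_R$, and the third row of $\widetilde{\Psi}(\gamma)$ — including the check that the embedded bi-infinite path coincides with the recurrence-defined one. Once that is set up, the closing-up of the path is an immediate consequence of $\text{SL}_2(R)$ being finite, and it is precisely here that the hypothesis that $R$ is finite is essential: over an infinite ring such as $\mathbb{Z}$ the monodromy $Q$ can have infinite order, and indeed by Conway and Coxeter's theorem only the quiddity sequences of triangulated polygons occur there.
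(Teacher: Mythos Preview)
Your proof is correct and follows essentially the same route as the paper's: extend the sequence periodically, build a path in $\mathscr{E}_R$ governed by the linear recurrence $v_{i+1}=a_iv_i-v_{i-1}$ (equivalently, with itinerary $(a_i)$), use finiteness of $\text{SL}_2(R)$ to force the transfer-matrix product to have finite order so the path closes after $n=dk$ steps, and read off the quiddity sequence from the third row of $\widetilde{\Psi}(\gamma)$. The only differences are cosmetic bookkeeping (your left-acting $M_i$ versus the paper's right-acting $U_i$) and your extra care with the $n\geq 2$ requirement, which the paper leaves implicit.
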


Theorem~\ref{thm34} could be established from related results in the literature, such as \cite{Mo2015}*{Theorem 1.15}. The merit of our approach is that the correspondence with paths facilitates a short, intuitive proof, in Section~\ref{section10}.

For our second application of Theorems~\ref{thm4} and~\ref{thm5}, we enumerate tame friezes of a given width over a finite field. Theorem~\ref{thm4} reduces this task to that of enumerating closed paths in a complete graph, which is straightforward.

\begin{theorem}\label{thm9}
The number of tame friezes of width $n$ over a finite field of order $q$ is
\[
\frac{(q-1)(q^{n-1}+(-1)^n)}{q+1}.
\]
\end{theorem}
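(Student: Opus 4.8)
The plan is to chain together two reductions and then count closed paths in a complete graph. \emph{Step one: pass from tame friezes to tame semiregular friezes.} As explained before Theorem~\ref{thm4}, normalising the second row via $m_{i,j}\mapsto\alpha^{(-1)^i}m_{i,j}$ with $\alpha=m_{1,0}$ gives a one-to-one correspondence $R^\times\backslash\FR\to\FRS$. When $R=\mathbb{F}_q$ the underlying action of $R^\times$ on $\FR$ is free: the diamond rule at $(i,j)=(1,0)$ reads $m_{1,0}m_{2,1}-m_{1,1}m_{2,0}=1$ and $m_{1,1}=0$, so $m_{1,0}$ is a unit; hence $\mu^{(-1)^i}m_{i,j}=m_{i,j}$ for all $i,j$ forces $\mu^{-1}m_{1,0}=m_{1,0}$, i.e.\ $\mu=1$. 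Thus $|\FR|=(q-1)\,|\FRS|$, and it suffices to prove $|\FRS|=\bigl(q^{n-1}+(-1)^n\bigr)/(q+1)$.

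\emph{Step two: pass from semiregular friezes to paths.} Theorem~\ref{thm4} with $U=\{1\}$ gives a bijection between $\FRS$ and $\text{SL}_2(\mathbb{F}_q)\backslash\mathscr{C}_{n,\{1\}}$, where $\mathscr{C}_{n,\{1\}}$ is the set of length-$n$ paths in $\mathscr{E}_{\mathbb{F}_q}$ with $R^\times$-equivalent endpoints. Over a field $\text{SL}_2(\mathbb{F}_q)$ acts simply transitively on the directed edges of $\mathscr{E}_{\mathbb{F}_q}$: transitivity holds for any Farey complex, and the stabiliser of $\langle(1,0),(0,1)\rangle$ is trivial, since an element fixing it and acting by $(x,y)\mapsto(ax+by,cx+dy)$ must satisfy $(a,c)=(1,0)$ and $(b,d)=(0,1)$. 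Since a length-$n$ path with $n\geq2$ determines its first directed edge, the action on $\mathscr{C}_{n,\{1\}}$ is free, so $|\FRS|=|\mathscr{C}_{n,\{1\}}|/|\text{SL}_2(\mathbb{F}_q)|=|\mathscr{C}_{n,\{1\}}|/\bigl(q(q^2-1)\bigr)$.

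\emph{Step three: pass to the complete graph.} The Farey complex $\mathscr{F}_{\mathbb{F}_q,R^\times}$ is the complete graph $K_{q+1}$ on the $q+1$ points of $\mathbb{P}^1(\mathbb{F}_q)$, since two distinct points of $\mathbb{P}^1(\mathbb{F}_q)$ span $\mathbb{F}_q^2$ and so have unit determinant. The covering $\pi_{R^\times}\colon\mathscr{E}_{\mathbb{F}_q}\to\mathscr{F}_{\mathbb{F}_q,R^\times}$ has the unique path-lifting property once the initial vertex is fixed: for a vertex $v=(a,b)$ of $\mathscr{E}_{\mathbb{F}_q}$ and a vertex $[w]\neq[v]$ of $K_{q+1}$, the edge condition joining $v$ to $sw$ is linear and nonhomogeneous in $s$ with nonzero leading coefficient, so exactly one $s\in\mathbb{F}_q^\times$ works. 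Hence the length-$n$ paths of $\mathscr{E}_{\mathbb{F}_q}$ from a fixed vertex $v_0$ correspond bijectively to the length-$n$ paths of $K_{q+1}$ from $\pi_{R^\times}(v_0)$, and such a path lies in $\mathscr{C}_{n,\{1\}}$ exactly when its image path is closed. Since $\mathscr{E}_{\mathbb{F}_q}$ has $q^2-1$ vertices, $|\mathscr{C}_{n,\{1\}}|=(q^2-1)\,c_n$, where $c_n$ is the number of closed paths of length $n$ at a vertex of $K_{q+1}$.

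\emph{Step four: spectral count and assembly.} The adjacency matrix of $K_{q+1}$ is $J-I$, with eigenvalue $q$ of multiplicity $1$ and eigenvalue $-1$ of multiplicity $q$; by vertex-transitivity every diagonal entry of $(J-I)^n$ equals $c_n=\tfrac1{q+1}\bigl(q^n+q(-1)^n\bigr)$. Therefore $|\FRS|=(q^2-1)c_n/\bigl(q(q^2-1)\bigr)=\bigl(q^{n-1}+(-1)^n\bigr)/(q+1)$ and $|\FR|=(q-1)\,|\FRS|=(q-1)\bigl(q^{n-1}+(-1)^n\bigr)/(q+1)$, as claimed. The one point that is more than bookkeeping is Step three — recognising $\mathscr{F}_{\mathbb{F}_q,R^\times}$ as $K_{q+1}$ and establishing the unique path-lifting property of $\pi_{R^\times}$ — after which everything is orbit counting together with the standard spectrum of a complete graph.
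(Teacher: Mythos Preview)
Your proof is correct and follows essentially the same strategy as the paper: reduce via Theorem~\ref{thm4} to counting closed walks of length $n$ in the complete graph $K_{q+1}$, then use the $(q-1)$-to-$1$ normalisation map from $\FR$ to $\FRS$. The only difference is bookkeeping: the paper applies Theorem~\ref{thm4} with $U=R^\times$ and works directly in $\mathscr{F}_{R,R^\times}=K_{q+1}$, at the cost of tracking the size-$2$ kernels of the $\text{SL}_2(R)$ action on $K_{q+1}$ and of the $R^\times$ action on $\FRS$; you instead take $U=\{1\}$, so both actions are free, but you must then pass from $\mathscr{E}_{\mathbb{F}_q}$ to $K_{q+1}$ via the covering $\pi_{R^\times}$ and its unique-lifting property. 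Both routes arrive at the same count $c_n=(q^n+q(-1)^n)/(q+1)$ for closed walks at a vertex of $K_{q+1}$, and the arithmetic from there is identical.
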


Using Theorems~\ref{thm4} and~\ref{thm9} we also reprove a result  of Morier-Genoud \cite{Mo2021}*{Theorem~1} on the number of tame \emph{regular} friezes of a given width over a finite field; see Section~\ref{section11}.

The third and most significant application of Theorems~\ref{thm4} and~\ref{thm5} concerns lifting tame $\text{SL}_2$-tilings and friezes from $\mathbb{Z}/N\mathbb{Z}$ to $\mathbb{Z}$. We will see in Section~\ref{section12} that \emph{any} tame $\text{SL}_2$-tiling  over $\mathbb{Z}/N\mathbb{Z}$ lifts to a tame $\text{SL}_2$-tiling  over $\mathbb{Z}$. In fact, we will see that it is possible to choose a lift with all entries positive. 

Lifting friezes is more complex; it is not always possible to lift a tame frieze over $\mathbb{Z}/N\mathbb{Z}$ to a tame frieze over $\mathbb{Z}$ of the same width. For example, the tame frieze over $\mathbb{Z}/5\mathbb{Z}$ shown in Figure~\ref{fig56} cannot be lifted to a tame frieze over $\mathbb{Z}$, since the entries 2 and 3 in the second-last row are not congruent to $\pm1$ modulo 5. For another example, consider the frieze $\mathbf{F}$ over $\mathbb{Z}/6\mathbb{Z}$ shown in Figure~\ref{fig34}. Alongside the frieze is a corresponding path $\gamma$ in $\mathscr{F}_6$ (specified by the correspondence $\Psi_U$ of Theorem~\ref{thm4}). This path winds once round the handle of the torus that underlies the surface complex of $\mathscr{F}_6$. Consequently, any lift $\tilde{\gamma}$ of $\gamma$ to $\mathscr{F}_\mathbb{Z}$ is not closed, in which case $\tilde{\gamma}$ does not specify a frieze, by Theorem~\ref{thm4}, and hence $\mathbf{F}$ cannot be lifted to a tame frieze over $\mathbb{Z}$. This reasoning will be made precise in Section~\ref{section12}.

\begin{figure}[ht]
\begin{subfigure}[c]{0.45\textwidth}
         %\centering
\(\cdots\
\begin{matrix} 
0 && 0 && 0 && 0 && 0 &   \\ 
&1 && 1 && 1 && 1 && 1    \\ 
2 && 4 && 2 && 4 && 2 &   \\
&1 && 1 && 1 && 1 && 1    \\ 
0 && 0 && 0 && 0 && 0 &    
\end{matrix}
\ \cdots
\)
\end{subfigure}
\hfill
%\hspace{1cm}
\begin{subfigure}[c]{0.45\textwidth}
\centering
\begin{tikzpicture}[scale=1.3,line join=bevel,z=-5.5,font=\footnotesize,inner sep=0.2mm]
	\coordinate[label={[shift={(2:0.33)}]$\tfrac10$}] (Z) at (0,0);
	\coordinate[label={[shift={(2:0.33)}]$\tfrac01$}](A) at (0:1);
	\coordinate[label=right:\textcolor{col3}{$\tfrac13$}](B) at (0:2);
	\coordinate[label={[right,shift={(35:0.08)}]{{$\tfrac12$}}}](C) at (30:1.73);
	\coordinate[label={[shift={(2:0.33)}]$\tfrac11$}](D) at (60:1);
	\coordinate[label={[above,shift={(90:0.05)}]{{$\tfrac23$}}}](E) at (60:2);
	\coordinate[label={[above,shift={(90:0.05)}]{{$\tfrac32$}}}](F) at (90:1.73);
	\coordinate[label={[shift={(178:0.33)}]$\tfrac21$}](G) at (120:1);
	\coordinate[label={[above,shift={(90:0.05)}]{{$\tfrac13$}}}](H) at (120:2);
	\coordinate[label={[left,shift={(145:0.08)}]{{$\tfrac14$}}}](I) at (150:1.73);
	\coordinate[label={[shift={(178:0.33)}]$\tfrac31$}](J) at (180:1);
	\coordinate[label=left:\textcolor{col1}{$\tfrac23$}](K) at (180:2);
	\coordinate[label={[left,shift={(215:0.08)}]{{$\tfrac12$}}}](L) at (210:1.73);
	\coordinate[label={[shift={(235:0.53)}]$\tfrac41$}](M) at (240:1);
	\coordinate[label={[below,shift={(-90:0.05)}]{{$\tfrac13$}}}](N) at (240:2);
	\coordinate[label={[below,shift={(-90:0.05)}]{{$\tfrac32$}}}](O) at (270:1.73);
	\coordinate[label={[shift={(305:0.53)}]$\tfrac51$}](P) at (300:1);
	\coordinate[label={[below,shift={(-90:0.05)}]{{$\tfrac23$}}}](Q) at (300:2);
	\coordinate[label={[right,shift={(-35:0.08)}]{{$\tfrac14$}}}](R) at (330:1.73);
	
	\draw (E)--(H) (C)--(I) (B)--(K) (R)--(L) (Q)--(N)
	(Q)--(B) (O)--(C) (N)--(E) (L)--(F) (K)--(H)
	(E)--(B) (F)--(R) (H)--(Q) (I)--(O) (K)--(N);

	\draw[very thick=2,draw=col2,postaction={decorate,decoration={markings,mark=at position 0.6 with {\arrow[arrowstyle]{{latex}}}}}] (J) -- (K);
	\draw[very thick=2,draw=col2,postaction={decorate,decoration={markings,mark=at position 0.6 with {\arrow[arrowstyle]{{latex}}}}}] (G) -- (J);
	\draw[very thick=2,draw=col2,postaction={decorate,decoration={markings,mark=at position 0.6 with {\arrow[arrowstyle]{{latex}}}}}] (D) -- (G);
	\draw[very thick=2,draw=col2,postaction={decorate,decoration={markings,mark=at position 0.6 with {\arrow[arrowstyle]{{latex}}}}}] (E) -- (D);
	
	\end{tikzpicture}
\end{subfigure}
\caption{Tame regular frieze over $\mathbb{Z}/6\mathbb{Z}$ with corresponding closed path in $\mathscr{F}_6$}
\label{fig34}
\end{figure}        
 
To classify which tame friezes over $\mathbb{Z}/N\mathbb{Z}$ lift to friezes over $\mathbb{Z}$, we introduce the following topological concepts. We say that a closed path in a Farey complex $\mathscr{F}_R$ is \emph{strongly contractible} if it can be transformed to a point by applying a finite number of the following two elementary homotopies. The first elementary homotopy is the removal of a single spur; that is, we replace a subpath $\langle v,u,v\rangle$ of a closed path $\gamma$ with the subpath $\langle v \rangle$, thereby decreasing the length of $\gamma$ by 2. The second elementary homotopy is an elementary deformation over a triangular face of $\mathscr{F}_R$; that is, we replace a subpath $\langle u,v,w\rangle$ of $\gamma$ with $\langle u,w\rangle$, where $u$, $v$, and $w$ are mutually adjacent vertices, thereby decreasing the length of $\gamma$ by 1.

Notice that being strongly contractible is \emph{not} the same as being null homotopic (in the usual sense) because for strongly contractible paths we are only allowed to remove edges; we cannot add spurs or triangles. For example, consider the closed paths $\gamma$ and $\delta$ shown in Figure~\ref{fig94}; both are null homotopic, however, $\gamma$ is strongly contractible but $\delta$ is not.
  
\begin{figure}[H]
\centering
\begin{tikzpicture}[scale=1.3,line join=bevel,z=-5.5,font=\footnotesize,inner sep=0.2mm]
	\coordinate[label={[shift={(2:0.33)}]$\tfrac10$}] (Z) at (0,0);
	\coordinate[label={[shift={(2:0.33)}]$\tfrac01$}](A) at (0:1);
	\coordinate[label=right:\textcolor{col3}{$\tfrac13$}](B) at (0:2);
	\coordinate[label={[right,shift={(35:0.08)}]{{$\tfrac12$}}}](C) at (30:1.73);
	\coordinate[label={[shift={(2:0.33)}]$\tfrac11$}](D) at (60:1);
	\coordinate[label={[above,shift={(90:0.05)}]{{$\tfrac23$}}}](E) at (60:2);
	\coordinate[label={[above,shift={(90:0.05)}]{{$\tfrac32$}}}](F) at (90:1.73);
	\coordinate[label={[shift={(178:0.33)}]$\tfrac21$}](G) at (120:1);
	\coordinate[label={[above,shift={(90:0.05)}]{{$\tfrac13$}}}](H) at (120:2);
	\coordinate[label={[left,shift={(145:0.08)}]{{$\tfrac14$}}}](I) at (150:1.73);
	\coordinate[label={[shift={(178:0.33)}]$\tfrac31$}](J) at (180:1);
	\coordinate[label=left:\textcolor{col1}{$\tfrac23$}](K) at (180:2);
	\coordinate[label={[left,shift={(215:0.08)}]{{$\tfrac12$}}}](L) at (210:1.73);
	\coordinate[label={[shift={(235:0.53)}]$\tfrac41$}](M) at (240:1);
	\coordinate[label={[below,shift={(-90:0.05)}]{{$\tfrac13$}}}](N) at (240:2);
	\coordinate[label={[below,shift={(-90:0.05)}]{{$\tfrac32$}}}](O) at (270:1.73);
	\coordinate[label={[shift={(305:0.53)}]$\tfrac51$}](P) at (300:1);
	\coordinate[label={[below,shift={(-90:0.05)}]{{$\tfrac23$}}}](Q) at (300:2);
	\coordinate[label={[right,shift={(-35:0.08)}]{{$\tfrac14$}}}](R) at (330:1.73);
	
	\draw (E)--(H) (C)--(I) (B)--(K) (R)--(L) (Q)--(N)
	(Q)--(B) (O)--(C) (N)--(E) (L)--(F) (K)--(H)
	(E)--(B) (F)--(R) (H)--(Q) (I)--(O) (K)--(N);

	\draw[very thick=2,draw=col2,postaction={decorate,decoration={markings,mark=at position 0.6 with {\arrow[arrowstyle]{{latex}}}}}] (F) -- (D);
	\draw[very thick=2,draw=col2,postaction={decorate,decoration={markings,mark=at position 0.6 with {\arrow[arrowstyle]{{latex}}}}}] (D) -- (Z);
	\draw[very thick=2,draw=col2,postaction={decorate,decoration={markings,mark=at position 0.6 with {\arrow[arrowstyle]{{latex}}}}}] (Z) -- (G);
	\draw[very thick=2,draw=col2,postaction={decorate,decoration={markings,mark=at position 0.6 with {\arrow[arrowstyle]{{latex}}}}}] (G) -- (J);
	\draw[very thick=2,draw=col2,postaction={decorate,decoration={markings,mark=at position 0.6 with {\arrow[arrowstyle]{{latex}}}}}] (J) -- (I);
	\draw[very thick=2,draw=col2,postaction={decorate,decoration={markings,mark=at position 0.6 with {\arrow[arrowstyle]{{latex}}}}}] (I) -- (H);
	\draw[very thick=2,draw=col2,postaction={decorate,decoration={markings,mark=at position 0.6 with {\arrow[arrowstyle]{{latex}}}}}] (H) -- (F) node[col2,midway,above,yshift=2pt]{$\gamma$};
	
	\draw[very thick=2,draw=col1,postaction={decorate,decoration={markings,mark=at position 0.6 with {\arrow[arrowstyle]{{latex}}}}}] (Z) -- (A);
	\draw[very thick=2,draw=col1,postaction={decorate,decoration={markings,mark=at position 0.6 with {\arrow[arrowstyle]{{latex}}}}}] (A) -- (R);
	\draw[very thick=2,draw=col1,postaction={decorate,decoration={markings,mark=at position 0.6 with {\arrow[arrowstyle]{{latex}}}}}] (R) -- (Q);
	\draw[very thick=2,draw=col1,postaction={decorate,decoration={markings,mark=at position 0.6 with {\arrow[arrowstyle]{{latex}}}}}] (Q) -- (O) node[col1,midway,below,yshift=-2pt]{$\delta$};;
	\draw[very thick=2,draw=col1,postaction={decorate,decoration={markings,mark=at position 0.6 with {\arrow[arrowstyle]{{latex}}}}}] (O) -- (M);
	\draw[very thick=2,draw=col1,postaction={decorate,decoration={markings,mark=at position 0.6 with {\arrow[arrowstyle]{{latex}}}}}] (M) -- (Z);
\end{tikzpicture}

\caption{A strongly contractible closed path $\gamma$ and a not strongly contractible closed path $\delta$}
\label{fig94}
\end{figure}        

Since $\pm 1$ are the only units in $\mathbb{Z}$, any frieze $\mathbf{F}$ over $\mathbb{Z}/N\mathbb{Z}$ that lifts to a frieze $\widetilde{\mathbf{F}}$ over $\mathbb{Z}$ must have entries $1$ or $-1$ in its second and second-last rows. Notice that $\widetilde{\mathbf{F}}$ is a lift of $\mathbf{F}$ if and only if $-\widetilde{\mathbf{F}}$ is a lift of $-\mathbf{F}$, so we can restrict our attention to semiregular friezes. We say that $\gamma$ is a path in $\mathscr{F}_N$ \emph{corresponding} to the tame semiregular frieze $\mathbf{F}$ if the image under $\Psi_{\{\pm 1\}}$ of the $\text{SL}_2(\mathbb{Z}/N\mathbb{Z})$-orbit of $\gamma$ is $\mathbf{F}$.

\begin{theorem}\label{thm100}
A tame semiregular frieze $\mathbf{F}$ over $\mathbb{Z}/N\mathbb{Z}$ lifts to a tame frieze over $\mathbb{Z}$ of the same width if and only if any path $\gamma$ in $\mathscr{F}_N$ corresponding to $\mathbf{F}$ is a strongly contractible closed path.
\end{theorem}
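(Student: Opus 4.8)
The plan is to transport the lifting question, through the correspondence $\Psi$ of Theorem~\ref{thm4}, into a statement about lifting closed paths along the reduction map $\rho_N\colon\mathscr{F}_\mathbb{Z}\to\mathscr{F}_N$ induced by $\mathbb{Z}\to\mathbb{Z}/N\mathbb{Z}$, and then to reduce that statement to a single combinatorial lemma about $\mathscr{F}_\mathbb{Z}$.

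First I would collect the elementary properties of $\rho_N$. It is a surjective simplicial map (reduction preserves unimodularity and the determinant relation), it is equivariant for $\text{SL}_2(\mathbb{Z})\twoheadrightarrow\text{SL}_2(\mathbb{Z}/N\mathbb{Z})$, and for $N\geq 2$ it never collapses an edge (a determinant $\pm1$ over $\mathbb{Z}$ reduces to $\pm1$, never $0$, modulo $N$); hence $\rho_N$ sends edges to edges, spurs to spurs, and faces of $\mathscr{F}_\mathbb{Z}$ to faces of $\mathscr{F}_N$ with pairwise distinct vertices, and in particular preserves the length of a path. Although $\rho_N$ is not a covering map, it has the two lifting properties one needs for elementary homotopies: given a vertex $\tilde v$ over $v$ and an edge $v\to u$ of $\mathscr{F}_N$ there is an edge $\tilde v\to\tilde u$ of $\mathscr{F}_\mathbb{Z}$ over it; and given a face $\{u,v,w\}$ of $\mathscr{F}_N$ together with an edge $\tilde u\to\tilde w$ of $\mathscr{F}_\mathbb{Z}$ over $u\to w$, there is a vertex $\tilde v$ over $v$ with $\{\tilde u,\tilde v,\tilde w\}$ a face of $\mathscr{F}_\mathbb{Z}$. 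Both reduce, using equivariance and transitivity of $\text{SL}_2(\mathbb{Z})$ on vertices and on directed edges of $\mathscr{F}_\mathbb{Z}$, to the model cases $\tilde v=1/0$ and $(\tilde u,\tilde w)=(1/0,0/1)$, where one checks the finitely many possibilities for $u$, respectively $v$, by hand. Finally, since $\pm1$ are the only units of $\mathbb{Z}$ and the $\{\pm1\}$-action on friezes of width $n$ is trivial, Theorem~\ref{thm4} identifies tame semiregular friezes over $\mathbb{Z}$ of width $n$ with $\text{SL}_2(\mathbb{Z})$-orbits of closed paths of length $n$ in $\mathscr{F}_\mathbb{Z}$ (a path ``between equivalent vertices'' in $\mathscr{F}_\mathbb{Z}$ is a closed path), and similarly over $\mathbb{Z}/N\mathbb{Z}$; because $\widetilde{\Psi}$ commutes with reduction of coefficients, $\rho_N$ intertwines these identifications with entrywise reduction of friezes. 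Using $N\geq 2$ together with surjectivity of $\text{SL}_2(\mathbb{Z})\twoheadrightarrow\text{SL}_2(\mathbb{Z}/N\mathbb{Z})$ to normalise the orbit, this yields: $\mathbf{F}$ lifts to a tame frieze over $\mathbb{Z}$ of width $n$ if and only if a path $\gamma$ corresponding to $\mathbf{F}$ equals $\rho_N\tilde\gamma$ for some closed path $\tilde\gamma$ of length $n$ in $\mathscr{F}_\mathbb{Z}$. In particular such a $\gamma$ is forced to be a closed path, which is the first assertion of the theorem.

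It remains to show that $\gamma$ is a $\rho_N$-image of a closed path of length $n$ in $\mathscr{F}_\mathbb{Z}$ if and only if $\gamma$ is strongly contractible, and here the engine is the lemma that \emph{every closed path in $\mathscr{F}_\mathbb{Z}$ is strongly contractible}. Granting this, the forward implication is immediate: given $\gamma=\rho_N\tilde\gamma$, fix a strong contraction of $\tilde\gamma$ and push it through $\rho_N$; by the properties above, each spur removal and each triangle deformation of $\tilde\gamma$ maps to one of $\gamma$, so $\gamma$ is strongly contractible. For the converse, reverse a strong contraction of $\gamma$, building $\gamma$ up from a single vertex by re-inserting spurs and un-deforming triangles, and lift this process step by step using the two lifting properties of $\rho_N$ (choosing, when a spur $\langle v\rangle\rightsquigarrow\langle v,u,v\rangle$ is re-inserted, a lift of $u$ adjacent to the current lift of $v$; and, when a triangle $\langle u,w\rangle\rightsquigarrow\langle u,v,w\rangle$ is un-deformed, a lift of $v$ completing a face of $\mathscr{F}_\mathbb{Z}$ over the lifted edge $\tilde u\to\tilde w$). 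The result is a closed path $\tilde\gamma$ in $\mathscr{F}_\mathbb{Z}$ with $\rho_N\tilde\gamma=\gamma$, of length $n$ because $\rho_N$ preserves lengths; by the previous paragraph $\mathbf{F}$ lifts.

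The main obstacle is the lemma that every closed path in $\mathscr{F}_\mathbb{Z}$ is strongly contractible; the rest of the argument is diagram chasing and bookkeeping about $\rho_N$. I would prove the lemma by induction on the length of a closed path $\langle v_0,\dots,v_m=v_0\rangle$: if it contains a spur, remove it; otherwise realise it as a null-homotopic loop in the hyperbolic plane tessellated by Farey triangles, so that it bounds a reduced disc diagram built from finitely many Farey triangles, and exploit the fact that the dual graph of the Farey tessellation is a tree to locate a Farey triangle meeting the loop along a single edge or along two consecutive edges. Folding that triangle off is an elementary deformation, decreasing the length by $1$ in the first case and, in the second case, producing a spur whose removal decreases the length by $2$; either way the inductive hypothesis applies. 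The delicate point to get right is the choice of complexity (length, or number of triangles of the diagram) so that one only ever uses the two contracting elementary homotopies and never their length-increasing inverses.
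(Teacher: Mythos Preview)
Your proposal is correct and follows essentially the same architecture as the paper: translate lifting of friezes into lifting of closed paths along $\rho\colon\mathscr{F}_\mathbb{Z}\to\mathscr{F}_N$ via Theorem~\ref{thm4}, push strong contractions down through $\rho$, and lift a reversed contraction step by step using the triangle-lifting property of $\rho$ (the paper packages these as Lemmas~\ref{lem53} and~\ref{lem54}). The one genuine difference is your proof of the key lemma that every closed path in $\mathscr{F}_\mathbb{Z}$ is strongly contractible: you propose a disc-diagram argument exploiting that the dual of the Farey tessellation is a tree, whereas the paper (Lemma~\ref{lem52}) gives a short arithmetic proof---take a putative minimal non-contractible closed path, normalise so $v_0=\infty$, observe that the itinerary entries satisfy $|\lambda_i|\geq 2$ (else (E1) or (E2) applies), and deduce from the recurrence that the denominators $|b_i|$ are strictly increasing, contradicting $v_m=v_0$. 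Your geometric route is sound in outline but, as you yourself flag, needs care when the boundary loop revisits vertices; the paper's argument sidesteps this entirely and is a few lines.
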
    
        
A consequence of Theorem~\ref{thm100} is that all tame friezes over $\mathbb{Z}/2\mathbb{Z}$ and $\mathbb{Z}/3\mathbb{Z}$ lift to tame friezes over $\mathbb{Z}$, because these rings are both fields (so equivalent vertices are equal) and the graphs of $\mathscr{F}_2$ and $\mathscr{F}_3$ are complete graphs, so all closed paths in these graphs are strongly contractible. However, the same cannot be said of $\mathbb{Z}/4\mathbb{Z}$ since the closed path
\[
\frac01 \to \frac11 \to \frac21 \to \frac31 \to \frac01 
\]
is not strongly contractible.

%%%%%%%%%%%%%%%%%%%%
%%%%%%%%%%%%%%%%%%%%
\section{Group actions on Farey complexes}\label{section2}

In this section we discuss some of the properties of the action of $\text{SL}_2(R)$ on $\mathscr{F}_{R,U}$ that were stated but not proven in the introduction. These properties are elementary, so we skip some details.

Let $R$ be a ring and let $U$ be a subgroup of $R^\times$. We consider the left group action $\theta\colon \text{SL}_2(R)\times \mathscr{F}_{R,U}\longrightarrow \mathscr{F}_{R,U}$ given by the rule $(A,x/y)\longmapsto (ax+by)/(cx+dy)$, where 
\[
A=\begin{pmatrix}a & b\\ c&d\end{pmatrix}\in \textnormal{SL}_2(R).
\]
We write $\theta_A(x/y)$ for $\theta(A,x/y)$. That this truly is a group action (for which $\theta_A$ is an automorphism of $\mathscr{F}_{R,U}$) will be established shortly. Remember that the formal fraction $x/y$ represents the orbit $U(x,y)=\{\lambda (x,y):\lambda \in U\}$. With this orbit notation, we have $\theta_A(U(x,y))=U(ax+by,cx+dy)$. 

The symbol $\theta$ for the group action is used in this section alone, to facilitate the proofs. Outside this section we abandon $\theta$ and simply write $Av$ for $\theta_A(v)$ and so forth.

\begin{proposition}
The function $\theta$ is a left group action of $\textnormal{SL}_2(R)$ on $\mathscr{F}_{R,U}$.
\end{proposition}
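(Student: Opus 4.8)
The plan is to verify the three defining conditions of a left group action: that $\theta_A$ is a well-defined map on vertices (independent of the choice of representative unimodular pair), that $\theta$ respects composition and the identity, and that each $\theta_A$ is in fact an automorphism of the directed graph $\mathscr{F}_{R,U}$ (preserving directed edges, hence also faces when $-1\in U$). First I would check well-definedness: if $(x',y') = \lambda(x,y)$ for some $\lambda\in U$, then $(ax'+by', cx'+dy') = \lambda(ax+by, cx+dy)$, so the formal fraction is unchanged; one also needs that $(ax+by, cx+dy)$ is again a unimodular pair, which follows since $A\in\textnormal{SL}_2(R)$ is invertible over $R$, so it carries a generating pair of the module $R^2$ to another generating pair (concretely, if $xs+yt=1$ then applying $A^{-1}$ gives an explicit expression $1 = (ax+by)s' + (cx+dy)t'$).

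Next I would check the group-action identities. For the identity matrix, $\theta_I(x/y) = x/y$ is immediate. For composition, given $A = \begin{pmatrix} a & b \\ c & d\end{pmatrix}$ and $B$, a direct computation shows $\theta_A(\theta_B(x/y))$ is computed by the matrix product $AB$ applied to $(x,y)^{\mathrm{t}}$, i.e.\ $\theta_A\circ\theta_B = \theta_{AB}$; this is just associativity of matrix–vector multiplication, so it is routine. Together with well-definedness this shows $\theta_A$ is a bijection with inverse $\theta_{A^{-1}}$, and that $\theta$ is a left action on the vertex set.

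It then remains to see that $\theta_A$ preserves the edge relation (and, in the $-1\in U$ case, the designated faces). If $a/b \to c/d$ is a directed edge of $\mathscr{F}_{R,U}$, meaning $ad-bc\in U$, then one checks that the determinant $\det\begin{pmatrix} \cdot & \cdot \end{pmatrix}$ formed from the images $\theta_A(a/b)$ and $\theta_A(c/d)$ equals $(\det A)(ad-bc) = ad-bc\in U$, using multiplicativity of the $2\times 2$ determinant; hence the image is again a directed edge. Since a face of $\mathscr{F}_R$ is just a triple of pairwise-adjacent vertices, $\theta_A$ carries faces to faces, and this property descends to $\mathscr{F}_{R,U}$ via the equivariance of the quotient map $\pi_U$ noted in the introduction. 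None of these steps presents a genuine obstacle — the proposition is entirely elementary; the only point requiring a moment's care is the verification that $A$ sends unimodular pairs to unimodular pairs (needed for $\theta_A$ to land in the vertex set at all), and this is handled cleanly by exploiting that $A$ is invertible in $\textnormal{SL}_2(R)$.
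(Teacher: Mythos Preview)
Your proposal is correct and follows essentially the same approach as the paper: verify that the image of a unimodular pair is unimodular (using invertibility of $A$), check well-definedness with respect to the $U$-orbit representative, note that the group-action axioms are routine, and then confirm edge preservation via a determinant computation (the paper phrases this last step as $A^TJA=J$, which is equivalent to your multiplicativity-of-determinant argument). The only addition you make is the remark about faces, which is harmless but not needed for the proposition as stated.
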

\begin{proof}
First we check that if $(x,y)$ is a unimodular pair, then so is $(x',y')=(ax+by,cx+dy)$. We know that there exist $r,s\in R$ with $rx+sy=1$. We define $r',s'\in R$ by 
\(\begin{pmatrix}r' & s'\end{pmatrix}= \begin{pmatrix}r & s\end{pmatrix}A^{-1}\). Then
\[
r'x'+s'y'=\begin{pmatrix}r' & s'\end{pmatrix}\begin{pmatrix}x' \\ y'\end{pmatrix}=\begin{pmatrix}r & s\end{pmatrix}A^{-1}A\begin{pmatrix}x \\ y\end{pmatrix}=1,
\]
so $(x',y')$ is a unimodular pair.

Next we  check that if $x_1/y_1=x_2/y_2$, then  $(ax_1+by_1)/(cx_1+dy_1)=(ax_2+by_2)/(cx_2+dy_2)$. This is indeed so, because if $x_1/y_1=x_2/y_2$ then there exists $\lambda\in U$ with $(x_1,y_1)=\lambda(x_2,y_2)$, and the rest of the argument follows easily. We have now shown that $\theta$ is a function. The left group action axioms can readily be seen to be satisfied.

It remains to show that $\theta_A$ is a graph automorphism, for each $A\in\text{SL}_2(R)$. For this we must show that $\theta_A$ preserves incidence in $\mathscr{F}_{R,U}$. Suppose that there is a directed edge from $x_1/y_1$ to $x_2/y_2$. Then
\[
x_1y_2-y_1x_2 = \begin{pmatrix}x_1 & y_1\end{pmatrix}J\begin{pmatrix}x_2 \\ y_2\end{pmatrix}\in U,\quad\text{where } J=\begin{pmatrix}0 & 1\\ -1&0\end{pmatrix}\!.
\]
Let $(x_1',y_1')=(ax_1+by_1,cx_1+dy_1)$ and $(x_2',y_2')=(ax_2+by_2,cx_2+dy_2)$. Then
\[
x_1'y_2'-y_1'x_2'=\begin{pmatrix}x_1' & y_1'\end{pmatrix}J\begin{pmatrix}x_2' \\ y_2'\end{pmatrix}=\begin{pmatrix}x_1 & y_1\end{pmatrix}A^TJA\begin{pmatrix}x_2 \\ y_2\end{pmatrix}\in U,
\]
because $A^TJA=J$ (where $A^T$ is the transpose of $A$). Hence there is a directed edge from $x_1'/y_1'$ to $x_2'/y_2'$, as required.
\end{proof}

Typically, the action $\theta$ is not faithful. Following the usual terminology for group actions, we consider the kernel of $\theta$ to be the subgroup of $\text{SL}_2(R)$ of matrices $A$ that satisfy $\theta_A(v)=v$, for all vertices $v\in \mathscr{F}_{R,U}$. 

\begin{lemma}
The kernel of $\theta$ is
\[
\left\{ \begin{pmatrix}\lambda & 0\\ 0&\lambda\end{pmatrix} : \lambda\in U,\, \lambda^2=1\right\}.\qedhere
\]
\end{lemma}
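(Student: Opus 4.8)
The plan is to prove the two inclusions separately: the containment of the stated set in the kernel is immediate, and the reverse containment reduces to evaluating $\theta_A$ at three well-chosen vertices.

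First I would check that each matrix of the given form lies in the kernel. If $\lambda\in U$ and $\lambda^2=1$, then $\det\begin{pmatrix}\lambda & 0\\0&\lambda\end{pmatrix}=\lambda^2=1$, so this matrix belongs to $\textnormal{SL}_2(R)$; and for any vertex $x/y$ the pair $(\lambda x,\lambda y)=\lambda(x,y)$ represents the same element of $\mathscr{F}_{R,U}$, since $\lambda\in U$, so $\theta_{\lambda I}$ fixes every vertex.

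For the reverse inclusion, suppose $A=\begin{pmatrix}a & b\\ c & d\end{pmatrix}\in\textnormal{SL}_2(R)$ fixes every vertex of $\mathscr{F}_{R,U}$. I would feed $A$ the vertices $1/0$, $0/1$, and $1/1$ in turn (each is a legitimate vertex, since $(1,0)$, $(0,1)$, and $(1,1)$ are unimodular pairs). Evaluating, $\theta_A(1/0)=a/c$, so the hypothesis $a/c=1/0$ forces $(a,c)=\lambda(1,0)$ for some $\lambda\in U$, giving $c=0$ and $a\in U$. Similarly $\theta_A(0/1)=b/d=0/1$ forces $(b,d)=\mu(0,1)$ for some $\mu\in U$, i.e.\ $b=0$ and $d=\mu\in U$. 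The determinant relation $ad-bc=1$ then collapses to $ad=1$. Finally $\theta_A(1/1)=a/d=1/1$ forces $(a,d)=\nu(1,1)$ for some $\nu\in U$, hence $a=d$, and combined with $ad=1$ this yields $a^2=1$. Setting $\lambda=a$ gives $A=\begin{pmatrix}\lambda & 0\\0&\lambda\end{pmatrix}$ with $\lambda\in U$ and $\lambda^2=1$, as desired.

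There is no genuine obstacle here; the one point requiring care is the translation between formal fractions and orbits, namely that an equality such as $a/c=1/0$ in $\mathscr{F}_{R,U}$ means precisely $U(a,c)=U(1,0)$, which is what introduces the units $\lambda$, $\mu$, and $\nu$. We use nothing about $R$ beyond the ring axioms, and in particular none of the earlier faithfulness discussion.
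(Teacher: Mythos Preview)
Your proof is correct and follows exactly the approach the paper sketches: the paper's proof consists of the single sentence ``This can be proved by checking images of $1/0$, $0/1$, and $1/1$; we omit the details,'' and you have supplied precisely those omitted details.
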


This can be proved by checking images of $1/0$, $0/1$, and $1/1$; we omit the details.

We remark that when $U=R^\times$, the kernel of $\theta$ is the group of all scalar matrices in $\text{SL}_2(R)$. In this case we obtain a faithful action of $\text{PSL}_2(R)$ on $\mathscr{F}_{R,R^\times}$ (and the vertices of $\mathscr{F}_{R,R^\times}$ form the projective line over $R$).

The action $\theta$ is transitive on vertices, in the sense that, for any $v,w\in \mathscr{F}_{R,U}$ there exists $A\in\text{SL}_2(R)$ with $\theta_A(v)=w$. In fact, if we consider $\theta$ to act not on the vertices of $\mathscr{F}_{R,U}$ but on directed edges (in the obvious way) then we can obtain the following stronger statement.

\begin{proposition}\label{prop2}
The action $\theta$ is transitive on directed edges.
\end{proposition}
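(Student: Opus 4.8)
The plan is to exhibit, for any directed edge, a matrix in $\text{SL}_2(R)$ carrying a fixed ``base'' directed edge onto it; transitivity then follows by composing one such matrix with the inverse of another. The natural choice of base edge is $1/0 \to 0/1$, which is a directed edge since $\det\!\begin{pmatrix}1&0\\0&1\end{pmatrix}=1\in U$ (here I am reading the edge $1/0\to 0/1$ via $1\cdot 1 - 0\cdot 0 = 1$).

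First I would take an arbitrary directed edge $a/b \to c/d$ in $\mathscr{F}_{R,U}$. By definition $ad-bc=\lambda$ for some $\lambda\in U$. The key observation is that $(a,b)$ and $(c,d)$ are then automatically unimodular pairs (since $a\cdot(\lambda^{-1}d) + b\cdot(-\lambda^{-1}c)=1$, and similarly for $(c,d)$), so they are legitimate vertices, and moreover the matrix
\[
A=\begin{pmatrix} a & c\lambda^{-1}\\ b & d\lambda^{-1}\end{pmatrix}
\]
has determinant $(ad-bc)\lambda^{-1}=1$, hence lies in $\text{SL}_2(R)$. One checks directly that $\theta_A(1/0)=a/b$ and $\theta_A(0/1)=(c\lambda^{-1})/(d\lambda^{-1})=c/d$, the last equality because $\lambda^{-1}\in U$ so the two formal fractions represent the same vertex. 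Thus every directed edge is the image of the base edge under some element of $\text{SL}_2(R)$.

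Finally, given two directed edges $e_1$ and $e_2$, pick $A_1,A_2\in\text{SL}_2(R)$ with $\theta_{A_i}$ sending the base edge to $e_i$; then $\theta_{A_2 A_1^{-1}}$ sends $e_1$ to $e_2$, using that $\theta$ is a group action (established in the previous proposition) and hence $\theta_{A_2}\circ\theta_{A_1}^{-1}=\theta_{A_2A_1^{-1}}$. This completes the argument. There is no real obstacle here: the only point requiring a moment's care is the bookkeeping with the unit $\lambda$, namely that dividing the second column by $\lambda$ both fixes the determinant and leaves the target vertex unchanged as an element of $\mathscr{F}_{R,U}$; everything else is a routine verification, consistent with the paper's stated intention to skip details for such elementary facts.
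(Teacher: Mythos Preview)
Your proof is correct and essentially identical to the paper's: both reduce to showing every directed edge is the image of $1/0\to 0/1$ under some $A\in\text{SL}_2(R)$, constructed by adjusting one column of $\begin{pmatrix}a&c\\b&d\end{pmatrix}$ by the unit $\lambda^{-1}$ to force determinant~$1$. The only cosmetic difference is that the paper scales the first column (setting $a'=\lambda^{-1}a$, $b'=\lambda^{-1}b$) whereas you scale the second, and the paper compresses your final composition-with-inverses step into the phrase ``follows from the group action axioms.''
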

\begin{proof}
Consider any directed edge from $a/b$ to $c/d$. Let $\lambda=ad-bc$; then $\lambda\in U$. We define $a'=\lambda^{-1}a$ and $b'=\lambda^{-1}b$. Then
\[
A=\begin{pmatrix}a' & c\\ b'&d\end{pmatrix}\in \textnormal{SL}_2(R),
\]
and $\theta_A(1/0)=a'/b'=a/b$ and $\theta_A(0/1)=c/d$. Consequently, any directed edge is the image of the directed edge from $1/0$ to $0/1$, and the proposition then follows from the group action axioms.
\end{proof}

A consequence of Proposition~\ref{prop2} is that, when $R$ is finite, the directed graph $\mathscr{F}_{R,U}$ is regular, which means that all indegrees and outdegrees of vertices are equal. Since all the directed edges with source $1/0$ have the form $1/0\to a/1$, for $a\in R$, we see that the outdegree of each vertex is equal to the order of $R$, and the same can be said of the indegree. Notice that the indegree and outdegree are independent of $U$.

Consider now the map $\pi_{U}\colon \mathscr{E}_{R}\longrightarrow \mathscr{F}_{R,U}$ given by $(x,y)\longmapsto U(x,y)$, where $U(x,y)=\{\lambda (x,y):\lambda \in U\}$, from the introduction. It is straightforward to check that this is a covering map. It was stated in the introduction that this map is equivariant under $\theta$; we now prove this fact.

\begin{proposition}\label{prop3}
For any $A\in\textnormal{SL}_2(R)$, we have $\pi_{U} \theta_A=\theta_A \pi_{U}$.
\end{proposition}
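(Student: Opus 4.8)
The plan is to verify the identity $\pi_U\theta_A = \theta_A\pi_U$ by evaluating both sides on an arbitrary vertex $(x,y)$ of $\mathscr{E}_R$ and checking they agree. Since every vertex of $\mathscr{E}_R$ is a unimodular pair and both $\pi_U$ and the two instances of $\theta_A$ act by explicit formulas, this reduces to a direct computation; the only genuine content is keeping track of where the orbit-forming operation $U(\,\cdot\,,\,\cdot\,)$ is applied.

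First I would recall the relevant formulas. Writing $A=\begin{pmatrix}a&b\\c&d\end{pmatrix}\in\textnormal{SL}_2(R)$, the action $\theta_A$ on $\mathscr{E}_R$ sends the pair $(x,y)$ to $(ax+by,\,cx+dy)$ (this is the special case $U=\{1\}$ of the action on $\mathscr{F}_{R,U}$, and by the first part of the preceding proposition this pair is again unimodular), while $\theta_A$ on $\mathscr{F}_{R,U}$ sends the formal fraction $x/y = U(x,y)$ to $(ax+by)/(cx+dy) = U(ax+by,\,cx+dy)$. The covering map is $\pi_U(x,y) = U(x,y)$. Then I would simply chase $(x,y)$ both ways around the square:
\[
(\theta_A\pi_U)(x,y) = \theta_A\big(U(x,y)\big) = U(ax+by,\,cx+dy),
\]
\[
(\pi_U\theta_A)(x,y) = \pi_U\big(ax+by,\,cx+dy\big) = U(ax+by,\,cx+dy).
\]
These are literally the same element of $\mathscr{F}_{R,U}$, so the two maps agree on vertices.

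It then remains to note that a graph morphism is determined by its effect on vertices once one knows it respects edges, so equality on vertices suffices to conclude equality of the maps $\mathscr{E}_R\to\mathscr{F}_{R,U}$; indeed both composites are already known to be graph morphisms, being composites of the covering map $\pi_U$ with the automorphisms $\theta_A$. There is no real obstacle here: the statement is purely a matter of unwinding definitions, and the ``hard part'' — if one can call it that — is merely the bookkeeping check that $\theta_A$ genuinely descends from $\mathscr{E}_R$ to $\mathscr{F}_{R,U}$ compatibly with $\pi_U$, which is exactly the commutativity just displayed. One could alternatively phrase the whole argument as the observation that the action of $A$ on pairs commutes with multiplication by scalars $\lambda\in U$ — since $A(\lambda(x,y)) = \lambda(A(x,y))$ — so $A$ passes to the quotient by $U$ in the obvious way, which is precisely the asserted equivariance.
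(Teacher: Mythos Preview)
Your proof is correct and follows essentially the same approach as the paper: evaluate both composites on an arbitrary vertex $(x,y)$ of $\mathscr{E}_R$ and observe that each yields $U(ax+by,\,cx+dy)$. The paper's version is terser, omitting your closing remarks about graph morphisms and the alternative scalar-commutation phrasing, but the core computation is identical.
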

\begin{proof}
Let $(x,y)$ be any vertex of $\mathscr{E}_{R}$. Then $\theta_A(x,y)=U(ax+by,cx+d)$, so
\[
\pi_{U} \theta_A(x,y)=U(ax+by,cx+d)=\theta_A(U(x,y))=\theta_A \pi_{U}(x,y),
\]
as required.
\end{proof}

There was a sleight of hand in the statement of Proposition~\ref{prop3}, in that the two occurrences of $\theta_A$ differ: the left one acts on $\mathscr{E}_{R}$ and the right one acts on $\mathscr{F}_{R,U}$.

%%%%%%%%%%%%%%%%%%%%
%%%%%%%%%%%%%%%%%%%%
\section{Paths and itineraries}\label{section3}

We gather here some basic properties of paths in Farey complexes. The first result we record is about lifting paths from the Farey complex $\mathscr{F}_{R,U}$ of the ring $R$ with units $U$ to the Farey complex $\mathscr{E}_R$ under the covering map $\pi_U\colon\mathscr{E}_R\longrightarrow \mathscr{F}_{R,U}$.

\begin{lemma}\label{lem61}
Let $\gamma$ be a bi-infinite path in $\mathscr{F}_{R,U}$. Then $\gamma$ lifts to a bi-infinite path $\tilde{\gamma}$ in $\mathscr{E}_{R}$ with vertices $(a_i,b_i)$, and every other lift of $\gamma$ to $\mathscr{E}_R$ has the form $\lambda^{(-1)^i}(a_i,b_i)$, for $\lambda\in U$.
\end{lemma}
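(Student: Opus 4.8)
The plan is to build the lift vertex by vertex, then check that the construction is forced up to the stated ambiguity. Fix a base vertex, say $v_0$ of $\gamma$, and choose any representative $(a_0,b_0)\in\mathscr{E}_R$ with $\pi_U(a_0,b_0)=v_0$; such a representative exists because $\pi_U$ is surjective. Now I would extend to the right by induction: suppose $(a_i,b_i)$ has been chosen lifting $v_i$, and consider the directed edge $v_i\to v_{i+1}$ in $\mathscr{F}_{R,U}$. By definition of the edge relation there is \emph{some} representative $(a,b)$ of $v_i$ and $(c,d)$ of $v_{i+1}$ with $ad-bc=1$, i.e. an edge $(a,b)\to(c,d)$ in $\mathscr{E}_R$. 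Since $(a_i,b_i)$ and $(a,b)$ both represent $v_i$, they differ by a unit $\mu\in U$, so $(a_i,b_i)=\mu(a,b)$; then $(a_i,b_i)\to \mu(c,d)$ is an edge in $\mathscr{E}_R$ (because $a_i\cdot\mu d - b_i\cdot \mu c=\mu^2(ad-bc)$ — wait, this is not $1$ in general, so I must be more careful). The correct statement: in $\mathscr{E}_R$ there is an edge from $(x,y)$ to $(x',y')$ iff $xy'-yx'=1$, and the claim to verify is that given a lift $(a_i,b_i)$ of $v_i$ there is a \emph{unique} $(a_{i+1},b_{i+1})$ lifting $v_{i+1}$ with $(a_i,b_i)\to(a_{i+1},b_{i+1})$ an edge of $\mathscr{E}_R$. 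Uniqueness: if $(c,d)$ and $\nu(c,d)$ (with $\nu\in U$) both work, then $a_ib_{i+1}-b_ia_{i+1}=1$ for both, forcing $\nu=1$. Existence: pick any edge $(a,b)\to(c,d)$ in $\mathscr{E}_R$ representing $v_i\to v_{i+1}$, write $(a_i,b_i)=\mu(a,b)$ with $\mu\in U$, and set $(a_{i+1},b_{i+1})=\mu^{-1}(c,d)$; then $a_ib_{i+1}-b_ia_{i+1}=\mu a\cdot\mu^{-1}d-\mu b\cdot\mu^{-1}c=ad-bc=1$, so this is an edge, and it lifts $v_{i+1}$ since $\mu^{-1}\in U$.

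Extending to the left is symmetric: given $(a_{i+1},b_{i+1})$ lifting $v_{i+1}$ and the edge $v_i\to v_{i+1}$, there is a unique $(a_i,b_i)$ lifting $v_i$ with $a_ib_{i+1}-b_ia_{i+1}=1$, by the same argument. Carrying this out for all $i\in\mathbb{Z}$ produces a bi-infinite path $\tilde\gamma$ in $\mathscr{E}_R$ with $\pi_U(\tilde\gamma)=\gamma$, which is the existence part of the lemma.

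For the classification of all lifts, let $\tilde\gamma'$ be any lift of $\gamma$, with vertices $(a_i',b_i')$. For each $i$ we have $\pi_U(a_i',b_i')=v_i=\pi_U(a_i,b_i)$, so there is $\lambda_i\in U$ with $(a_i',b_i')=\lambda_i(a_i,b_i)$. I must show $\lambda_{i+1}=\lambda_i^{-1}$ for all $i$; then an easy induction from $\lambda_0$ gives $\lambda_i=\lambda_0^{(-1)^i}$, which is exactly the stated form with $\lambda=\lambda_0$. To see $\lambda_{i+1}=\lambda_i^{-1}$: since $\tilde\gamma$ and $\tilde\gamma'$ are both paths in $\mathscr{E}_R$, the edges $(a_i,b_i)\to(a_{i+1},b_{i+1})$ and $(a_i',b_i')\to(a_{i+1}',b_{i+1}')$ both hold, so $a_ib_{i+1}-b_ia_{i+1}=1$ and $\lambda_i\lambda_{i+1}(a_ib_{i+1}-b_ia_{i+1})=1$; hence $\lambda_i\lambda_{i+1}=1$, as required. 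Conversely, every sequence $(\lambda^{(-1)^i}(a_i,b_i))_i$ with $\lambda\in U$ is genuinely a path in $\mathscr{E}_R$ (the determinant computation above again gives $1$) and lifts $\gamma$, so these are precisely the lifts.

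I do not anticipate a serious obstacle here; the only subtlety is the bookkeeping with the unit scalars $\mu$ in the existence step — one must scale the \emph{target} by $\mu^{-1}$ rather than by $\mu$ so that the determinant comes out to $1$ rather than $\mu^2$ — and the observation that the edge relation in $\mathscr{E}_R$ is rigid enough that, once the source is fixed, the lift of the target is uniquely determined (no unit ambiguity remains). Everything else is a routine induction in both directions along $\mathbb{Z}$, together with the standing fact that two representatives of the same vertex of $\mathscr{F}_{R,U}$ differ by an element of $U$.
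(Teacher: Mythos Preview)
Your proof is correct and follows essentially the same approach as the paper. The classification argument---writing $(a_i',b_i')=\lambda_i(a_i,b_i)$ and using the edge relation in $\mathscr{E}_R$ to deduce $\lambda_i\lambda_{i+1}=1$---is exactly what the paper does; for existence the paper simply invokes the fact that $\pi_U$ is a covering map, whereas you unpack this by explicitly constructing the lift vertex by vertex (in effect proving the unique path-lifting property), which is more self-contained but not a different idea.
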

\begin{proof}
Certainly $\gamma$ has a lift to some path $\tilde{\gamma}$ because $\pi_U$ is a covering map. Let $(a_i,b_i)$ be the vertices of $\tilde{\gamma}$. Suppose that another lift of $\gamma$ has vertices $(c_i,d_i)$. Then $(a_i,b_i)$ and $(c_i,d_i)$ project to the same vertex in $\mathscr{F}_{R,U}$, so $(c_i,d_i)=\lambda_i(a_i,b_i)$, where $\lambda_i\in U$. Consequently,
\[
\lambda_i\lambda_{i+1}=\lambda_i\lambda_{i+1}(a_ib_{i+1}-b_{i+1}a_i)=c_id_{i+1}-d_ic_{i+1}=1.
\]
It follows that $\lambda_i=\lambda_0^{(-1)^i}$, for $i\in\mathbb{Z}$, as required.
\end{proof}

Next we explore the concept of itineraries of paths, which appeared in \cite{Sh2023} for the Farey complex $\mathscr{F}_\mathbb{Z}$. Itineraries are well known and explored in other contexts: they are closely related to (for example) quiddity sequences for friezes, partial quotient sequences for continued fractions, and integer coefficient sequences $(a_i)$ for discrete Sturm--Liouville equations $V_{i+1}=a_iV_i-V_{i-1}$. Coxeter appreciated the relationship between quiddity sequences and partial quotient sequences in his original work on friezes \cite{Co1971}, and this relationship was exploited by Conway and Coxeter in \cite{CoCo1973}. The connection between quiddity sequences and coefficient sequences for discrete Sturm--Liouville equations (and certain moduli spaces of curves with marked points) is expounded in the appendix of \cite{MoOvTa2012}; see also the survey \cite{Mo2015} where these sequences appear in all their various guises.

Our approach to itineraries develops that of \cites{BeHoSh2012,Ha2022,MoOvTa2015}. In those works, the convergents of a continued fraction determine a path in the usual Farey complex and the partial quotients of the continued fraction are what we call the itinerary of the path. The results in this section should seem familiar to those acquainted with the theory of continued fractions. A crucial difference is that we work with arbitrary commutative rings; it is a noteworthy feature of this theory that it does not require the cancellation property of integral domains.

\begin{definition}
Let $\gamma$ be a bi-infinite path with vertices $(a_i,b_i)$ in the Farey complex $\mathscr{E}_R$ of the ring $R$. The \emph{itinerary} of $\gamma$, denoted $\Sigma(\gamma)$, is the bi-infinite sequence $(e_i)$ in $R$ given by
\[
e_i=a_{i-1}b_{i+1}-b_{i-1}a_{i+1},
\]
for $i\in\mathbb{Z}$.
\end{definition}

The following alternative characterisation of itineraries (which is reminiscent of similar results in the theory of continued fractions) will prove useful.

\begin{lemma}\label{lem85}
The itinerary of a bi-infinite path $\gamma$ in $\mathscr{E}_R$ with vertices $(a_i,b_i)$ is the unique bi-infinite sequence $(e_i)$ in $R$ that satisfies
\[
a_{i-1}+a_{i+1}=e_ia_i\quad\text{and}\quad b_{i-1}+b_{i+1}=e_ib_i,
\]
for $i\in\mathbb{Z}$.
\end{lemma}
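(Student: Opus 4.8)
The plan is to exploit the $\text{SL}_2$-structure implicit in any three consecutive vertices of $\gamma$. Since $\gamma$ is a path in $\mathscr{E}_R=\mathscr{F}_{R,\{1\}}$, consecutive vertices satisfy the edge condition $a_ib_{i+1}-b_ia_{i+1}=1$ for all $i$; equivalently, the matrix
\[
M_i=\begin{pmatrix} a_{i-1} & a_i\\ b_{i-1} & b_i\end{pmatrix}
\]
lies in $\text{SL}_2(R)$, since $\det M_i=a_{i-1}b_i-b_{i-1}a_i=1$ is the edge condition for $(a_{i-1},b_{i-1})\to(a_i,b_i)$. Hence $M_i$ is invertible, with $M_i^{-1}=\begin{pmatrix} b_i & -a_i\\ -b_{i-1} & a_{i-1}\end{pmatrix}$.

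For existence, I would compute the column vector $M_i^{-1}\begin{pmatrix} a_{i+1}\\ b_{i+1}\end{pmatrix}$. Its first entry is $b_ia_{i+1}-a_ib_{i+1}=-1$, by the edge condition for $(a_i,b_i)\to(a_{i+1},b_{i+1})$, and its second entry is $a_{i-1}b_{i+1}-b_{i-1}a_{i+1}=e_i$ by the definition of the itinerary. Multiplying back on the left by $M_i$ then gives
\[
\begin{pmatrix} a_{i+1}\\ b_{i+1}\end{pmatrix}=M_i\begin{pmatrix} -1\\ e_i\end{pmatrix}=\begin{pmatrix} e_ia_i-a_{i-1}\\ e_ib_i-b_{i-1}\end{pmatrix},
\]
which is precisely the pair of recurrences $a_{i-1}+a_{i+1}=e_ia_i$ and $b_{i-1}+b_{i+1}=e_ib_i$. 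One can equally verify these two identities by direct expansion of $e_ia_i$ and $e_ib_i$, using $a_ib_{i+1}-b_ia_{i+1}=1$ and $a_{i-1}b_i-b_{i-1}a_i=1$; the matrix computation merely organises the bookkeeping.

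For uniqueness, suppose $(e_i')$ is another sequence in $R$ with $a_{i-1}+a_{i+1}=e_i'a_i$ and $b_{i-1}+b_{i+1}=e_i'b_i$. Subtracting the two versions of each recurrence yields $(e_i-e_i')a_i=0$ and $(e_i-e_i')b_i=0$. This is the one step requiring care, since $R$ need not be an integral domain: because $(a_i,b_i)$ is a unimodular pair, there exist $x,y\in R$ with $a_ix+b_iy=1$, and hence $e_i-e_i'=(e_i-e_i')(a_ix+b_iy)=0$. Thus $e_i=e_i'$ for every $i$, giving uniqueness.

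I do not expect a genuine obstacle here; the only conceptual point is the uniqueness argument, where unimodularity of each vertex $(a_i,b_i)$ substitutes for the cancellation property of an integral domain, in keeping with the remark preceding the lemma.
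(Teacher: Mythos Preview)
Your proof is correct and follows essentially the same approach as the paper: the existence part is the paper's direct expansion repackaged via the $\text{SL}_2$ matrix $M_i$ (as you yourself note), and your uniqueness argument is the same unimodularity trick, with the paper simply choosing the specific B\'ezout pair $(x,y)=(b_{i+1},-a_{i+1})$ coming from the edge condition $a_ib_{i+1}-b_ia_{i+1}=1$.
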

\begin{proof}
Let $(e_i)$ be the itinerary of $\gamma$. Then
\[
a_{i-1}+a_{i+1}=a_{i-1}(a_{i}b_{i+1}-b_{i}a_{i+1})+a_{i+1}(a_{i-1}b_{i}-b_{i-1}a_{i})=e_ia_i,
\]
and similarly $b_{i-1}+b_{i+1}=e_ib_i$. For uniqueness, let $(r_i)$ be a bi-infinite sequence that satisfies $a_{i-1}+a_{i+1}=r_ia_i$ and $b_{i-1}+b_{i+1}=r_ib_i$, for $i\in\mathbb{Z}$. Then $(r_i-e_i)a_i=(r_i-e_i)b_i=0$, so 
\[
r_i-e_i=(r_i-e_i)(a_{i}b_{i+1}-b_{i}a_{i+1})=0.
\]
Hence $(r_i)$ and $(e_i)$ coincide.
\end{proof}

We recall that $\mathscr{P}$ denotes the collection of bi-infinite paths in $\mathscr{E}_R$. The next lemma shows that the itinerary map $\Sigma\colon \mathscr{P}\longrightarrow R^\mathbb{Z}$ is surjective. 

\begin{lemma}\label{lem17}
For any bi-infinite sequence  $(e_i)$ in $R$ and directed edge $(a^*,b^*)\to (c^*,d^*)$ in $\mathscr{E}_R$ there is a unique bi-infinite path $(a_i,b_i)$ in $\mathscr{E}_R$ with $(a_0,b_0)=(a^*,b^*)$, $(a_1,b_1)=(c^*,d^*)$, and itinerary $(e_i)$.
\end{lemma}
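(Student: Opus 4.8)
The plan is to construct the path $(a_i,b_i)$ by two-sided recursion, using Lemma~\ref{lem85} to see what the recursion must be, and then to verify directly that the resulting sequence of pairs does what is required.

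First I would set $(a_0,b_0)=(a^*,b^*)$ and $(a_1,b_1)=(c^*,d^*)$ and define the remaining terms by the forward recursion $(a_{i+1},b_{i+1})=e_i(a_i,b_i)-(a_{i-1},b_{i-1})$ for $i\geq 1$, together with the backward recursion $(a_{i-1},b_{i-1})=e_i(a_i,b_i)-(a_{i+1},b_{i+1})$ for $i\leq 0$; equivalently, $a_{i-1}+a_{i+1}=e_ia_i$ and $b_{i-1}+b_{i+1}=e_ib_i$ for all $i\in\mathbb{Z}$, which is exactly the defining relation of Lemma~\ref{lem85}. This prescription determines every $(a_i,b_i)$ uniquely from the two given initial pairs and the sequence $(e_i)$, so uniqueness of the path (given that it \emph{is} a path with the claimed itinerary) will follow from uniqueness in Lemma~\ref{lem85} once we know the path exists; alternatively uniqueness is immediate from the recursion itself. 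The only thing that needs checking is that the sequence so defined actually is a bi-infinite \emph{path} in $\mathscr{E}_R$, i.e. that each consecutive pair is unimodular and that $a_ib_{i+1}-b_ia_{i+1}=1$ for all $i$.

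The key computation is to track the quantity $\Delta_i:=a_ib_{i+1}-b_ia_{i+1}$. We know $\Delta_0=a^*d^*-b^*c^*=1$ since $(a^*,b^*)\to(c^*,d^*)$ is a directed edge in $\mathscr{E}_R=\mathscr{F}_{R,\{1\}}$. Using the recursion $a_{i+1}=e_ia_i-a_{i-1}$ and $b_{i+1}=e_ib_i-b_{i-1}$ one computes
\[
\Delta_i=a_i(e_ib_i-b_{i-1})-b_i(e_ia_i-a_{i-1})=-(a_ib_{i-1}-b_ia_{i-1})=a_{i-1}b_i-b_{i-1}a_i=\Delta_{i-1},
\]
so $\Delta_i=1$ for all $i\geq 0$, and the symmetric computation using the backward recursion gives $\Delta_i=1$ for all $i\leq 0$ as well. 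Hence $a_ib_{i+1}-b_ia_{i+1}=1\in\{1\}=U$ for every $i$, which shows both that each $(a_i,b_i)$ is a unimodular pair (since $1\in a_iR+b_iR$, as $1=a_ib_{i+1}-b_ia_{i+1}$) and that $(a_i,b_i)\to(a_{i+1},b_{i+1})$ is a directed edge; thus $(a_i,b_i)$ is a bi-infinite path in $\mathscr{E}_R$. Finally, to confirm the itinerary is $(e_i)$, apply the formula $\Sigma(\gamma)_i=a_{i-1}b_{i+1}-b_{i-1}a_{i+1}$; expanding with the recursion, $a_{i-1}b_{i+1}-b_{i-1}a_{i+1}=a_{i-1}(e_ib_i-b_{i-1})-b_{i-1}(e_ia_i-a_{i-1})=e_i(a_{i-1}b_i-b_{i-1}a_i)=e_i\Delta_{i-1}=e_i$, as wanted.

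I do not expect a genuine obstacle here — the point of the lemma is precisely that the recursion works over any commutative ring. The one place to be slightly careful is that $\mathscr{E}_R$ has \emph{directed} edges, so I must verify $a_ib_{i+1}-b_ia_{i+1}=1$ (not merely that this quantity is a unit), which the determinant-preservation calculation delivers; and I should note explicitly that unimodularity of each vertex pair is a \emph{consequence} of $\Delta_i=1$ rather than an extra hypothesis to be imposed. The argument is self-contained and uses only Lemma~\ref{lem85} (for the reformulation and for uniqueness) plus the definition of itinerary.
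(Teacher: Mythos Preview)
Your proof is correct and follows essentially the same approach as the paper. The paper packages the same recursion in matrix form, writing $\begin{pmatrix}a_i & a_{i+1}\\ b_i & b_{i+1}\end{pmatrix}=\begin{pmatrix}a_{i-1} & a_{i}\\ b_{i-1} & b_{i}\end{pmatrix}\begin{pmatrix}0 & -1 \\ 1 & e_i\end{pmatrix}$ and then taking determinants to get $\Delta_i=\Delta_{i-1}$, while you compute this scalar identity directly; the verification of the itinerary and uniqueness are likewise the same, with the paper deferring to Lemma~\ref{lem85} where you expand explicitly.
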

\begin{proof}
First we establish the existence of a path with the specified properties. We define $(a_i,b_i)$, for $i\in\mathbb{Z}$, from the equations
\[
\begin{pmatrix}a_i &  a_{i+1}\\ b_i & b_{i+1}\end{pmatrix}=\begin{pmatrix}a_{i-1} &  a_{i}\\ b_{i-1} & b_{i}\end{pmatrix}\begin{pmatrix}0 & -1 \\ 1 & e_i\end{pmatrix},\quad \begin{pmatrix}a_{0} &  a_{1}\\ b_{0}& b_{1}\end{pmatrix}=\begin{pmatrix}a^*&c^*\\ b^*&d^*\end{pmatrix}.
\]
By taking determinants we see that $(a_i,b_i)$ determine a path $\gamma$. Moreover, since $a_{i-1}+a_{i+1}=e_ia_i$ and $b_{i-1}+b_{i+1}=e_ib_i$ it follows from Lemma~\ref{lem85} that $(e_i)$ is the itinerary of $\gamma$.

Next we show that this is the unique path with the specified properties. For this we can apply the recurrence relations $a_{i-1}+a_{i+1}=e_ia_i$ and $b_{i-1}+b_{i+1}=e_ib_i$ to deduce that $(a_i,b_i)$ is determined uniquely from the itinerary $(e_i)$ and directed edge $(a^*,b^*)\to (c^*,d^*)$.
\end{proof}

The itinerary map $\Sigma$ is invariant under $\text{SL}_2(R)$, in the sense that $\Sigma(A\gamma)=\Sigma(\gamma)$, for $A\in\gamma$. To see that this is so, notice that
\[
e_i=\begin{pmatrix}a_{i-1} & b_{i-1}\end{pmatrix}J\begin{pmatrix}a_{i+1} \\ b_{i+1}\end{pmatrix},\quad \text{where }J=\begin{pmatrix}0 & 1\\ -1 & 0 \end{pmatrix}.
\]
Then the itinerary $(e_i')$ of $A\gamma$ satisfies
\[
e_i'= \begin{pmatrix}a_{i-1} & b_{i-1}\end{pmatrix}A^TJA\begin{pmatrix}a_{i+1} \\ b_{i+1}\end{pmatrix}=e_i,
\]
because $A^TJA=J$, as required. It follows that $\Sigma$ induces a map $\text{SL}_2(R)\backslash \mathscr{P}\longrightarrow R^\mathbb{Z}$. In fact, this map is bijective. 

\begin{theorem}\label{thm55}
The map $\Sigma$ induces a one-to-one correspondence between $\textnormal{SL}_2(R)\backslash \mathscr{P}$ and $R^\mathbb{Z}$.
\end{theorem}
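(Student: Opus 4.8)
The plan is to show that the map $\mathscr{P}\longrightarrow R^\mathbb{Z}$ induced by $\Sigma$ on $\text{SL}_2(R)$-orbits is both surjective and injective, drawing on the two preceding lemmas. Surjectivity is essentially immediate: given any sequence $(e_i)\in R^\mathbb{Z}$, Lemma~\ref{lem17} (applied, say, to the standard directed edge $(1,0)\to(0,1)$) produces a bi-infinite path in $\mathscr{E}_R$ whose itinerary is $(e_i)$, so the induced map hits every point of $R^\mathbb{Z}$.

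For injectivity, suppose $\gamma$ and $\gamma'$ are bi-infinite paths in $\mathscr{E}_R$, with vertices $(a_i,b_i)$ and $(a_i',b_i')$ respectively, having the same itinerary $(e_i)$. First I would normalise: by Proposition~\ref{prop2} the action of $\text{SL}_2(R)$ is transitive on directed edges, so there is $A\in\text{SL}_2(R)$ carrying the initial edge $(a_0,b_0)\to(a_1,b_1)$ of $\gamma$ to the initial edge $(a_0',b_0')\to(a_1',b_1')$ of $\gamma'$. Since $\Sigma$ is $\text{SL}_2(R)$-invariant (as shown just before the statement, using $A^TJA=J$), the path $A\gamma$ still has itinerary $(e_i)$, and now it agrees with $\gamma'$ on the edge $(a_0',b_0')\to(a_1',b_1')$. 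By the uniqueness clause of Lemma~\ref{lem17}, a bi-infinite path in $\mathscr{E}_R$ is determined by its itinerary together with a single prescribed directed edge at position $(0,1)$; hence $A\gamma=\gamma'$, so $\gamma$ and $\gamma'$ lie in the same $\text{SL}_2(R)$-orbit. This shows the induced map is injective, completing the proof.

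The only point requiring mild care — and the step I would expect to be the main obstacle, such as it is — is the reduction to a common initial edge: one must confirm that Proposition~\ref{prop2} genuinely gives a matrix $A$ matching the ordered pair of vertices (not merely the underlying edge), which it does, since the action is stated to be transitive on \emph{directed} edges. After that, everything is a direct appeal to Lemmas~\ref{lem17} and~\ref{lem85} and the invariance of $\Sigma$; no ring-theoretic subtleties arise, in keeping with the remark that the theory avoids the cancellation property of integral domains.
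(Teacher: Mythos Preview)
Your proposal is correct and follows essentially the same approach as the paper: surjectivity via Lemma~\ref{lem17}, then injectivity by using Proposition~\ref{prop2} to align the initial directed edges and invoking the uniqueness clause of Lemma~\ref{lem17}. The paper's proof is slightly more terse but the logical structure is identical.
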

\begin{proof}
We have only to show that the induced map is injective. Suppose then that $\gamma$ and $\delta$ are bi-infinite paths in $\mathscr{P}$ with the same itinerary $(r_i)$. Let $\gamma$ have vertices $(a_i,b_i)$ and $\delta$ have vertices $(c_j,d_j)$. By Proposition~\ref{prop2}, there is a matrix $A\in\text{SL}_2(R)$ that maps the directed edge $(a_0,b_0)\to (a_1,b_1)$ to the directed edge $(c_0,d_0)\to (c_1,d_1)$. Consequently, $A\gamma$ and $\delta$ have the same itinerary and the same 0th and 1st vertices. Hence they are equal, by Lemma~\ref{lem17}, so the $\text{SL}_2(R)$-orbits of $\gamma$ and $\delta$ are equal.
\end{proof}

Theorem~\ref{thm55} tells us that, up to $\text{SL}_2(R)$ equivalence, bi-infinite paths correspond to bi-infinite sequences in $R^\mathbb{Z}$. This correspondence parallels the correspondence between convergents and partial quotients of a continued fraction. The theorem will not be used explicitly in the sequel.

We have explored itineraries on the Farey complex $\mathscr{E}_R$, and a similar account could be given for $\mathscr{F}_R$. For other Farey complexes $\mathscr{F}_{R,U}$, with a larger group of units $U$, the story is complicated by the presence of squares of units in the definition of an itinerary (which vanish if $U=\{\pm 1\}$); there is no need for us to go into this. 

We remark that  itineraries can be defined for finite or half-infinite paths, and there are analogues of the results above in these restricted settings.

%%%%%%%%%%%%%%%%%%%%
%%%%%%%%%%%%%%%%%%%%
\section{Graph distance on Farey complexes}\label{section4}

In this section we prove the result promised in the introduction that between any two vertices of a Farey complex over a finite ring there is a path of length at most 3. To this end, let $R$ be a finite ring and $U$ a group of units in $R$. We denote by $\delta(u,v)$ the length of the shortest path from a vertex $u$ to another vertex $v$ in $\mathscr{F}_{R,U}$. The \emph{diameter} of $\mathscr{F}_{R,U}$ is the maximum of $\delta(u,v)$ among all pairs of vertices $u$ and $v$ in $\mathscr{F}_{R,U}$. We will prove that the diameter of $\mathscr{F}_{R,U}$ is finite, from which it follows that $\mathscr{F}_{R,U}$ is connected.

\begin{theorem}\label{thm1}
The diameter of the Farey complex $\mathscr{E}_R$ of a finite ring $R$ satisfies
\[ 
\diam{\mathscr{E}_R} =
\begin{cases}
 1 & \text{if $R$ is $\mathbb{Z}/2\mathbb{Z}$},\\
 2 & \text{if $R$ is $\mathbb{Z}/3\mathbb{Z}$, $\mathbb{Z}/4\mathbb{Z}$, or $(\mathbb{Z}/2\mathbb{Z})^n$, for $n>1$},\\
 3 & \text{otherwise.}
\end{cases}
\]
\end{theorem}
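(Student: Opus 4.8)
The plan is to reduce the whole computation to distances from the single vertex $(1,0)$ of $\mathscr{E}_R$. Since the action of $\textnormal{SL}_2(R)$ is transitive on directed edges (Proposition~\ref{prop2}), it is transitive on vertices and acts by graph automorphisms; hence $\diam\mathscr{E}_R=\max_w\delta((1,0),w)$, where $w$ ranges over all vertices. I would then describe the balls around $(1,0)$ explicitly. The out-neighbours of $(1,0)$ are exactly the pairs $(c,1)$ with $c\in R$ (each such pair being automatically unimodular), and the out-neighbours of $(c,1)$ are exactly the pairs $(cy-1,y)$ with $y\in R$. Taking the union over $c$, the set of vertices at distance at most $2$ from $(1,0)$ is $\{(1,0)\}\cup\{(x,y)\text{ unimodular}:x+1\in yR\}$. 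Two criteria follow at once: $\diam\mathscr{E}_R\leq 1$ if and only if every vertex equals $(1,0)$ or has second entry $1$; and $\diam\mathscr{E}_R\leq 2$ if and only if every unimodular pair $(x,y)\ne(1,0)$ satisfies $x+1\in yR$.

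The next step is the universal bound $\diam\mathscr{E}_R\leq 3$. Given a vertex $(x,y)$, I would try to build a path $(1,0)\to(p,1)\to(pr-1,r)\to(x,y)$; the only nontrivial requirement is that the last step be an edge, which simplifies to $r(py-x)=1+y$. Because $R$ is a finite commutative ring it is a product of local rings and therefore has stable range $1$: from $xR+yR=R$ one obtains $p\in R$ with $py-x\in R^\times$, and then $r=(py-x)^{-1}(1+y)$ completes the path. (Each intermediate pair is automatically a vertex, being an out-neighbour of a vertex.) So the diameter is always $1$, $2$, or $3$.

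It remains to decide when it is $1$ and when it is $\leq 2$. For $\diam\leq 1$: the pair $(1,y)$ is a vertex for every $y\in R$, so the first criterion forces $R=\{0,1\}$, i.e.\ $R=\mathbb{Z}/2\mathbb{Z}$, and one checks directly that this ring has diameter $1$. For $\diam\leq 2$: testing the second criterion on $(x,0)$ with $x$ a unit forces $R^\times\subseteq\{1,-1\}$, and testing it on $(1,y)$ with $y\ne 0$ forces $2\in yR$ for every nonzero $y$. Writing $R=\prod_i R_i$ as a product of finite local rings, if there are at least two factors then applying ``$2\in yR$'' to each idempotent $e_i$ forces every factor to have characteristic $2$, so $R^\times=\{1\}$ and every factor is $\mathbb{Z}/2\mathbb{Z}$; and if there is a single factor, a short computation involving its residue field leaves only $\mathbb{Z}/2\mathbb{Z}$, $\mathbb{Z}/3\mathbb{Z}$, $\mathbb{Z}/4\mathbb{Z}$. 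Hence the criterion can hold only for $R\in\{\mathbb{Z}/3\mathbb{Z},\mathbb{Z}/4\mathbb{Z}\}\cup\{(\mathbb{Z}/2\mathbb{Z})^n:n\geq 1\}$, and I would then verify that each of these rings really does satisfy it (for $(\mathbb{Z}/2\mathbb{Z})^n$ this reduces to a transparent statement about supports of tuples). For each such $R$ other than $\mathbb{Z}/2\mathbb{Z}$ an explicit vertex at distance $>1$ from $(1,0)$ shows the diameter is exactly $2$, while for every finite ring outside this list the criterion fails and the diameter is exactly $3$.

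The main obstacle is the classification in the last paragraph: converting the innocuous condition ``$x+1\in yR$ for all unimodular $(x,y)\ne(1,0)$'' into the precise list of rings. The delicate point is that $R^\times\subseteq\{1,-1\}$ by itself is not sufficient --- for example $\mathbb{Z}/6\mathbb{Z}$ satisfies it but has diameter $3$ --- so one must genuinely exploit how the condition interacts with the factors of $R$ (the characteristic-$2$ dichotomy), and the bookkeeping over finite local rings of order at most $4$ has to be carried out with some care.
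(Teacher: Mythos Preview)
Your plan is correct and close in spirit to the paper's argument, but organised differently in two places. For the bound $\diam\mathscr{E}_R\leq 3$, the paper works by hand in a local ring (noting that one of $a,b$ is a unit and writing down an explicit length-$3$ path), then takes products of such paths; you instead package this as stable range~$1$, which is equivalent but cleaner --- your single equation $r(py-x)=1+y$ replaces the paper's two case-by-case paths. For the classification, the paper proves a separate lemma that the only finite local rings with $R^\times\subseteq\{\pm1\}$ are $\mathbb{Z}/2\mathbb{Z},\mathbb{Z}/3\mathbb{Z},\mathbb{Z}/4\mathbb{Z}$, then checks each and handles products by noting that in $\mathscr{E}_{\mathbb{Z}/3\mathbb{Z}}$ and $\mathscr{E}_{\mathbb{Z}/4\mathbb{Z}}$ there is no closed path of length~$2$ from a vertex to itself; you instead derive the single criterion ``$x+1\in yR$ for all unimodular $(x,y)\neq(1,0)$'' and analyse it directly. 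Your approach is more uniform and makes the $\mathbb{Z}/6\mathbb{Z}$ pitfall visible from the criterion itself, while the paper's is more elementary in that it never names stable range and keeps every step as an explicit path.
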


Here $(\mathbb{Z}/2\mathbb{Z})^n$ is the direct product of $n$ copies of $\mathbb{Z}/2\mathbb{Z}$. Since the map $\mathscr{E}_R\longrightarrow \mathscr{F}_{R,U}$ is a covering map, it follows that $\diam{\mathscr{F}_{R,U}}\leq \diam{\mathscr{E}_R}\leq 3$; that is, the diameter of \emph{any} Farey complex over a finite ring is at most 3.

In order to prove Theorem~\ref{thm1}, we first classify Farey complexes $\mathscr{E}_R$ of diameter 1. Before doing so, it is worth observing that if the characteristic of $R$ (denoted $\cha R$) is $2$, then $\mathscr{E}_R=\mathscr{F}_R$.

\begin{lemma}\label{lem1}
The Farey complex $\mathscr{E}_R$ of a finite ring $R$ has diameter 1 if and only if $R=\mathbb{Z}/2\mathbb{Z}$.
\end{lemma}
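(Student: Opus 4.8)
The plan is to handle the two directions separately; the only real decision is which pair of vertices to test against the diameter-$1$ hypothesis.

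For the ``if'' direction I would simply enumerate. When $R=\mathbb{Z}/2\mathbb{Z}$ the unimodular pairs are exactly $(1,0)$, $(0,1)$, and $(1,1)$, so $\mathscr{E}_R$ has three vertices; since $\cha R=2$ we have $\mathscr{E}_R=\mathscr{F}_R$, and checking the six ordered pairs of distinct vertices shows a directed edge in each case (for instance $(1,1)\to(0,1)$ because $1\cdot 1-1\cdot 0=1$, and $(0,1)\to(1,1)$ because $0\cdot 1-1\cdot 1=-1=1$). Hence $\mathscr{E}_{\mathbb{Z}/2\mathbb{Z}}$ is the triangle $\mathscr{F}_2$ of Figure~\ref{fig29} and has diameter $1$.

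For the ``only if'' direction, suppose $\diam\mathscr{E}_R=1$; in particular $\mathscr{E}_R$ has at least two vertices, so $0\neq 1$ in $R$ and $R$ is nontrivial. The key observation is that for \emph{every} $a\in R$ the pair $(a,1)$ is unimodular (take $x=0$, $y=1$ in $0\cdot a+1\cdot 1=1$), and $(1,1)$ is unimodular too, and $(a,1)\neq(1,1)$ whenever $a\neq 1$. If $a\neq 1$, the diameter-$1$ hypothesis forces a directed edge from $(1,1)$ to $(a,1)$, and by definition of $\mathscr{E}_R=\mathscr{F}_{R,\{1\}}$ this means $1\cdot 1-1\cdot a=1-a\in\{1\}$, i.e.\ $a=0$. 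So every element of $R$ other than $1$ equals $0$, giving $R=\{0,1\}$, and the only ring structure on a two-element set (with $0\neq 1$) is that of $\mathbb{Z}/2\mathbb{Z}$.

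There is essentially no hard step here; the one thing to get right is the choice of test vertices, and $(1,1)$ together with $(a,1)$ does the job in a single stroke. A more leisurely route would first use the edge $(0,1)\to(1,0)$ to deduce $-1=1$, hence $\cha R=2$ (this is where the remark preceding the lemma, that $\mathscr{E}_R=\mathscr{F}_R$ in characteristic $2$, comes in), and then note that a larger ring of characteristic $2$ still contains some $a\notin\{0,1\}$, producing the same contradiction; but the one-shot argument above subsumes it. The only genuine subtlety is the degenerate conventions -- a one-vertex graph has diameter $0$, not $1$ -- which is why we may assume $0\neq 1$ from the outset.
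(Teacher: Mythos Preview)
Your proof is correct and follows essentially the same approach as the paper: both argue by exhibiting a pair of distinct vertices with no directed edge whenever $R$ has an element other than $0$ or $1$. The only cosmetic difference is the choice of test pair---the paper uses $(1,0)\to(1,x)$ (which fails since $1\cdot x-0\cdot 1=x\neq 1$) while you use $(1,1)\to(a,1)$; both work equally well.
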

\begin{proof}
Certainly $\mathscr{E}_{\mathbb{Z}/2\mathbb{Z}}$ has diameter 1 because it is the complete directed graph on three vertices. Suppose now that $R\neq \mathbb{Z}/2\mathbb{Z}$. Then $R$ contains an element $x$ other than $0$ or $1$, in which case there is no directed edge from $(1,0)$ to $(1,x)$, so $\mathscr{E}_R$ does not have diameter 1, as required.
\end{proof}

A ring $R$ is a \emph{local ring} if it contains a unique maximal ideal $M$. We will use the well-known property of a local ring that if $x,y\in R$ and $x+y=1$, then one of $x$ or $y$ must be a unit. To see why this property holds, suppose  that $x$ and $y$ are elements of a local ring $R$ that are not units. Then $Rx\neq R$ and $Ry\neq R$, and hence $Rx$ and $Ry$ are contained in the unique maximal ideal $M$. Consequently, $Rx+Ry\neq R$, so $x+y\neq 1$. 

\begin{lemma}\label{lem2}
The only finite local rings with no units besides $\pm 1$ are $\mathbb{Z}/2\mathbb{Z}$, $\mathbb{Z}/3\mathbb{Z}$, and $\mathbb{Z}/4\mathbb{Z}$. 
\end{lemma}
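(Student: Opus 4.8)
The plan is to classify finite local rings $R$ in which every unit equals $\pm 1$. Let $M$ be the unique maximal ideal of $R$, so that $k = R/M$ is a finite field, say of order $q = p^f$ where $p = \cha k$. First I would observe that the units of $R$ are exactly the elements not in $M$, so $|R^\times| = |R| - |M|$. The hypothesis $R^\times \subseteq \{1, -1\}$ forces $|R^\times| \leq 2$; moreover, since the composite $R^\times \to k^\times$ is surjective (any lift of a unit of $k$ is a unit of $R$, as its image is nonzero hence invertible mod $M$), we get $|k^\times| = q - 1 \leq 2$, so $q \in \{2, 3\}$, i.e.\ $k = \mathbb{Z}/2\mathbb{Z}$ or $k = \mathbb{Z}/3\mathbb{Z}$.

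Next I would bound $|M|$. Each coset of $M$ in $R$ other than $M$ itself consists entirely of units (it maps to a nonzero element of $k$). If $q = 3$ there are two such cosets, each a torsor for $M$ under translation; but all units lie in $\{1, -1\}$, a set of size at most $2$, and these two elements lie in the two distinct cosets $1 + M$ and $-1 + M = 2 + M$ (distinct since $q = 3$ means $1 \not\equiv -1 \pmod M$). Hence each of $1 + M$ and $-1 + M$ has exactly one element, forcing $|M| = 1$, so $M = 0$ and $R = k = \mathbb{Z}/3\mathbb{Z}$. If $q = 2$, then $-1 = 1$ in $k$; the single nonzero coset $1 + M$ consists of units, all lying in $\{1, -1\} = \{1\}$ (note $-1 \in 1 + M$ here), so again $|1 + M| = 1$, giving $|M| = 1$ and $R = k = \mathbb{Z}/2\mathbb{Z}$ --- unless $-1 \neq 1$ in $R$, i.e.\ $\cha R \neq 2$ while $\cha k = 2$. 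In that exceptional case $2 \in M$ but $2 \neq 0$; then $\{1, -1\} \subseteq 1 + M$ are two distinct units, so $|1+M| \geq 2$ hence $|M| \geq 2$, and in fact the same counting shows $1 + M = \{1, -1\}$, so $|M| = 2$ and $|R| = 4$. One then checks the only such local ring is $\mathbb{Z}/4\mathbb{Z}$: with $|R| = 4$ and $\cha R \nmid 2$ being a local ring, $R$ is generated by $1$ as an abelian group (it has an element of additive order $4$, namely $1$, since $\cha R \in \{4\}$ as $2 \neq 0$ and $|R| = 4$), so $R = \mathbb{Z}/4\mathbb{Z}$.

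The main obstacle I anticipate is the exceptional mixed-characteristic case $\cha k = 2 \neq \cha R$: here the naive count $|M| = 1$ fails, and one must carefully track that $-1$ lies in the nontrivial coset $1 + M$ and pin down $|R| = 4$, then identify $R$ as $\mathbb{Z}/4\mathbb{Z}$ rather than, say, $\mathbb{F}_2[x]/(x^2)$ --- the latter is excluded precisely because it has characteristic $2$, so $-1 = 1$ and its unique non-identity unit would be $1 + x \notin \{1, -1\}$. I would make this distinction explicit. All remaining steps are routine finite counting and the standard fact (already invoked in the excerpt) about local rings, so the proof should be short once this case analysis is organised cleanly.
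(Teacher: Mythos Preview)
Your proof is correct but takes a different, more structural route than the paper's. The paper gives a three-line argument: if $R\neq\mathbb{Z}/2\mathbb{Z},\mathbb{Z}/3\mathbb{Z}$ then pick any $x\notin\{0,\pm1\}$; since $x$ is not a unit and $x+(1-x)=1$, the local-ring property forces $1-x$ to be a unit, hence $1-x=\pm1$, hence $x=2$ (the case $x=0$ being excluded). Thus $R=\{0,1,-1,2\}=\mathbb{Z}/4\mathbb{Z}$. Your argument instead passes to the residue field $k=R/M$, uses surjectivity of $R^\times\to k^\times$ to bound $|k|\leq 3$, and then counts units coset-by-coset to pin down $|M|$, with a separate mixed-characteristic subcase for $\mathbb{Z}/4\mathbb{Z}$. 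Your approach is longer but makes the role of the residue field explicit and generalises more readily (e.g.\ to classifying local rings with small unit group); the paper's trick is slicker here because the hypothesis is so restrictive that one element-chase suffices. One minor wording issue: ``$\cha R \nmid 2$'' should read ``$\cha R \neq 2$'', and you might state up front that $\cha R$ is a power of $\cha k$ for a finite local ring, which you use implicitly when asserting $1\neq -1$ in the $q=3$ case.
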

\begin{proof}
Suppose that $R$ is a finite local ring with no units other than $\pm 1$. Suppose also that $R\neq \mathbb{Z}/2\mathbb{Z}, \mathbb{Z}/3\mathbb{Z}$. Then there exists an element $x$ of $R$ other than $0$ or $\pm 1$. Since $R$ is a local ring, $1-x$ is a unit, in which case $1-x=-1$, so $x=2$. Hence $R=\mathbb{Z}/4\mathbb{Z}$. 
\end{proof}

It is now straightforward to classify the finite local rings for which $\mathscr{E}_R$ has diameter 2.

\begin{lemma}\label{lem3}
The Farey complex $\mathscr{E}_R$ of a finite local ring $R$ has diameter 2 if and only if $R$ is $\mathbb{Z}/3\mathbb{Z}$ or $\mathbb{Z}/4\mathbb{Z}$.
\end{lemma}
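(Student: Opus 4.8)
The plan is to reduce the whole statement to computing directed distances from the single vertex $(1,0)$. Since $\textnormal{SL}_2(R)$ acts on $\mathscr{E}_R=\mathscr{F}_{R,\{1\}}$ by graph automorphisms transitively on vertices (see Proposition~\ref{prop2}), and graph automorphisms preserve distance, we have $\diam\mathscr{E}_R=\max_v\delta((1,0),v)$. The one piece of genuine bookkeeping, which I would establish first, is the following reformulation: an edge $(1,0)\to(x,y)$ exists precisely when $y=1$, so the out-neighbours of $(1,0)$ are exactly the vertices $(s,1)$ with $s\in R$; chaining one more step, an edge $(s,1)\to(x,y)$ exists precisely when $sy-x=1$. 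Hence a vertex $(x,y)\ne(1,0)$ satisfies $\delta((1,0),(x,y))\le 2$ if and only if $1+x\in Ry$.

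With this in hand the ``if'' direction is short. Suppose $R=\mathbb{Z}/3\mathbb{Z}$ or $\mathbb{Z}/4\mathbb{Z}$. By Lemma~\ref{lem1} the diameter is not $1$ (as $R\ne\mathbb{Z}/2\mathbb{Z}$), so it suffices to show $\diam\mathscr{E}_R\le 2$, i.e.\ that $1+x\in Ry$ for every unimodular pair $(x,y)\ne(1,0)$. When $y$ is a unit this is automatic since $Ry=R$; the only remaining pairs --- $y\in\{0,2\}$ with $x$ then forced to be a unit, when $R=\mathbb{Z}/4\mathbb{Z}$, and $y=0$ with $x$ a nonzero element, when $R=\mathbb{Z}/3\mathbb{Z}$ --- form a short finite list that one checks by hand.

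For the ``only if'' direction I would argue the contrapositive: if $R$ is a finite local ring not isomorphic to $\mathbb{Z}/3\mathbb{Z}$ or $\mathbb{Z}/4\mathbb{Z}$, then $\diam\mathscr{E}_R\ne 2$. If $R=\mathbb{Z}/2\mathbb{Z}$ this is Lemma~\ref{lem1}. Otherwise Lemma~\ref{lem2} provides a unit $u\ne\pm 1$; then $(u,0)$ is a unimodular pair distinct from $(1,0)$, and $1+u\notin R\cdot 0=\{0\}$ since $u\ne-1$, so $\delta((1,0),(u,0))\ge 3$ and hence $\diam\mathscr{E}_R\ge 3$. The only subtlety --- and the main thing to be careful about --- is the reformulation itself: one must use that $\mathscr{E}_R$ is a \emph{directed} graph (only the second coordinate of the target of an edge leaving $(1,0)$ is constrained), and one must exclude the vertex $(1,0)$ from the ``$1+x\in Ry$'' criterion, since $1+1\in R\cdot 0$ holds only in characteristic~$2$. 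Granting the reformulation, both implications are immediate from Lemmas~\ref{lem1} and~\ref{lem2}.
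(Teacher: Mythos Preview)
Your proof is correct and follows essentially the same approach as the paper: both reduce to distances from $(1,0)$ via $\textnormal{SL}_2(R)$-transitivity, handle the ``if'' direction by direct verification, and handle the ``only if'' direction by invoking Lemma~\ref{lem2} to produce a unit $u\neq\pm1$ and observing that $(u,0)$ lies at distance at least~$3$ from $(1,0)$. Your explicit criterion $1+x\in Ry$ is a clean way to organise the length-$2$ reachability computation that the paper leaves as ``a quick calculation''.
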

\begin{proof}
To see that $\mathscr{E}_{\mathbb{Z}/3\mathbb{Z}}$ has diameter 2, observe that there are paths
\[
(1,0)\to(0,1)\to(2,0),\quad (1,0)\to (a,1),\quad\text{and}\quad (1,0)\to(2a+2,1)\to(a,2)
\]
of lengths 1 or 2 from $(1,0)$ to $(2,0)$, $(a,1)$, and $(a,2)$, respectively, for $a\in\mathbb{Z}/3\mathbb{Z}$. By applying suitable elements of $\text{SL}_2(\mathbb{Z}/3\mathbb{Z})$ we can see that there is a path of length 1 or 2 from any vertex of $\mathscr{E}_{\mathbb{Z}/3\mathbb{Z}}$ to another. Hence $\mathscr{E}_{\mathbb{Z}/3\mathbb{Z}}$ has diameter 2. Reasoning similarly we can see that $\mathscr{E}_{\mathbb{Z}/4\mathbb{Z}}$ has diameter 2.

Suppose now that $R$ is a finite local ring other than $\mathbb{Z}/2\mathbb{Z}$, $\mathbb{Z}/3\mathbb{Z}$, or $\mathbb{Z}/4\mathbb{Z}$. By Lemma~\ref{lem2}, $R$ contains a unit $x$ not equal to $\pm 1$. A quick calculation shows that there is no path of length 1 or 2 from $(1,0)$ to $(x,0)$. Hence $\diam\mathscr{E}_R\geq 3$. This confirms that $\mathscr{E}_{\mathbb{Z}/3\mathbb{Z}}$ and $\mathscr{E}_{\mathbb{Z}/4\mathbb{Z}}$ are the only Farey complexes of finite local rings with diameter 2.
\end{proof}

We can now prove Theorem~\ref{thm1}.

\begin{proof}[Proof of Theorem~\ref{thm1}]
First we will prove that $\diam \mathscr{E}_R \leq 3$.

Assume for the moment that $R$ is a (finite) local ring. Let $(a,b)$ be a vertex of $\mathscr{E}_R$; then $ax+by=1$ for some elements $x$ and $y$ of $R$, so one of $ax$ or $by$ must be a unit. Hence one of $a$ or $b$ must be a unit. We will find a path from $(1,0)$ to $(a,b)$ of length 3. Note that we include the possibility that $(a,b)=(1,0)$.

Suppose first that $a$ is a unit, with inverse $q$. Let $r=-q(1+b)$. Then 
\[
(1,0) \to (0,1) \to  (-1,r) \to (a,b)
\]
is such a path. Suppose now that $b$ is a unit, with inverse $q$. Let $r = q(1+a)$. Then 
\[
(1,0) \to (r+1,1) \to (r,1) \to (a,b)
\]
is a suitable path. Since $\text{SL}_2(R)$ acts transitively on the vertices of $\mathscr{E}_R$, we see that there is a path of length 3 from any vertex to another.

Now let $R$ be any finite ring; then $R$ is a direct product of local rings (see, for example, \cite{BiFl2002}*{Theorem~3.1.4}). By taking products of paths, we can obtain a path of length 3 from any vertex to another. Consequently,  $\diam \mathscr{E}_R \leq 3$. 

It remains to identify those Farey complexes $\mathscr{E}_R$ with diameters 1 and 2. By Lemma~\ref{lem1}, the only finite ring $R$ for which $\diam \mathscr{E}_R=1$ is $R=\mathbb{Z}/2\mathbb{Z}$.

Suppose now that $R$ is either equal to  $\mathbb{Z}/3\mathbb{Z}$ or $\mathbb{Z}/4\mathbb{Z}$, or else it is equal to $(\mathbb{Z}/2\mathbb{Z})^n$, for $n>1$. In the former case $\diam\mathscr{E}_R=2$, by Lemma~\ref{lem2}. In the latter case, observe that we can find a path of length exactly 2 from any vertex $u$ of $\mathscr{E}_{\mathbb{Z}/2\mathbb{Z}}$ to any other vertex $v$, even when $v=u$. Hence we can find a path of length 2 from any vertex of $\mathscr{E}_R$ to another, so $\diam \mathscr{E}_R=2$.

Finally, consider any finite ring $R$ for which $\diam \mathscr{E}_R=2$. We can express $R$ as a product of finite local rings. The diameter of the Farey complex of each of these local rings must be at most 2. Hence, by Lemmas~\ref{lem1} and~\ref{lem2}, each local ring must be one of $\mathbb{Z}/2\mathbb{Z}$, $\mathbb{Z}/3\mathbb{Z}$, or $\mathbb{Z}/4\mathbb{Z}$. However, in $\mathscr{E}_{\mathbb{Z}/3\mathbb{Z}}$ and $\mathscr{E}_{\mathbb{Z}/4\mathbb{Z}}$ there is a path of length 3 from any vertex to itself but not a path of length 2, as one can easily verify. Consequently, if there is more than one local ring in the product $R$, and if one of those local rings is $\mathbb{Z}/3\mathbb{Z}$ or $\mathbb{Z}/4\mathbb{Z}$, then we can find two distinct vertices in $\mathscr{E}_R$ not connected by a path of length 1 or 2.  (For example, if $u\in \mathscr{E}_{\mathbb{Z}/3\mathbb{Z}}$ and $v$ and $w$ are distinct vertices in a ring $S$, then there is no path of length 1 or 2 from $(u,v)$ to $(u,w)$ in $\mathscr{E}_R$, where $R=\mathbb{Z}/3\mathbb{Z}\times S$.) In this case, then, $\diam \mathscr{E}_R=3$.

This argument shows that the only finite rings $R$ for which $\diam \mathscr{E}_R=2$ are $\mathbb{Z}/3\mathbb{Z}$, $\mathbb{Z}/4\mathbb{Z}$, and $(\mathbb{Z}/2\mathbb{Z})^n$, for $n>1$.
\end{proof}

Next we state a similar result to Theorem~\ref{thm1} for the Farey complex $\mathscr{F}_R$ instead of $\mathscr{E}_R$.

\begin{theorem}\label{thm7}
The diameter of the Farey complex $\mathscr{F}_R$ of a finite ring $R$ satisfies
\[ 
\diam{\mathscr{F}_R} =
\begin{cases}
 1 & \text{if $R$ is  $\mathbb{Z}/2\mathbb{Z}$ or $\mathbb{Z}/3\mathbb{Z}$,}\\
 2 & \text{if $R$ is a direct product of rings $\mathbb{Z}/2\mathbb{Z}$, $\mathbb{Z}/3\mathbb{Z}$, and $\mathbb{Z}/4\mathbb{Z}$}\\
 & \text{(and $R\neq \mathbb{Z}/2\mathbb{Z},\mathbb{Z}/3\mathbb{Z}$),}\\
 3 & \text{otherwise.}
\end{cases}
\]
\end{theorem}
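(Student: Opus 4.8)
The plan is to deduce the theorem from Theorem~\ref{thm1} by passing along the covering map $\mathscr{E}_R\to\mathscr{F}_R$, the substance being a careful treatment of the two lifts of each vertex of $\mathscr{F}_R$. Since that map is a covering map, Theorem~\ref{thm1} already gives $\diam\mathscr{F}_R\leq\diam\mathscr{E}_R\leq 3$, so only the values $1$ and $2$ remain to be pinned down. For diameter~$1$: the graph $\mathscr{F}_R$ is complete precisely when every element of $R$ lies in $\{0,1,-1\}$. Indeed, if $x\in R\setminus\{0,\pm1\}$ then $1/0$ and $1/x$ are distinct, non-adjacent vertices; and if $R\subseteq\{0,\pm1\}$ then $|R|\leq 3$, forcing $R\cong\mathbb{Z}/2\mathbb{Z}$ or $\mathbb{Z}/3\mathbb{Z}$, whose Farey complexes $\mathscr{F}_2$ and $\mathscr{F}_3$ are the complete graphs on $3$ and $4$ vertices. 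This also shows $\diam\mathscr{F}_R\geq 2$ for every other finite ring.

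For the remaining dichotomy I would write $R=R_1\times\cdots\times R_k$ as a product of finite local rings and use $\mathscr{E}_R\cong\mathscr{E}_{R_1}\times\cdots\times\mathscr{E}_{R_k}$, so that a path of length $2$ in $\mathscr{E}_R$ is precisely a tuple of length-$2$ paths, one in each factor. The key observation is that a length-$2$ path from $u$ to $v$ in $\mathscr{F}_R$ exists if and only if, for a fixed lift $\tilde u$ of $u$, there is a length-$2$ path in $\mathscr{E}_R$ from $\tilde u$ to one of the two lifts $\tilde v$, $-\tilde v$ of $v$; and in the product these two lifts are the ``all-plus'' and ``all-minus'' tuples $(\tilde v_1,\dots,\tilde v_k)$ and $(-\tilde v_1,\dots,-\tilde v_k)$, so the sign cannot be chosen coordinate by coordinate. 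I would then record the facts about the small factors contained in, or immediate from, the proof of Theorem~\ref{thm1}: in $\mathscr{E}_{\mathbb{Z}/2\mathbb{Z}}$ every vertex is reachable from every vertex (itself included) by a path of length exactly $2$; in $\mathscr{E}_{\mathbb{Z}/3\mathbb{Z}}$ and $\mathscr{E}_{\mathbb{Z}/4\mathbb{Z}}$ every vertex is reachable from a given vertex in exactly $2$ steps \emph{except} that vertex itself, and there is no closed path of length $2$; and, for a finite local ring $R'$ not among $\mathbb{Z}/2\mathbb{Z}$, $\mathbb{Z}/3\mathbb{Z}$, $\mathbb{Z}/4\mathbb{Z}$, Lemma~\ref{lem2} supplies a unit $x\neq\pm1$, and then neither $(x,0)$ nor $(-x,0)$ is reachable from $(1,0)$ in $\mathscr{E}_{R'}$ in at most two steps.

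The upper bound $\diam\mathscr{F}_R\leq 2$ then follows whenever every $R_i$ lies in $\{\mathbb{Z}/2\mathbb{Z},\mathbb{Z}/3\mathbb{Z},\mathbb{Z}/4\mathbb{Z}\}$ and at most one of the factors differs from $\mathbb{Z}/2\mathbb{Z}$. Given $u$ and $v$, I try to join $\tilde u$ to $\tilde v$ coordinatewise in two steps; by the facts above this can fail only in the unique exceptional coordinate $j$, and only when $\tilde v_j=\tilde u_j$, in which case I aim instead at $-\tilde v$, whose $j$-th coordinate is $-\tilde u_j\neq\tilde u_j$ while its $\mathbb{Z}/2\mathbb{Z}$-coordinates are unchanged; thus a length-$2$ path always exists and, since $R\neq\mathbb{Z}/2\mathbb{Z},\mathbb{Z}/3\mathbb{Z}$, we get $\diam\mathscr{F}_R=2$.

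It remains to show $\diam\mathscr{F}_R=3$ in all other cases. If some factor $R_j$ is not one of the three listed rings, the last fact above yields two vertices of $\mathscr{F}_R$ --- the images of $(1,0)$ and $(x,0)$ in the $R_j$-coordinate, with all other coordinates equal --- that are distinct, non-adjacent, and not joined by a length-$2$ path, hence at distance $3$. If instead two distinct factors $R_i$ and $R_j$ both differ from $\mathbb{Z}/2\mathbb{Z}$, I take any $\tilde u$ and let $\tilde v$ satisfy $\tilde v_i=\tilde u_i$ and $\tilde v_\ell=-\tilde u_\ell$ for all $\ell\neq i$: then reaching $\tilde v$ in two steps is obstructed in coordinate $i$ (one would need a closed $2$-path at $\tilde u_i$ in $\mathscr{E}_{R_i}$), and reaching $-\tilde v$ in two steps is obstructed in coordinate $j$ (there $-\tilde v_j=\tilde u_j$), so $\delta(u,v)\geq 3$ and hence $=3$. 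The step I expect to be the main obstacle is precisely this interplay between the product decomposition and the group $\{\pm1\}$: one has to aim at the right lift of the target, and in the distance-$3$ constructions one must also check that the chosen $u$ and $v$ are genuinely distinct and non-adjacent, so that excluding length-$2$ paths really does force the distance to be $3$.
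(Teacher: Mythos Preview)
Your argument does not prove the theorem as stated. You conclude that $\diam\mathscr{F}_R=2$ precisely when every local factor of $R$ lies in $\{\mathbb{Z}/2\mathbb{Z},\mathbb{Z}/3\mathbb{Z},\mathbb{Z}/4\mathbb{Z}\}$ \emph{and at most one factor differs from $\mathbb{Z}/2\mathbb{Z}$}; the theorem (and the paper's clarifying remark after it) asserts diameter~$2$ for \emph{any} product of these three rings, repetitions allowed. Your diameter-$3$ construction when two factors $R_i,R_j$ are both $\neq\mathbb{Z}/2\mathbb{Z}$ directly contradicts the stated conclusion, so as a proof of Theorem~\ref{thm7} the proposal does not succeed.

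However, the discrepancy is not on your side: your construction is correct and shows that the theorem, as stated, is false. Indeed, whenever $R$ possesses a unit $x\notin\{\pm1\}$ the vertices $1/0$ and $x/0$ of $\mathscr{F}_R$ are distinct and non-adjacent, and every neighbour of $1/0$ has the form $a/1$, which is adjacent to $x/0$ only if $x=\pm1$; hence $\delta(1/0,x/0)=3$. For $R=(\mathbb{Z}/3\mathbb{Z})^2$ the element $x=(1,2)$ is such a unit, so $\diam\mathscr{F}_R=3$, contrary to the paper's claim. The paper's proof slips exactly where you were careful: it passes from ``any two (possibly equal) vertices of each $\mathscr{F}_{R_i}$ are joined by a path of length~$2$'' to the same statement for $\mathscr{F}_R$, implicitly treating $\mathscr{F}_R$ as the graph product of the $\mathscr{F}_{R_i}$. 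It is not, because the $\pm1$ in the edge condition is a single global sign rather than a sign chosen per coordinate --- precisely the obstruction your lift-to-$\mathscr{E}_R$ analysis isolates. The classification you actually prove (equivalently: $\diam\mathscr{F}_R=2$ if and only if $R^\times=\{\pm1\}$ and $|R|>3$) is the correct one.
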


Here the direct product in the diameter 2 case can involve multiple copies of $\mathbb{Z}/2\mathbb{Z}$, $\mathbb{Z}/3\mathbb{Z}$, and $\mathbb{Z}/4\mathbb{Z}$; for example, the diameter of $\mathscr{F}_R$ is 2, where $R=\mathbb{Z}/2\mathbb{Z}\times\mathbb{Z}/2\mathbb{Z}\times\mathbb{Z}/3\mathbb{Z}$.

We prove Theorem~\ref{thm7} in a similar way to how we proved Theorem~\ref{thm1}, beginning by classifying the Farey complexes $\mathscr{F}_R$ of diameter 1.

\begin{lemma}\label{lem1x}
The Farey complex $\mathscr{F}_R$ of a finite ring $R$ has diameter 1 if and only if $R$ is $\mathbb{Z}/2\mathbb{Z}$ or $\mathbb{Z}/3\mathbb{Z}$.
\end{lemma}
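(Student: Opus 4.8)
The plan is to prove the two implications separately, handling the two rings directly in the ``if'' part and splitting on $\cha R$ in the ``only if'' part.

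For the ``if'' direction, I would note first that both $\mathbb{Z}/2\mathbb{Z}$ and $\mathbb{Z}/3\mathbb{Z}$ are fields whose unit group is exactly $\{\pm1\}$, so that $\mathscr{F}_R=\mathscr{F}_{R,R^\times}$ and the vertices are the $|R|+1$ points of the projective line over $R$. It then suffices to check that the underlying graph of $\mathscr{F}_R$ is complete in each case. When $R=\mathbb{Z}/2\mathbb{Z}$ we have $\cha R=2$, so $\mathscr{F}_R=\mathscr{E}_R$ (as recorded just before Lemma~\ref{lem1}), and Lemma~\ref{lem1} already gives $\diam\mathscr{F}_R=1$. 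When $R=\mathbb{Z}/3\mathbb{Z}$ the four vertices are $1/0$, $0/1$, $1/1$, $2/1$; the edge from $1/0$ to $a/1$ has determinant $1$, and for $a\neq b$ the edge between $a/1$ and $b/1$ has determinant $a-b$, which is nonzero in $\mathbb{Z}/3\mathbb{Z}$ and hence a unit. So any two distinct vertices are joined, and $\diam\mathscr{F}_R=1$.

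For the ``only if'' direction, suppose $\diam\mathscr{F}_R=1$. If $\cha R=2$ then $\mathscr{F}_R=\mathscr{E}_R$, so $\diam\mathscr{E}_R=1$ and Lemma~\ref{lem1} forces $R=\mathbb{Z}/2\mathbb{Z}$. Otherwise $1\neq-1$ in $R$, and I would argue as follows: for any nonzero $c\in R$, the pairs $(0,1)$ and $(c,1)$ are unimodular (their second entries are units) and they represent distinct vertices of $\mathscr{F}_R$, since $(c,1)=-(0,1)=(0,-1)$ would force $1=-1$. The diameter hypothesis then supplies an edge between them, so $c=c\cdot1-1\cdot0\in\{\pm1\}$. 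Hence $R\setminus\{0\}=\{1,-1\}$, so $R$ has exactly three elements and is therefore isomorphic to $\mathbb{Z}/3\mathbb{Z}$.

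There is no serious obstacle here; the only point that needs a little care is checking that the test pairs used in each direction --- $1/0,0/1,1/1,2/1$ for $\mathbb{Z}/3\mathbb{Z}$, and $0/1,c/1$ in the general step --- really are distinct vertices of the quotient complex $\mathscr{F}_R$ rather than being identified by the $\{\pm1\}$-action, and it is precisely here that the hypothesis $\cha R\neq2$ is used.
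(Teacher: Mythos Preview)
Your proof is correct and follows essentially the same idea as the paper's: exhibit two non-adjacent vertices whenever $R$ has an element outside $\{0,\pm1\}$. The paper is slightly more economical, using the pair $1/0$ and $1/x$ (with $x\notin\{0,\pm1\}$), which are automatically distinct in $\mathscr{F}_R$ regardless of characteristic and so avoids your case split on $\cha R$.
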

\begin{proof}
Certainly $\mathscr{F}_2$ and $\mathscr{F}_3$ have diameter 1 (see Figure~\ref{fig29}). Suppose now that $R$ is not equal to $\mathbb{Z}/2\mathbb{Z}$ or $\mathbb{Z}/3\mathbb{Z}$. Then $R$ contains an element $x$ other than $0$ or $\pm 1$, in which case the vertices $1/0$ and $1/x$ of $\mathscr{F}_R$ are not adjacent. Hence $\mathscr{F}_R$ does not have diameter 1, as required.
\end{proof}

Next we classify the finite local rings for which $\mathscr{F}_R$ has diameter 2.

\begin{lemma}\label{lem3x}
The Farey complex $\mathscr{F}_R$ of a finite local ring $R$ has diameter 2 if and only if $R$ is~$\mathbb{Z}/4\mathbb{Z}$.
\end{lemma}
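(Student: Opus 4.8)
The plan is to mirror the proof of Lemma~\ref{lem3} (the analogue for $\mathscr{E}_R$), leaning on the results already established for $\mathscr{E}_R$ and on the covering map $\mathscr{E}_R\to\mathscr{F}_R$.

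For the ``if'' direction, I would first note that $\mathbb{Z}/4\mathbb{Z}$ is a finite local ring, so Lemma~\ref{lem3} gives $\diam\mathscr{E}_{\mathbb{Z}/4\mathbb{Z}}=2$; since the covering map $\mathscr{E}_{\mathbb{Z}/4\mathbb{Z}}\to\mathscr{F}_{\mathbb{Z}/4\mathbb{Z}}$ carries paths to paths and is surjective on vertices, distances cannot increase, so $\diam\mathscr{F}_{\mathbb{Z}/4\mathbb{Z}}\leq 2$. As $\mathbb{Z}/4\mathbb{Z}$ is neither $\mathbb{Z}/2\mathbb{Z}$ nor $\mathbb{Z}/3\mathbb{Z}$, Lemma~\ref{lem1x} rules out diameter~$1$, so the diameter is exactly~$2$. (One could instead invoke the fact that $\mathscr{F}_4$ is the octahedron of Figure~\ref{fig29}, whose $1$-skeleton has diameter~$2$, but the route via $\mathscr{E}_R$ is cleanest.)

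For the ``only if'' direction, suppose $R$ is a finite local ring with $\diam\mathscr{F}_R=2$, and split on whether $R$ has a unit outside $\{1,-1\}$. If not, then Lemma~\ref{lem2} forces $R\in\{\mathbb{Z}/2\mathbb{Z},\mathbb{Z}/3\mathbb{Z},\mathbb{Z}/4\mathbb{Z}\}$, and Lemma~\ref{lem1x} eliminates the first two (their complexes have diameter~$1$), leaving $R=\mathbb{Z}/4\mathbb{Z}$. If instead $R$ has a unit $x\notin\{1,-1\}$, I would derive a contradiction by showing that $1/0$ and $x/0$ lie at distance at least~$3$ in $\mathscr{F}_R$: they are distinct vertices (for $(x,0)=\pm(1,0)$ would give $x=\pm1$), they are not adjacent (the relevant ``determinant'' is $1\cdot0-0\cdot x=0\notin\{\pm1\}$), and every vertex adjacent to $1/0$ has a representative of the form $(p,1)$ --- because an edge $1/0\to p/q$ forces $q\in\{\pm1\}$ --- while such a vertex is adjacent to $x/0$ only if $-x\in\{\pm1\}$, which is false; hence no path of length at most~$2$ joins them, contradicting $\diam\mathscr{F}_R=2$.

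The argument is short and mostly packages earlier lemmas; the only point needing care --- the ``main obstacle,'' such as it is --- is that the edge and distance conditions in $\mathscr{F}_R$ are statements about vertices (orbits under $\{\pm1\}$) rather than about chosen representatives, so one must check that normalising the middle vertex of a hypothetical length-$2$ path to a representative $(p,1)$ is legitimate, which it is because $\{\pm1\}$ acts by scaling both coordinates simultaneously.
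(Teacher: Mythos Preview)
Your proof is correct and follows essentially the same approach as the paper's own proof. The only cosmetic difference is in the ``if'' direction: the paper simply cites the octahedron picture of $\mathscr{F}_4$, while you reach $\diam\mathscr{F}_4\leq 2$ via the covering map from $\mathscr{E}_{\mathbb{Z}/4\mathbb{Z}}$ (and you even mention the octahedron route yourself); the ``only if'' direction---finding a unit $x\neq\pm1$ via Lemma~\ref{lem2} and showing $1/0$ and $x/0$ are at distance at least~$3$---matches the paper exactly, with your version spelling out the verification the paper leaves to the reader.
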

\begin{proof}
Certainly $\mathscr{F}_4$ has diameter 2 since it is an octahedron, as shown in  Figure~\ref{fig29}, and $\mathscr{F}_2$ and $\mathscr{F}_3$ have diameter 1.

Suppose now that $R$ is a finite local ring other than $\mathbb{Z}/2\mathbb{Z}$, $\mathbb{Z}/3\mathbb{Z}$, or $\mathbb{Z}/4\mathbb{Z}$. By Lemma~\ref{lem2}, $R$ contains a unit $x$ not equal to $\pm 1$. Then one can check that the graph distance between $1/0$ and $x/0$ is at least 3, so $\diam \mathscr{F}_3\neq 2$, as required.
\end{proof}

We can now prove Theorem~\ref{thm7}.

\begin{proof}[Proof of Theorem~\ref{thm7}]
We know that $\diam \mathscr{E}_R \leq 3$, by Theorem~\ref{thm1}, and since $\pi_{\{\pm 1\}}\colon \mathscr{E}_R\longrightarrow \mathscr{F}_R$ is a covering map we see that $\diam \mathscr{F}_R \leq 3$. By Lemma~\ref{lem1x}, among Farey complexes of finite rings only $\mathscr{F}_2$ and $\mathscr{F}_3$ have diameter 1. 

It remains to determine the Farey complexes $R$ for which $\diam \mathscr{F}_R=2$. To this end, suppose that $R$ is some direct product of copies of $\mathbb{Z}/2\mathbb{Z}$, $\mathbb{Z}/3\mathbb{Z}$, and $\mathbb{Z}/4\mathbb{Z}$ (and $R\neq \mathbb{Z}/2\mathbb{Z},\mathbb{Z}/3\mathbb{Z}$). Between any two (possibly equal) vertices of any one of $\mathscr{F}_2$, $\mathscr{F}_3$, and $\mathscr{F}_4$ we can find a path of length exactly 2. Hence we can find a path of length 2 between any two vertices of $\mathscr{F}_R$, so $\diam \mathscr{F}_R=2$.

Finally, consider any finite ring $R$ for which $\diam \mathscr{F}_R=2$. We can express $R$ as a product of finite local rings. The diameter of the Farey complex of each of these local rings must be at most 2. Hence, by Lemmas~\ref{lem1x} and~\ref{lem3x}, each local ring must be one of $\mathbb{Z}/2\mathbb{Z}$, $\mathbb{Z}/3\mathbb{Z}$, or $\mathbb{Z}/4\mathbb{Z}$, as required.
\end{proof}

%%%%%%%%%%%%%%%%%%%%
%%%%%%%%%%%%%%%%%%%%
\section{Farey complexes and surfaces}\label{section5}

We saw in Figure~\ref{fig29} that the Farey complexes $\mathscr{F}_N$  are surfaces for low values of $N$, and indeed $\mathscr{F}_N$ is a surface for all values of $N$, as we will see shortly. Moreover, we will prove that the Farey complexes over $\mathbb{Z}/N\mathbb{Z}$ are the \emph{only} Farey complexes of finite rings that give rise to surfaces. 

Let us begin by defining a 2-complex and surface complex formally (following \cite{CoGrKuZi1998}*{Chapter~3}, although simplified slightly in our context).

\begin{definition}
A \emph{2-complex} $C$ consists of a collection of vertices $V$, a collection of edges $E$ comprising subsets of $V$ of cardinality 2, and a collection of faces $F$ comprising subsets of $V$ of cardinality 3, where each subset of a face that has cardinality 2 belongs to $E$. 
\end{definition}

This definition does not allow multiple edges between a pair of vertices and nor does it allow edges with only one vertex. Familiar concepts like connectedness can be defined for 2-complexes in the usual way. The Farey complex $\mathscr{F}_{R,U}$ is a finite 2-complex, because $\pm 1\in U$ (so directed edges come in inverse pairs, which can be considered as undirected edges). It is connected, by Theorem~\ref{thm1}. 

\begin{definition}
A \emph{surface complex} $C$ is a connected 2-complex that satisfies the following two properties. 
\begin{enumerate}[label=(\arabic*)]
\item\label{p:e12f} Each edge of $C$ is incident to either one or two faces.
\item\label{p:fseq} For any two edges $u$ and $v$ incident to the same vertex $w$ in $C$ there is a sequence of distinct neighbours $v_1,v_2,\dots,v_n$ of $w$ with $v_1=u$ and $v_n=v$ such that $w$, $v_{i-1}$, and $v_i$ form a face, for $i=2,3,\dots,n$.
\end{enumerate}
\end{definition}

A surface complex can be realised as a topological surface, possibly with boundary. An edge lies on the boundary if it is incident to only one face.

\begin{theorem}\label{thm2}
The Farey complex $\mathscr{F}_{R,U}$ of a finite ring $R$ with units $U$, where $-1\in U$, is a surface complex if and only if $R=\mathbb{Z}/N\mathbb{Z}$, for $N=2,3,\dotsc$. 
\end{theorem}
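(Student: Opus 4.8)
The plan is to localise the whole question at the vertex $1/0$ and the edge $1/0\to 0/1$, exploiting the symmetry of the complex. Connectedness of $\mathscr{F}_{R,U}$ is already guaranteed by Theorem~\ref{thm1}, so being a surface complex amounts to properties~\ref{p:e12f} and~\ref{p:fseq}. Since $\textnormal{SL}_2(R)$ acts transitively on directed edges (Proposition~\ref{prop2}) and, being induced by a graph automorphism of $\mathscr{F}_R$ that carries triangles to triangles, it permutes the faces of $\mathscr{F}_{R,U}$, it is enough to test property~\ref{p:e12f} on the single edge $1/0\to 0/1$ and property~\ref{p:fseq} at the single vertex $1/0$. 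So I would first describe the star of $1/0$: its neighbours are exactly the vertices $a/1$ with $a\in R$, pairwise distinct; the triple $\{1/0,a/1,b/1\}$ is a triangle precisely when $a-b\in U$; and a short matrix computation — lifting through $\pi_U$ to $\mathscr{F}_R$ and using that faces of $\mathscr{F}_{R,U}$ are images of triangles of $\mathscr{F}_R$ — shows that such a triangle is a \emph{face} of $\mathscr{F}_{R,U}$ precisely when $a-b$ lies in the symmetric subset $D=\{\pm\lambda^2:\lambda\in U\}$ of $R$, which contains $\pm1$ since $-1\in U$ (and equals $\{\pm1\}$ in the basic case $U=\{\pm1\}$). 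Consequently the faces through $1/0$ are the triples $\{1/0,a/1,b/1\}$ with $a-b\in D$; equivalently the ``combinatorial link'' of $1/0$, recording which neighbours share a face, is the Cayley graph $\mathrm{Cay}\big((R,+),D\big)$, and the number of faces incident to any edge of $\mathscr{F}_{R,U}$ equals $|D|$.

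With this local picture, property~\ref{p:e12f} is exactly the assertion $|D|\le 2$, which I would read off the face count above, and property~\ref{p:fseq} is exactly the assertion that $\mathrm{Cay}\big((R,+),D\big)$ is connected, i.e.\ that $D$ generates the additive group of $R$. Because $1\in D$, the latter is equivalent to $(R,+)=\langle 1\rangle$; and a ring with identity whose additive group is generated by $1$ is precisely a quotient of $\mathbb{Z}$, so for finite $R$ this means $R\cong\mathbb{Z}/N\mathbb{Z}$ with $N\ge 2$. This gives both implications. If $R$ is \emph{not} of the form $\mathbb{Z}/N\mathbb{Z}$, then $(R,+)$ is not generated by $1$, hence $D$ does not generate $(R,+)$, the link $\mathrm{Cay}\big((R,+),D\big)$ is disconnected, property~\ref{p:fseq} fails at $1/0$, and $\mathscr{F}_{R,U}$ is not a surface complex. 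Conversely, if $R=\mathbb{Z}/N\mathbb{Z}$, then $1$ generates $(R,+)$, so the link is connected and property~\ref{p:fseq} holds, while property~\ref{p:e12f} follows from the value of $|D|$; hence $\mathscr{F}_{R,U}$ is a surface complex. (An alternative argument for this direction realises $\mathscr{F}_{\mathbb{Z}/N\mathbb{Z},U}$ as a quotient of the Farey tessellation of $\mathbb{H}^2$ under reduction modulo $N$ and transports the surface-complex structure downward, but the link computation is more self-contained.)

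The step I expect to be the main obstacle is the local description in the first paragraph: determining exactly which triangles through $1/0$ are faces (the lifting computation through $\pi_U$) and the resulting identification of the set $D$ and the per-edge face count, since this is where the definition of a face of $\mathscr{F}_{R,U}$ as an image of a face of $\mathscr{F}_R$ must be carefully unpacked. Once the link of $1/0$ is recognised as $\mathrm{Cay}\big((R,+),D\big)$, the remainder is the clean algebraic fact that this Cayley graph is connected if and only if $1$ generates $(R,+)$, which is in turn equivalent to $R\cong\mathbb{Z}/N\mathbb{Z}$.
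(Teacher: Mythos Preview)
Your identification of the link of $1/0$ with $\mathrm{Cay}\big((R,+),D\big)$ for $D=\{\pm\lambda^2:\lambda\in U\}$ is correct, and for the ``only if'' direction this is precisely the paper's argument made systematic: the paper picks $a\in R\setminus\{0,1,\dots,\cha R-1\}$ and observes directly that $0/1$ and $a/1$ lie in different components of that link (using that the face-neighbours of $x/1$ through $1/0$ are $(x\pm1)/1$). For the ``if'' direction the paper instead realises $\mathscr{F}_N$ as the triangulation of $\Gamma_N\backslash\overline{\mathbb{H}}$ inherited from $\mathscr{F}_\mathbb{Z}$, so your direct link computation is more elementary and self-contained.

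There is, however, a genuine gap in your ``only if'' reasoning. You assert that because $1\in D$, the condition ``$D$ generates $(R,+)$'' is \emph{equivalent} to ``$(R,+)=\langle 1\rangle$'', and hence that property~\ref{p:fseq} fails whenever $R\ncong\mathbb{Z}/N\mathbb{Z}$. The implication $\langle 1\rangle\neq R\Rightarrow\langle D\rangle\neq R$ needs $D\subseteq\langle 1\rangle$, which you have not shown and which is false in general: for $R$ the field of order $4$ and $U=R^\times$ one computes $D=R^\times$, and this \emph{does} generate $(R,+)$ additively even though $\langle 1\rangle=\{0,1\}\neq R$. In that example property~\ref{p:fseq} holds; it is property~\ref{p:e12f} that fails, since $|D|=3$. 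The repair is to argue from both properties together: if~\ref{p:e12f} holds then $|D|\le2$, and since $\{\pm1\}\subseteq D$ this forces $D=\{\pm1\}\subseteq\langle 1\rangle$ whenever $\cha R\neq 2$, after which~\ref{p:fseq} yields $R=\langle 1\rangle$; the characteristic-$2$ case with $|D|=2$ needs a short separate check. Relatedly, in your converse you say property~\ref{p:e12f} ``follows from the value of $|D|$'' without computing it, but $|D|$ depends on $U$ and not only on $R$ --- for instance $|D|=6$ when $R=\mathbb{Z}/7\mathbb{Z}$ and $U=R^\times$ --- so that step likewise needs care.
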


To prove Theorem~\ref{thm2}, we first discuss the Farey complex $\mathscr{F}_N$ for $\mathbb{Z}/N\mathbb{Z}$; as we shall see, this 2-complex can be realised as a triangulation of a hyperbolic surface with the vertices of $\mathscr{F}_N$ located at the cusps of the surface and the edges of $\mathscr{F}_N$ represented by hyperbolic geodesics between pairs of cusps. We summarise the details; a full explanation can be found in~\cite{IvSi2005}.

The group $\text{SL}_2(\mathbb{Z})$ acts on the extended complex plane by the familiar rule for M\"obius transformations
\[
z\longmapsto \frac{az+b}{cz+d},\quad \text{where } \begin{pmatrix}a&b\\ c& d\end{pmatrix}\in\text{SL}_2(\mathbb{Z}), 
\]
with the usual conventions for the point $\infty$. This group also acts on the extended rationals $\mathbb{Q}_\infty=\mathbb{Q}\cup\{\infty\}$ and on the upper half-plane $\mathbb{H}=\{z:\Ima z>0\}$. The action of  $\text{SL}_2(\mathbb{Z})$ on $\mathbb{H}$ preserves the hyperbolic metric $|dz|/\Ima z$ on $\mathbb{H}$.  For $N=2,3,\dotsc$, the \emph{principal congruence subgroup} of  $\text{SL}_2(\mathbb{Z})$ of level $N$ is the group 
\[
\Gamma_N =  \left\{ \begin{pmatrix}a&b\\ c& d\end{pmatrix}\in\text{SL}_2(\mathbb{Z}) : \begin{pmatrix}a&b\\ c& d\end{pmatrix}\equiv \begin{pmatrix}1&0\\ 0& 1\end{pmatrix}\tpmod{N}\right\}.
\]
 It is a normal subgroup of $\text{SL}_2(\mathbb{Z})$ because it is the kernel of the homomorphism 
\[
\text{SL}_2(\mathbb{Z})\longrightarrow \text{SL}_2(\mathbb{Z}/N\mathbb{Z})
\]
induced by the reduction homomorphism $\mathbb{Z}\longrightarrow\mathbb{Z}/N\mathbb{Z}$ that sends an integer $a$ to the class $\bar{a}$ of integers congruent to  $a$ modulo $N$. The group $\Gamma_N$ is discrete and acts freely on $\mathbb{H}$, so the quotient $S_N=\Gamma_N\backslash \mathbb{H}$ is a hyperbolic surface. Topologically, $S_N$ is a compact surface with finitely many punctures. Geometrically, each puncture is said to be a \emph{cusp} of $S_N$; there is a neighbourhood of each cusp isometric to the hyperbolic punctured unit disc. For $N>2$, we have
\[
\text{genus }= 1+ \frac{N^2(N-6)}{24}\prod_{p|N}\mleft(1-\frac{1}{p^2}\mright),\qquad \text{number of cusps }=\frac{N^2}{2}\prod_{p|N}\mleft(1-\frac{1}{p^2}\mright),
\]
where both products are taken over the prime divisors $p$ of $N$. When $N=2$ the genus is $0$ and there are 3 cusps; see \cite{DiSh2005}*{Section~3.9} for these formulae.

The surface $S_N$ can be compactified by adjoining one additional point to $S_N$ for each cusp. This can be realised by considering the action of $\Gamma_N$ on $\overline{\mathbb{H}}=\mathbb{H}\cup\mathbb{Q}_\infty$. Under this action, the cusps of $S_N$ are identified with the orbits of $\Gamma_N$ on $\mathbb{Q}_\infty$. We denote the resulting compactification of $S_N$ by $\overline{S_N}=\Gamma_N\backslash \overline{\mathbb{H}}$; full details of its construction can be found in \cite{DiSh2005}*{Chapter~2}. Under the projection $\sigma\colon \overline{\mathbb{H}}\longrightarrow \overline{S_N}$ the usual Farey complex $\mathscr{F}_\mathbb{Z}$ on $\overline{\mathbb{H}}$ (see Figure~\ref{fig1}) is mapped to a triangulation of $\overline{S_N}$ with vertices at the cusps of $S_N$ and edges represented by hyperbolic geodesics between certain pairs of cusps. We will show that this triangulation is a realisation of the Farey complex $\mathscr{F}_N$ for $N>2$. The reader can consult \cite{IvSi2005} for a more complete explanation.

We recall that vertices of $\mathscr{F}_N$ have the form $\pm (x,y)$, where $x,y\in\mathbb{Z}/N\mathbb{Z}$. There is a one-to-one correspondence between the cusps $\Gamma_N\backslash \mathbb{Q}_\infty$ and the vertices of $\mathscr{F}_N$ given by 
\[
\Gamma_N(a/b) \longmapsto \pm(\bar{a},\bar{b}).
\]
Two cusps $\Gamma_N(a/b)$ and $\Gamma_N(c/d)$ are adjacent in $\sigma(\mathscr{F}_\mathbb{Z})$ if and only if there exist $A,B\in \Gamma_N$ with 
\[
\begin{pmatrix}a & b\end{pmatrix}A^TJB\binom{c}{d}=\pm 1, \quad\text{where }J=\begin{pmatrix}0 & 1\\ -1 & 0 \end{pmatrix}\!,
\]
and this occurs if and only if $\bar{a}\bar{d}-\bar{b}\bar{c}=\pm 1$. 

For $N>2$, the image of every face of $\mathscr{F}_\mathbb{Z}$ is uniquely specified by the image of its three vertices. The case $N=2$ is special: $\mathscr{F}_2$ consists of a single face, while $\overline{S_2}$ is topologically a sphere arising from two copies of that face glued along the boundary.

This completes our summary of how $\mathscr{F}_N$ arises as a surface complex, indeed one endowed with a hyperbolic structure and with vertices located at the cusps of the surface. The Farey complex $\mathscr{F}_{\mathbb{Z}/N\mathbb{Z},U}$, where $U$ is a subgroup of $(\mathbb{Z}/N\mathbb{Z})^\times$ that contains $\{\pm 1\}$, is the quotient of $\mathscr{F}_N$ by the (free) action of the finite group $U$ on $\mathscr{F}_N$ -- so it is also a surface, possibly nonorientable (for example, $\mathscr{F}_{\mathbb{Z}/5\mathbb{Z},(\mathbb{Z}/5\mathbb{Z})^\times}$ is the projective icosahedron). There are similar geometric constructions of Farey complexes in three dimensions, exemplified by the octahedral tessellation of a hyperbolic 3-manifold in Figure~\ref{fig3}(b). 

We can now complete the proof of Theorem~\ref{thm2}. 

\begin{proof}[Proof of Theorem~\ref{thm2}]
Let $R$ be a finite ring and let $U$ be a subgroup of $R^\times$ that contains $\{\pm 1\}$. We have just seen that $\mathscr{F}_{R,U}$ is a surface complex when $R=\mathbb{Z}/N\mathbb{Z}$. Suppose instead that $R\neq \mathbb{Z}/N\mathbb{Z}$. Then we can find $a\in R - \{0,1,\dots,N-1\}$, where $N=\cha R$. Let $w=1/0$. For each neighbour $x/1$ of $w$, the only neighbours of both $x/1$ and $w$ are $(x\pm 1)/1$. It follows that $u=0/1$ and $v=a/1$ fail Property~\ref{p:fseq} from the definition of a surface complex. Hence $\mathscr{F}_{R,U}$ is not a surface complex, as required.
\end{proof}

%%%%%%%%%%%%%%%%%%%%%%%%%
%%%%%%%%%%%%%%%%%%%%%%%%%
\section{Tame $\text{SL}_2$-tilings}\label{section6}

In this section we prove Theorem~\ref{thm3}. Before that we will establish the following theorem, which gives necessary and sufficient conditions for an $\text{SL}_2$-tiling to be tame. Results of this type have been known since $\text{SL}_2$-tilings were first studied -- perhaps the earliest source is \cite[Lemma~2]{BeRe2010}, which is framed in the context of $\text{SL}_k$-tilings over fields. We are not aware of any such result for general rings, so we include a short proof, which circumvents divisibility arguments often present in related work on integers or fields.

\begin{theorem}\label{thm19}
An $\textnormal{SL}_2$-tiling $\mathbf{M}$ over a ring $R$ is tame if and only if there are bi-infinite sequences $(r_i)$ and $(s_j)$ in $R$ such that
\[
m_{i-1,j}+m_{i+1,j}=r_im_{i,j}\quad\text{and}\quad m_{i,j-1}+m_{i,j+1}=s_jm_{i,j},
\]
for $i,j\in\mathbb{Z}$.	Furthermore, if $\mathbf{M}$ is tame, then $(r_i)$ and $(s_j)$ are uniquely determined by $\mathbf{M}$.
\end{theorem}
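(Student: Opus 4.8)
The plan is to follow the template of Lemma~\ref{lem85}, the crucial point being that in an $\text{SL}_2$-tiling any two horizontally or vertically adjacent entries form a unimodular pair: from $m_{i,j}m_{i+1,j+1}-m_{i,j+1}m_{i+1,j}=1$ we read off $m_{i,j}m_{i+1,j+1}+m_{i,j+1}(-m_{i+1,j})=1$, so $(m_{i,j},m_{i,j+1})$ is unimodular, and likewise for each vertical pair $(m_{i,j},m_{i+1,j})$.

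For the ``if'' direction it suffices to have either one of the two recurrences. Assuming $m_{i-1,j}+m_{i+1,j}=r_im_{i,j}$ for all $i,j$, in the $3$-by-$3$ block of contiguous entries centred at $(i,j)$ the top row plus the bottom row equals $r_i$ times the middle row; hence the three rows are $R$-linearly dependent and the determinant vanishes, so $\mathbf{M}$ is tame.

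For the ``only if'' direction, fix $i$ and $j$. Since the contiguous block $\begin{pmatrix} m_{i,j-1} & m_{i,j}\\ m_{i+1,j-1} & m_{i+1,j}\end{pmatrix}$ has determinant $1$, it is invertible, so there are unique $a,b\in R$ with $m_{i,j+1}=am_{i,j-1}+bm_{i,j}$ and $m_{i+1,j+1}=am_{i+1,j-1}+bm_{i+1,j}$. Applying the explicit inverse to $\begin{pmatrix} m_{i,j+1}\\ m_{i+1,j+1}\end{pmatrix}$ and using that the $2$-by-$2$ blocks have determinant $1$ gives $a=-1$ and $b=s_j:=m_{i,j-1}m_{i+1,j+1}-m_{i+1,j-1}m_{i,j+1}$, so $m_{i,j-1}+m_{i,j+1}=s_jm_{i,j}$ and the same identity with $i$ replaced by $i+1$. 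To propagate this identity to every row I would invoke tameness: in the $3$-by-$3$ contiguous determinant centred at $(i,j)$ the column operation $C_3\mapsto C_1+C_3-s_jC_2$ annihilates the new third column in rows $i$ and $i+1$, so expanding along that column gives $m_{i-1,j-1}+m_{i-1,j+1}-s_jm_{i-1,j}=0$; the same move centred at $(i+1,j)$, then at $(i-1,j)$, then at $(i+2,j)$, and so on, shows inductively that $m_{i,j-1}+m_{i,j+1}=s_jm_{i,j}$ for all $i$, with $s_j$ independent of $i$. Interchanging the roles of rows and columns produces the sequence $(r_i)$.

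Uniqueness is immediate from unimodularity: if $(r_i)$ and $(r_i')$ both satisfy the recurrence then $(r_i-r_i')m_{i,j}=0$ for every $j$, so, choosing the two columns $j,j+1$ and writing $1=m_{i,j}m_{i+1,j+1}-m_{i,j+1}m_{i+1,j}$, we get $r_i-r_i'=(r_i-r_i')\bigl(m_{i,j}m_{i+1,j+1}-m_{i,j+1}m_{i+1,j}\bigr)=0$, and similarly $s_j=s_j'$. I expect the only delicate point to be the bookkeeping of signs in the $2$-by-$2$ inversion and in the column operation, ensuring that the coefficient which propagates is genuinely one and the same $s_j$; otherwise the argument is purely formal and, as in Section~\ref{section3}, uses no cancellation or divisibility hypothesis on $R$.
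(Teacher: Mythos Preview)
Your proof is correct and complete. The ``if'' direction and the uniqueness argument coincide with the paper's. For the ``only if'' direction your route differs from the paper's: you invert the $2$-by-$2$ block to obtain the recurrence for two consecutive rows, then propagate by a column operation and cofactor expansion (using that the remaining $2$-by-$2$ minor equals $1$), whereas the paper establishes a Dodgson condensation identity (Lemma~\ref{lemK}) and deduces from it (Lemma~\ref{lemL}) that $\Delta_{i,j}:=m_{i-1,j-1}m_{i,j+1}-m_{i-1,j+1}m_{i,j-1}$ equals $m_{i,j-1}m_{i+1,j+1}-m_{i,j+1}m_{i+1,j-1}$, so that $\Delta_{i,j}=\Delta_{i+1,j}$ immediately and no induction is needed. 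Your argument is slightly more elementary in that it avoids the condensation identity altogether; the paper's is slightly slicker in that the independence of $s_j$ from $i$ is visible at once from the two coincident formulas rather than established step by step. Both arguments are purely formal over any commutative ring, as you note.
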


Key to proving Theorem~\ref{thm19} is the following elementary lemma (not proven here), which is a special case of a procedure known as `Dodgson condensation' for calculating determinants.

\begin{lemma}\label{lemK}
In any ring $R$ we have
\[
e\det\!\begin{pmatrix}a&b&c\\ d&e&f\\ g&h&i\end{pmatrix}=\det\!\begin{pmatrix}a&b\\ d&e\end{pmatrix}\det\!\begin{pmatrix}e&f\\ h&i\end{pmatrix}-\det\!\begin{pmatrix}b&c\\ e&f\end{pmatrix}\det\!\begin{pmatrix}d&e\\ g&h\end{pmatrix}\!.
\]
\end{lemma}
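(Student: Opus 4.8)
The plan is to prove the identity by recognising it as an identity of polynomials with integer coefficients in nine indeterminates and then verifying it once and for all in the free commutative ring $\mathbb{Z}[a,b,c,d,e,f,g,h,i]$. Both sides of the asserted equation are built from the entries using only addition, subtraction, and multiplication, so each side is an element of $\mathbb{Z}[a,b,c,d,e,f,g,h,i]$. Any commutative ring $R$ together with a choice of the nine entries determines a unique ring homomorphism from this polynomial ring to $R$ sending the indeterminates to the chosen entries, and ring homomorphisms respect the operations defining both sides; hence it suffices to prove that the two sides agree as polynomials. This reduction is precisely what lets us avoid any appeal to cancellation or to $R$ being an integral domain or a field, which is the feature we need for general $R$.

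With the reduction in hand, the remaining task is to expand both sides and match monomials. Expanding the $3\times 3$ determinant along its first row and multiplying by $e$ gives
\[
e\det\!\begin{pmatrix}a&b&c\\ d&e&f\\ g&h&i\end{pmatrix}=ae^2i-aefh-bdei+befg+cdeh-ce^2g.
\]
On the other side, $(ae-bd)(ei-fh)=ae^2i-aefh-bdei+bdfh$ and $(bf-ce)(dh-eg)=bdfh-befg-cdeh+ce^2g$; subtracting the second product from the first, the two copies of $bdfh$ cancel, leaving exactly the six monomials above. This single cancellation is the only thing that actually happens, so the identity follows. I would present the computation compactly rather than term by term, observing that every surviving monomial has total degree $4$ and contains the central entry $e$, which constrains the bookkeeping and makes the comparison quick.

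There is no genuine obstacle here: the statement is the $3\times 3$ instance of the Desnanot--Jacobi (Lewis Carroll) identity, and the only point requiring care is to carry out the verification purely formally over $\mathbb{Z}$, so that no division or cancellation is invoked. Were a more conceptual proof wanted, one could instead apply Jacobi's theorem on the minors of the adjugate: the $2\times 2$ minor of $\operatorname{adj}(M)$ on rows and columns $\{1,3\}$ equals $\det(M)$ times the complementary central minor $e$ of $M$, while expanding that same $2\times 2$ minor through the cofactor description of the adjugate reproduces the right-hand side. I would relegate this to a remark, since the direct polynomial verification is shorter and entirely self-contained.
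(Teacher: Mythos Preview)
Your proof is correct. The paper itself does not prove this lemma: it is stated with the remark that it is an elementary special case of Dodgson condensation and left unproven. Your direct polynomial verification over $\mathbb{Z}[a,\dots,i]$, together with the reduction via the universal ring homomorphism to an arbitrary commutative $R$, supplies exactly the omitted argument and correctly avoids any use of cancellation or invertibility.
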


We require another lemma before proving Theorem~\ref{thm19}.

\begin{lemma}\label{lemL}
Consider any 3-by-3 matrix
\[
A=\begin{pmatrix}a&b&c\\ d&e&f\\ g&h&i\end{pmatrix}
\]
over a ring $R$ with determinant 0, and suppose that 
\[
\det\!\begin{pmatrix}a&b\\ d&e\end{pmatrix}=\det\!\begin{pmatrix}b&c\\ e&f\end{pmatrix}=\det\!\begin{pmatrix}d&e\\ g&h\end{pmatrix}=\det\!\begin{pmatrix}e&f\\ h&i\end{pmatrix}=1.
\]
Then
\(
d+f  = \Delta e,
\)
where $\Delta = af-cd=di-fg$.
\end{lemma}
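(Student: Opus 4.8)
The plan is to exploit Lemma~\ref{lemK} (Dodgson condensation) together with the four unit $2$-by-$2$ minors, and to produce two separate expressions for $d+f$ and for $\Delta$ by clearing out the middle row and column in different ways. First I would set $\Delta_1=af-cd$ and $\Delta_2=di-fg$ and show $\Delta_1=\Delta_2$; this is the assertion that the two ``anti-diagonal'' $2$-by-$2$ determinants obtained by deleting the middle row and a suitable column agree. The cleanest route is to apply Lemma~\ref{lemK} directly: since $\det A=0$, the right-hand side of the condensation identity vanishes, giving
\[
\det\!\begin{pmatrix}a&b\\ d&e\end{pmatrix}\det\!\begin{pmatrix}e&f\\ h&i\end{pmatrix}=\det\!\begin{pmatrix}b&c\\ e&f\end{pmatrix}\det\!\begin{pmatrix}d&e\\ g&h\end{pmatrix},
\]
but with all four of those minors equal to $1$ this reads $1=1$, which is no help on its own. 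So instead I would apply the condensation lemma (or rather the underlying cofactor expansion) to the two $2$-by-$3$ strips formed by rows $1,2$ and by rows $2,3$.

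The key computation I would carry out is the following. Expand $0=\det A$ along the middle row: $0 = -d\,M_{21}+e\,M_{22}-f\,M_{23}$, where $M_{2k}$ is the $(2,k)$ minor of $A$. Now $M_{21}=bi-ch$, $M_{22}=ai-cg$, and $M_{23}=ah-bg$. The hypotheses give us relations among the entries — for instance $ae-bd=1$ and $ei-fh=1$ — which I can use to rewrite $M_{21}$, $M_{22}$, $M_{23}$ in a form where $e$ factors out. Concretely, I expect that multiplying the expansion $d\,M_{21}-e\,M_{22}+f\,M_{23}=0$ through and substituting $ae=1+bd$, $ei=1+fh$, $bf-ce=\pm 1$ (the minor $\det\begin{pmatrix}b&c\\ e&f\end{pmatrix}=1$ rewritten), $eg-dh=-1$ (from $\det\begin{pmatrix}d&e\\ g&h\end{pmatrix}=1$) will collapse the identity to $e(d+f)=e\Delta$ after the dust settles; then since we want an equation with no spare factor of $e$, I would note that the same manipulations carried out on the cofactor expansion along the middle \emph{column} give $e(d+f)$ paired against the column analogue, and combining the two — or rather, observing that the factor of $e$ can be removed exactly as in the uniqueness argument of Lemma~\ref{lem85}, using $af-cd=\Delta$ together with a minor equal to $1$ to cancel $e$ — yields $d+f=\Delta e$ on the nose.

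The main obstacle I anticipate is the cancellation of the common factor $e$: we are working over an arbitrary commutative ring with no cancellation, so I cannot simply divide by $e$. The remedy is the trick already used in the proofs of Lemma~\ref{lem85} and Lemma~\ref{lemK}'s applications: if $x$ is such that $ex = \text{(expression)}$ and one also has a unimodular-type relation $\alpha e+\beta(\text{something})=1$ exhibiting $e$ as part of a unit combination, then $x$ itself is pinned down. Here the four unit minors supply exactly such relations among the entries adjacent to $e$, so after establishing $e(d+f-\Delta e)=0$ I would multiply by the appropriate cofactor (one of the entries $a,c,g,i$ scaled so that it combines with a multiple of $e$ to give $1$) to conclude $d+f-\Delta e=0$. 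The equality $af-cd=di-fg$ is then either extracted as a byproduct of the same computation or proved first by the identical factor-killing argument applied to $\det A=0$ expanded along the first and third columns.
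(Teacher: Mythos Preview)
Your plan has real gaps at both steps, and the paper's proof shows that the cancellation worry you spend most of your energy on is a red herring.

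For $d+f=\Delta e$, the paper never produces a spare factor of $e$ to remove. It simply writes each of $d$ and $f$ as $1$ times itself, using the two top unit minors as the $1$'s:
\[
d+f=(bf-ce)\,d+(ae-bd)\,f=bfd-ced+aef-bdf=(af-cd)\,e=\Delta_1 e.
\]
That is the whole argument---two lines, no cofactor expansion along the middle row, no ``dust settling'', no cancellation in a non-domain. (Running the same trick with the two bottom unit minors gives $d+f=\Delta_2 e$, incidentally.) Your middle-row expansion $d(bi-ch)+f(ah-bg)=e(ai-cg)$ does not obviously collapse to $e(d+f)=e\Delta$; you assert this without checking it, and I do not see a clean route there. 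Even if you did reach $e(d+f-\Delta e)=0$, your proposed cancellation via $ae-bd=1$ does not work: multiplying $X=d+f-\Delta e$ by $ae-bd$ gives $X=a(eX)-bdX=-bdX$, which only yields $aeX=0$, not $X=0$.

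For $\Delta_1=\Delta_2$, the paper's idea---which you do not have---is to apply Lemma~\ref{lemK} not to $A$ but to the matrix obtained by cycling the left column of $A$ to the right. The centre entry becomes $f$, two of the corner $2\times 2$ minors are still among your four unit minors, and the other two are $-\Delta_1$ and $-\Delta_2$. Dodgson condensation then reads $f\det A=\Delta_1-\Delta_2$, and $\det A=0$ finishes it. Again, no division.
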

\begin{proof}
Let $\Delta_1 = af-cd$ and $\Delta_2 = di-fg$. By moving the left column of $A$ over to the right and applying Lemma~\ref{lemK} we obtain
\[
f\det A=\Delta_1-\Delta_2.
\]
Since $\det A=0$, we see that $\Delta_1=\Delta_2$. Then
\[
d+f=(bf-ce)d+(ae-bd)f=(af-cd)e=\Delta e,
\]
as required. 
\end{proof}

Now we can prove Theorem~\ref{thm19}.

\begin{proof}[Proof of Theorem~\ref{thm19}]
Suppose first that $m_{i+1,j}+m_{i-1,j}=r_im_{i,j}$ and $m_{i,j+1}+m_{i,j-1}=s_jm_{i,j}$, for $i,j\in\mathbb{Z}$. Then the rows (or columns) of any 3-by-3 submatrix of $\mathbf{M}$ are linearly dependent, so the determinant of that matrix is 0. Hence $\mathbf{M}$ is tame.

Suppose now that $\mathbf{M}$ is tame. Then, by Lemma~\ref{lemL}, for any $i,j\in\mathbb{Z}$ we have
\[
m_{i,j-1}+m_{i,j+1}  = \Delta_{i,j}m_{i,j},
\]
where $\Delta_{i,j} = m_{i-1,j-1}m_{i,j+1}-m_{i-1,j+1}m_{i,j-1}=m_{i,j-1}m_{i+1,j+1}-m_{i,j+1}m_{i+1,j-1}$. This last pair of equations shows that, for any integer $j$, we have $\Delta_{i,j}=\Delta_{k,j}$, for all $i,k\in\mathbb{Z}$. Consequently, there is $s_j\in R$ with 
\[
m_{i,j-1}+m_{i,j+1}=s_jm_{i,j}, \quad\text{for $i\in\mathbb{Z}$,}
\]
as required. A similar argument gives the existence of the required sequence $(r_i)$.

It remains to prove that $(r_i)$ and $(s_j)$ are uniquely determined by $\mathbf{M}$. Again, we prove this for $(s_j)$; the proof for $(r_i)$ is similar. Suppose then that there is another sequence $(s_j')$ in $R$ with $m_{i,j-1}+m_{i,j+1}=s_j'm_{i,j}$, for $i,j\in\mathbb{Z}$. Choose any integer $j$. Then $(s_j-s_j')m_{i,j}=0$, for $i\in\mathbb{Z}$. Hence
\[
s_j-s_j'=(s_j-s_j')(m_{0,j}m_{1,j+1}-m_{0,j+1}m_{1,j})=0,
\]
so $s_j=s_j'$, as required.
\end{proof}

Next we set about proving Theorem~\ref{thm3}. Recall (from the introduction) that $\mathscr{P}$ denotes the collection of bi-infinite paths in $\mathscr{E}_R$ and $\SL$ denotes the collection of tame $\text{SL}_2$-tilings over $R$. The function $\widetilde{\Phi}\colon \mathscr{P}\times\mathscr{P}\longrightarrow \SL$ sends the pair of paths $\gamma$ and $\delta$, with vertices $(a_i,b_i)$ and $(c_j,d_j)$, to the $\text{SL}_2$-tiling $\mathbf{M}$ with entries 
\[
m_{i,j}=\begin{pmatrix}a_i & b_i\end{pmatrix}J\begin{pmatrix}c_j \\ d_j\end{pmatrix}=a_id_j-b_ic_j,
\quad
\text{where } 
J=\begin{pmatrix}0 & 1\\ -1 & 0 \end{pmatrix}\!.
\]
We can see that $\mathbf{M}$ is indeed an $\text{SL}_2$-tiling by observing that
\begin{equation}\label{eqn2}
\begin{pmatrix}m_{i,j} & m_{i,j+1}\\ m_{i+1,j} & m_{i+1,j+1} \end{pmatrix}=
\begin{pmatrix}a_i & b_i \\a_{i+1} & b_{i+1}\end{pmatrix}J
\begin{pmatrix}c_{j} & c_{j+1}\\ d_j & d_{j+1}\end{pmatrix}
\in \text{SL}_2(R).
\end{equation}
We must also show that $\mathbf{M}$ is tame. By Lemma~\ref{lem85}, the itinerary $(e_i)$ of $\gamma$ satisfies $a_{i-1}+a_{i+1}=e_ia_i$ and $b_{i-1}+b_{i+1}=e_ib_i$, for $i\in\mathbb{Z}$. It follows that any three consecutive rows of $\mathbf{M}$ are linearly dependent, so the determinant of any 3-by-3 submatrix of $\mathbf{M}$ is 0, as required. 

\begin{lemma}\label{lem78}
The map $\widetilde{\Phi}$ satisfies $\widetilde{\Phi}(A{\gamma},A{\delta})=\widetilde{\Phi}(\gamma,{\delta})$, for $A\in\textnormal{SL}_2(R)$.
\end{lemma}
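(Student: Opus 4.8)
The plan is to compute the $(i,j)$-entry of $\widetilde{\Phi}(A\gamma,A\delta)$ directly and show that it equals the $(i,j)$-entry of $\widetilde{\Phi}(\gamma,\delta)$, for every $i,j\in\mathbb{Z}$; since a tame $\text{SL}_2$-tiling is determined by its entries, this yields $\widetilde{\Phi}(A\gamma,A\delta)=\widetilde{\Phi}(\gamma,\delta)$.

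First I would record how the $\text{SL}_2(R)$-action transforms vertices. Write $A=\begin{pmatrix}a&b\\ c&d\end{pmatrix}\in\textnormal{SL}_2(R)$, and suppose $\gamma$ has vertices $(a_i,b_i)$ and $\delta$ has vertices $(c_j,d_j)$. By the rule defining the action on $\mathscr{E}_R$, the path $A\gamma$ has vertices $(aa_i+bb_i,\,ca_i+db_i)$ and $A\delta$ has vertices $(ac_j+bd_j,\,cc_j+dd_j)$; viewing vertices as column vectors these are $A\binom{a_i}{b_i}$ and $A\binom{c_j}{d_j}$, so as row vectors they are $\begin{pmatrix}a_i & b_i\end{pmatrix}A^T$ and $\begin{pmatrix}c_j & d_j\end{pmatrix}A^T$.

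Next, applying the definition of $\widetilde{\Phi}$ to the pair $(A\gamma,A\delta)$, the $(i,j)$-entry of the resulting tiling is
\[
\begin{pmatrix}a_i & b_i\end{pmatrix}A^T\,J\,A\begin{pmatrix}c_j \\ d_j\end{pmatrix}=\begin{pmatrix}a_i & b_i\end{pmatrix}\bigl(A^TJA\bigr)\begin{pmatrix}c_j \\ d_j\end{pmatrix},\qquad J=\begin{pmatrix}0&1\\ -1&0\end{pmatrix}.
\]
The key step is the identity $A^TJA=J$, which holds for every $A\in\textnormal{SL}_2(R)$ because $A^TJA=(\det A)J=J$ — this is exactly the computation already invoked in Section~\ref{section2} (for instance in verifying that each $\theta_A$ is a graph automorphism) and again in Section~\ref{section3} for the $\text{SL}_2(R)$-invariance of itineraries. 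Substituting it in, the $(i,j)$-entry of $\widetilde{\Phi}(A\gamma,A\delta)$ equals $\begin{pmatrix}a_i & b_i\end{pmatrix}J\binom{c_j}{d_j}=a_id_j-b_ic_j=m_{i,j}$, the $(i,j)$-entry of $\widetilde{\Phi}(\gamma,\delta)$, as wanted.

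There is no genuine obstacle here: the entire content of the lemma is the relation $A^TJA=J$, already established, so the argument amounts to one line of matrix bookkeeping once the vertices of $A\gamma$ and $A\delta$ have been written out. (If one preferred a self-contained phrasing, one could simply remark that $J$ represents the determinant bilinear form on $R\times R$, which is preserved by any matrix of determinant $1$.)
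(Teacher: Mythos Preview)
Your proof is correct and is essentially identical to the paper's own argument: both compute the $(i,j)$-entry of $\widetilde{\Phi}(A\gamma,A\delta)$ as $\begin{pmatrix}a_i & b_i\end{pmatrix}A^TJA\begin{pmatrix}c_j\\ d_j\end{pmatrix}$ and reduce it to $m_{i,j}$ via the identity $A^TJA=J$. The only difference is cosmetic---you spell out the transformed vertices and the provenance of the key identity a bit more explicitly than the paper does.
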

\begin{proof}
Let
\[
\begin{pmatrix}a_i' \\ b_i'\end{pmatrix}=A\begin{pmatrix}a_i \\ b_i\end{pmatrix}\!,
\quad
\begin{pmatrix}c_j' \\ d_j'\end{pmatrix}=A\begin{pmatrix}c_j \\ d_j\end{pmatrix}\!,
\quad\text{and}\quad
m'_{i,j}=a_i'd_j'-b_i'c_j',
\]
for $i,j\in\mathbb{Z}$. Then we can use the formula $A^TJA=J$ to give
\[
m_{i,j}' = \begin{pmatrix}a_i' & b_i'\end{pmatrix}J\begin{pmatrix}c_j' \\ d_j'\end{pmatrix}
=\begin{pmatrix}a_i & b_i\end{pmatrix}A^TJA\begin{pmatrix}c_j \\ d_j\end{pmatrix}=m_{i,j},
\]
as required.
\end{proof}

Lemma~\ref{lem78} shows that the map $\widetilde{\Phi}$ induces a function $\text{SL}_2(R)\backslash (\mathscr{P}\times\mathscr{P})\longrightarrow \SL$. The next lemma demonstrates that this function is bijective.

\begin{lemma}\label{lem45}
The function $\widetilde{\Phi}\colon \mathscr{P}\times\mathscr{P}\longrightarrow \SL$ is surjective. Furthermore, if $\widetilde{\Phi}(\gamma,\delta)=\widetilde{\Phi}(\gamma',\delta')$, then $(\gamma',\delta')=(A\gamma,A\delta)$, for some $A\in\textnormal{SL}_2(R)$. 
\end{lemma}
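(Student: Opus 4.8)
The plan is to prove surjectivity by writing down, from a given tame tiling, an explicit pair of paths, and to prove the rigidity clause by reading off the matrix $A$ from the block identity~\eqref{eqn2}. Throughout I write $J=\begin{pmatrix}0&1\\-1&0\end{pmatrix}$.

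For surjectivity, let $\mathbf{M}$ be a tame $\text{SL}_2$-tiling. First I would invoke Theorem~\ref{thm19} to produce sequences $(r_i),(s_j)$ in $R$ with $m_{i-1,j}+m_{i+1,j}=r_im_{i,j}$ and $m_{i,j-1}+m_{i,j+1}=s_jm_{i,j}$. Then I would define $\delta$ to have vertices $(c_j,d_j)=(-m_{1,j},m_{0,j})$ and $\gamma$ to have vertices
\[
(a_i,b_i)=\bigl(m_{1,1}m_{i,0}-m_{1,0}m_{i,1},\ m_{0,0}m_{i,1}-m_{0,1}m_{i,0}\bigr),
\]
which is exactly $\binom{a_i}{b_i}=\left(\begin{smallmatrix}m_{0,0}&m_{1,0}\\ m_{0,1}&m_{1,1}\end{smallmatrix}\right)^{-1}\binom{m_{i,0}}{m_{i,1}}$. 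The verification then splits into three routine checks, each reducing to an $\text{SL}_2$-relation of $\mathbf{M}$: that consecutive vertices of $\gamma$ (resp.\ $\delta$) have ``determinant'' $1$, hence $\gamma,\delta\in\mathscr{P}$; that $m_{i,j}=a_id_j-b_ic_j$ holds for $j=0,1$ by the choice of $a_i,b_i$; and that it then holds for \emph{all} $j$, since both $m_{i,j}$ and $a_im_{0,j}+b_im_{1,j}$ satisfy the second-order recurrence $x_{j+1}=s_jx_j-x_{j-1}$ and agree at $j=0,1$. Hence $\widetilde{\Phi}(\gamma,\delta)=\mathbf{M}$.

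For the rigidity clause, suppose $\widetilde{\Phi}(\gamma,\delta)=\widetilde{\Phi}(\gamma',\delta')$, with $\gamma,\gamma'$ having vertices $(a_i,b_i),(a_i',b_i')$ and $\delta,\delta'$ having vertices $(c_j,d_j),(c_j',d_j')$. Set $P_i=\left(\begin{smallmatrix}a_i&b_i\\ a_{i+1}&b_{i+1}\end{smallmatrix}\right)$ and $Q_j=\left(\begin{smallmatrix}c_j&c_{j+1}\\ d_j&d_{j+1}\end{smallmatrix}\right)$, together with the primed analogues; all lie in $\textnormal{SL}_2(R)$ since consecutive vertices $(x,y),(x',y')$ of a path in $\mathscr{E}_R$ satisfy $xy'-yx'=1$. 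Identity~\eqref{eqn2} reads $P_iJQ_j=P_i'JQ_j'$ for all $i,j$. I would put $A=J^{-1}(P_0')^{-1}P_0J\in\textnormal{SL}_2(R)$. The relations with $i=0$ give $Q_j'=AQ_j$ for every $j$, i.e.\ $\delta'=A\delta$; the relations with $j=0$ give $P_i'=P_iJQ_0(Q_0')^{-1}J^{-1}=P_iJA^{-1}J^{-1}$, and since $A^TJA=J$ yields $A^T=JA^{-1}J^{-1}$, this says $P_i'=P_iA^T$, i.e.\ $(a_i',b_i')=(a_i,b_i)A^T$ for all $i$, which is precisely $\gamma'=A\gamma$. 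Thus $(\gamma',\delta')=(A\gamma,A\delta)$.

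The main obstacle — really the only point requiring care — is making the rigidity argument work with a matrix $A$ built from a \emph{single} pair of adjacent vertices of each path: this needs every $P_i$ and $Q_j$ to be invertible (which is the reason for passing to adjacent pairs rather than single vertices) and it hinges on the symplectic identity $A^TJA=J$. Because no element of $R$ is ever cancelled, the whole argument is valid over an arbitrary commutative ring.
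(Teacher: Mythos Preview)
Your proof is correct. The surjectivity argument is essentially a more explicit version of the paper's: both write down paths tailored to $\mathbf{M}$ and verify $\widetilde{\Phi}(\gamma,\delta)=\mathbf{M}$ via the column recurrence $x_{j+1}=s_jx_j-x_{j-1}$ from Theorem~\ref{thm19}. The paper builds $\gamma$ and $\delta$ abstractly from prescribed itineraries and initial edges using Lemma~\ref{lem17}, whereas you write the vertices down directly as entries (or simple combinations of entries) of $\mathbf{M}$; the net effect is the same.

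The rigidity clause is where the approaches genuinely diverge. The paper normalises both pairs so that $\delta$ and $\delta'$ share the initial edge $(0,1)\to(-1,0)$, then argues via Theorem~\ref{thm19} and Lemma~\ref{lem17} that the itineraries---and hence the paths---coincide. You instead read $A$ straight off the block identity $P_iJQ_j=P_i'JQ_j'$: setting $i=0$ gives $\delta'=A\delta$, setting $j=0$ gives $P_i'=P_iJA^{-1}J^{-1}$, and the symplectic relation $A^TJA=J$ converts this into $\gamma'=A\gamma$. Your route is slicker and entirely self-contained (no appeal to itineraries or Lemma~\ref{lem17}); the paper's route, on the other hand, reuses machinery that is needed elsewhere anyway. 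Both avoid any cancellation in $R$, so both work over an arbitrary commutative ring.
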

\begin{proof}
First we prove that $\widetilde{\Phi}$ is surjective. Let $\mathbf{M}$ be a tame $\text{SL}_2$-tiling over $R$ and let $(r_i)$ and $(s_j)$ be the sequences in $R$ associated to $\mathbf{M}$ specified by Theorem~\ref{thm19}. We define $\gamma$ to be the path in $\mathscr{P}$ with itinerary $(r_i)$ and vertices $(a_i,b_i)$ that satisfy $(a_0,b_0)=(m_{0,0},m_{0,1})$ and $(a_1,b_1)=(m_{1,0},m_{1,1})$, and we define $\delta$ to be the path with itinerary $(s_j)$ and vertices $(c_j,d_j)$ that satisfy $(c_0,d_0)=(0,1)$ and $(c_1,d_1)=(-1,0)$. Now let $\mathbf{M}'$ be the tame $\text{SL}_2$-tiling $\widetilde{\Phi}(\gamma,\delta)$, with entries $m_{i,j}'=a_id_j-b_ic_j$. Then $m_{i,j}'=m_{i,j}$, for $i,j=0,1$. What is more, applying Lemma~\ref{lem85}, we have
\[
m_{i-1,j}'+m_{i+1,j}'= (a_{i-1}+a_{i+1})d_j-(b_{i-1}+b_{i+1})c_j=r_ia_id_j-r_ib_ic_j=r_im_{i,j}',
\]
and similarly $m_{i,j-1}'+m_{i,j+1}'=s_jm_{i,j}'$. These recurrence relations specify $\mathbf{M}'$ uniquely from $m_{0,0}, m_{1,0},m_{0,1},m_{1,1}$, so $\mathbf{M}'=\mathbf{M}$.

For the second part, suppose that $\widetilde{\Phi}(\gamma,\delta)=\widetilde{\Phi}(\gamma',\delta')$, and let $\gamma,\delta,\gamma',\delta'\in\mathscr{P}$ have vertices $(a_i,b_i)$, $(c_j,d_j)$, $(a_i',b_i')$, and $(c_j',d_j')$. By replacing $(\gamma,\delta)$ with $(B\gamma,B\delta)$ and replacing $(\gamma',\delta')$ with $(B'\gamma',B'\delta')$, for suitable matrices $B,B'\in\text{SL}_2(R)$, and using Lemma~\ref{lem78}, we can assume that $(c_0,d_0)=(c_0',d_0')=(0,1)$ and $(c_1,d_1)=(c_1',d_1')=(-1,0)$. Let $\mathbf{M}=\widetilde{\Phi}(\gamma,\delta)$. Then, as before, we can see that the itinerary $(e_i)$ of $\gamma$ satisfies $m_{i-1,j}+m_{i+1,j}=e_im_{i,j}$, for $i\in\mathbb{Z}$, and similarly for the itinerary of $\gamma'$. From the uniqueness part of Theorem~\ref{thm19} it follows that the itineraries of $\gamma$ and $\gamma'$ are equal, and likewise so are those of $\delta$ and $\delta'$. Hence $\delta$ and $\delta'$ are equal, by Lemma~\ref{lem17}. Also, because $(c_0,d_0)=(c_0',d_0')=(0,1)$, we have
\[
a_0=a_0d_0-b_0c_0=a_0'd_0'-b_0'c_0'=a_0',
\]
and similarly $b_0=b_0'$, $a_1=a_1'$, and $b_1=b_1'$. Therefore $\gamma$ and $\gamma'$ are equal also, as required.
\end{proof}

To facilitate the formal definition of the map $\Phi_U$ from Theorem~\ref{thm3}, we introduce an intermediate function 
$\widetilde{\Phi}_U\colon  \mathscr{P}_U\times\mathscr{P}_U\longrightarrow(U\times U)\backslash \SL$. For this we define $\tau_U\colon \SL\longrightarrow (U\times U)\backslash \SL$ to be the map that takes a tame $\text{SL}_2$-tiling $\mathbf{M}$ to its orbit under the action of $U\times U$ on $\SL$ (given by $m_{i,j}\longmapsto \lambda^{(-1)^i}\mu^{(-1)^j}m_{i,j}$, for $(\lambda,\mu)\in U\times U$). We recall that $\pi_U\colon \mathscr{E}_R\longrightarrow \mathscr{F}_{R,U}$ is the usual covering map. Then we define
\[
\widetilde{\Phi}_U(\pi_U(\gamma),\pi_U(\delta))=\tau_U\widetilde{\Phi}(\gamma,\delta),\quad \text{for }(\gamma,\delta)\in\mathscr{P}\times \mathscr{P}.
\]
To see that $\widetilde{\Phi}_U$ is well-defined, first observe that, because $\pi_U$ is a covering map, any pair in $\mathscr{P}_U\times\mathscr{P}_U$ lifts to a pair in $\mathscr{P}\times\mathscr{P}$. Next, suppose that $(\pi_U(\gamma),\pi_U(\delta))=(\pi_U(\gamma'),\pi_U(\delta'))$. Let $\gamma$ have vertices $(a_i,b_i)$ and $\delta$ have vertices $(c_j,d_j)$; then $\widetilde{\Phi}(\gamma,\delta)$ has entries $m_{i,j}=a_id_j-b_ic_j$. By Lemma~\ref{lem61}, $\gamma'$ and $\delta'$ have vertices $\lambda^{(-1)^{i}}(a_i,b_i)$ and $\mu^{(-1)^{j}}(c_j,d_j)$, respectively, for some $\lambda,\mu\in U$, in which case $\widetilde{\Phi}(\gamma',\delta')$ has entries
\[
m_{i,j}'=\lambda^{(-1)^{i}}\mu^{(-1)^{j}}(a_id_j-b_ic_j)=\lambda^{(-1)^{i}}\mu^{(-1)^{j}}m_{i,j}.
\]
Hence $\tau_U\widetilde{\Phi}(\gamma,\delta)=\tau_U\widetilde{\Phi}(\gamma',\delta')$, as required.

The next lemma proves an $\text{SL}_2(R)$-invariance property of $\widetilde{\Phi}_U$ that has already been established for $\widetilde{\Phi}$ in Lemma~\ref{lem78}.

\begin{lemma}\label{lem79}
The map $\widetilde{\Phi}_U$ satisfies $\widetilde{\Phi}_U(A{\gamma},A{\delta})=\widetilde{\Phi}_U(\gamma,{\delta})$, for $A\in\textnormal{SL}_2(R)$.
\end{lemma}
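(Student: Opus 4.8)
The plan is to reduce the claim for $\widetilde{\Phi}_U$ to the already-established $\text{SL}_2(R)$-invariance of $\widetilde{\Phi}$ (Lemma~\ref{lem78}), using the equivariance of the covering map $\pi_U$ recorded in Proposition~\ref{prop3}. Since $\widetilde{\Phi}_U$ has just been shown to be well-defined, i.e.\ independent of the choice of lift of a pair in $\mathscr{P}_U\times\mathscr{P}_U$ to $\mathscr{P}\times\mathscr{P}$, the whole argument is a short diagram chase.

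Concretely, I would start with an arbitrary pair in $\mathscr{P}_U\times\mathscr{P}_U$, write it as $(\pi_U(\gamma),\pi_U(\delta))$ for some lift $(\gamma,\delta)\in\mathscr{P}\times\mathscr{P}$ (such a lift exists because $\pi_U$ is a covering map), and apply Proposition~\ref{prop3} to obtain $\pi_U(A\gamma)=A\pi_U(\gamma)$ and $\pi_U(A\delta)=A\pi_U(\delta)$. Thus $(A\gamma,A\delta)$ is a lift of $(A\pi_U(\gamma),A\pi_U(\delta))$, and unwinding the definition of $\widetilde{\Phi}_U$ on this lift gives
\[
\widetilde{\Phi}_U\bigl(A\pi_U(\gamma),A\pi_U(\delta)\bigr)=\tau_U\widetilde{\Phi}(A\gamma,A\delta)=\tau_U\widetilde{\Phi}(\gamma,\delta)=\widetilde{\Phi}_U\bigl(\pi_U(\gamma),\pi_U(\delta)\bigr),
\]
where the middle equality is Lemma~\ref{lem78} and the outer two equalities are the definition of $\widetilde{\Phi}_U$ applied to the lifts $(A\gamma,A\delta)$ and $(\gamma,\delta)$ respectively.

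I do not anticipate any genuine obstacle: the only delicate point, namely that the value of $\widetilde{\Phi}_U$ is independent of the chosen lift, has already been disposed of in the paragraph preceding the lemma, which licenses the convenient choice of $(A\gamma,A\delta)$ as a lift of $(A\pi_U(\gamma),A\pi_U(\delta))$. (One should perhaps note in passing that the two occurrences of the symbol $A$ act on different objects — the left-hand one on $\mathscr{F}_{R,U}$ via $\mathscr{P}_U$, the right-hand one on $\mathscr{E}_R$ via $\mathscr{P}$ — exactly the same sleight of hand as in Proposition~\ref{prop3}.)
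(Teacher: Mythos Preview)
Your proof is correct and follows essentially the same route as the paper: lift to $\mathscr{P}\times\mathscr{P}$, use the equivariance of $\pi_U$ from Proposition~\ref{prop3} to identify $(A\gamma,A\delta)$ as a lift of the translated pair, and then invoke Lemma~\ref{lem78} inside $\tau_U$. The only cosmetic difference is that the paper reserves $\gamma,\delta$ for the paths in $\mathscr{P}_U$ and writes $\tilde{\gamma},\tilde{\delta}$ for their lifts, whereas you use $\gamma,\delta$ for the lifts themselves; aligning your notation with the statement would avoid a mild clash.
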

\begin{proof}
We choose $(\tilde{\gamma},\tilde{\delta})\in\mathscr{P}\times\mathscr{P}$ with $\pi_U(\tilde{\gamma})=\gamma$ and $\pi_U(\tilde{\delta})=\delta$.
Recall from Proposition~\ref{prop3} that $\pi_U A=A\pi_U$ (using the current terminology). Then $\widetilde{\Phi}_U(A\pi_U(\tilde{\gamma}),A\pi_U(\tilde{\delta}))=\widetilde{\Phi}_U(\pi_U(A\tilde{\gamma}),\pi_U(A\tilde{\delta}))$, so
\[
\widetilde{\Phi}_U(A\pi_U(\tilde{\gamma}),A\pi_U(\tilde{\delta}))=\tau_U\widetilde{\Phi}(A\tilde{\gamma},A\tilde{\delta})=\tau_U\widetilde{\Phi}(\tilde{\gamma},\tilde{\delta})=\widetilde{\Phi}_U(\pi_U(\tilde{\gamma}),\pi_U(\tilde{\gamma})),
\]
where we have applied Lemma~\ref{lem78}.
\end{proof}

From Lemma~\ref{lem79} we see that $\widetilde{\Phi}_U$ induces a function 
\[
\Phi_U\colon \text{SL}_2(R)\backslash(\mathscr{P}_U\times\mathscr{P}_U)\longrightarrow (U\times U)\backslash \SL,
\]
which maps the $\text{SL}_2(R)$-orbit of a pair $(\gamma,\delta)$ in $\mathscr{P}_U\times\mathscr{P}_U$ to $\widetilde{\Phi}_U(\gamma,\delta)$. This map is surjective, since $\widetilde{\Phi}$ is surjective. To complete the proof of Theorem~\ref{thm3} we have only to show that $\Phi_U$ is injective.

Suppose then that $\widetilde{\Phi}_U(\pi_U(\gamma),\pi_U(\delta))=\widetilde{\Phi}_U(\pi_U(\gamma'),\pi_U(\delta'))$, for paths $\gamma,\delta,\gamma',\delta'\in\mathscr{P}$ with vertices $(a_i,b_i)$, $(c_j,d_j)$, $(a_i',b_i')$, and $(c_j',d_j')$. The $\text{SL}_2$-tilings $\mathbf{M}$ and $\mathbf{M}'$ with entries $m_{i,j}=a_id_j-b_ic_j$ and $m_{i,j}'=a_i'd_j'-b_i'c_j'$ lie in the same orbit of $U\times U$, so there exist $\lambda,\mu\in U$ with $m_{i,j}=\lambda^{(-1)^i}\mu^{(-1)^j}m_{i,j}'$, for $i,j\in\mathbb{Z}$. Let  $\gamma'',\delta''\in\mathscr{P}$ have vertices $\lambda^{(-1)^i}(a_i',b_i')$ and $\mu^{(-1)^j}(c_j',d_j')$, respectively. Then $(\pi_U(\gamma''),\pi_U(\delta''))=(\pi_U(\gamma'),\pi_U(\delta'))$, by Lemma~\ref{lem61}, and the $\text{SL}_2$-tiling $\mathbf{M}''$ with entries $m_{i,j}''=\lambda^{(-1)^i}a_i'\mu^{(-1)^j}d_j'-\lambda^{(-1)^i}b_i'\mu^{(-1)^j}c_j'=m_{i,j}$ coincides with $\mathbf{M}$. We can then apply Lemma~\ref{lem45} to see that there exists $A\in\text{SL}_2(R)$ with $(\gamma'',\delta'')=(A\gamma,A\delta)$. Consequently, $(\pi_U(\gamma''),\pi_U(\delta''))=(A\pi_U(\gamma),A\pi_U(\delta))$, so $\Phi_U$ is indeed injective. This completes the proof of Theorem~\ref{thm3}.

%%%%%%%%%%%%%%%%%%%
\section{Tame infinite friezes}\label{section7}

As a stepping stone from $\text{SL}_2$-tilings to friezes we look at infinite friezes, which have been considered by Baur, Parsons, and Tschabold \cite{BaPaTs2016} among others.

\begin{definition}
An infinite frieze over a ring $R$ is a function $\mathbf{F}\colon \{(i,j)\in\mathbb{Z}^2:i\geq j\}\longrightarrow R$ with entries  $m_{i,j}=\mathbf{F}(i,j)$ such that
\begin{itemize}
\item $m_{i,i}=0$, for $i\in\mathbb{Z}$ (top row of zeros),
\item $m_{i,j}m_{i+1,j+1}-m_{i,j+1}m_{i+1,j}=1$, for $i>j$ (diamond rule).
\end{itemize}
\end{definition}

The infinite frieze $\mathbf{F}$ is \emph{semiregular} if $m_{i+1,i}=1$, for $i\in\mathbb{Z}$ (second row of 1's), and \emph{tame} if the usual 3-by-3 determinant 0 condition holds for $\mathbf{F}$, wherever it makes sense. The main objective of this section is to prove the following theorem (featuring a map $\Psi$ defined formally later).

\begin{theorem}\label{thm51}
The map $\Psi$ induces a one-to-one correspondence between
\[
\textnormal{SL}_2(R)\Big\backslash\mleft\{\parbox{1.9cm}{\centering\textnormal{bi-infinite paths~in~$\mathscr{E}_R$}}\mright\}\quad \longleftrightarrow\quad \mleft\{\parbox{3.9cm}{\centering\textnormal{tame~semiregular~infinite friezes~over~$R$}}\mright\}.
\]
\end{theorem}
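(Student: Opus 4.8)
The plan is to realise every tame semiregular infinite frieze as the restriction to $\{(i,j):i\ge j\}$ of an antisymmetric tame $\textnormal{SL}_2$-tiling of the special form $\widetilde{\Psi}(\gamma)$, and then to transfer the bijectivity statements already proved for $\widetilde{\Phi}$ (Lemmas~\ref{lem78} and~\ref{lem45}) along this identification. First I record the elementary features of $\widetilde{\Psi}$. For $\gamma\in\mathscr{P}$ with vertices $(a_i,b_i)$, the tiling $\widetilde{\Psi}(\gamma)=\widetilde{\Phi}(\gamma,-\gamma)$ has entries $m_{i,j}=a_jb_i-b_ja_i$; it is \emph{antisymmetric} ($m_{j,i}=-m_{i,j}$), it has $m_{i,i}=0$, and $m_{i+1,i}=a_ib_{i+1}-b_ia_{i+1}=1$ because $(a_i,b_i)\to(a_{i+1},b_{i+1})$ is an edge of $\mathscr{E}_R$. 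Since $\widetilde{\Phi}(\gamma,-\gamma)$ is a tame $\textnormal{SL}_2$-tiling, its restriction to $\{i\ge j\}$ is a tame semiregular infinite frieze $\mathbf{F}$, and by antisymmetry $\widetilde{\Psi}(\gamma)$ is nothing but the \emph{antisymmetric extension} of $\mathbf{F}$ obtained by setting $m_{i,j}=-\mathbf{F}(j,i)$ for $i<j$. Moreover $\widetilde{\Psi}(A\gamma)=\widetilde{\Phi}(A\gamma,A(-\gamma))=\widetilde{\Phi}(\gamma,-\gamma)=\widetilde{\Psi}(\gamma)$ by Lemma~\ref{lem78}, so $\widetilde{\Psi}$ followed by restriction descends to a well-defined map $\Psi$ from $\textnormal{SL}_2(R)\backslash\mathscr{P}$ to the set of tame semiregular infinite friezes over $R$. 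Injectivity of $\Psi$ is then immediate: if $\widetilde{\Psi}(\gamma)$ and $\widetilde{\Psi}(\gamma')$ restrict to the same frieze they are equal (both being that frieze's antisymmetric extension), so $\widetilde{\Phi}(\gamma,-\gamma)=\widetilde{\Phi}(\gamma',-\gamma')$, and the second part of Lemma~\ref{lem45} gives $A\in\textnormal{SL}_2(R)$ with $(\gamma',-\gamma')=(A\gamma,A(-\gamma))$, whence $\gamma'=A\gamma$.

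For surjectivity, fix a tame semiregular infinite frieze $\mathbf{F}$ and let $\mathbf{M}$ be its antisymmetric extension. The key claim is that $\mathbf{M}$ is a tame $\textnormal{SL}_2$-tiling. Running the argument that proves Theorem~\ref{thm19} — applying Lemma~\ref{lemL} to the $3$-by-$3$ submatrices of $\mathbf{F}$, all of which lie inside the domain of $\mathbf{F}$ — produces a bi-infinite sequence $(s_i)$ in $R$, explicitly $s_i=\mathbf{F}(i+1,i-1)$, with $m_{i,j-1}+m_{i,j+1}=s_jm_{i,j}$ and $m_{i-1,j}+m_{i+1,j}=s_im_{i,j}$ for all $i\ge j+1$. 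Using the values $0$ and $\pm1$ on and just off the diagonal of $\mathbf{M}$, together with antisymmetry, one checks that both recurrences in fact hold for \emph{all} $i,j\in\mathbb{Z}$ (the case $i=j$ reads $1+(-1)=s_j\cdot 0$, and the case $i<j$ follows from the $\mathbf{F}$-recurrences after reflecting across the diagonal). It follows that any three consecutive rows of $\mathbf{M}$ are $R$-linearly dependent, so every $3$-by-$3$ minor of $\mathbf{M}$ vanishes and $\mathbf{M}$ is tame; and writing $D_{i,j}=m_{i,j}m_{i+1,j+1}-m_{i,j+1}m_{i+1,j}$, the row recurrence gives $D_{i,j}=D_{i-1,j}$ and the column recurrence gives $D_{i,j}=D_{i,j-1}$, so $D_{i,j}$ is constant, equal to $D_{i,i}=-m_{i,i+1}m_{i+1,i}=1$; hence $\mathbf{M}$ is an $\textnormal{SL}_2$-tiling.

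With $\mathbf{M}$ now known to be a tame $\textnormal{SL}_2$-tiling, Lemma~\ref{lem45} gives $\mathbf{M}=\widetilde{\Phi}(\gamma,\delta)$ for some $\gamma,\delta\in\mathscr{P}$, and it remains to arrange $\delta=-\gamma$. Because the transpose of $\widetilde{\Phi}(\gamma,\delta)$ equals $-\widetilde{\Phi}(\delta,\gamma)$, the antisymmetry of $\mathbf{M}$ forces $\widetilde{\Phi}(\delta,\gamma)=\widetilde{\Phi}(\gamma,\delta)$, so Lemma~\ref{lem45} again yields $A\in\textnormal{SL}_2(R)$ with $(\delta,\gamma)=(A\gamma,A\delta)$; in particular $\delta=A\gamma$. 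By transitivity of the action on directed edges (Proposition~\ref{prop2}) together with Lemma~\ref{lem78}, I may replace $(\gamma,\delta)$ by $(B\gamma,B\delta)$ so that $\gamma$ has $(a_0,b_0)=(1,0)$ and $(a_1,b_1)=(0,1)$; then reading $m_{0,0}=0$, $m_{1,0}=1$, $m_{0,1}=-1$, $m_{1,1}=0$ off $\mathbf{M}$ forces $\delta$ to begin $(c_0,d_0)=(-1,0)$, $(c_1,d_1)=(0,-1)$. Since $\delta=A\gamma$, this says $A$ sends $(1,0)\mapsto(-1,0)$ and $(0,1)\mapsto(0,-1)$, so $A=-I$ and $\delta=-\gamma$. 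Therefore $\mathbf{M}=\widetilde{\Phi}(\gamma,-\gamma)=\widetilde{\Psi}(\gamma)$, and restricting to $\{i\ge j\}$ recovers $\mathbf{F}$; thus $\Psi$ is surjective, and hence bijective.

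I expect the main obstacle to be the verification that the antisymmetric extension $\mathbf{M}$ of $\mathbf{F}$ is tame — equivalently, that the frieze recurrences propagate across the diagonal; all the remaining steps are short consequences of the $\textnormal{SL}_2$-tiling machinery already in place. The same strategy — promote a frieze to an antisymmetric $\textnormal{SL}_2$-tiling, then cut down the correspondence of Theorem~\ref{thm3} — is the blueprint for the finite-width results, Theorems~\ref{thm4} and~\ref{thm5}.
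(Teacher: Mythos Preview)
Your proof is correct and follows the same overall architecture as the paper: extend the frieze antisymmetrically to a tame $\textnormal{SL}_2$-tiling $\mathbf{M}$, invoke the surjectivity of $\widetilde{\Phi}$, and show that the resulting pair of paths has the form $(\gamma,-\gamma)$. The execution differs in two places. First, for the extension step the paper proves directly (Lemma~\ref{lem91}) that $\mathbf{M}$ satisfies the diamond rule and tameness by case analysis near the diagonal, whereas you propagate the Sturm--Liouville recurrences $m_{i,j-1}+m_{i,j+1}=s_jm_{i,j}$ across the diagonal and read off both properties from Theorem~\ref{thm19}; this is slightly slicker, though note that Lemma~\ref{lemL} only applies to blocks with $i\ge j+2$, so the boundary cases $i=j+1,\,j,\,j-1$ still require the one-line checks you indicate. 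Second, and more interestingly, to force $\delta=-\gamma$ the paper argues pointwise: from $m_{i,i}=0$ and Lemma~\ref{lem66} one gets $(c_i,d_i)=\lambda_i(a_i,b_i)$, and then the second row pins down $\lambda_i=-1$. You instead exploit the global identity $\widetilde{\Phi}(\gamma,\delta)^{T}=-\widetilde{\Phi}(\delta,\gamma)$ together with antisymmetry of $\mathbf{M}$ to obtain $\widetilde{\Phi}(\delta,\gamma)=\widetilde{\Phi}(\gamma,\delta)$, apply Lemma~\ref{lem45} to get $\delta=A\gamma$, and normalise to find $A=-I$. Both arguments are short; yours avoids Lemma~\ref{lem66} entirely, while the paper's avoids the normalisation step and generalises immediately to arbitrary $\alpha=m_{1,0}$ (Lemma~\ref{lem25}), which is what feeds into Theorem~\ref{thm84}. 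One small wording point: after conjugating by $B$ the matrix relating $\gamma$ and $\delta$ becomes $BAB^{-1}$, not the original $A$, but your computation that this conjugate equals $-I$ is unaffected.
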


The second row of an infinite frieze can be calculated from any single entry using the diamond rule. This observation is encapsulated in the next lemma.

\begin{lemma}\label{lem95}
Let $\mathbf{F}$ be a frieze or infinite frieze. Then $m_{i+1,i}=\alpha^{(-1)^i}$, for $i\in\mathbb{Z}$, where $\alpha=m_{1,0}$.
\end{lemma}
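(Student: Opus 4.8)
The plan is to extract a recurrence for the second-row entries directly from the diamond rule, using the top row of zeros, and then solve that recurrence; no appeal to any cancellation property of $R$ will be needed.

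First I would apply the diamond rule in the strip immediately below the top row. Write $\beta_i = m_{i+1,i}$ for the $i$-th entry of the second row, and substitute $j = i-1$ into the identity $m_{i,j}m_{i+1,j+1}-m_{i,j+1}m_{i+1,j}=1$. The four entries that occur are $m_{i,i-1}=\beta_{i-1}$, $m_{i+1,i}=\beta_i$, $m_{i,i}=0$ (top row), and the third-row entry $m_{i+1,i-1}$. Since $m_{i,i}=0$, the identity collapses to $\beta_{i-1}\beta_i = 1$ for every $i\in\mathbb{Z}$. Here one must check that the indices stay inside the domain of $\mathbf{F}$: for a frieze of width $n$ the constraint $0<i-j<n$ with $i-j=1$ is exactly where $n\geq 2$ is used, and $m_{i+1,i-1}$ is defined since $2\leq n$; for an infinite frieze the condition $i>j$ holds with $j=i-1$ and every $m_{k,l}$ with $k\geq l$ is defined.

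Next I would read off the consequences of $\beta_{i-1}\beta_i = 1$. Each $\beta_i$ is a unit, and comparing $\beta_{i-1}\beta_i = 1$ with $\beta_i\beta_{i+1} = 1$ (and using that $R$ is commutative) gives $\beta_{i+1}=\beta_i^{-1}=\beta_{i-1}$, hence $\beta_{i+2}=\beta_i$ for all $i$. Thus all even-indexed $\beta_i$ equal $\beta_0$ and all odd-indexed $\beta_i$ equal $\beta_0^{-1}$; a short induction on $|i|$ in both directions from $i=0$ yields $\beta_i = \beta_0^{(-1)^i}$. Since $\alpha = m_{1,0}=\beta_0$, this is precisely $m_{i+1,i}=\alpha^{(-1)^i}$, as claimed.

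I do not anticipate a real obstacle; the only point requiring a moment's care is verifying that the chosen diamond genuinely lies in the domain of the (finite or infinite) frieze, which is why the hypothesis $n\geq 2$ appears in the finite case. Everything else is an immediate consequence of the diamond rule together with the top row of zeros.
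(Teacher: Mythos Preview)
Your proof is correct and follows essentially the same approach as the paper: apply the diamond rule at $j=i-1$, use $m_{i,i}=0$ to obtain $m_{i,i-1}m_{i+1,i}=1$, and deduce $m_{i+1,i}=\alpha^{(-1)^i}$ by induction. You are simply more explicit than the paper about verifying that the relevant indices lie in the domain of $\mathbf{F}$ and about the two-periodicity consequence of the recurrence.
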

\begin{proof}
Since $\mathbf{F}$ is a frieze or infinite frieze we have $m_{i,i-1}m_{i+1,i}-m_{i,i}m_{i+1,i-1}=1$, for each $i\in\mathbb{Z}$. But $m_{i,i}=0$, so $m_{i,i-1}m_{i+1,i}=1$. The equation $m_{i+1,i}=\alpha^{(-1)^i}$ then follows by induction.
\end{proof}

Any tame infinite frieze can be extended to a tame $\text{SL}_2$-tiling in the following manner.

\begin{lemma}\label{lem91}
Let $\mathbf{F}$ be a tame infinite frieze and let $\mathbf{M}\colon \mathbb{Z}\times\mathbb{Z}\longrightarrow R$ (with entries $m_{i,j}$) satisfy $m_{i,j}=\mathbf{F}(i,j)$, for $i\geq j$, and $m_{j,i}=-\alpha^{(-1)^i-(-1)^j}m_{i,j}$, for $i,j\in\mathbb{Z}$, where $\alpha=m_{1,0}$. Then $\mathbf{M}$ is a tame $\textnormal{SL}_2$-tiling.
\end{lemma}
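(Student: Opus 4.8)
The plan is to reduce to the semiregular case, establish a single two-term recurrence on the rows of $\mathbf{M}$, and then read off both the $\text{SL}_2$ condition and tameness from that recurrence. Since $m_{1,0}m_{2,1}-m_{1,1}m_{2,0}=1$ and $m_{1,1}=0$, the element $\alpha=m_{1,0}$ is a unit (this is Lemma~\ref{lem95}), and the reflection rule defining $\mathbf{M}$ is self-consistent because $\alpha^{(-1)^i-(-1)^j}\alpha^{(-1)^j-(-1)^i}=1$. Replace $\mathbf{F}$ and $\mathbf{M}$ by the arrays with entries $\alpha^{(-1)^i}m_{i,j}$: rescaling row $i$ by $\alpha^{(-1)^i}$ multiplies each contiguous $2$-by-$2$ determinant by $\alpha^{(-1)^i+(-1)^{i+1}}=1$ (adjacent rows are scaled by reciprocal units) and each contiguous $3$-by-$3$ determinant by a unit, so it preserves the property of being a tame $\text{SL}_2$-tiling; it turns $\mathbf{F}$ into a tame \emph{semiregular} infinite frieze and replaces the reflection rule by the plain antisymmetry $m_{j,i}=-m_{i,j}$. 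So from now on assume $\alpha=1$, that $\mathbf{F}$ is semiregular, and that $m_{j,i}=-m_{i,j}$ for all $i,j$.

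Set $e_i=m_{i+1,i-1}$. I claim that $m_{i-1,j}+m_{i+1,j}=e_im_{i,j}$ for all $i,j\in\mathbb{Z}$. For $j\leq i-2$ the $3$-by-$3$ block of $\mathbf{F}$ on rows $i-1,i,i+1$ and columns $j-1,j,j+1$ lies wholly inside the frieze, hence has determinant $0$ by tameness and has all four overlapping $2$-by-$2$ minors equal to $1$ by the diamond rule; applying Lemma~\ref{lemL} to its transpose gives $m_{i-1,j}+m_{i+1,j}=r_im_{i,j}$ with $r_i$ independent of $j$ (the usual overlapping argument), and evaluating this coefficient at $j=i-2$ through the closed form for $\Delta$ in Lemma~\ref{lemL} (using $m_{i-1,i-2}=1$ and $m_{i-1,i-1}=0$) gives $r_i=m_{i+1,i-1}=e_i$. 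The column $j=i-1$ reads $0+m_{i+1,i-1}=e_i\cdot 1$, true by definition; the column $j=i$ reads $m_{i-1,i}+1=e_i\cdot 0$, true because $m_{i-1,i}=-m_{i,i-1}=-1$. For $j\geq i+1$, antisymmetry rewrites the claim as $m_{j,i-1}+m_{j,i+1}=e_im_{j,i}$, which concerns three entries of row $j$ of $\mathbf{F}$; running the same Dodgson argument directly (rather than on the transpose) on the frieze block on rows $j-1,j,j+1$ and columns $i-1,i,i+1$ disposes of $j\geq i+2$ and evaluating at $j=i+2$ again identifies the coefficient as $m_{i+1,i-1}=e_i$, while $j=i+1$ reads $m_{i+1,i-1}+0=e_i\cdot 1$. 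By antisymmetry the mirrored recurrence $m_{i,j-1}+m_{i,j+1}=e_jm_{i,j}$ then holds for all $i,j$ as well.

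From the recurrence, any three consecutive rows of $\mathbf{M}$ are $R$-linearly dependent — the dependence $m_{i-1,j}-e_im_{i,j}+m_{i+1,j}=0$ holds with leading coefficient $1$ — so every contiguous $3$-by-$3$ submatrix of $\mathbf{M}$ has determinant $0$; thus $\mathbf{M}$ is tame as soon as it is an $\text{SL}_2$-tiling. For the $\text{SL}_2$ condition, write $D_{i,j}=m_{i,j}m_{i+1,j+1}-m_{i,j+1}m_{i+1,j}$. Substituting $m_{i+1,k}=e_im_{i,k}-m_{i-1,k}$ yields $D_{i,j}=D_{i-1,j}$, and substituting the mirrored recurrence $m_{k,j+1}=e_jm_{k,j}-m_{k,j-1}$ yields $D_{i,j}=D_{i,j-1}$; hence $D_{i,j}$ is constant, equal to $D_{0,0}=m_{0,0}m_{1,1}-m_{0,1}m_{1,0}=0-(-1)\cdot 1=1$. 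So every contiguous $2$-by-$2$ submatrix of $\mathbf{M}$ lies in $\text{SL}_2(R)$, and $\mathbf{M}$ is a tame $\text{SL}_2$-tiling; undoing the rescaling of the first paragraph completes the proof.

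The main obstacle is the second paragraph: one must verify that the recurrence coefficient furnished by Dodgson condensation deep inside the frieze coincides with the value forced at the main diagonal — in effect, that the ``vertical'' and ``horizontal'' recurrence coefficients of a semiregular frieze are equal, both being $m_{i+1,i-1}$ — so that a single sequence $(e_i)$ governs the recurrence across the antisymmetry seam. The remaining steps are the short computations above.
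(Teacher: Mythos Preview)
Your proof is correct, and it takes a genuinely different route from the paper's. The paper verifies the diamond rule and the tameness condition directly, by case analysis on $i-j$: the diamond rule is inherited from $\mathbf{F}$ for $i>j$, recovered from the reflection formula for $i<j$, and checked by hand on the diagonal; tameness is handled for $|i-j|>1$ via the reflection, for $i=j$ by a short determinant computation, and for $i=j\pm 1$ by invoking Lemma~\ref{lemK} together with the fact that the central entry is a unit.

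Your approach is more structural. You first normalise to the semiregular antisymmetric case by the row rescaling $m_{i,j}\mapsto\alpha^{(-1)^i}m_{i,j}$, which is a clean reduction the paper does not make here. You then establish the single recurrence $m_{i-1,j}+m_{i+1,j}=e_im_{i,j}$ uniformly in $j$ \emph{before} knowing that $\mathbf{M}$ is an $\textnormal{SL}_2$-tiling, by running the Dodgson argument separately in the two half-planes and matching the coefficients at the seam (your ``main obstacle'', namely $r_i=s_i=m_{i+1,i-1}$). From that recurrence both the $\textnormal{SL}_2$ condition and tameness drop out in two lines. In effect you are proving the conclusion of Theorem~\ref{thm19} for this particular $\mathbf{M}$ and using it to deduce the hypotheses, whereas the paper proves Lemma~\ref{lem91} independently and only later invokes Theorem~\ref{thm19}. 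Your route is a little longer but yields the recurrence as a by-product and avoids the somewhat ad hoc case split; the paper's route is shorter and keeps Lemma~\ref{lem91} self-contained.
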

\begin{proof}
Let us first check that the definition of $\mathbf{M}$ does indeed specify a function from $\mathbb{Z}\times\mathbb{Z}$ to $R$. To see this, observe that $\mathbf{M}$ is uniquely defined by the requirements that  $m_{i,j}=\mathbf{F}(i,j)$, for $i\geq j$, and $m_{j,i}=-\alpha^{(-1)^i-(-1)^j}m_{i,j}$, for $i<j$. It is then straightforward to check that in fact the formula $m_{j,i}=-\alpha^{(-1)^i-(-1)^j}m_{i,j}$ holds for \emph{all} indices $i,j\in\mathbb{Z}$, as specified in the original definition of $\mathbf{M}$.

Next we check that the diamond rule $m_{i,j}m_{i+1,j+1}-m_{i,j+1}m_{i+1,j}=1$ is satisfied for all $i,j\in\mathbb{Z}$. Certainly it is satisfied when $i>j$, by definition of a tame infinite frieze. If $i<j$, then one can check that the rule is satisfied by applying $m_{j,i}=-\alpha^{(-1)^i-(-1)^j}m_{i,j}$. Last, for $i=j$, we observe that $m_{i,i+1}=-\alpha^{(-1)^{i+1}-(-1)^i}m_{i+1,i}=-\alpha^{-2(-1)^i}m_{i+1,i}$ and $m_{i+1,i}=\alpha^{(-1)^i}$, so
\[
m_{i,i}m_{i+1,i+1}-m_{i,i+1}m_{i+1,i}=\alpha^{-2(-1)^i}m_{i+1,i}^2=1.
\]
It remains to check that $\mathbf{M}$ is tame; that is, we must check that $\det A=0$, where
\[
A= \begin{pmatrix}m_{i-1,j-1} & m_{i-1,j} & m_{i-1,j+1}\\ m_{i,j-1} & m_{i,j} & m_{i,j+1}\\  m_{i+1,j-1} & m_{i+1,j} & m_{i+1,j+1}\end{pmatrix}\!,
\]
for each $i,j\in\mathbb{Z}$. One can check that this is true if $|i-j|>1$ by using the tame property of $\mathbf{F}$ and the relation $m_{j,i}=-\alpha^{(-1)^i-(-1)^j}m_{i,j}$. Suppose that $i=j$. Then each entry on the leading diagonal is 0, and a short calculation shows that $\det A=m_{j+1,j-1}+m_{j-1,j+1}=0$. Suppose now that $i=j+1$. Then $m_{i,j}$ is a unit, by Lemma~\ref{lem95}, so $\det A=0$, by Lemma~\ref{lemK}, using the diamond rule. The case $i=j-1$ can be handled similarly. Hence $\mathbf{M}$ is a tame $\text{SL}_2$-tiling, as required.
\end{proof}

We refer to the $\text{SL}_2$-tiling $\mathbf{M}$ obtained from the tame infinite frieze $\mathbf{F}$ as the \emph{extension} of $\mathbf{F}$ to a tame $\text{SL}_2$-tiling.

Theorem~\ref{thm3} taught us that pairs of bi-infinite paths can be used to classify tame $\text{SL}_2$-tilings; here we will see that single bi-infinite paths can be used to classify infinite friezes. This observation was made by the first author for the usual Farey complex over the integers in \cite{Sh2023}. We extend this to all rings and groups of units. Let us start with an elementary lemma.

\begin{lemma}\label{lem66}
Let $(a,b)$ and $(c,d)$ be vertices of $\mathscr{E}_R$ with $ad-bc=0$. Then $(c,d)=\lambda(a,b)$, for some $\lambda\in R^\times$.
\end{lemma}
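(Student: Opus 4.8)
The plan is to exploit the unimodularity of both pairs to produce the scalar $\lambda$ explicitly and then check it is a unit. Since $(a,b)$ is a unimodular pair, there exist $x,y\in R$ with $ax+by=1$; since $(c,d)$ is a unimodular pair, there exist $u,v\in R$ with $cu+dv=1$. The natural candidate for the scalar is $\lambda=cx+dy$.

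First I would verify that this $\lambda$ works, i.e.\ that $c=\lambda a$ and $d=\lambda b$. For the first, compute
\[
\lambda a=(cx+dy)a=acx+ady=acx+bcy=c(ax+by)=c,
\]
where the middle step uses the hypothesis $ad=bc$. The computation for $d=\lambda b$ is entirely parallel, again substituting $bc=ad$ at the appropriate moment. Thus $(c,d)=\lambda(a,b)$.

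It remains to show $\lambda\in R^\times$. Substituting $c=\lambda a$ and $d=\lambda b$ into $cu+dv=1$ gives $\lambda(au+bv)=1$, so $\lambda$ has a multiplicative inverse (namely $au+bv$) and hence is a unit, since $R$ is commutative. That completes the argument.

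There is no real obstacle here; the only thing to be careful about is that we are working over an arbitrary commutative ring, so we cannot cancel or divide — but the proof avoids this entirely, using only the two B\'ezout-type identities coming from unimodularity together with the single relation $ad=bc$. This mirrors the philosophy already emphasised in Section~\ref{section3} that these arguments do not require the cancellation property of integral domains.
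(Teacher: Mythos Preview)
Your proof is correct and follows essentially the same route as the paper: pick a B\'ezout relation $ax+by=1$ from unimodularity of $(a,b)$, define $\lambda=cx+dy$, and use $ad=bc$ to check $\lambda a=c$ and $\lambda b=d$. You go one step further than the paper by explicitly verifying $\lambda\in R^\times$ via the unimodularity of $(c,d)$; the paper's proof omits this step, so your version is in fact the more complete of the two.
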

\begin{proof}
Since $(a,b)\in\mathscr{E}_R$, we have $pa+qb=1$, for some $p,q\in R$. Let $\lambda=pc+qd$. Then 
\[
\lambda a = (pc+qd)a=pca+qbc=(pa+qb)c=c,
\]
and similarly $\lambda b=d$, as required.
\end{proof}

The next lemma is central to our classification of tame infinite friezes using bi-infinite paths.

\begin{lemma}\label{lem25}
Let $\mathbf{F}$ be a tame infinite frieze, and let $\mathbf{M}$ be the extension of $\mathbf{F}$ to a tame $\textnormal{SL}_2$-tiling. Then there exists a bi-infinite path $\gamma$ in $\mathscr{E}_R$ with vertices $(a_i,b_i)$ such that 
\[
m_{i,j}=\alpha^{(-1)^j}(a_jb_i-b_ja_i),\quad \text{for }i,j\in\mathbb{Z},
\]
where $\alpha=m_{1,0}$. 
\end{lemma}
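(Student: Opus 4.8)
The plan is to reconstruct the desired bi-infinite path directly from the extended $\text{SL}_2$-tiling $\mathbf{M}$, using the top two rows to identify the vertices. First I would set $(a_i,b_i)=(m_{i,0},m_{i,1})$ for $i\in\mathbb{Z}$, so that the pairs $(a_i,b_i)$ are the columns of the $2\times\infty$ matrix formed by the $0$th and $1$st columns of $\mathbf{M}$. Since consecutive $2$-by-$2$ submatrices of $\mathbf{M}$ lie in $\text{SL}_2(R)$, we have $a_ib_{i+1}-b_ia_{i+1}=m_{i,0}m_{i+1,1}-m_{i,1}m_{i+1,0}=1$, so each $(a_i,b_i)$ is a unimodular pair and $\gamma=\langle\dots,(a_{-1},b_{-1}),(a_0,b_0),(a_1,b_1),\dots\rangle$ is a genuine bi-infinite path in $\mathscr{E}_R$.

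Next I would verify the claimed formula $m_{i,j}=\alpha^{(-1)^j}(a_jb_i-b_ja_i)$. The right-hand side, call it $m_{i,j}'=\alpha^{(-1)^j}(a_jb_i-b_ja_i)$, visibly satisfies $m_{i,j}'=-m_{j,i}'\cdot\alpha^{(-1)^i-(-1)^j}$ (matching the symmetry relation defining $\mathbf{M}$ from $\mathbf{F}$ in Lemma~\ref{lem91}), and it satisfies the recurrence $m_{i-1,j}'+m_{i+1,j}'=e_im_{i,j}'$ in the row index $i$, where $(e_i)$ is the itinerary of $\gamma$, by Lemma~\ref{lem85} applied to the sequences $(a_i)$ and $(b_i)$. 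By Theorem~\ref{thm19}, the tame tiling $\mathbf{M}$ also satisfies a recurrence $m_{i-1,j}+m_{i+1,j}=r_im_{i,j}$ with a uniquely determined sequence $(r_i)$. So it suffices to check two things: that $(e_i)=(r_i)$, and that $m_{i,j}'=m_{i,j}$ for two consecutive values of $j$ (say $j=0,1$) and all $i$; then the common recurrence in the $j$-index — which I would also check holds for $m_{i,j}'$, again via Lemma~\ref{lem85} now applied in the column index using the symmetry relation and Lemma~\ref{lem95} — propagates the equality to all $j$.

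For the base cases $j=0,1$: when $j=0$ we have $\alpha^{(-1)^0}=\alpha$ and $a_0b_i-b_0a_i=m_{0,0}m_{i,1}-m_{0,1}m_{i,0}$; expanding the determinant of the first two columns of $\mathbf{M}$ in rows $0$ and $i$, and using that this determinant equals... here I would use that $m_{0,0}=\mathbf{F}(0,0)=0$ and $m_{0,1}=-\alpha^{-2}m_{1,0}=-\alpha^{-1}$ (from the symmetry relation and Lemma~\ref{lem95}), so $a_0b_i-b_0a_i=\alpha^{-1}m_{i,0}$, giving $m_{i,0}'=\alpha\cdot\alpha^{-1}m_{i,0}=m_{i,0}$. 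The case $j=1$ is similar, using $m_{1,1}=0$ and $m_{1,0}=\alpha$. To identify the itineraries, note $m_{i,0}=m_{i,0}'$ and $m_{i,1}=m_{i,1}'$ for all $i$ forces $r_im_{i,0}=m_{i-1,0}+m_{i+1,0}=e_im_{i,0}'=e_im_{i,0}$ and likewise with the $1$st column, so $(r_i-e_i)m_{i,0}=(r_i-e_i)m_{i,1}=0$; multiplying out using $m_{i,0}m_{i+1,1}-m_{i,1}m_{i+1,0}=1$ yields $r_i=e_i$, as in the uniqueness arguments of Lemmas~\ref{lem85} and~\ref{lem17}.

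The main obstacle I anticipate is bookkeeping with the powers of $\alpha$: the symmetry relation $m_{j,i}=-\alpha^{(-1)^i-(-1)^j}m_{i,j}$ and the factor $\alpha^{(-1)^j}$ in the target formula must be reconciled carefully so that the column recurrence for $m_{i,j}'$ genuinely matches the column recurrence $m_{i,j-1}+m_{i,j+1}=s_jm_{i,j}$ from Theorem~\ref{thm19} — in particular one must track how $\alpha^{(-1)^j}$ changes as $j\mapsto j\pm1$ and confirm the itinerary of $\gamma$ governs the column direction after the twist by $\alpha$. Everything else is a routine determinant expansion plus an appeal to the uniqueness clauses already proved.
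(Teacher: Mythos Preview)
Your approach is correct and takes a genuinely different route from the paper. The paper invokes the surjectivity of $\widetilde{\Phi}$ (Lemma~\ref{lem45}) to obtain \emph{some} pair $(\gamma,\delta)$ with $m_{i,j}=a_id_j-b_ic_j$, then uses the diagonal zeros $m_{i,i}=0$ together with Lemma~\ref{lem66} to force $(c_j,d_j)=\lambda_j(a_j,b_j)$, and finally reads off $\lambda_j=-\alpha^{(-1)^j}$ from the second row via Lemma~\ref{lem95}. This is a short top-down argument: existence first, then constrain $\delta$ to be a twist of $\gamma$. Your method is the explicit, bottom-up version: pin down $\gamma$ concretely as $(a_i,b_i)=(m_{i,0},m_{i,1})$ (which is in fact the very same $\gamma$ constructed inside the proof of Lemma~\ref{lem45}) and then verify the formula by recurrence. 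The paper's argument is three lines once Lemmas~\ref{lem45} and~\ref{lem66} are in hand; yours trades that machinery for a direct, self-contained verification with more $\alpha$-bookkeeping.

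Two remarks on the execution. First, your stated symmetry relation has the exponent sign flipped: Lemma~\ref{lem91} gives $m_{j,i}=-\alpha^{(-1)^i-(-1)^j}m_{i,j}$, equivalently $m_{i,j}=-\alpha^{(-1)^j-(-1)^i}m_{j,i}$, and one checks $m_{i,j}'$ obeys the same relation. Second, the cleanest way to close the argument is not through the column recurrence (where, as you anticipated, the $\alpha$-twist makes the column coefficients $\alpha^{-2(-1)^j}e_j$ rather than $e_j$, and you must separately verify these equal the $s_j$ of Theorem~\ref{thm19} via the symmetry of $\mathbf{M}$) but directly through symmetry: once $m_{i,j}=m_{i,j}'$ for $j=0,1$ and all $i$, the common symmetry relation yields $m_{i,j}=m_{i,j}'$ for $i=0,1$ and all $j$, and then the shared \emph{row} recurrence $(e_i)=(r_i)$ propagates to all $(i,j)$.
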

\begin{proof}
By Lemma~\ref{lem45}, there are paths $\gamma,\delta\in \mathscr{P}$ with vertices $(a_i,b_i)$ and $(c_j,d_j)$ such that $m_{i,j}=a_id_j-b_ic_j$. Since $m_{i,i}=0$, we have that $a_id_i-b_ic_i=0$, for each $i\in\mathbb{Z}$. Lemma~\ref{lem66} tells us that there is $\lambda_j\in R$ with $(c_j,d_j)=\lambda_j (a_j,b_j)$. From Lemma~\ref{lem95} we have $m_{j+1,j}=\alpha^{(-1)^j}$, so 
\[
\lambda_j=-\lambda_j(a_{j+1}b_j-b_{j+1}a_j)=-(a_{j+1}d_j-b_{j+1}c_j)=-m_{j+1,j}=-\alpha^{(-1)^j}.
\]
Hence $m_{i,j}=\alpha^{(-1)^j}(a_jb_j-b_ja_i)$, as required.
\end{proof}

The next theorem shows that we can construct a tame infinite frieze uniquely from any suitable second row and any choice of third row whatsoever.

\begin{theorem}\label{thm84}
Given any bi-infinite sequence $(r_i)$ in $R$ and $\alpha\in R^\times$ there is a unique tame infinite frieze with $m_{i+1,i}=\alpha^{(-1)^i}$ and $m_{i+1,i-1}=r_i$, for $i\in\mathbb{Z}$.
\end{theorem}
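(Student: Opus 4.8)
The plan is to realise the frieze as the restriction to the half-plane $i\geq j$ of the tame $\textnormal{SL}_2$-tiling built from a suitable bi-infinite path in $\mathscr{E}_R$ via $\widetilde{\Phi}$, with the itinerary of the path encoding the prescribed third row and an $\alpha$-twist encoding the prescribed second row. The starting point is the following identity: if $\gamma$ is a bi-infinite path in $\mathscr{E}_R$ with vertices $(a_i,b_i)$ and itinerary $(e_i)$, and if we set $m_{i,j}=\alpha^{(-1)^j}(a_jb_i-b_ja_i)$, then $m_{i,i}=0$ automatically, while $m_{i+1,i}=\alpha^{(-1)^i}$ because $a_ib_{i+1}-b_ia_{i+1}=1$ (as $(a_i,b_i)\to(a_{i+1},b_{i+1})$ is an edge of $\mathscr{E}_R$), and $m_{i+1,i-1}=\alpha^{(-1)^{i-1}}e_i$ because $a_{i-1}b_{i+1}-b_{i-1}a_{i+1}=e_i$ by definition of the itinerary. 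Thus the prescribed data $(r_i)$ and $\alpha$ should correspond to the itinerary $e_i=\alpha^{(-1)^i}r_i$.

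For existence, I would fix the directed edge $(1,0)\to(0,1)$ of $\mathscr{E}_R$ and invoke Lemma~\ref{lem17} to obtain the unique bi-infinite path $\gamma$ through that edge with itinerary $(\alpha^{(-1)^i}r_i)$; write $(a_i,b_i)$ for its vertices. Let $\delta$ be the sequence of vertices $(c_j,d_j)=-\alpha^{(-1)^j}(a_j,b_j)$; a one-line exponent computation using $(-1)^j+(-1)^{j+1}=0$ shows $c_jd_{j+1}-d_jc_{j+1}=1$, so $\delta$ is a bi-infinite path in $\mathscr{E}_R$. Then $\mathbf{M}=\widetilde{\Phi}(\gamma,\delta)$ is a tame $\textnormal{SL}_2$-tiling (Section~\ref{section6}), and its entries simplify to $m_{i,j}=a_id_j-b_ic_j=\alpha^{(-1)^j}(a_jb_i-b_ja_i)$. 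Restricting $\mathbf{M}$ to $\{(i,j):i\geq j\}$ produces the required frieze $\mathbf{F}$: the top row of zeros, the diamond rule, and tameness are all inherited from $\mathbf{M}$, and the second and third rows take the values $\alpha^{(-1)^i}$ and $r_i$ by the identity above.

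For uniqueness, let $\mathbf{F}$ be any tame infinite frieze with $m_{i+1,i}=\alpha^{(-1)^i}$ and $m_{i+1,i-1}=r_i$; here $\alpha=m_{1,0}$. Applying Lemma~\ref{lem25} to $\mathbf{F}$ and its extension gives a bi-infinite path $\gamma$ in $\mathscr{E}_R$ with vertices $(a_i,b_i)$ and $m_{i,j}=\alpha^{(-1)^j}(a_jb_i-b_ja_i)$; writing $(e_i)=\Sigma(\gamma)$, the identity above gives $r_i=m_{i+1,i-1}=\alpha^{(-1)^{i-1}}e_i$, so $e_i=\alpha^{(-1)^i}r_i$. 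Hence the itinerary of $\gamma$ is forced; since two bi-infinite paths in $\mathscr{E}_R$ with the same itinerary are $\textnormal{SL}_2(R)$-equivalent (Proposition~\ref{prop2} and Lemma~\ref{lem17}), and since $(a_i,b_i)\longmapsto a_jb_i-b_ja_i$ is invariant under the $\textnormal{SL}_2(R)$-action, the entries $m_{i,j}$ are completely determined by $(r_i)$ and $\alpha$. (Alternatively, extend $\mathbf{F}$ by Lemma~\ref{lem91}, obtain the recurrence coefficient $\rho_i=\alpha^{(-1)^i}r_i$ by evaluating the relation of Theorem~\ref{thm19} at $j=i-1$, and propagate $m_{i+1,j}=\rho_im_{i,j}-m_{i-1,j}$ outward along the southeast diagonals from the known diagonals $i-j=0$ and $i-j=1$.)

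The only points needing care---none of them a genuine obstacle---are the exponent bookkeeping that identifies $\delta$ as a path and simplifies the entries of $\widetilde{\Phi}(\gamma,\delta)$, and the verification that restricting a tame $\textnormal{SL}_2$-tiling to $\{(i,j):i\geq j\}$ really is a tame infinite frieze in the sense of the definition; this holds because every diamond with $i>j$ and every $3\times3$ window with $i\geq j+2$ lies entirely in that region. In essence the theorem repackages Lemmas~\ref{lem17} and~\ref{lem25}.
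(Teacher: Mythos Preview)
Your proof is correct and follows essentially the same route as the paper's: build the path $\gamma$ with itinerary $(\alpha^{(-1)^i}r_i)$ via Lemma~\ref{lem17}, form $\delta$ with vertices $-\alpha^{(-1)^j}(a_j,b_j)$, and read off the frieze from $\widetilde{\Phi}(\gamma,\delta)$; for uniqueness, apply Lemma~\ref{lem25} to recover the itinerary and then use $\textnormal{SL}_2(R)$-equivalence of paths with the same itinerary together with $\textnormal{SL}_2(R)$-invariance of the formula. You supply a little more detail than the paper (checking $\delta$ is a path, checking the restriction really is a tame infinite frieze) and add an alternative recurrence argument for uniqueness, but the structure is the same.
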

\begin{proof}
First we prove the existence of such an infinite frieze. By Lemma~\ref{lem17}, there is a bi-infinite path $\gamma$ in $\mathscr{E}_R$ with itinerary $(\alpha^{(-1)^i}r_i)$; let $(a_i,b_i)$ be the vertices of this path and let $\delta$ be the path with vertices $-\alpha^{(-1)^j}(a_j,b_j)$. Then $\mathbf{M}=\widetilde{\Phi}(\gamma,\delta)$ is a tame $\text{SL}_2$-tiling with entries $m_{i,j}=\alpha^{(-1)^j}(a_jb_i-b_ja_i)$. Hence $m_{i,i}=0$, $m_{i+1,i}=\alpha^{(-1)^i}$, and
\[
m_{i+1,i-1}=\alpha^{-(-1)^i}(a_{i-1}b_{i+1}-b_{i-1}a_{i+1})=r_i,
\]
by definition of the itinerary of $\gamma$. Thus the restriction of $\mathbf{M}$ to $\{(i,j)\in\mathbb{Z}^2:i\geq j\}$ is a tame infinite frieze with the required properties.

It remains to prove the uniqueness part of the theorem. Suppose then that $\mathbf{F}$ is a tame  infinite frieze with $m_{i+1,i}=\alpha^{(-1)^i}$ and $m_{i+1,i-1}=r_i$, for $i\in\mathbb{Z}$, and let $\mathbf{M}$ be the extension of $\mathbf{F}$ to a tame $\text{SL}_2$-tiling. From Lemma~\ref{lem25} we can find a bi-infinite path $\gamma$ in $\mathscr{E}_R$ with vertices $(a_i,b_i)$ such that $m_{i,j}=\alpha^{(-1)^j}(a_jb_i-b_ja_i)$. Let $(e_i)$ be the itinerary of $\gamma$. Then
\[
e_i=a_{i-1}b_{i+1}-b_{i-1}a_{i+1}=\alpha^{(-1)^i}m_{i+1,i-1}=\alpha^{(-1)^i}r_i.
\]
We can now apply Lemma~\ref{lem17} to see that $\gamma$ is uniquely specified by its itinerary (itself determined by $(r_i)$ and $\alpha$) and an initial directed edge. However, up to $\text{SL}_2(R)$ equivalence we can choose any edge in $\mathscr{E}_R$ as the initial directed edge, and the formula $m_{i,j}=\alpha^{(-1)^j}(a_jb_i-b_ja_i)$ is preserved under the action of $\text{SL}_2(R)$. It follows that $\mathbf{M}$ is uniquely specified by  $(r_i)$ and $\alpha$.
\end{proof}

It is worthwhile highlighting the special case of Theorem~\ref{thm84} for semiregular infinite friezes (the $\alpha=1$ case). This result is known already (see \cite{Mo2015}*{Theorem 1.15}), and indeed one can easily obtain Theorem~\ref{thm84} from this special case.

\begin{corollary}\label{cor84}
Given any bi-infinite sequence $(r_i)$ in $R$ there is a unique tame semiregular infinite frieze with $m_{i+1,i-1}=r_i$, for $i\in\mathbb{Z}$.
\end{corollary}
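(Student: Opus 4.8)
The plan is to deduce this immediately from Theorem~\ref{thm84} by specialising to $\alpha = 1$. First I would observe that a tame semiregular infinite frieze is, by definition, a tame infinite frieze whose second row consists entirely of $1$'s, that is, $m_{i+1,i} = 1$ for all $i \in \mathbb{Z}$. By Lemma~\ref{lem95}, any infinite frieze satisfies $m_{i+1,i} = \alpha^{(-1)^i}$ with $\alpha = m_{1,0}$; hence the condition $m_{i+1,i} = 1$ for all $i$ is equivalent to $\alpha = 1$ (take $i = 0$ for one direction, the other being immediate). So tame semiregular infinite friezes are exactly the tame infinite friezes arising in Theorem~\ref{thm84} in the case $\alpha = 1$.

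Then I would apply Theorem~\ref{thm84} with this choice of $\alpha$: given the sequence $(r_i)$, there is a unique tame infinite frieze with $m_{i+1,i} = 1^{(-1)^i} = 1$ and $m_{i+1,i-1} = r_i$ for $i \in \mathbb{Z}$. By the previous paragraph, such a frieze is precisely a tame semiregular infinite frieze with third row $(r_i)$, and the uniqueness clause of Theorem~\ref{thm84} yields the uniqueness asserted here.

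There is essentially no obstacle; the corollary is a direct specialisation, and the only point that merits a moment's attention is the equivalence between semiregularity and the normalisation $\alpha = 1$, which is the trivial remark above. (One could instead prove the corollary first and directly, using Lemma~\ref{lem17} and the map $\widetilde{\Phi}$ as in the proof of Theorem~\ref{thm84}, and then recover the general statement by rescaling even rows by $\alpha$; but since Theorem~\ref{thm84} is already available, the specialisation is the shortest route.)
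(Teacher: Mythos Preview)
Your proof is correct and takes essentially the same approach as the paper: the corollary is stated as the $\alpha = 1$ specialisation of Theorem~\ref{thm84}, with no further argument given. Your additional remark that one could prove the corollary directly and then recover the general theorem by rescaling also mirrors the paper's observation that ``one can easily obtain Theorem~\ref{thm84} from this special case.''
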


In the introduction we defined the map $\widetilde{\Psi}\colon\mathscr{P}\longrightarrow\SL$ by the rule $\widetilde{\Psi}(\gamma)=\widetilde{\Phi}(\gamma,-\gamma)$. This map sends the bi-infinite path $\gamma$ in $\mathscr{E}_R$ with vertices $(a_i,b_i)$ to the tame $\text{SL}_2$-tiling $\mathbf{M}$ over $R$ with entries $m_{i,j}=a_jb_i-b_ja_i$. Observe that $m_{i,i}=0$ and $m_{i+1,i}=1$, for $i\in\mathbb{Z}$, so $\mathbf{M}$ is in fact (the extension of) a tame semiregular infinite frieze. Since $\widetilde{\Phi}(A\gamma,-A\gamma)=\widetilde{\Phi}(\gamma,-\gamma)$, for $A\in\text{SL}_2(R)$, it follows that $\widetilde{\Psi}(A\gamma)=\widetilde{\Psi}(\gamma)$, so we obtain an induced map $\Psi$ from $\text{SL}_2(R)\backslash \mathscr{P}$ to the set of tame semiregular infinite friezes. Theorem~\ref{thm51}, stated at the start of this section, says that this map $\Psi$ is bijective.

\begin{proof}[Proof of Theorem~\ref{thm51}] 
We recall from Theorem~\ref{thm3}  that the map $\Phi\colon \text{SL}_2(R)\backslash(\mathscr{P}\times\mathscr{P})\longrightarrow\SL$ (where $\Phi=\Phi_{\{1\}}$) is a one-to-one correspondence. A straightforward consequence is that $\Psi$ is injective. All that remains is to prove that $\Psi$ is surjective; however, this has been done already, in Lemma~\ref{lem25} for the case $\alpha=1$.
\end{proof}

A version of this result for integer friezes was proven by the first author in \cite{Sh2023}*{Theorem~1.3}.

%%%%%%%%%%%%%%%%
\section{Tame friezes}\label{section8}

In this section we prove Theorem~\ref{thm4}. First, though, we prove (as promised in the introduction) that a tame frieze $\mathbf{F}$ of width $n$ has a unique extension to a tame $\text{SL}_2$-tiling. We require a preparatory lemma, which is similar to Lemma~\ref{lem95}.

\begin{lemma}\label{lem96}
Let $\mathbf{F}$ be a frieze of width $n$. Then $m_{i+n-1,i}=\beta^{(-1)^i}$, for $i\in\mathbb{Z}$, where $\beta=m_{n-1,0}$.
\end{lemma}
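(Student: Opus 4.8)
The plan is to mimic the proof of Lemma~\ref{lem95}, replacing the role of the top row of zeros by that of the bottom row of zeros. Where Lemma~\ref{lem95} exploited the diamond sitting immediately below the top row (whose top entry $m_{i,i}$ vanishes), here I shall exploit the diamond sitting immediately above the bottom row (whose bottom entry $m_{i+n,i}$ vanishes).

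Concretely, first I would fix $i\in\mathbb{Z}$ and apply the diamond rule at the index $(i+n-1,\,i)$. Since $n\geq 2$ we have $0<n-1<n$, so the rule is applicable and yields
\[
m_{i+n-1,i}\,m_{i+n,i+1}-m_{i+n-1,i+1}\,m_{i+n,i}=1.
\]
Next, because $m_{i+n,i}=0$ belongs to the bottom row of zeros of $\mathbf{F}$, this collapses to $m_{i+n-1,i}\,m_{i+n,i+1}=1$. The key observation is then that $m_{i+n,i+1}=m_{(i+1)+n-1,\,i+1}$ is precisely the entry of the second-last row directly to the right of $m_{i+n-1,i}$; writing $\ell_i=m_{i+n-1,i}$, we have shown $\ell_i\ell_{i+1}=1$ for every $i\in\mathbb{Z}$. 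In particular each $\ell_i$ is a unit with $\ell_{i+1}=\ell_i^{-1}$, and a trivial induction (in both directions) gives $\ell_i=\ell_0^{(-1)^i}$. Since $\ell_0=m_{n-1,0}=\beta$, this is exactly the assertion $m_{i+n-1,i}=\beta^{(-1)^i}$.

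There is no real obstacle here: the argument is purely formal and, as with Lemma~\ref{lem95}, uses no cancellation property of $R$. The only point requiring a little care is the index bookkeeping --- checking that the chosen diamond lies within the domain $\{0\leq i-j\leq n\}$ of $\mathbf{F}$ (which is where the hypothesis $n\geq 2$ enters) and that $m_{i+n,i}$ really is one of the prescribed zero entries rather than an entry off the edge of the frieze.
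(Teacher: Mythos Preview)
Your proof is correct and follows essentially the same approach as the paper: apply the diamond rule at $(i+n-1,i)$, use $m_{i+n,i}=0$ to obtain $m_{i+n-1,i}\,m_{i+n,i+1}=1$, and conclude by induction. Your version is in fact more detailed than the paper's, which compresses the argument into two sentences.
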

\begin{proof}
Since $\mathbf{F}$ is a frieze we have $m_{i+n-1,i}m_{i+n,i+1}-m_{i+n-1,i+1}m_{i+n,i}=1$, for each $i\in\mathbb{Z}$. But $m_{i+n,i}=0$, so $m_{i+n-1,i}m_{i+n,i+1}=1$, and the result follows by induction.
\end{proof}

Next we can state the extension result.

\begin{proposition}\label{lem93}
Let $\mathbf{F}$ be a tame frieze of width $n$ (with entries $m_{i,j}$) and let $\mathbf{M}\colon \mathbb{Z}\times\mathbb{Z}\longrightarrow R$ be the function determined by the recurrence relations $m_{i+n,j}=-\lambda^{(-1)^i}m_{i,j}$, for $i,j\in\mathbb{Z}$, where $\lambda=m_{1,0}/m_{n-1,0}$. Then $\mathbf{M}$ is a tame $\textnormal{SL}_2$-tiling. Furthermore, $\mathbf{M}$ is the unique tame $\textnormal{SL}_2$-tiling that coincides with $\mathbf{F}$ on $0\leq i-j\leq n$.
\end{proposition}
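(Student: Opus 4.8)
The plan is to follow the template of Lemma~\ref{lem91}. Write $\alpha=m_{1,0}$ and $\beta=m_{n-1,0}$; these are units by Lemmas~\ref{lem95} and~\ref{lem96} (for instance, taking $i$ odd in those lemmas exhibits $\alpha^{-1}$ and $\beta^{-1}$ as entries of $\mathbf{F}$), so $\lambda=\alpha\beta^{-1}$ is a well-defined unit. The recurrence $m_{i+n,j}=-\lambda^{(-1)^i}m_{i,j}$ overlaps the strip $0\leq i-j\leq n$ only along the diagonal $i=j$, where it reads $0=-\lambda^{(-1)^j}\cdot 0$; hence it extends $\mathbf{F}$ to a well-defined function $\mathbf{M}$ on $\mathbb{Z}\times\mathbb{Z}$, and every ``zero diagonal'' $i-j\equiv 0\pmod{n}$ of $\mathbf{M}$ consists of zeros. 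Combining the recurrence with Lemmas~\ref{lem95} and~\ref{lem96} also yields the off-strip values $m_{k,k+1}=-\alpha^{-(-1)^k}$ and $m_{k+n+1,k}=-\beta^{(-1)^k}$, which I will need for the boundary cases below.

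First I would check that $\mathbf{M}$ is an $\textnormal{SL}_2$-tiling, i.e.\ that the diamond rule holds at every $(i,j)$. For $0<i-j<n$ all four entries lie in the strip, so this is the diamond rule for $\mathbf{F}$. For $i-j=0$ and $i-j=n$ it is a short direct computation from the boundary values just recorded (each such diamond evaluates to $1$). For all other $(i,j)$ it propagates: passing from the 2-by-2 block at $(i,j)$ to the one at $(i+n,j)$ multiplies its determinant by $(-\lambda^{(-1)^i})(-\lambda^{(-1)^{i+1}})=\lambda^{(-1)^i+(-1)^{i+1}}=1$, so the diamond rule, known on the window $0\leq i-j\leq n$, spreads to all diagonals.

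Next, to prove $\mathbf{M}$ is tame I would establish a global column recurrence $m_{i,j-1}+m_{i,j+1}=s_jm_{i,j}$ for all $i,j$ and a single sequence $(s_j)$, and then invoke the easy direction of Theorem~\ref{thm19}: the three columns $j-1,j,j+1$ of any 3-by-3 block of $\mathbf{M}$ are then linearly dependent, so its determinant vanishes. Fix $j$. For $1<i-j<n-1$ the enclosing 3-by-3 block lies in the strip and has determinant $0$ by tameness of $\mathbf{F}$; for $i-j\in\{1,n-1\}$ the central entry $m_{i,j}$ is a unit (Lemmas~\ref{lem95},~\ref{lem96}), so that block's determinant is $0$ by Lemma~\ref{lemK}, using that the four flanking 2-by-2 blocks have determinant $1$. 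In each of these cases Lemma~\ref{lemL} gives $m_{i,j-1}+m_{i,j+1}=\Delta_{i,j}m_{i,j}$ with $\Delta_{i,j}$ expressible in the two forms for $\Delta_{i,j}$ appearing in the proof of Theorem~\ref{thm19}, which (exactly as there) force $\Delta_{i,j}=\Delta_{i+1,j}$; hence $\Delta_{i,j}$ equals a constant $s_j$ for $i\in\{j+1,\dots,j+n-1\}$, and this set is always exactly the range covered above. At the two remaining positions $i=j$ and $i=j+n$ of the window $j\leq i\leq j+n$ one checks directly from the boundary values that $m_{i,j-1}+m_{i,j+1}=0=s_j\cdot 0$. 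Applying the defining recurrence once more (under $i\mapsto i+n$ it scales this identity by $-\lambda^{(-1)^i}$) propagates it to all $i\in\mathbb{Z}$, completing the proof that $\mathbf{M}$ is tame.

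Finally, for uniqueness let $\mathbf{M}'$ be any tame $\textnormal{SL}_2$-tiling agreeing with $\mathbf{F}$ on $0\leq i-j\leq n$. By Theorem~\ref{thm19} it has an associated sequence $(s_j)$ with $m'_{i,j-1}+m'_{i,j+1}=s_jm'_{i,j}$; evaluating at $(i,j)=(k,k-1)$, where $m'_{k,k-1}=m_{k,k-1}=\alpha^{(-1)^{k-1}}$ is a unit and $m'_{k,k}=0$, gives $s_{k-1}=\alpha^{(-1)^k}m_{k,k-2}$, which is determined by $\mathbf{F}$. Since $0\leq i-j\leq n$ contains $n+1\geq 3$ consecutive entries of every row of $\mathbf{M}'$, the recurrence $m'_{i,j+1}=s_jm'_{i,j}-m'_{i,j-1}$ in $j$ then determines $\mathbf{M}'$ from $(s_j)$ and $\mathbf{F}|_{\{0\leq i-j\leq n\}}$; the same reasoning applies to $\mathbf{M}$, so $\mathbf{M}'=\mathbf{M}$. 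The one delicate point throughout is the behaviour of $\mathbf{M}$ along the zero diagonals $i-j\equiv 0\pmod{n}$, where the central entry vanishes and neither tameness of $\mathbf{F}$ nor Lemma~\ref{lemK} applies directly; this is handled for the diamond rule by the explicit off-strip values above, and for tameness by routing around those determinants via the column recurrence and Theorem~\ref{thm19}.
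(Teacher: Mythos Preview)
Your proof is correct and follows essentially the same skeleton as the paper's: establish well-definedness, the diamond rule on the boundary diagonals $i-j=0$ and $i-j=n$, then propagate; handle tameness for $1<i-j<n-1$ via tameness of $\mathbf{F}$ and for $i-j\in\{1,n-1\}$ via Lemma~\ref{lemK}; and finish with uniqueness. The two proofs differ in how they close the remaining gaps. For tameness at the zero diagonals, the paper computes the $3\times 3$ determinant at $i=j$ directly (it reduces to $m_{j+1,j-1}+m_{j-1,j+1}=0$), while you instead build the global column recurrence $m_{i,j-1}+m_{i,j+1}=s_jm_{i,j}$ from Lemma~\ref{lemL}, check it trivially at $i=j$ and $i=j+n$, and then invoke the easy direction of Theorem~\ref{thm19}; this neatly sidesteps the explicit determinant. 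For uniqueness, the paper appeals to Theorem~\ref{thm84} (a tame infinite frieze is determined by its second and third rows), whereas you extract $(s_j)$ from Theorem~\ref{thm19}, pin it down using entries of $\mathbf{F}$, and propagate along rows---more self-contained, at the cost of a little more bookkeeping. Both routes are sound; yours trades a short explicit computation for a slightly longer but more systematic argument.
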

\begin{proof}
We define $\alpha=m_{1,0}$ and $\beta=m_{n-1,0}$, so $\lambda=\alpha/\beta$. The recurrence relations $m_{i+n,j}=-\lambda^{(-1)^i}m_{i,j}$ specify a well-defined function $\mathbf{M}$ because $m_{i,i}=m_{i+n,i}=0$, for $i\in\mathbb{Z}$. 

Let us establish the diamond rule $m_{i,j}m_{i+1,j+1}-m_{i,j+1}m_{i+1,j}=1$, for $i,j\in\mathbb{Z}$. We know that this holds for $0<i-j<n$ because $\mathbf{F}$ is a tame frieze of width $n$. For $i=j$ we have $m_{i,i}=0$, $m_{i+1,i}=\alpha^{(-1)^{i}}$ by Lemma~\ref{lem95}, and 
\[
m_{i,i+1}=-\lambda^{-(-1)^{i}}m_{i+n,i+1}  =-\lambda^{-(-1)^{i}}\beta^{(-1)^{i+1}}=-\alpha^{-(-1)^i},
\]
where we have applied Lemma~\ref{lem96} to obtain $m_{i+n,i+1}=\beta^{(-1)^{i+1}}$. With these values we see that the diamond rule is indeed satisfied with $i=j$. We have now established the diamond rule for $0\leq i-j< n$, and the recurrence relations $m_{i+n,j}=-\lambda^{(-1)^i}m_{i,j}$ can then be applied to show that the rule is satisfied for all $i,j\in\mathbb{Z}$; we omit the details.

We must check that $\mathbf{M}$ is tame; that is, we must check that $\det A=0$, where
\[
A= \begin{pmatrix}m_{i-1,j-1} & m_{i-1,j} & m_{i-1,j+1}\\ m_{i,j-1} & m_{i,j} & m_{i,j+1}\\  m_{i+1,j-1} & m_{i+1,j} & m_{i+1,j+1}\end{pmatrix}\!,
\]
for each $i,j\in\mathbb{Z}$. This holds when $1<i-j<n-1$ because $\mathbf{F}$ is tame. It also holds whenever $m_{i,j}$ is a unit, by Lemma~\ref{lemK} (and the diamond rule), so in particular it holds for $i=j+1$ and $i=j+n-1$. When $i=j$ each entry on the leading diagonal is 0, and a short calculation shows that $\det A=m_{j+1,j-1}+m_{j-1,j+1}=0$. Therefore $\det A=0$ for $0\leq i-j<n$, and the recurrence relations $m_{i+n,j}=-\lambda^{(-1)^i}m_{i,j}$ can then be applied to show that $\det A=0$ for all $i,j\in\mathbb{Z}$; again we omit the details.

It remains to prove the uniqueness part of the proposition. Suppose then that $\mathbf{M}$ is any tame $\text{SL}_2$-tiling that coincides with $\mathbf{F}$ on $0\leq i-j\leq n$. Then $\mathbf{M}$ is a tame infinite frieze (since $m_{i,i}=0$, for $i\in\mathbb{Z}$). We can then invoke Theorem~\ref{thm84} to see that $\mathbf{M}$ is uniquely specified by the second and third rows of $\mathbf{F}$, as required.
\end{proof}

There is an embedding of the collection $\FR$ of tame friezes of width $n$ into $\SL$ that identifies a tame frieze $\mathbf{F}$ with the unique tame $\text{SL}_2$-tiling $\mathbf{M}$ determined by Proposition~\ref{lem93}. For convenience, we also write $\FR$ for the image under this embedding; it comprises those tame $\text{SL}_2$-tilings $\mathbf{M}$ that satisfy $m_{i,i}=0$ and
\[
m_{i+n,j}=-\lambda^{(-1)^i}m_{i,j},\quad \text{for }i,j\in\mathbb{Z},
\]
where $\lambda\in R^\times$ (and in fact $\lambda=m_{1,0}/m_{n-1,0}$, by the uniqueness part of Proposition~\ref{lem93}).

Let us now set about proving Theorem~\ref{thm4}. We recall the map $\widetilde{\Psi}\colon\mathscr{P}\longrightarrow\SL$ given by the rule $\widetilde{\Psi}(\gamma)=\widetilde{\Phi}(\gamma,-\gamma)$, which sends the bi-infinite path $\gamma$ in $\mathscr{E}_R$ with vertices $v_i=(a_i,b_i)$ to the tame semiregular infinite frieze $\mathbf{M}$ with entries $m_{i,j}=a_jb_i-b_ja_i$. Consider the restriction of $\widetilde{\Psi}$ to $\mathscr{C}_n$; this set comprises bi-infinite paths $\gamma$ with $v_{i+n}=\lambda^{(-1)^i} v_i$, for $i\in\mathbb{Z}$, where $\lambda\in R^\times$. Observe that
\[
m_{i+n,j} = a_{j}b_{i+n}-b_{j}a_{i+n}=\lambda^{(-1)^i}(a_jb_i-b_ja_i)=\lambda^{(-1)^i}m_{i,j}.
\]
Therefore $\widetilde{\Psi}(\gamma)\in \FRS$ (the collection of tame semiregular friezes of width $n$). Consequently, we have a map $\widetilde{\Psi}\colon\mathscr{C}_n\longrightarrow\FRS$.

\begin{lemma}\label{lem51}
The map $\widetilde{\Psi}\colon\mathscr{C}_n\longrightarrow\FRS$ is surjective and  $\widetilde{\Psi}(A\gamma)=\widetilde{\Psi}(\gamma)$, for $A\in\textnormal{SL}_2(R)$.
\end{lemma}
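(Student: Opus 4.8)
The plan is to dispose of the $\textnormal{SL}_2(R)$-equivariance first, since it is immediate from Lemma~\ref{lem78}, and then to establish surjectivity by feeding the given semiregular frieze into Lemma~\ref{lem25} and checking that the path it returns lies in $\mathscr{C}_n$.

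For the equivariance, observe that if $\gamma$ has vertices $v_i$ then $-\gamma$ has vertices $-v_i$, and since every $A\in\textnormal{SL}_2(R)$ acts linearly on pairs we have $A(-\gamma)=-(A\gamma)$; moreover $(A\gamma)_{i+n}=A(\lambda^{(-1)^i}v_i)=\lambda^{(-1)^i}(A\gamma)_i$, so $A\gamma$ is again in $\mathscr{C}_n$. Hence
\[
\widetilde{\Psi}(A\gamma)=\widetilde{\Phi}\bigl(A\gamma,-(A\gamma)\bigr)=\widetilde{\Phi}\bigl(A\gamma,A(-\gamma)\bigr)=\widetilde{\Phi}(\gamma,-\gamma)=\widetilde{\Psi}(\gamma)
\]
by Lemma~\ref{lem78}.

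For surjectivity, let $\mathbf{F}\in\FRS$; identified with its extension it is a tame $\textnormal{SL}_2$-tiling $\mathbf{M}$ with $m_{i,i}=0$, second row of $1$'s (so $m_{1,0}=1$), and $m_{i+n,j}=-\lambda^{(-1)^i}m_{i,j}$ for some $\lambda\in R^\times$. Its restriction to $\{(i,j):i\geq j\}$ is a tame semiregular infinite frieze whose extension is $\mathbf{M}$ (by the uniqueness in Theorem~\ref{thm84}), so Lemma~\ref{lem25} applied with $\alpha=1$ yields a bi-infinite path $\gamma$ in $\mathscr{E}_R$, with vertices $(a_i,b_i)$, such that $m_{i,j}=a_jb_i-b_ja_i$ for all $i,j$. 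In other words $\widetilde{\Psi}(\gamma)=\mathbf{M}=\mathbf{F}$, and it only remains to check $\gamma\in\mathscr{C}_n$.

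The key step is precisely this last point. I would evaluate the identity $a_jb_{i+n}-b_ja_{i+n}=m_{i+n,j}=-\lambda^{(-1)^i}(a_jb_i-b_ja_i)$ at $j=i$, getting $a_ib_{i+n}-b_ia_{i+n}=0$, so Lemma~\ref{lem66} produces a unit $\mu_i$ with $(a_{i+n},b_{i+n})=\mu_i(a_i,b_i)$; then evaluating at $j=i+1$ and using $a_{i+1}b_i-b_{i+1}a_i=-1$ forces $\mu_i=-\lambda^{(-1)^i}$. Since $(-\lambda)^{(-1)^i}=-\lambda^{(-1)^i}$ for every $i$, this reads $v_{i+n}=(-\lambda)^{(-1)^i}v_i$ with $-\lambda\in R^\times$, so $\gamma\in\mathscr{C}_n$. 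I expect the main obstacle to be exactly this passage from a scalar relation among the tiling entries to the vectorial relation among the path vertices; Lemma~\ref{lem66} together with the edge relation in $\mathscr{E}_R$ does the work, and the only subtlety is the sign identity $(-\lambda)^{(-1)^i}=-\lambda^{(-1)^i}$.
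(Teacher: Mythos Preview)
Your proof is correct and follows essentially the same approach as the paper: invoke Lemma~\ref{lem78} for the $\textnormal{SL}_2(R)$-invariance, apply Lemma~\ref{lem25} with $\alpha=1$ to produce a path $\gamma$ with $\widetilde{\Psi}(\gamma)=\mathbf{M}$, and then use Lemma~\ref{lem66} on $m_{i+n,i}=0$ to obtain $v_{i+n}=\mu_i v_i$. The only minor difference is in pinning down $\mu_i$: the paper shows $\mu_i\mu_{i+1}=1$ directly from the edge relation $a_{i+n}b_{i+n+1}-b_{i+n}a_{i+n+1}=1$, whereas you use the periodicity formula $m_{i+n,j}=-\lambda^{(-1)^i}m_{i,j}$ at $j=i+1$ to identify $\mu_i=-\lambda^{(-1)^i}$ explicitly; both are equally valid and equally short.
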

\begin{proof}
First we prove that $\widetilde{\Psi}$ is surjective. Let $\mathbf{M}\in\FRS$. By Lemma~\ref{lem25} we can find a bi-infinite path $\gamma$ in $\mathscr{E}_R$ with vertices $v_i=(a_i,b_i)$ such that $m_{i,j}=a_jb_i-b_ja_i$ (here $\alpha=m_{1,0}=1$ because $\mathbf{M}$ is a semiregular frieze). Since $m_{i+n,i}=0$ we have $v_{i+n}=\lambda_i v_i$, for some $\lambda_i \in R^\times$, by Lemma~\ref{lem66}. Observe that 
\[
\lambda_i\lambda_{i+1}=\lambda_i\lambda_{i+1}(a_ib_{i+1}-b_ia_{i+1})=a_{i+n}b_{i+n+1}-b_{i+n}a_{i+n+1}=1,
\]
from which it follows that $\lambda_{i}=\lambda^{(-1)^i}$, where $\lambda=\lambda_0$. Therefore $\gamma\in \mathscr{C}_n$, so $\widetilde{\Psi}$ is indeed surjective.

The $\text{SL}_2(R)$-invariance property follows immediately from the $\text{SL}_2(R)$-invariance property of $\widetilde{\Phi}$ given in Lemma~\ref{lem78}.
\end{proof}

Next, mirroring the procedure from Section~\ref{section6}, we define $\tau_U\colon \FRS\longrightarrow U\backslash \FRS$ to be the map that takes a tame semiregular frieze $\mathbf{M}$ to its orbit under the action of $U$ on $\FRS$ (given by $m_{i,j}\longmapsto \lambda^{(-1)^i+(-1)^j}m_{i,j}$, for $\lambda\in U$). Then we define $\widetilde{\Psi}_U\colon\mathscr{C}_{n,U}\longrightarrow U\backslash\FRS$ by
\[
\widetilde{\Psi}_U(\pi_U(\gamma))=\tau_U\widetilde{\Psi}(\gamma),
\]
where $\mathscr{C}_{n,U}=\pi_U(\mathscr{C}_n)$. This is a well-defined function because if $\pi_U(\gamma)=\pi_U(\gamma')$ -- where $\gamma$ and $\gamma'$ have vertices $(a_i,b_i)$ and $\lambda^{(-1)^i}(a_i,b_i)$ for some $\lambda\in U$ (see Lemma~\ref{lem61}) -- then $\mathbf{M}=\widetilde{\Psi}(\gamma)$ and $\mathbf{M}'=\widetilde{\Psi}(\gamma')$ satisfy
\[
m_{i,j}'=a_j'b_i'-b_j'a_i'=\lambda^{(-1)^i+(-1)^j}m_{i,j}.
\]
Thus $\tau_U(\mathbf{M}')=\tau_U(\mathbf{M})$. 

Notice that $\widetilde{\Psi}_U(A\gamma)=\widetilde{\Psi}_U(\gamma)$, for $A\in\text{SL}_2(R)$, by Lemma~\ref{lem51} and the equivariance of $\pi_U$ under the action of $\text{SL}_2(R)$. It follows that $\widetilde{\Psi}_U$ induces a map
\[
\Psi_U\colon \text{SL}_2(R)\backslash \mathscr{C}_{n,U}\longrightarrow U\backslash\FRS,
\]
which is surjective since $\widetilde{\Psi}$ is surjective. To complete the proof of Theorem~\ref{thm4} we have only to show that $\Psi_U$ is injective.

Suppose then that $\widetilde{\Psi}_U(\pi_U(\gamma))=\widetilde{\Psi}_U(\pi_U(\gamma'))$, for $\gamma,\gamma'\in \mathscr{C}_n$ with vertices $(a_i,b_i)$ and $(a_i',b_i')$. Then  $\tau_U\widetilde{\Psi}(\gamma)=\tau_U\widetilde{\Psi}(\gamma')$. By replacing $(a_i',b_i')$ with $\lambda^{(-1)^i}(a_i',b_i')$, for some suitable unit $\lambda\in U$ (which preserves $\pi_U(\gamma')$), we can assume that in fact $\widetilde{\Psi}(\gamma)=\widetilde{\Psi}(\gamma')$. Then Lemma~\ref{lem45} tells us that $\gamma'=A\gamma$, for some $A\in\text{SL}_2(R)$, so $\pi_U(\gamma)$ and $\pi_U(\gamma')$ lie in the same $\text{SL}_2(R)$-orbit in $\mathscr{C}_{n,U}$. Therefore $\Psi_U$ is indeed injective. This completes the proof of Theorem~\ref{thm4}.

%%%%%%%%%%%%%%%%%%%
\section{Tame regular friezes}\label{section9}

In this section we prove Theorem~\ref{thm5}, which says that the map $\Psi$ induces a one-to-one correspondence between 
\[
\textnormal{SL}_2(R)\Big\backslash\mleft\{\parbox{3.1cm}{\centering\textnormal{semiclosed~paths~of length~$n$~in~$\mathscr{E}_R$}}\mright\}\quad \longleftrightarrow\quad \mleft\{\parbox{3.2cm}{\centering\textnormal{tame~regular~friezes over~$R$~of~width~$n$}}\mright\}.
\]
Let us denote by $\mathscr{S}_n$ the subcollection of $\mathscr{C}_n$ of semiclosed paths of length $n$. A semiclosed path of length $n$ considered as a bi-infinite path with vertices $v_i$ satisfies $v_{i+n}=-v_i$, for $i\in\mathbb{Z}$. We denote by $\FRT$ the subcollection of $\FRS$ of tame regular friezes of width $n$. By Lemma~\ref{lem51}, the map $\widetilde{\Psi}\colon\mathscr{C}_n\longrightarrow\FRS$ is surjective. Suppose now that $\gamma\in \mathscr{S}_n$ with vertices $v_i=(a_i,b_i)$. Then $\mathbf{M}=\widetilde{\Psi}(\gamma)$ is a tame semiregular frieze of width $n$ and
\[
m_{i+n-1,i}= a_ib_{i+n-1}-b_ia_{i+n-1}=-a_{i+n}b_{i+n-1}+b_{i+n}a_{i+n-1}=1.
\]
Hence $\mathbf{M}\in \FRT$. 

Conversely, suppose that  $\mathbf{M}\in \FRT$ and $\widetilde{\Psi}(\gamma)=\mathbf{M}$, where $\gamma\in\mathscr{C}_n$. Let $\gamma$ have vertices $v_i=(a_i,b_i)$, where $v_{i+n}=\lambda^{(-1)^i}v_i$, for some $\lambda\in R^\times$. Then, because $m_{n-1,0}=1$, we have 
\[
\lambda =\lambda m_{n-1,0} =  \lambda(a_0b_{n-1}-b_0a_{n-1})=a_{n}b_{n-1}-b_{n}a_{n-1}=-1.
\]
Hence $\gamma\in \mathscr{S}_n$. 

It follows that the restriction of $\Psi$ to $\text{SL}_2(R)\backslash \mathscr{S}_n$ is a surjective map from $\text{SL}_2(R)\backslash \mathscr{S}_n$ onto $\FRT$. Now, we know from Theorem~\ref{thm4} that the map $\Psi\colon \text{SL}_2(R)\backslash \mathscr{C}_n\longrightarrow \FRS$ is a bijection, so it follows that the restriction map $\Psi\colon \text{SL}_2(R)\backslash \mathscr{S}_n\longrightarrow \FRT$ is a bijection also. This completes the proof of Theorem~\ref{thm5}.

%%%%%%%%%%%%%%%%%%%
\section{Quiddity sequences of tame friezes}\label{section10}

We recall that a quiddity sequence for a tame semiregular frieze $\mathbf{F}$ is a period $a_1,a_2,\dots,a_k$ from the third row of $\mathbf{F}$. Here we prove Theorem~\ref{thm34}, which says that any finite sequence in a finite ring $R$ is the quiddity sequence for some tame semiregular frieze over $R$.

\begin{proof}[Proof of Theorem~\ref{thm34}]
Consider any finite sequence $e_1,e_2,\dots,e_k$ in a finite ring $R$. We extend this to a periodic bi-infinite sequence by defining $e_{k+i}=e_i$, for $i\in\mathbb{Z}$. Let $\gamma$ be any bi-infinite path in $\mathscr{E}_R$ with itinerary $(e_i)$. As usual, we denote the vertices of $\gamma$ by $v_i=(a_i,b_i)$.  From Lemma~\ref{lem85} we have that
\[
\begin{pmatrix}a_i &  a_{i+1}\\ b_i & b_{i+1}\end{pmatrix}=\begin{pmatrix}a_{i-1} &  a_{i}\\ b_{i-1} & b_{i}\end{pmatrix}U_i,\quad\text{where }U_i=\begin{pmatrix}0 & -1 \\ 1 & e_i\end{pmatrix}\!,
\]
for $i\in\mathbb{Z}$. Let $g=U_1U_2\dotsb U_k$ and let $m$ be a positive integer such that $g^m=I$, the identity matrix in (the finite group) $\text{SL}_2(R)$. We define $n=mk$; then $U_1U_2\dotsb U_n=I$ and, more generally, \(U_iU_{i+1}\dotsb U_{i+n-1}=I\), for any $i\in\mathbb{Z}$. It follows that 
\[
\begin{pmatrix}a_{i+n-1} &  a_{i+n}\\ b_{i+n-1} & b_{i+n}\end{pmatrix}=\begin{pmatrix}a_{i-1} &  a_{i}\\ b_{i-1} & b_{i}\end{pmatrix}\!,
\]
so $v_{i+n}=v_i$. As we saw in Section~\ref{section8}, the image of $\gamma$ under $\widetilde{\Psi}$ is a tame semiregular frieze $\mathbf{F}$ of width $n$ with entries $m_{i,j}=a_jb_i-b_ja_i$. Observe that
\[
m_{i+1,i-1}=a_{i-1}b_{i+1}-b_{i-1}a_{i+1}=e_i.
\]
Thus $e_1,e_2,\dots,e_k$ is a quiddity sequence for $\mathbf{F}$, as required.
\end{proof}

In contrast to Theorem~\ref{thm34}, it is not true that any finite sequence $e_1, e_2,\dots,e_k$ in a finite ring $R$ is the quiddity sequence for some tame \emph{regular} frieze over $R$. The simplest example to demonstrate this is the sequence with a single entry $-1$ in the ring $\mathbb{Z}/3\mathbb{Z}$. We explore this no further here and instead refer the reader to \cite{Mo2015}*{Theorem~1.15} for an alternative approach.

%%%%%%%%%%%%%%%%%%%
\section{Enumerating friezes over finite fields}\label{section11}

Here we prove Theorem~\ref{thm9}, which says that the number of tame friezes of width $n$ over a finite field $R$ of order $q$ is
\[
\frac{(q-1)(q^{n-1}+(-1)^n)}{q+1}.
\]
We use the Farey complex $\mathscr{F}_{R,R^\times}$, which is the complete graph on $q+1$ vertices. In this Farey complex any two equivalent vertices (vertices $u$ and $v$ with $v=\lambda u$, for $\lambda\in R^\times$) are in fact equal, so a path between equivalent vertices is a closed path.

\begin{proof}[Proof of Theorem~\ref{thm9}]
Let $R$ denote the finite field of order $q$. The number of closed paths of length $n$ in $\mathscr{F}_{R,R^\times}$ is $q(q^{n-1}+(-1)^n)$. This is a straightforward observation in graph theory, which can be proven by induction. Now, the group $\text{SL}_2(R)$ acts transitively on $\mathscr{F}_{R,R^\times}$, and the kernel of this action is $\{\pm I\}$. It is a well-known and elementary observation that this group has order $q(q^2-1$), so the quotient set
\[
\textnormal{SL}_2(R)\Big\backslash\mleft\{\parbox{2.7cm}{\centering\textnormal{closed~paths~of length~$n$~in~$\mathscr{F}_{R,R^\times}$}}\mright\}
\]
has cardinality $2(q^{n-1}+(-1)^n)/(q^2-1)$. By Theorem~\ref{thm4}, this is also the cardinality of the set
\[
R^\times\Big\backslash\mleft\{\parbox{3.85cm}{\centering\textnormal{tame~semiregular~friezes over~$R$~of~width~$n$}}\mright\}.
\]
The group $R^\times$ acts transitively on the collection of tame semiregular friezes of width $n$ (by the rule $m_{i,j}\longmapsto \lambda^{(-1)^i+(-1)^j}m_{i,j}$) and the kernel of this action is $\{\pm 1\}$. It follows that there are $(q^{n-1}+(-1)^n)/(q+1)$ tame semiregular friezes over $R$ of width $n$. Finally, as noted in the introduction, there is a $(q-1)$-to-1 map from the full collection of tame friezes of width $n$ to the subcollection of tame semiregular friezes given by $m_{i,j}\longmapsto \alpha^{(-1)^j}m_{i,j}$, where $\alpha=m_{1,0}$. Consequently, there are $(q-1)(q^{n-1}+(-1)^n)/(q+1)$ tame friezes over $R$ of width $n$, as required.
\end{proof}

Next we recover a result of Morier-Genoud \cite{Mo2021}*{Theorem~1} on the number of tame regular friezes of width $n$. Morier-Genoud's approach is to utilise a correspondence between tame regular friezes over a finite field and a certain moduli spaces of curves with marked points; this correspondence was developed by Morier-Genoud and others in \cite{MoOvScTa2014,MoOvTa2012}. Morier-Genoud's result is generalised by Cuntz and Mabilat in \cite{CuMa2023} who enumerate the solutions of matrix equations of the form
\[
\begin{pmatrix}a_n & -1\\ 1 & 0\end{pmatrix}\begin{pmatrix}a_{n-1} & -1\\ 1 & 0\end{pmatrix}\dotsb \begin{pmatrix}a_1 & -1\\ 1 & 0\end{pmatrix}=B,
\]
for $a_1,a_2,\dots,a_n\in R$ and $B\in\text{SL}_2(R)$, where $R$ is a finite field. The solutions of this equation when $B=-I$ (the negative of the identity matrix) correspond to quiddity sequences of tame regular friezes over $R$. There have been further advances on this topic by B\"ohmler and Cuntz \cite{BoCu2024}, Mabilat \cite{Ma2024}, and the authors (with Benzaira) \cite{BeShSoZa2024} who carry out similar enumerations for the ring of integers modulo $N$. Common to all the approaches (including our own) is the strategy of representing friezes by an algebraic or geometric model and deriving recurrence relations for enumerating friezes within that model.

Following Morier-Genoud, we frame the result using the notation of $q$-integers and $q$-binomial coefficients, with
\[
[m]_{q^2} = \frac{q^{2m}-1}{q^2-1}\quad\text{and}\quad \binom{m}{2}_{\!\!q} = \frac{(q^m-1)(q^{m-1}-1)}{(q-1)(q^2-1)}.
\]

\begin{theorem}\label{thm10}
The number $\alpha_n$ of tame regular friezes of width $n$ over the finite field $R$ of order $q$ is
\[
\alpha_n = 
\begin{cases}
[m]_{q^2}, &\text{if $n=2m+1$,}\\
(q-1)\displaystyle\binom{m}{2}_{\!\!q} &\text{if $n=2m$ with $m$ even and $\cha R\neq 2$,}\\[1pt]
(q-1)\displaystyle\binom{m}{2}_{\!\!q}+q^{m-1}, &\text{if $n=2m$ with $m$ odd or $\cha R=2$}.
\end{cases}
\]
\end{theorem}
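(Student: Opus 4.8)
The plan is to use Theorem~\ref{thm5} to convert the enumeration into a counting problem inside the finite group $\mathrm{SL}_2(\mathbb{F}_q)$, and then to resolve that problem by Fourier analysis on $\mathrm{SL}_2(\mathbb{F}_q)$.

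Write $R$ for the field of order $q$ and set $U_e=\left(\begin{smallmatrix}0&-1\\1&e\end{smallmatrix}\right)$ for $e\in R$. By Theorem~\ref{thm5}, $\alpha_n$ is the number of $\mathrm{SL}_2(R)$-orbits of semiclosed paths of length $n$ in $\mathscr{E}_R$. Since $\mathrm{SL}_2(R)$ acts simply transitively on the directed edges of $\mathscr{E}_R$ (by Proposition~\ref{prop2}, the kernel of the action being trivial when $U=\{1\}$), every such orbit has a unique representative with initial directed edge $(1,0)\to(0,1)$; tracking the vertices of this path through the recurrence of Lemma~\ref{lem85} shows it is determined by its itinerary $(e_1,\dots,e_n)\in R^n$ (and determines it), and the semiclosed condition $v_{i+n}=-v_i$ translates exactly into $U_{e_1}U_{e_2}\cdots U_{e_n}=-I$. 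Hence $\alpha_n=N_n(-I)$, where $N_n(B)$ denotes the number of sequences $(e_1,\dots,e_n)\in R^n$ with $U_{e_1}\cdots U_{e_n}=B$; equivalently, $N_n(B)$ is the number of $n$-step walks from $I$ to $B$ in $G=\mathrm{SL}_2(\mathbb{F}_q)$ whose steps lie in $S=\{U_e:e\in\mathbb{F}_q\}$.

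Next I would exploit that $S$ is the left coset $U_0N$, where $U_0=\left(\begin{smallmatrix}0&-1\\1&0\end{smallmatrix}\right)$ and $N$ is the group of upper unitriangular matrices, of order $q$. Thus in the group algebra $\mathbb{C}[G]$ the step sum is $s=\sum_{e}U_e=U_0\eta$ with $\eta=\sum_{u\in N}u$, and for every irreducible representation $\rho$ one has $\rho(\eta)=q\,P_\rho$, where $P_\rho$ is the orthogonal projection onto the $N$-fixed subspace $V_\rho^N$. Fourier inversion on $G$ then yields
\[
\alpha_n=\frac{1}{|G|}\sum_{\rho}(\dim\rho)\,\omega_\rho(-I)\,q^{\,n}\,\operatorname{tr}\bigl(T_\rho^{\,n}\bigr),
\]
where $\omega_\rho(-I)\in\{\pm1\}$ is the value of the central character of $\rho$ at $-I$ and $T_\rho=P_\rho\rho(U_0)P_\rho$ is regarded as an operator on $V_\rho^N$. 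Now one runs through the irreducibles of $\mathrm{SL}_2(\mathbb{F}_q)$: the cuspidal representations have $V_\rho^N=0$ and drop out; the trivial and Steinberg representations have $\dim V_\rho^N=1$, so each $T_\rho$ is a scalar to be identified; and the principal series representations (irreducible of dimension $q+1$ for a generic character of $\mathbb{F}_q^\times$, and splitting into two half-dimensional pieces at the quadratic character when $q$ is odd) have $\dim V_\rho^N\le 2$, with the eigenvalues of $T_\rho$ and the sign $\omega_\rho(-I)$ read off from the character table of $\mathrm{SL}_2(\mathbb{F}_q)$ and the model of $\rho$ on functions over $\mathbb{P}^1(\mathbb{F}_q)$. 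Equivalently, and closer to the geometric spirit of the paper, each contribution can be realised as the trace of an $n$-th power of a character-twisted adjacency operator on the complete graph $\mathscr{F}_{R,R^\times}\cong K_{q+1}$, where the edge $v\to w$ is weighted by the unit recording the monodromy of the covering $\mathscr{E}_R\to\mathscr{F}_{R,R^\times}$; Fourier analysis on the abelian group $R^\times$ then isolates the closed walks of monodromy $-1$, which by Section~\ref{section9} are precisely the ones corresponding to regular rather than merely semiregular friezes, while the trivial-character term recovers the $K_{q+1}$ closed-walk count already used for Theorem~\ref{thm9}.

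Summing the surviving terms and rewriting in $q$-integers and $q$-binomials should produce the three stated formulas. I expect the decisive difficulty to be precisely this final bookkeeping: pinning down, uniformly across the families of irreducibles, the dimension of $V_\rho^N$, the eigenvalue data of $T_\rho$, and the sign $\omega_\rho(-I)$, and then tracing how these combine differently according to the parity of $m=\lfloor n/2\rfloor$ and according to whether $\cha R=2$ — in characteristic $2$ one has $-I=I$, there is no quadratic character and hence no split principal series, and this is the source both of the extra summand $q^{m-1}$ and of the coalescence of the second and third cases. A minor additional point is to check that the count remains correct when $q$ is even, where the kernels $\{\pm1\}$ appearing in the passage between $\FR$, $\FRS$ and $\FRT$ degenerate to $\{1\}$.
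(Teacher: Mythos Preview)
Your reduction is correct and the strategy is sound, but it is genuinely different from the paper's argument and, as you yourself flag, it stops before the computation is done.

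The paper avoids representation theory entirely. Having identified $\alpha_n$ with the number of paths in $\mathscr{E}_R$ of the form $(1,0)\to(0,1)\to\cdots\to(-1,0)$, it introduces the companion count $\beta_n$ of such paths ending instead at $(1,0)$, and uses a short lemma on paths of length~2 (Lemma~\ref{lemZ}) to derive the coupled recursions
\[
\alpha_n=(q^{n-3}-\lambda_{n-2})+q\beta_{n-2},\qquad \beta_n=(q^{n-3}-\lambda_{n-2})+q\alpha_{n-2},
\]
where $\lambda_k=(q^{k-1}+(-1)^k)/(q+1)$ is already known from Theorem~\ref{thm9}. For odd $n$ an explicit sign-flip bijection gives $\alpha_n=\beta_n$; for even $n$ subtracting the two relations gives $\alpha_n-\beta_n=-(-q)^{n/2}$. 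Both cases then close by induction. The whole argument is elementary and self-contained.

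Your route via Fourier inversion on $\mathrm{SL}_2(\mathbb{F}_q)$ is legitimate: the identity $\alpha_n=N_n(-I)$ is exactly right, the factorisation $s=U_0\eta$ with $\rho(\eta)=qP_\rho$ is correct, cuspidals do vanish since they have no $N$-fixed vectors, and the remaining spectral data live on spaces of dimension at most~2. What is missing is the actual work: for each principal series you must compute the eigenvalues of $T_\rho=P_\rho\rho(U_0)P_\rho$ (these turn out to be $q^{-1}$ times roots of a quadratic determined by the inducing character), track the sign $\omega_\rho(-I)=\chi(-1)$, handle the split pieces at the quadratic character when $q$ is odd, and then sum a collection of geometric-type series indexed by $\widehat{\mathbb{F}_q^\times}$. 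This is all feasible and is essentially the method behind \cite{CuMa2023}, but it is substantially more machinery than the paper uses, and your proposal does not carry it out. The trade-off is that your method would give $N_n(B)$ for arbitrary $B$ in one stroke, whereas the paper's recursion is tailored to $B=\pm I$ but needs nothing beyond elementary combinatorics.
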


To prove Theorem~\ref{thm10}, it is convenient to work with the Farey complex $\mathscr{E}_R$.

\begin{lemma}\label{lemZ}
The number $\mu(a,b)$ of paths of length 2 from $(a,b)$ to $(1,0)$ in $\mathscr{E}_R$, where $R$ is the finite field of order $q$, is given by 
\[
\mu(a,b) = 
\begin{cases}
1, &\text{if $b\neq 0$,}\\
q, &\text{if $(a,b)=(-1,0)$,}\\
0, &\text{otherwise.}
\end{cases}
\]
\end{lemma}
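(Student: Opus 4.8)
The plan is to characterise directly the possible midpoints of such a path. A path of length $2$ from $(a,b)$ to $(1,0)$ is a vertex $(x,y)$ of $\mathscr{E}_R$ together with directed edges $(a,b)\to(x,y)$ and $(x,y)\to(1,0)$. By the definition of $\mathscr{E}_R$ (the case $U=\{1\}$), these two conditions are $ay-bx=1$ and $x\cdot 0-y\cdot 1=1$, respectively. The second equation forces $y=-1$. Since $-1\in R^\times$, the pair $(x,-1)$ is automatically unimodular for every $x\in R$, so the only remaining constraint is the first equation, which with $y=-1$ becomes the linear equation $bx=-(a+1)$ in the single unknown $x\in R$. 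Hence $\mu(a,b)$ equals the number of solutions $x\in R$ of $bx=-(a+1)$.

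From here the three cases in the statement follow by inspection. If $b\neq 0$, then $b$ is invertible in the field $R$, so there is a unique solution $x=-b^{-1}(a+1)$ and $\mu(a,b)=1$. If $b=0$, the equation reads $0=-(a+1)$: when $a\neq -1$ it has no solution, so $\mu(a,b)=0$, which is precisely the remaining (``otherwise'') case; when $a=-1$, that is $(a,b)=(-1,0)$, every $x\in R$ is a solution, so $\mu(-1,0)=q$. (The hypothesis that $(a,b)$ is a vertex, i.e.\ $(a,b)\neq(0,0)$, plays no role and need not be invoked.)

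I do not expect any genuine obstacle in this lemma; the only step that needs a moment's attention is noticing that once the second coordinate of the midpoint equals the unit $-1$ its unimodularity is automatic, so that counting paths collapses to counting solutions of a single linear equation over the field $R$. This computation is exactly what is required downstream: it feeds the recurrence for $\alpha_n$ used in the proof of Theorem~\ref{thm10}.
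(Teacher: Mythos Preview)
Your proof is correct and matches the paper's argument almost verbatim: the paper also observes that the midpoint must have the form $(\lambda,-1)$, reduces to the equation $b\lambda=-(a+1)$, and splits into the same three cases.
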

\begin{proof}
Any path of length 2 from $(a,b)$ to $(1,0)$ has the form
\[
(a,b) \to (\lambda,-1) \to (1,0),
\]
where $\lambda\in R$. If $b\neq 0$, then there is only one such path, with $\lambda=-(a+1)b^{-1}$. If $(a,b)=(-1,0)$, then there are $q$ such paths, since any value of $\lambda\in R$ yields a path. Otherwise, if $a\neq -1$ and $b=0$, then no value of $\lambda$ yields a path, so there are no paths of length 2 from $(a,b)$ to $(1,0)$.
\end{proof}

\begin{proof}[Proof of Theorem~\ref{thm10}]
For each $n=2,3,\dotsc$ we define four classes of paths $A_n$, $B_n$, $C_n$, and $D_n$ in $\mathscr{E}_R$ as follows:
\begin{description}[leftmargin=20pt,topsep=0pt,itemsep=0pt]
\item $D_n$ is the collection of all paths of length $n$ with initial vertex $(1,0)$ and second vertex $(0,1)$,
\item $C_n$ is the subcollection of $D_n$ of paths with final vertex $(a,0)$, for some $a\in R^\times$,
\item $B_n$ is the subcollection of $C_n$ of paths with final vertex $(1,0)$,
\item $A_n$ is the subcollection of $C_n$ of paths with final vertex $(-1,0)$.
\end{description}
The collection $D_n$ has cardinality $q^{n-1}$. Let $\lambda_n$ be the cardinality of $C_n$. Under the covering map $\pi_{R^\times}\colon \mathscr{E}_R\longrightarrow \mathscr{F}_{R,R^\times}$, each element of $C_n$ maps to a closed path in $\mathscr{F}_{R,R^\times}$. Also, each closed path in $\mathscr{F}_{R,R^\times}$ has precisely $q(q+1)$ preimages in $C_n$ under $\pi_{R^\times}$. We noted earlier that there are $q(q^{n-1}+(-1)^n)$ closed paths of length $n$ in $\mathscr{F}_{R,R^\times}$, so 
\[
\lambda_n=\frac{q^{n-1}+(-1)^n}{q+1}.
\]
Next, recall that a semiclosed path in $\mathscr{E}_R$ is a path with initial vertex $v$ and final vertex $-v$, for some vertex $v$ in $\mathscr{E}_R$. By Theorem~\ref{thm5} the number $\alpha_n$ of tame regular friezes of width $n$ over $R$ is equal to the cardinality of 
\[
\textnormal{SL}_2(R)\Big\backslash\mleft\{\parbox{3.1cm}{\centering\textnormal{semiclosed~paths~of length~$n$~in~$\mathscr{E}_R$}}\mright\}.
\]
Each semiclosed path of length $n$ in $\mathscr{E}_R$ is $\text{SL}_2(R)$-equivalent to precisely one element of $A_n$; hence $\alpha_n$ is the cardinality of $A_n$. Let $\beta_n$ denote the cardinality of $B_n$. 

Consider the map $A_n\longrightarrow D_{n-2}$ that sends a path $\langle v_0,v_1,\dots,v_n\rangle$ in $A_n$ to $\langle v_0,v_1,\dots,v_{n-2}\rangle$. By Lemma~\ref{lemZ}, each element of $D_{n-2} -  C_{n-2}$ has exactly one preimage under this map, each element of $B_{n-2}$ has exactly $q$ preimages, and all other elements have no preimages. Consequently,
\[
\alpha_n = (q^{n-3}-\lambda_{n-2})+q\beta_{n-2},
\]
and similarly one can see that
\[
\beta_n = (q^{n-3}-\lambda_{n-2})+q\alpha_{n-2}.
\]
Suppose now that $n$ is odd. In this case there is a bijection $A_n\longrightarrow B_n$ that sends
\[
(1,0)\to(0,1)\to(a_2,b_2)\to (a_3,b_3)\to (a_4,b_4)\to (a_5,b_5)\to\dotsb \to (-1,0)
\]
to
\[
(1,0)\to(0,1)\to(a_2,-b_2)\to (-a_3,b_3)\to (a_4,-b_4)\to (-a_5,b_5)\to\dotsb \to (1,0).
\]
Hence $\alpha_n=\beta_n$, so $\alpha_n=(q^{n-3}-\lambda_{n-2})+q\alpha_{n-2}$. The formula $\alpha_n=[m]_{q^2}$ can then be proven by induction.

Suppose instead that $n$ is even. By subtracting the recurrence relations for $\alpha_n$ and $\beta_n$ we obtain $\alpha_n-\beta_n=-q(\alpha_{n-2}-\beta_{n-2})$. Since $\alpha_2=q$ and $\beta_2=0$, we see that $\alpha_{2m}-\beta_{2m}=-(-q)^m$. From this we obtain the revised recurrence relation
\[
\alpha_{2m} = (q^{2m-3}-\lambda_{2m-2})+q\alpha_{2m-2}-(-q)^{2m-1},
\]
and the required formulas for $\alpha_{2m}$ can then be established by induction.
\end{proof}

We note that the recent work \cite{CuMa2023} enumerates certain tame friezes over finite fields and $\mathbb{Z}/N\mathbb{Z}$ for some values of $N$; the friezes they consider are termed `quasiregular' in Lemma~\ref{lem75}, to follow.

%%%%%%%%%%%%%%%%%%%%%%%%%%
%%%%%%%%%%%%%%%%%%%%%%%%%%
%%%%%%%%%%%%%%%%%%%%%%%%%%
%%%%%%%%%%%%%%%%%%%%%%%%%%

\section{Lifting $\text{SL}_2$-tilings}\label{section12}

In this section we consider a third and final application of Theorems~\ref{thm3} to~\ref{thm5}, on lifting $\text{SL}_2$-tilings and friezes. In particular, we prove Theorem~\ref{thm100}.

Let $R$ be a ring and $I$ an ideal in $R$; then $R/I$ is also a ring. The quotient map $R\longrightarrow R/I$ given by $a\longmapsto a+I$ induces a map $\theta\colon \text{SL}_2(R)\longrightarrow \text{SL}_2(R/I)$ by applying the quotient map to each entry, and it also induces a map $\Theta$ from the collection of tame $\text{SL}_2$-tilings over $R$ to the collection of tame $\text{SL}_2$-tilings over $R/I$. The first of three main results in this section follows.

\begin{theorem}\label{thm44}
For any ideal $I$ in a ring $R$, the map $\theta$ is surjective if and only if $\Theta$ is surjective.
\end{theorem}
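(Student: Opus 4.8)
The plan is to exploit the fact that the contiguous $2$-by-$2$ submatrices of an $\textnormal{SL}_2$-tiling are precisely the members of $\textnormal{SL}_2$ -- one inclusion is the definition of an $\textnormal{SL}_2$-tiling, the other follows from the surjectivity of $\widetilde{\Phi}$ in Lemma~\ref{lem45} -- together with the observation that entrywise reduction modulo $I$ commutes with all of the constructions of Sections~\ref{section3} and~\ref{section6}.

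For the implication ``$\Theta$ surjective $\implies$ $\theta$ surjective'', I would start from an arbitrary matrix $\bar A\in\textnormal{SL}_2(R/I)$ and first realise it as a window of a tiling. Its two rows form a directed edge of $\mathscr{E}_{R/I}$, so by Lemma~\ref{lem17} they extend to a bi-infinite path $\gamma$ in $\mathscr{E}_{R/I}$ (with any itinerary we like); pairing $\gamma$ with any bi-infinite path $\delta$ whose $0$th and $1$st vertices are $(0,1)$ and $(-1,0)$ yields, via $\widetilde{\Phi}$, a tame $\textnormal{SL}_2$-tiling $\mathbf{M}$ over $R/I$ whose top-left $2$-by-$2$ block $\bigl(\begin{smallmatrix}m_{0,0}&m_{0,1}\\ m_{1,0}&m_{1,1}\end{smallmatrix}\bigr)$ equals $\bar A$ (a one-line check using $m_{i,j}=a_id_j-b_ic_j$). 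Applying $\Theta$-surjectivity lifts $\mathbf{M}$ to a tame $\textnormal{SL}_2$-tiling $\mathbf{M}'$ over $R$, and the top-left $2$-by-$2$ block of $\mathbf{M}'$ is then an element of $\textnormal{SL}_2(R)$ reducing to $\bar A$.

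For the converse ``$\theta$ surjective $\implies$ $\Theta$ surjective'', fix a tame $\textnormal{SL}_2$-tiling $\mathbf{M}$ over $R/I$. By the surjectivity construction in Lemma~\ref{lem45} (together with Theorem~\ref{thm19}) we may take $\mathbf{M}=\widetilde{\Phi}(\gamma,\delta)$, where $\gamma$ has itinerary $(\bar r_i)$ and $0$th, $1$st vertices $(m_{0,0},m_{0,1})$, $(m_{1,0},m_{1,1})$, and $\delta$ has itinerary $(\bar s_j)$ and $0$th, $1$st vertices $(0,1)$, $(-1,0)$. I would now lift all the defining data to $R$: pick arbitrary lifts $(r_i)$, $(s_j)$ of the itineraries (possible since $R\to R/I$ is onto on elements), and use surjectivity of $\theta$ to lift the matrix $\bigl(\begin{smallmatrix}m_{0,0}&m_{1,0}\\ m_{0,1}&m_{1,1}\end{smallmatrix}\bigr)\in\textnormal{SL}_2(R/I)$ to a matrix in $\textnormal{SL}_2(R)$, whose columns furnish a directed edge of $\mathscr{E}_R$ reducing to the $0$th and $1$st vertices of $\gamma$. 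Lemma~\ref{lem17} then yields a bi-infinite path $\tilde{\gamma}$ in $\mathscr{E}_R$ with that initial edge and itinerary $(r_i)$, and likewise a path $\tilde{\delta}$ with initial vertices $(0,1)$, $(-1,0)$ and itinerary $(s_j)$; set $\mathbf{M}'=\widetilde{\Phi}(\tilde{\gamma},\tilde{\delta})$, a tame $\textnormal{SL}_2$-tiling over $R$. Since entrywise reduction modulo $I$ carries $\tilde{\gamma}$, $\tilde{\delta}$ to paths in $\mathscr{E}_{R/I}$ with itineraries $(\bar r_i)$, $(\bar s_j)$ and the correct initial edges, the uniqueness clause of Lemma~\ref{lem17} identifies those reductions with $\gamma$ and $\delta$; and because $\widetilde{\Phi}$ is given by the polynomial formula $m_{i,j}=a_id_j-b_ic_j$, reduction commutes with $\widetilde{\Phi}$, whence $\Theta(\mathbf{M}')=\widetilde{\Phi}(\gamma,\delta)=\mathbf{M}$.

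The only step requiring any care is checking that these reductions actually land where claimed: a unimodular pair over $R$ reduces to a unimodular pair over $R/I$, a directed edge of $\mathscr{E}_R$ reduces to a directed edge of $\mathscr{E}_{R/I}$, and the recurrences $a_{i-1}+a_{i+1}=e_ia_i$ of Lemma~\ref{lem85} are preserved, so that ``reduce the path, then invoke uniqueness over $R/I$'' is legitimate. I do not anticipate a genuine obstacle; the substance of the theorem is precisely the symmetric remark that every element of $\textnormal{SL}_2$ occurs as a $2$-by-$2$ window of a tame tiling, read in each direction, with the remainder being bookkeeping with the correspondences already established.
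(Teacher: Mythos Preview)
Your argument is correct, and for the direction ``$\theta$ surjective $\implies$ $\Theta$ surjective'' it is genuinely different from the paper's. The paper proves this by first establishing that the principal congruence subgroup $\Gamma_I$ acts transitively on each fibre of $\rho\colon\mathscr{E}_R\to\mathscr{E}_{R/I}$ (Lemma~\ref{lem49}), and then lifting a bi-infinite path one edge at a time, using that transitivity to match up the endpoint of each lifted edge with the start of the next (Lemma~\ref{lem75}). You instead exploit the itinerary parameterisation of Lemma~\ref{lem17}: you lift the two itineraries elementwise and the two initial edges via $\theta$, rebuild the paths in $\mathscr{E}_R$ from this data, and then use uniqueness in Lemma~\ref{lem17} over $R/I$ to see that their reductions recover the original paths. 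This is more economical for the theorem at hand and avoids introducing $\Gamma_I$ altogether; the price is that the paper's Lemmas~\ref{lem49} and~\ref{lem75} are needed again later (for lifting paths with geometric constraints in Theorems~\ref{thm99} and~\ref{thm100}), so the paper is not actually doing extra work in the long run. For the other direction, the paper writes down an explicit doubly-periodic tiling built from a non-liftable $\bar A$, whereas you realise $\bar A$ as a $2\times 2$ window of a tiling via $\widetilde{\Phi}$ and then read off a lift of $\bar A$ from any $\Theta$-preimage; these are essentially the same idea packaged differently.
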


The question of when $\theta$ is surjective in general is complex. It is surjective if $R$ is the ring of integers of a number field; on the other hand, there are examples such as  \cite{Mi1971}*{Example 13.5} for which the map is not surjective.

Let $\rho\colon \mathscr{E}_R\longrightarrow \mathscr{E}_{R/I}$ be the map with rule $(a,b)\longmapsto (a+I,b+I)$. It is straightforward to prove that $\rho$ is a graph homomorphism, in the sense that it maps vertices to vertices and preserves directed edges. It is not in general a covering map because it need not be locally bijective at vertices, and it may not be surjective. However, if $\theta$  is surjective then $\rho$ \emph{is} surjective. This is straightforward to establish, for if $u\to v$ is a directed edge in $\mathscr{E}_{R/I}$, then there is a matrix $A\in \textnormal{SL}_2(R/I)$ with rows $u$ and $v$. We can then find $\widetilde{A}\in \textnormal{SL}_2(R)$ with $\theta(\widetilde{A})=A$, in which case the rows $\tilde{u}$ and $\tilde{v}$ of $\widetilde{A}$ satisfy $\rho(\tilde{u})=u$ and $\rho(\tilde{v})=v$.

In Section~\ref{section5} we introduced principal congruence subgroups for $\text{SL}_2(\mathbb{Z})$; now we consider these groups for more general rings and ideals. The \emph{principal congruence subgroup} of $\text{SL}_2(R)$ for the ideal $I$ is the group
\[
\Gamma_I = \mleft\{\begin{pmatrix}a&b\\ c&d\end{pmatrix}\in \text{SL}_2(R) : a-1,b,c,d-1 \in I\mright\}. 
\]
It is a normal subgroup of $\text{SL}_2(R)$. It preserves the fibres of $\rho$, in the sense that the action of $\Gamma_I$ on $\mathscr{E}_R$ fixes each set $\rho^{-1}(v)$, for $v\in\mathscr{E}_{R/I}$. In fact, $\Gamma_I$ acts transitively on each fibre.

\begin{lemma}\label{lem49}
The group $\Gamma_I$ acts transitively on each fibre of $\rho$.
\end{lemma}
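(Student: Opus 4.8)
The plan is to show that if $(a,b)$ and $(a',b')$ are two vertices of $\mathscr{E}_R$ lying in the same fibre of $\rho$—that is, $a-a', b-b' \in I$—then there is a matrix $B \in \Gamma_I$ with $B(a,b) = (a',b')$ (writing the action on row vectors as in the paper). First I would reduce to a convenient base point. Since $(a,b)$ is a unimodular pair, $\text{SL}_2(R)$ acts transitively on vertices of $\mathscr{E}_R$ (Proposition~\ref{prop2}), so there is $A \in \text{SL}_2(R)$ with $A(1,0) = (a,b)$; one can take $A = \begin{pmatrix} a & p \\ b & q \end{pmatrix}$ where $ap + bq = 1$ is not quite the right orientation, so instead take the matrix whose first row is $(a,b)$. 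Conjugating $\Gamma_I$ by such an $A$ permutes fibres of $\rho$ (because $\rho$ is equivariant: $\rho \theta_A = \theta_{\theta(A)} \rho$), and $\Gamma_I$ is normal, so $A^{-1} \Gamma_I A = \Gamma_I$. Hence it suffices to prove transitivity on the single fibre $\rho^{-1}(\rho(1,0))$, i.e. to show: if $(c,d)$ is a unimodular pair with $c - 1 \in I$ and $d \in I$, then some element of $\Gamma_I$ sends $(1,0)$ to $(c,d)$.

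The key step is then to exhibit such an element explicitly. Since $(c,d)$ is unimodular there exist $x,y \in R$ with $cx + dy = 1$; here $d \in I$ forces $cx \equiv 1 \pmod I$, and since $c \equiv 1 \pmod I$ we may as well arrange $x \equiv 1 \pmod I$ (replace $x$ by $x$ times a suitable unit congruent to $1$, or simply note $cx - 1 \in I$ suffices). Now consider
\[
B = \begin{pmatrix} c & d \\ -y & x \end{pmatrix},
\]
which has determinant $cx + dy = 1$, so $B \in \text{SL}_2(R)$. Its first row is $(c,d)$, so $B(1,0) = (c,d)$ under the action. It remains to check $B \in \Gamma_I$: the $(1,1)$ entry satisfies $c - 1 \in I$, the $(1,2)$ entry is $d \in I$, the $(2,2)$ entry satisfies $x - 1 \in I$ (from $cx - 1 \in I$ and $c - 1 \in I$, hence $cx - c \in I$, hence $x - 1 \in I$ using that $c$ is a unit mod $I$—or argue directly from $cx+dy=1$ and $d\in I$), and the $(2,1)$ entry is $-y$, which lies in $I$ because $dy = 1 - cx \equiv 0 \pmod I$ and $cx \equiv 1$, so $y \equiv 0 \pmod I$ after multiplying by a mod-$I$ inverse of $c$. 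Thus $B \in \Gamma_I$ as required.

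Finally I would assemble the two reductions: given arbitrary $(a,b)$ and $(a',b')$ in one fibre, pick $A \in \text{SL}_2(R)$ with $A(1,0) = (a,b)$; then $A^{-1}(a',b')$ lies in the fibre $\rho^{-1}(\rho(1,0))$ because $\rho$ is a graph homomorphism equivariant for the quotient map $\theta$ and $\theta(A^{-1})$ fixes $\rho(a',b') = \rho(a,b) = \rho(1,0)$; by the previous paragraph there is $B \in \Gamma_I$ with $B(1,0) = A^{-1}(a',b')$, whence $ABA^{-1} \in \Gamma_I$ (normality) sends $(a,b)$ to $(a',b')$. I expect the only mildly delicate point to be the bookkeeping that $x \equiv 1$ and $y \equiv 0$ modulo $I$ in the construction of $B$: this is where one uses that $c$ is a unit modulo $I$ and that $I$ is closed under multiplication by ring elements, and it is worth stating cleanly rather than waving at. Everything else is routine verification.
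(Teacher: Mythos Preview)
Your overall strategy matches the paper's: reduce to the fibre over $(1,0)$ via transitivity of $\text{SL}_2(R)$ and normality of $\Gamma_I$, then exhibit an explicit element of $\Gamma_I$ carrying $(1,0)$ to a given $(c,d)$ with $c-1,d\in I$. The reduction step is fine (modulo the cosmetic point that the paper's action is on column vectors, so $A(1,0)$ is the first \emph{column} of $A$, not the first row).

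The genuine gap is in your verification that $B\in\Gamma_I$. Your deduction $x\equiv 1\pmod I$ is correct: from $cx+dy=1$ and $d\in I$ one gets $cx\equiv 1$, and with $c\equiv 1$ this forces $x\equiv 1$. But your argument that $-y\in I$ is wrong. The relation $cx+dy=1$ together with $c\equiv 1$, $d\in I$ places \emph{no} constraint on $y$ modulo $I$, precisely because $d\in I$ annihilates the $dy$ term; the phrase ``after multiplying by a mod-$I$ inverse of $c$'' cannot help, since there is nothing to which such a multiplication applies. Concretely, take $R=\mathbb{Z}$, $I=2\mathbb{Z}$, $(c,d)=(3,2)$, $x=1$, $y=-1$: then $cx+dy=1$ but $-y=1\notin I$, so your $B$ is not in $\Gamma_I$.

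The paper avoids this by building the matrix so that the troublesome off-diagonal entry is $y(1-c)$ rather than $-y$; since $1-c\in I$, this lies in $I$ automatically, whatever $y$ is. Explicitly, with $cx-dy=1$ one takes
\[
A=\begin{pmatrix} c & y(1-c)\\ d & 1+x(1-c)\end{pmatrix},
\]
checks $\det A=c+(1-c)(cx-dy)=1$, and reads off that all four congruence conditions hold. Equivalently, you can repair your argument by first replacing the B\'ezout pair $(x,y)$ by $(x+dy,\,y(1-c))$, which still satisfies $c(x+dy)+d\cdot y(1-c)=cx+dy=1$ and now has its second entry in $I$; but you must actually make this replacement rather than assume an arbitrary B\'ezout pair works.
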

\begin{proof}
Suppose that $\rho(a,b)=\rho(c,d)$; we will prove that there exists an element of $\Gamma_I$ that maps $(a,b)$ to $(c,d)$.

Consider first the case $(a,b)=(1,0)$. Let us choose $x,y\in R$ such that $cx-dy=1$ and define
\[
A=\begin{pmatrix} c & y(1-c) \\ d & 1+x(1-c)\end{pmatrix}\!.
\]
Then $c-1,y(1-c),d,x(1-c)\in I$ and $\det A = c+(1-c)(cx-dy)=1$, so $A\in \Gamma_I$. Also, $A(1,0)=(c,d)$, as required.

For the general case, first choose $B\in \text{SL}_2(R)$ with $B(a,b)=(1,0)$. Then we can find $A\in\Gamma_I$ with $AB(a,b)=B(c,d)$, so $B^{-1}AB(a,b)=(c,d)$. Since $\Gamma_I$ is a normal subgroup of $\text{SL}_2(R)$ we have found the required element of $\Gamma_I$ that maps $(a,b)$ to $(c,d)$.
\end{proof}

We will use Lemma~\ref{lem49} to prove that bi-infinite paths lift from $\mathscr{E}_{R/I}$ to $\mathscr{E}_{R}$ under $\rho$.

\begin{lemma}\label{lem75}
Suppose that $\theta\colon \textnormal{SL}_2(R)\longrightarrow \textnormal{SL}_2(R/I)$ is surjective. Then for any bi-infinite path $\gamma$ in $\mathscr{E}_{R/I}$ there is a bi-infinite path $\tilde{\gamma}$ in $\mathscr{E}_{R}$ with $\rho(\tilde{\gamma})=\gamma$.
\end{lemma}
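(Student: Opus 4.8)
The plan is to construct the lift $\tilde{\gamma}$ vertex by vertex. I would start from an arbitrary lift of a \emph{single} directed edge of $\gamma$ and then extend the partial lift in both directions along $\gamma$, using the transitivity of $\Gamma_I$ on the fibres of $\rho$ (Lemma~\ref{lem49}) to repair the partial lift at each new step so that it continues to sit over $\gamma$.

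Concretely, write $\gamma=\langle\,\dotsc,v_{-1},v_0,v_1,\dotsc\rangle$. First I would invoke the hypothesis that $\theta$ is surjective to conclude, exactly as in the remark preceding the statement, that $\rho$ is surjective on directed edges; this lets me pick a directed edge $\tilde{v}_0\to \tilde{v}_1$ in $\mathscr{E}_R$ with $\rho(\tilde{v}_0)=v_0$ and $\rho(\tilde{v}_1)=v_1$. The inductive step to the right is the heart of the argument: given a path $\tilde{v}_0,\dotsc,\tilde{v}_i$ in $\mathscr{E}_R$ with $\rho(\tilde{v}_j)=v_j$ for all $j\leq i$, I would use surjectivity of $\rho$ on edges to find \emph{some} directed edge $w\to w'$ with $\rho(w)=v_i$ and $\rho(w')=v_{i+1}$, note that $w$ and $\tilde{v}_i$ lie in the same fibre $\rho^{-1}(v_i)$, apply Lemma~\ref{lem49} to get $A\in\Gamma_I$ with $Aw=\tilde{v}_i$, and then set $\tilde{v}_{i+1}=Aw'$. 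This works because $\textnormal{SL}_2(R)$ acts on $\mathscr{E}_R$ by graph automorphisms (so $\tilde{v}_i=Aw\to Aw'$ is a directed edge) and because $\Gamma_I$ fixes each fibre of $\rho$ setwise (so $\rho(Aw')=\rho(w')=v_{i+1}$). Running the symmetric argument with source and sink interchanged extends the lift to the left, and the resulting bi-infinite sequence $(\tilde{v}_i)$ is a bi-infinite path $\tilde{\gamma}$ in $\mathscr{E}_R$ with $\rho(\tilde{\gamma})=\gamma$.

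The main obstacle is conceptual rather than computational: $\rho$ is \emph{not} a covering map (it need not be locally bijective at vertices), so the usual path-lifting argument fails verbatim — a directed edge of $\mathscr{E}_{R/I}$ together with a chosen lift of its source need not admit a lifted edge with that precise source. The device that rescues the induction is that any two lifts of a vertex differ by an element of $\Gamma_I$, and $\Gamma_I$ acts by graph automorphisms preserving fibres; this is exactly what Lemma~\ref{lem49} supplies. Everything else (that $\rho$ is a graph homomorphism, surjective on edges under the hypothesis on $\theta$) is already recorded in the text, so no further calculation is needed.
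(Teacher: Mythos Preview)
Your proposal is correct and follows essentially the same approach as the paper's own proof: lift an initial directed edge using surjectivity of $\theta$, then extend inductively by lifting the next edge arbitrarily and using Lemma~\ref{lem49} to conjugate its source onto the already-constructed endpoint. Your explicit justification that $A\in\Gamma_I$ preserves both the edge structure and the fibres of $\rho$ makes the argument slightly more self-contained than the paper's version, which leaves these checks implicit.
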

\begin{proof}
The hypothesis that $\theta$ is surjective guarantees that any directed edge in $\mathscr{E}_{R/I}$ lifts to a directed edge in  $\mathscr{E}_{R}$. Let $\gamma$ be a bi-infinite path in $\mathscr{E}_{R/I}$ with vertices $v_i$, for $i\in\mathbb{Z}$. We can choose a directed edge $\tilde{v}_0\to\tilde{v}_1$ in $\mathscr{E}_R$ with image $v_0\to v_1$ under $\rho$.  Suppose that we have found directed edges $\tilde{v}_{i-1}\to\tilde{v}_{i}$ in $\mathscr{E}_R$ with images $v_{i-1}\to v_i$ under $\rho$, for $i=1,2,\dots,n$. Let $w\to w'$ be some lift of $v_{n}\to v_{n+1}$. Since $\rho(\tilde{v}_n)=\rho(w)=v_n$, we can apply Lemma~\ref{lem49} to find $A\in\Gamma_I$ with $Aw=\tilde{v}_n$. We define $\tilde{v}_{n+1}=Aw'$. Then $\tilde{v}_{n}\to\tilde{v}_{n+1}$ has image $v_n\to v_{n+1}$ under $\rho$.

In this way, using the axiom of dependent choice, we can construct a path $\tilde{v}_0\to \tilde{v}_1\to \tilde{v}_{2}\to\dotsb$ in $\mathscr{E}_R$ with image $v_0\to v_1\to v_2\to\dotsb$ in $\mathscr{E}_{R/I}$. The path can be extended in the opposite direction in the same manner to give the required lift $\tilde{\gamma}$ of $\gamma$.
\end{proof}

The map $\rho\colon \mathscr{E}_R\longrightarrow \mathscr{E}_{R/I}$ induces a map $(\tilde{\gamma},\tilde{\delta})\longmapsto (\rho(\tilde{\gamma}),\rho(\tilde{\delta}))$ from the collection of pairs of bi-infinite paths in $\mathscr{E}_R$ to the collection of pairs of bi-infinite paths in $\mathscr{E}_{R/I}$; we also denote this map by $\rho$. We write $\widetilde{\Phi}_R$ and $\widetilde{\Phi}_{R/I}$ for the map $\widetilde{\Phi}$ (first introduced before Theorem~\ref{thm3}), for the rings $R$ and $R/I$, respectively. One can easily verify that the following diagram commutes.

\begin{tikzcd}[row sep=4.5em,column sep=5.5em]
\mleft\{\parbox{2.8cm}{\centering\textnormal{pairs~of~bi-infinite paths~in~$\mathscr{E}_{R}$}}\mright\}\arrow[r, "\widetilde{\Phi}_R",pos=0.5,shorten= 0ex,shorten >=0ex] \arrow[d,"\rho" left]
& \mleft\{\parbox{2.6cm}{\centering\textnormal{tame~$\text{SL}_2$-tilings over~$R$}}\mright\} \arrow[d, "\Theta"] \\
\mleft\{\parbox{2.8cm}{\centering\textnormal{pairs~of~bi-infinite paths~in~$\mathscr{E}_{R/I}$}}\mright\} \arrow[r,"\widetilde{\Phi}_{R/I}" below]
& \mleft\{\parbox{2.6cm}{\centering\textnormal{tame~$\text{SL}_2$-tilings over~$R/I$}}\mright\}
\end{tikzcd}

We are ready to prove Theorem~\ref{thm44}.

\begin{proof}[Proof of Theorem~\ref{thm44}]
Suppose first that $\theta\colon\textnormal{SL}_2(R)\longrightarrow \textnormal{SL}_2(R/I)$ is not surjective. Then there is a matrix $A\in\text{SL}_2(R/I)$ (with entries $a,b,c,d$) that does not lift to $\text{SL}_2(R)$ under $\theta$. Consider the following tame $\text{SL}_2$-tiling over $R/I$.
\begin{center}
$
  \vcenter{
  \xymatrix @-0.2pc @!0 {
          & &  &  & \vdots &  &  & &\\        
         & \phantom{-}a & \phantom{-}b& -a & -b & \phantom{-}a & \phantom{-}b & -a & \\
         & \phantom{-}c & \phantom{-}d& -c & -d & \phantom{-}c & \phantom{-}d & -c & \\
          & -a & -b& \phantom{-}a & \phantom{-}b & -a & -b & \phantom{-}a & \\
          \dotsb & -c & -d& \phantom{-}c & \phantom{-}d & -c & -d & \phantom{-}c &\dotsb \\
         & \phantom{-}a & \phantom{-}b& -a & -b & \phantom{-}a & \phantom{-}b & -a & \\
         & \phantom{-}c & \phantom{-}d& -c & -d & \phantom{-}c & \phantom{-}d & -c & \\
          & -a & -b& \phantom{-}a & \phantom{-}b & -a & -b & \phantom{-}a & \\
       &  &  &  &   \vdots &  &  & & 
            }
          }
$
\end{center}
Evidently this does not lift to a tame $\text{SL}_2$-tiling over $R$ under $\Theta$.

Suppose now that $\theta$ is surjective. Let $\mathbf{M}$ be a tame $\text{SL}_2$-tiling over $R/I$. Choose bi-infinite paths $\gamma$ and $\delta$ in $\mathscr{E}_{R/I}$  with $\widetilde{\Phi}_{R/I}(\gamma,\delta)=\mathbf{M}$. By Lemma~\ref{lem75} there are bi-infinite paths $\tilde{\gamma}$ and $\tilde{\delta}$ in $\mathscr{E}_{R/I}$ with $\rho(\tilde{\gamma},\tilde{\delta})=(\gamma,\delta)$. Now let $\widetilde{\mathbf{M}}=\widetilde{\Phi}_R(\tilde{\gamma},\tilde{\delta})$. Then
\[
\Theta(\widetilde{\mathbf{M}})=\Theta(\widetilde{\Phi}_R(\tilde{\gamma},\tilde{\delta}))=\widetilde{\Phi}_{R/I}(\rho(\tilde{\gamma},\tilde{\delta}))=\mathbf{M}.
\]
Hence $\Theta$ is surjective, as required.
\end{proof}

For the remainder of the paper we restrict our attention to the case $R=\mathbb{Z}$ and $I=N\mathbb{Z}$, where $N>1$, which we considered already in Section~\ref{section5}. It is well known, and easy to establish, that the map $\theta\colon\text{SL}_2(\mathbb{Z})\longrightarrow \text{SL}_2(\mathbb{Z}/N\mathbb{Z})$ is surjective, so Theorem~\ref{thm44} tells us that any tame $\text{SL}_2$-tiling over $\mathbb{Z}/N\mathbb{Z}$ lifts to a tame $\text{SL}_2$-tiling over $\mathbb{Z}$. In fact, we will prove that any  tame $\text{SL}_2$-tiling over $\mathbb{Z}/N\mathbb{Z}$ lifts to a \emph{positive integer} $\text{SL}_2$-tiling over $\mathbb{Z}$.
 
\begin{theorem}\label{thm99}
Given any tame $\textnormal{SL}_2$-tiling $\mathbf{M}$ over $\mathbb{Z}/N\mathbb{Z}$ there is a tame $\textnormal{SL}_2$-tiling $\widetilde{\mathbf{M}}$ with positive integer entries for which $\Theta(\widetilde{\mathbf{M}})=\mathbf{M}$.
\end{theorem}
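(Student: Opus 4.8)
The plan is to use Theorem~\ref{thm3}: every tame $\text{SL}_2$-tiling over $\mathbb{Z}/N\mathbb{Z}$ is $\widetilde{\Phi}(\gamma,\delta)$ for a pair of bi-infinite paths, and then to lift such a pair to $\mathscr{E}_{\mathbb{Z}}$ so carefully that the image under $\widetilde{\Phi}$ of the lifted pair has positive entries; tameness and the fact that it reduces to $\mathbf{M}$ will then follow automatically. Using surjectivity of $\widetilde{\Phi}$ (Lemma~\ref{lem45}), write $\mathbf{M}=\widetilde{\Phi}(\gamma,\delta)$ with $\gamma,\delta$ in $\mathscr{E}_{\mathbb{Z}/N\mathbb{Z}}$ having vertices $(a_i,b_i)$ and $(c_j,d_j)$, so that $m_{i,j}=a_id_j-b_ic_j$. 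Applying one matrix of $\text{SL}_2(\mathbb{Z}/N\mathbb{Z})$ to \emph{both} paths changes neither $\mathbf{M}$ (Lemma~\ref{lem78}) nor the problem, and any such matrix sending $(a_0,b_0)$ to $(1,0)$ automatically sends $(a_1,b_1)$ to a pair whose second coordinate is $a_0b_1-b_0a_1=1$; so I may assume $\gamma$ has $0$th vertex $(1,0)$ and $1$st vertex $(\bar{P},1)$ for some $\bar{P}\in\mathbb{Z}/N\mathbb{Z}$, while $\delta$ has some fixed starting edge $(c_0,d_0)\to(c_1,d_1)$.

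Let $(r_i)$ and $(s_j)$ be the itineraries of $\gamma$ and $\delta$. I would take $\tilde{\gamma}$ to be the path in $\mathscr{E}_{\mathbb{Z}}$ that, by Lemma~\ref{lem17}, is determined by the starting edge $(1,0)\to(P,1)$ and an itinerary $(\tilde{r}_i)$, where $P\equiv\bar{P}\pmod N$ is a large positive integer and each $\tilde{r}_i\equiv r_i\pmod N$ is chosen with $\tilde{r}_i\geq 3$ and, additionally, $\tilde{r}_0\geq 2P$. Reducing modulo $N$ and using the uniqueness in Lemma~\ref{lem17} shows that $\tilde{\gamma}$ lifts $\gamma$. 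The crucial point is that all vertices $(\tilde{a}_i,\tilde{b}_i)$ of $\tilde{\gamma}$ then lie in an arbitrarily small angular sector about the positive $x$-axis: from $\tilde{a}_{i+1}=\tilde{r}_i\tilde{a}_i-\tilde{a}_{i-1}$ with $\tilde{a}_0=1$, $\tilde{a}_1=P$, $\tilde{r}_i\geq 3$ we get $0<\tilde{a}_0\leq\tilde{a}_1\leq\tilde{a}_2\leq\cdots$ with $\tilde{a}_{i+1}\geq 2\tilde{a}_i$ for $i\geq 1$, and $\tilde{r}_0\geq 2P$ gives the analogous growth for $i\leq 0$; since consecutive vertices satisfy $\tilde{a}_{i-1}\tilde{b}_i-\tilde{b}_{i-1}\tilde{a}_i=1$, the slopes $\tilde{b}_i/\tilde{a}_i$ have consecutive differences $1/(\tilde{a}_i\tilde{a}_{i+1})$, a geometrically decaying series summing to $O(1/P)$, so every slope lies within $O(1/P)$ of the slope $0$ of $(1,0)$. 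I would treat $\delta$ the same way, but centred on the positive $y$-axis: an elementary argument (with the Chinese remainder theorem used only to arrange primitivity) yields a lift $v_0\to v_1$ in $\mathscr{E}_{\mathbb{Z}}$ of the edge $(c_0,d_0)\to(c_1,d_1)$ by \emph{large} primitive integer vectors pointing nearly along the positive $y$-axis with $v_1$ nearly a large positive multiple of $v_0$; taking the itinerary $(\tilde{s}_j)$ of $\tilde{\delta}$ to reduce to $(s_j)$, with all entries $\geq 3$ and $\tilde{s}_0$ large compared with that multiple, the same estimate confines every vertex of $\tilde{\delta}$ to a tiny sector about the positive $y$-axis, and $\tilde{\delta}$ lifts $\delta$.

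Finally, set $\widetilde{\mathbf{M}}=\widetilde{\Phi}(\tilde{\gamma},\tilde{\delta})$, with entries $\tilde{m}_{i,j}=\tilde{a}_i\tilde{d}_j-\tilde{b}_i\tilde{c}_j$. Since every vertex of $\tilde{\gamma}$ makes an angle within $\epsilon$ of $0$ and every vertex of $\tilde{\delta}$ an angle within $\epsilon$ of $\pi/2$, with $\epsilon$ as small as we please, the vector $(\tilde{c}_j,\tilde{d}_j)$ is obtained from $(\tilde{a}_i,\tilde{b}_i)$ by an anticlockwise rotation through an angle in $(0,\pi)$, so $\tilde{m}_{i,j}>0$ for all $i,j$, these being determinants of integer matrices and hence positive integers. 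The tiling $\widetilde{\mathbf{M}}$ is tame because every $\widetilde{\Phi}$-image is (Section~\ref{section6}), and $\Theta(\widetilde{\mathbf{M}})=\mathbf{M}$ because $\rho(\tilde{\gamma})=\gamma$, $\rho(\tilde{\delta})=\delta$, and the commutative diagram preceding the proof of Theorem~\ref{thm44} applies. The main obstacle is making the slope estimate precise and uniform in both directions — establishing carefully that a path in $\mathscr{E}_{\mathbb{Z}}$ with a tiny-angle starting edge and a sufficiently large itinerary remains in a small angular sector — together with the more routine number-theoretic construction of the lift of $\delta$'s starting edge; the remaining steps are bookkeeping with results already in hand.
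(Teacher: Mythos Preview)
Your approach is correct and shares the same geometric core as the paper's: lift $\gamma$ and $\delta$ to bi-infinite paths in $\mathscr{E}_{\mathbb{Z}}$ whose vertices are confined to two separated angular regions, so that every determinant $\tilde{a}_i\tilde{d}_j-\tilde{b}_i\tilde{c}_j$ is positive. The execution, however, differs. The paper does not normalise via $\text{SL}_2(\mathbb{Z}/N\mathbb{Z})$ and does not prescribe the lifted itinerary; instead it lifts each path edge by edge (Lemmas~\ref{lem41} and~\ref{lem76}), using transitivity of $\Gamma_N$ on fibres to choose each successive lifted vertex with $b>0$ and $a/b\in(-1,1)$ strictly decreasing. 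Having done this for both paths, it simply translates one of them by $(a_i,b_i)\mapsto(a_i+Nb_i,b_i)$, which moves its fractions into $(N-1,N+1)$, so that $a_i/b_i>c_j/d_j$ for all $i,j$ and positivity follows in one line. This sidesteps both the slope-telescoping estimate and the Chinese remainder construction of the starting edge for $\tilde{\delta}$ that you identified as the main obstacles; your route works, but the paper's is shorter and treats the two paths symmetrically.
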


In proving this theorem we write $\mathscr{E}_N$ in place of the more cumbersome $\mathscr{E}_{\mathbb{Z}/N\mathbb{Z}}$, just as we have been writing $\mathscr{F}_N$ in place of $\mathscr{F}_{\mathbb{Z}/N\mathbb{Z}}$.

\begin{lemma}\label{lem41}
Let $u\to v$ be a directed edge in $\mathscr{E}_N$, and let $(a,b)\in \mathscr{E}_\mathbb{Z}$ satisfy $\rho(a,b)=u$, $b>0$, and $-1<a/b<1$. Then we can find a directed edge $(a,b)\to (c,d)$ in $\mathscr{E}_\mathbb{Z}$ with $\rho(c,d)=v$, $d>0$, and $-1<c/d<a/b$.
\end{lemma}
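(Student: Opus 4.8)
The plan is to first translate the two one‑sided bounds on $c/d$ into additive inequalities that are easy to control, then to lift the given edge to $\mathscr{E}_\mathbb{Z}$ with the correct source $(a,b)$ using the machinery already set up, and finally to slide this lift along the vector $(Na,Nb)$ until the inequalities hold.

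First I would record the following reductions. Since we will demand $ad-bc=1$ (the directed‑edge condition in $\mathscr{E}_\mathbb{Z}$) and since $b,d>0$, the inequality $c/d<a/b$ is equivalent to $ad-bc>0$ and so is automatic, while $-1<c/d$ is equivalent to $c+d>0$. Moreover $ad-bc=1$ forces $\gcd(c,d)=1$, so any such $(c,d)$ is automatically a vertex of $\mathscr{E}_\mathbb{Z}$. Thus it suffices to produce $(c,d)\in\mathbb{Z}^2$ with $ad-bc=1$, $\rho(c,d)=v$, $d>0$, and $c+d>0$.

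Next I would produce one lift of the edge with source exactly $(a,b)$. Since $\theta\colon\text{SL}_2(\mathbb{Z})\to\text{SL}_2(\mathbb{Z}/N\mathbb{Z})$ is surjective, the directed edge $u\to v$ lifts to a directed edge $\tilde u\to\tilde v$ in $\mathscr{E}_\mathbb{Z}$. As $\rho(\tilde u)=u=\rho(a,b)$, Lemma~\ref{lem49} supplies $g\in\Gamma_{N\mathbb{Z}}$ with $g\tilde u=(a,b)$; writing $(c_0,d_0)=g\tilde v$, the pair $(a,b)\to(c_0,d_0)$ is then a directed edge of $\mathscr{E}_\mathbb{Z}$, so $ad_0-bc_0=1$, and $\rho(c_0,d_0)=\rho(\tilde v)=v$ because $\Gamma_{N\mathbb{Z}}$ preserves the fibres of $\rho$. (Alternatively one can take any integer solution of $ad_1-bc_1=1$, which exists since $\gcd(a,b)=1$, and correct it by a multiple of $(a,b)$ using the coset structure of the solutions modulo $N$ of a unimodular relation.)

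Finally, for every $k\in\mathbb{Z}$ the pair $(c_0+kNa,\,d_0+kNb)$ still satisfies $a(d_0+kNb)-b(c_0+kNa)=1$ and still reduces to $v$ modulo $N$. Because $b\geq 1$ and $a+b\geq 1$ — the latter since $-1<a/b$ with $b>0$ — both $d_0+kNb$ and $(c_0+d_0)+kN(a+b)$ tend to $+\infty$ as $k\to\infty$; choosing $k$ large enough that both are positive and setting $(c,d)=(c_0+kNa,\,d_0+kNb)$ yields a directed edge $(a,b)\to(c,d)$ in $\mathscr{E}_\mathbb{Z}$ with $\rho(c,d)=v$, $d>0$, $-1<c/d$ (from $c+d>0$) and $c/d<a/b$ (from $ad-bc=1>0$), as required. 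There is no genuine obstacle in this argument; the only point requiring care is that the lift of $u\to v$ can be chosen to have source precisely $(a,b)$, which is exactly the transitivity of $\Gamma_{N\mathbb{Z}}$ on the fibres of $\rho$ furnished by Lemma~\ref{lem49}.
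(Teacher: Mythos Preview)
Your proof is correct and follows essentially the same route as the paper's: lift the edge using surjectivity of $\theta$ and Lemma~\ref{lem49}, then slide by multiples of $(Na,Nb)$ until the inequalities hold. The only cosmetic difference is that you first reformulate $-1<c/d$ as $c+d>0$ and observe $c/d<a/b$ is automatic from $ad-bc=1$, whereas the paper verifies $a/b-c/d=1/(bd)>0$ directly at the end; these are the same computation presented in a different order.
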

\begin{proof}
Since $\theta\colon\text{SL}_2(\mathbb{Z})\longrightarrow \text{SL}_2(\mathbb{Z}/N\mathbb{Z})$ is surjective, we can certainly find a directed edge $\tilde{u}\to \tilde{v}$ in $\mathscr{E}_\mathbb{Z}$ that is mapped to $u\to v$ by $\rho$. From Lemma~\ref{lem49}, the principal congruence subgroup $\Gamma_N$ acts transitively on $\rho^{-1}(u)$, so there exists $A\in\Gamma_N$ with $A\tilde{u}=(a,b)$. Let $(x,y)=A\tilde{v}$. Then $(a,b)\to (x,y)$ is a directed edge in $\mathscr{E}_\mathbb{Z}$ and $\rho(x,y)=v$. Let us define $(c,d)=(x+kNa,y+kNb)$, where $k$ is a positive integer chosen to be sufficiently large that $d>0$ and $c/d>-1$. Observe that $(a,b)\to (c,d)$ is also a directed edge in $\mathscr{E}_\mathbb{Z}$ and $\rho(c,d)=v$. Furthermore,
\[
\frac{a}{b}-\frac{c}{d} = \frac{1}{bd}(ad-bc)=\frac{1}{bd}(ay-bx)=\frac{1}{bd}>0.
\]
Hence $(c,d)$ has all the required properties.
\end{proof}

From this we deduce the following stronger version of Lemma~\ref{lem75}.

\begin{lemma}\label{lem76}
For any bi-infinite path $\gamma$ in $\mathscr{E}_{N}$ there is a bi-infinite path $\tilde{\gamma}$ in $\mathscr{E}_{\mathbb{Z}}$ such that $\rho(\tilde{\gamma})=\gamma$ and such that the vertices $(a_i,b_i)$ of $\tilde{\gamma}$ satisfy $b_i>0$, for $i\in\mathbb{Z}$, and
\[
-1< \dotsb <\frac{a_2}{b_2}<\frac{a_1}{b_1}<\frac{a_0}{b_0}<\frac{a_{-1}}{b_{-1}}<\frac{a_{-2}}{b_{-2}}<\dotsb <1.
\]
\end{lemma}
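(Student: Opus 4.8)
The plan is to construct $\tilde\gamma$ vertex by vertex, starting from a suitable lift of the single vertex $v_0$ of $\gamma$ and then extending indefinitely in both directions, much as in the proof of Lemma~\ref{lem75} but additionally tracking the fractions $a_i/b_i$. In the direction of increasing index this is exactly Lemma~\ref{lem41}; the two things that need care are a matching statement for the direction of decreasing index and the choice of the initial lift of $v_0$.

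First I would introduce the involution $\iota\colon(x,y)\longmapsto(-x,y)$. It is a bijection on the vertices of $\mathscr{E}_{\mathbb{Z}}$ and on those of $\mathscr{E}_N$, it commutes with $\rho$, it carries the region $\{(a,b):b>0,\ -1<a/b<1\}$ onto itself and negates the fraction $a/b$, and---crucially---it reverses directed edges, because the determinant condition $p_1q_2-p_2q_1=1$ for an edge $(p_1,p_2)\to(q_1,q_2)$ is the same as the condition for the edge $\iota(q_1,q_2)\to\iota(p_1,p_2)$. Conjugating Lemma~\ref{lem41} by $\iota$ then gives the required backward statement: for a directed edge $u\to v$ in $\mathscr{E}_N$ and a lift $(c,d)$ of $v$ with $d>0$ and $-1<c/d<1$, there is a directed edge $(a,b)\to(c,d)$ in $\mathscr{E}_{\mathbb{Z}}$ with $\rho(a,b)=u$, $b>0$, and $c/d<a/b<1$; one applies Lemma~\ref{lem41} to the edge $\iota(v)\to\iota(u)$ with the lift $\iota(c,d)$ and then applies $\iota$ to the outcome.

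Next I would settle the base case: $v_0$ has a lift $(a_0,b_0)$ in $\mathscr{E}_{\mathbb{Z}}$ with $b_0>0$ and $-1<a_0/b_0<1$. Since $\theta$ is surjective, so is $\rho$, so $v_0$ has some lift $(p,q)$; passing from $(p,q)$ to a pair $(p,q+kNp)$ with $k\in\mathbb{Z}$ chosen appropriately (which preserves the residues modulo $N$ and preserves coprimality) one first arranges $q>0$ and then $|p|<q$, leaving only the degenerate pairs $(\pm1,0)$ and $(0,\pm1)$, which one handles by inspection (e.g.\ $(\pm1,N)$ and $(N,2N-1)$ are admissible lifts). This is a short manipulation of the same flavour as the one in the proof of Lemma~\ref{lem41}.

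Finally I would run the two inductions, using dependent choice as in Lemma~\ref{lem75}. Feeding Lemma~\ref{lem41} down the edges $v_0\to v_1\to v_2\to\cdots$ yields lifts $(a_i,b_i)$ of $v_i$ for $i\geq0$ with $(a_i,b_i)\to(a_{i+1},b_{i+1})$ a directed edge, $b_i>0$, and $a_{i+1}/b_{i+1}<a_i/b_i$, so by induction $-1<a_i/b_i<a_0/b_0$. Feeding the backward statement down the edges $\cdots\to v_{-2}\to v_{-1}\to v_0$ yields lifts $(a_i,b_i)$ of $v_i$ for $i\leq0$ with the same edge and positivity conditions and $a_{i-1}/b_{i-1}>a_i/b_i$, so by induction $a_0/b_0<a_i/b_i<1$. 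Concatenating, $\tilde\gamma=(\dots,(a_{-1},b_{-1}),(a_0,b_0),(a_1,b_1),\dots)$ is a bi-infinite path in $\mathscr{E}_{\mathbb{Z}}$ with $\rho(\tilde\gamma)=\gamma$ satisfying the displayed chain of inequalities. I expect the main obstacle to be the backward direction: Lemma~\ref{lem41} is one-sided, so one must notice the edge-reversing involution $\iota$ and verify that it respects $\rho$, the positivity of denominators, and the monotonicity of the fractions; the base case is a second, smaller point that is nonetheless genuinely needed, since Lemma~\ref{lem41} cannot be started from an arbitrary lift.
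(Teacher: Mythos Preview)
Your proof is correct and follows essentially the same approach as the paper: choose an initial lift $(a_0,b_0)$ with $b_0>0$ and $-1<a_0/b_0<1$, run Lemma~\ref{lem41} forward, and handle the backward direction via the sign-changing involution. The paper compresses your involution argument into the single phrase ``by changing the sign of $a_0$ and applying a similar argument''; your explicit identification of $\iota\colon(x,y)\mapsto(-x,y)$ as an edge-reversing, $\rho$-equivariant map that preserves the region $\{b>0,\,|a/b|<1\}$ is exactly the content that phrase is hiding.
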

\begin{proof}
It is straightforward to choose a preimage $(a_0,b_0)$ of the zeroth vertex of $\gamma$ such that $b_0>0$ and $-1<a_0/b_0<1$. We can then apply Lemma~\ref{lem41} and a recursive argument to construct suitable vertices $(a_i,b_i)$, for $i\geq 0$. Then, by changing the sign of $a_0$ and applying a similar argument, we can extend the sequence to $i\leq 0$.
\end{proof}

We can now prove Theorem~\ref{thm99}.

\begin{proof}[Proof of Theorem~\ref{thm99}]
We begin by choosing bi-infinite paths $\gamma$ and $\delta$ in $\mathscr{E}_N$ with $\widetilde{\Phi}_{\mathbb{Z}/N\mathbb{Z}}(\gamma,\delta)=\mathbf{M}$. By Lemma~\ref{lem76} there are paths $\tilde{\gamma}$ and $\tilde{\delta}$ in $\mathscr{E}_{\mathbb{Z}}$ such that $\rho(\tilde{\gamma},\tilde{\delta})=(\gamma,\delta)$ and such that the vertices $(a_i,b_i)$ and $(c_j,d_j)$ of $\tilde{\gamma}$ and $\tilde{\delta}$ satisfy $b_i,d_j>0$, for $i,j\in\mathbb{Z}$, and 
\[
-1<\dotsb <\frac{c_2}{d_2}<\frac{c_1}{d_1}<\frac{c_0}{d_0}<\frac{c_{-1}}{d_{-1}}<\frac{c_{-2}}{d_{-2}}<\dotsb <1
\]
and
\[
N-1< \dotsb <\frac{a_2}{b_2}<\frac{a_1}{b_1}<\frac{a_0}{b_0}<\frac{a_{-1}}{b_{-1}}<\frac{a_{-2}}{b_{-2}}<\dotsb <N+1.
\]
(Replace $(a_i,b_i)$ with $(a_i+Nb_i,b_i)$ for the second set of inequalities.)  Now let $\widetilde{\mathbf{M}}=\widetilde{\Phi}_\mathbb{Z}(\tilde{\gamma},\tilde{\delta})$. Then, as before, we have 
$\Theta(\widetilde{\mathbf{M}})=\mathbf{M}$. What is more, the entries $\tilde{m}_{i,j}$ of $\widetilde{\mathbf{M}}$ satisfy
\[
\tilde{m}_{i,j}=a_id_j-b_ic_j = b_id_j\mleft(\frac{a_i}{b_i}-\frac{c_j}{d_j}\mright)>0,
\]
as required.
\end{proof}

Theorems~\ref{thm44} and~\ref{thm99} tell us that every tame $\text{SL}_2$-tiling over $\mathbb{Z}/N\mathbb{Z}$ lifts to a tame $\text{SL}_2$-tiling over $\mathbb{Z}$ and in fact the lifted $\text{SL}_2$-tiling can be chosen with positive entries. We learned in the introduction that a similar statement cannot be made for friezes. Here we will prove Theorem~\ref{thm100}, which tells us exactly when a tame frieze over $\mathbb{Z}/N\mathbb{Z}$ lifts to a tame frieze over $\mathbb{Z}$.

For this purpose it is more convenient to use the Farey complexes $\mathscr{F}_N$ and $\mathscr{F}_\mathbb{Z}$ rather than $\mathscr{E}_N$ and $\mathscr{E}_\mathbb{Z}$, because  $\mathscr{F}_N$ is a surface complex, as we saw in Section~\ref{section5}, and $\mathscr{E}_N$ is not. We recall the notation $\bar{a}$ from Section~\ref{section5} for the class of integers congruent to $a$ modulo $N$. With this notation we have $\rho(a,b)=(\bar{a},\bar{b})$. Notice that $\rho$ maps $\pm(a,b)$ to $\pm(\bar{a},\bar{b})$, so we obtain a map from $\mathscr{F}_\mathbb{Z}$ to $\mathscr{F}_N$, which we also denote by $\rho$. This preserves edges and triangles. Furthermore, if there is an edge between $a/b$ and $c/d$ in $\mathscr{F}_\mathbb{Z}$ (where we have switched to fractional notation $a/b$ in place of $\pm(a,b)$), then the other vertices of the two triangles incident to this edge are $(a+b)/(c+d)$ and $(a-b)/(c-d)$. Under $\rho$ these are mapped to the vertices of the two triangles incident to the edge between $\rho(a/b)$ and $\rho(c/d)$. (Unless $N=2$, in which case they are mapped to the single common neighbour of $\rho(a/b)$ and $\rho(c/d)$.)

We recall from the introduction that a (finite) closed path in a Farey complex $\mathscr{F}_R$ is said to be strongly contractible if it can be transformed to a point by applying a finite number of the following two elementary homotopies. 
\begin{itemize}
\item[](E1)  Replace a subpath $\langle v,u,v\rangle$ with the subpath $\langle v \rangle$.
\item[](E2)  Replace a subpath $\langle u,v,w\rangle$ with $\langle u,w\rangle$, where $u$, $v$, and $w$ are mutually adjacent.
\end{itemize}
The property of being strongly contractible is preserved under the action of $\text{SL}_2(R)$, in the sense that if $\gamma$ is strongly contractible then so is $A\gamma$, where $A\in\text{SL}_2(R)$. Not all closed paths are strongly contractible, as Figure~\ref{fig94} demonstrates; however, they are in $\mathscr{F}_\mathbb{Z}$.

\begin{lemma}\label{lem52}
All closed paths in $\mathscr{F}_\mathbb{Z}$ are strongly contractible.
\end{lemma}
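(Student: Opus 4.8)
The plan is to argue by induction on the length of the closed path, using the $\textnormal{SL}_2(\mathbb{Z})$-invariance of strong contractibility noted just above to place the path conveniently, and then exploiting the arithmetic of Farey neighbours to exhibit an elementary homotopy that shortens it. First I would note that $\textnormal{SL}_2(\mathbb{Z})$ acts transitively on the vertices of $\mathscr{F}_\mathbb{Z}$ (for example, by Proposition~\ref{prop2}), so since a closed path $\gamma$ has only finitely many vertices, after replacing $\gamma$ by $A\gamma$ for a suitable $A\in\textnormal{SL}_2(\mathbb{Z})$ we may assume that $1/0$ is \emph{not} a vertex of $\gamma$; every vertex of $\gamma$ is then a fraction $p/q$ in lowest terms with $q\geq 1$. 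Since $\gamma$ is strongly contractible if and only if $A\gamma$ is, this loses no generality.

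For the induction, a closed path of length $0$ is a point and there is nothing to prove, and a closed path cannot have length $1$ since $\mathscr{F}_\mathbb{Z}$ has no loops. So suppose $\gamma=\langle v_0,\dots,v_n\rangle$ is closed of length $n\geq 2$ and choose a vertex $v_i=p/q$ of $\gamma$ whose denominator $q$ is as large as possible. If $q\geq 2$ I would invoke the standard description of Farey neighbours: the vertices adjacent to $p/q$ are exactly the fractions $p'/q'$ with $pq'-p'q=\pm1$, and among these exactly two have denominator smaller than $q$, namely the two Stern--Brocot ``parents'' $p_L/q_L$ and $p_R/q_R$ of $p/q$, which satisfy $q_L+q_R=q$ and $p_Lq_R-p_Rq_L=\pm1$ and hence are adjacent to each other and, together with $p/q$, span a face of $\mathscr{F}_\mathbb{Z}$; moreover $p/q$ has no neighbour of denominator exactly $q$ (that would force $q\mid 1$) and $1/0$ is not a neighbour of $p/q$ at all. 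Since $v_{i-1}$ and $v_{i+1}$ are neighbours of $v_i$ of denominator at most $q$, both lie in $\{p_L/q_L,\,p_R/q_R\}$: if they coincide then $\langle v_{i-1},v_i,v_{i+1}\rangle$ is a spur and I apply (E1); if not then $v_{i-1},v_i,v_{i+1}$ form a face and I apply (E2). Either way $\gamma$ is reduced to a strictly shorter closed path, which is strongly contractible by the inductive hypothesis, hence so is $\gamma$.

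It remains to treat the case $q=1$, when every vertex of $\gamma$ is an integer; there the adjacency is $m\sim m'\iff|m-m'|=1$, so $\gamma$ lies in the subcomplex of $\mathscr{F}_\mathbb{Z}$ spanned by the integers, which is a bi-infinite path and in particular a tree. A non-trivial closed walk in a tree always has a backtrack, so $v_{i-1}=v_{i+1}$ for some $i$ and (E1) again shortens $\gamma$, closing the induction. The only point demanding care is the arithmetic input used when $q\geq 2$ -- that a reduced fraction $p/q$ with $q\geq 2$ has exactly two Farey neighbours of smaller denominator and that those two are adjacent -- which is the elementary theory of Farey mediants but should be stated precisely; everything else (the normalisation, and the separate $q=1$ argument) exists to absorb the degenerate configurations.
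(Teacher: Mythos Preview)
Your proof is correct, but it takes a genuinely different route from the paper's. The paper argues by contradiction: it assumes there is a closed path $\gamma=\langle v_0,\dots,v_m\rangle$ to which neither (E1) nor (E2) can be applied, normalises so that $v_0=\infty$, and then uses the itinerary recurrence $b_{i+1}=\lambda_i b_i-b_{i-1}$ together with $|\lambda_i|\geq 2$ (forced because $\lambda_i\in\{0,\pm1\}$ would allow an elementary homotopy) to show that $|b_i|$ is strictly increasing, contradicting $v_m=v_0=\infty$. You instead induct on length, normalise so that $\infty$ is \emph{absent}, locate a vertex $p/q$ of maximal denominator, and use the Stern--Brocot parent structure to force $v_{i-1},v_{i+1}\in\{p_L/q_L,p_R/q_R\}$, yielding an elementary homotopy directly.

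Both arguments exploit the same underlying arithmetic --- that a Farey fraction with $q\geq 2$ has exactly two neighbours of smaller denominator, and these are themselves adjacent --- but they package it differently. The paper's version is shorter and ties in with the itinerary formalism developed in Section~\ref{section3}; yours is perhaps more transparently geometric and avoids the recurrence machinery entirely, at the cost of a separate (easy) case when all denominators equal $1$. Either is a perfectly good proof of the lemma.
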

\begin{proof}
Suppose to the contrary that some closed paths in $\mathscr{F}_\mathbb{Z}$ are not strongly contractible. Then there exists a closed path $\gamma=\langle v_0,v_1,\dots,v_m\rangle$, where $m\geq 2$, to which neither (E1) nor (E2) can be applied. We will demonstrate that this leads to a contradiction.

It is convenient to assume that $v_0=\infty$, which can be achieved by applying an element of $\text{SL}_2(\mathbb{Z})$ to $\gamma$. 

Let us choose coprime pairs of integers $(a_i,b_i)$ with  $v_i=a_i/b_i$, for $i=1,2,\dots,m$ (and $(a_0,b_0)=(1,0)$). The neighbours in $\mathscr{F}_\mathbb{Z}$ of $a_i/b_i$ have the form $(\lambda a_i-a_{i-1})/(\lambda b_i-b_{i-1})$, for $\lambda\in\mathbb{Z}$. Hence $a_{i+1}=\lambda_ia_i-a_{i-1}$ and $b_{i+1}=\lambda_ib_i-b_{i-1}$, for some $\lambda_i\in\mathbb{Z}$, where $1\leq i<m$. It cannot be that $\lambda_i=0$, for if that were so then $v_{i+1}$ would equal $v_{i-1}$ and we could apply (E1) to $\gamma$. Nor can we have $\lambda_i=\pm 1$, for in these cases $v_{i-1}$, $v_i$, and $v_{i+1}$ are mutually adjacent, so we could apply (E2) to $\gamma$. Thus $|\lambda_i|\geq 2$. As a consequence,
\[
|b_{i+1}|=|\lambda_ib_i-b_{i-1}|\geq |b_i| + (|b_i|-|b_{i-1}|).
\]
From this inequality it follows that the sequence $|b_0|, |b_1|, \dotsc$ is increasing. Hence $b_m\neq 0$, so $v_m\neq v_0$, which is the desired contradiction. Therefore all closed paths in 
$\mathscr{F}_\mathbb{Z}$ are strongly contractible after all.
\end{proof}

Suppose that $\tilde{\gamma}_2$ is a closed path in $\mathscr{F}_\mathbb{Z}$ obtained from another closed path $\tilde{\gamma}_1$ by applying one of (E1) or (E2). Let $\gamma_1=\rho(\tilde{\gamma}_1)$ and $\gamma_2=\rho(\tilde{\gamma}_2)$. Since $\rho$ preserves edges and triangles, we can see that $\gamma_2$ is obtained from $\gamma_1$ by applying one of (E1) or (E2). From this observation we deduce the following lemma.

\begin{lemma}\label{lem53}
Let $\tilde{\gamma}$ be a closed path in $\mathscr{F}_\mathbb{Z}$ and let $\gamma=\rho(\tilde{\gamma})$. Then $\gamma$ is strongly contractible in~$\mathscr{F}_N$.
\end{lemma}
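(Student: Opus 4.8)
The plan is to combine Lemma~\ref{lem52} with the compatibility of $\rho$ with the elementary homotopies recorded in the remark preceding this lemma. First I would apply Lemma~\ref{lem52} to the closed path $\tilde{\gamma}$ in $\mathscr{F}_\mathbb{Z}$: since every closed path in $\mathscr{F}_\mathbb{Z}$ is strongly contractible, there is a finite chain of closed paths $\tilde{\gamma}=\tilde{\gamma}_0,\tilde{\gamma}_1,\dots,\tilde{\gamma}_k=\langle v\rangle$ in $\mathscr{F}_\mathbb{Z}$ in which each $\tilde{\gamma}_{\ell+1}$ is obtained from $\tilde{\gamma}_\ell$ by a single elementary homotopy of type (E1) or (E2), and $\langle v\rangle$ is a path of length $0$ (a point).

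Next I would set $\gamma_\ell=\rho(\tilde{\gamma}_\ell)$ for $\ell=0,\dots,k$ and check that $\gamma_{\ell+1}$ is obtained from $\gamma_\ell$ by an elementary homotopy, which is exactly the observation stated before the lemma; I would spell it out for completeness. If the step $\tilde{\gamma}_\ell\to\tilde{\gamma}_{\ell+1}$ is (E1), removing a spur $\langle v,u,v\rangle$, then applying $\rho$ removes $\langle\rho(v),\rho(u),\rho(v)\rangle$; this is again a spur since $\rho$ preserves edges, so $\rho(u)$ and $\rho(v)$ are adjacent and in particular distinct, hence the step is (E1). If the step is (E2), replacing $\langle u,v,w\rangle$ by $\langle u,w\rangle$ with $u,v,w$ mutually adjacent, then applying $\rho$ gives $\langle\rho(u),\rho(v),\rho(w)\rangle\mapsto\langle\rho(u),\rho(w)\rangle$ with $\rho(u),\rho(v),\rho(w)$ mutually adjacent because $\rho$ preserves triangles; if $\rho(u)\neq\rho(w)$ this is (E2), while if $\rho(u)=\rho(w)$ the transformation is instead the removal of the spur $\langle\rho(u),\rho(v),\rho(u)\rangle$, i.e.\ (E1). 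In every case $\gamma_\ell\to\gamma_{\ell+1}$ is an elementary homotopy in $\mathscr{F}_N$.

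Finally, since $\gamma_k=\rho(\langle v\rangle)=\langle\rho(v)\rangle$ is a point, the chain $\gamma=\gamma_0,\gamma_1,\dots,\gamma_k$ witnesses that $\gamma$ is strongly contractible in $\mathscr{F}_N$, completing the proof. I do not anticipate any genuine obstacle: the substantive content is Lemma~\ref{lem52} (already established) together with the facts that $\rho$ preserves edges and triangles (already noted). The only points needing a moment's care are the degenerate case $\rho(u)=\rho(w)$ in an (E2) step, handled above by reinterpreting the move as (E1), and the special case $N=2$, where $\rho$ may identify the two triangles on an edge; but this likewise only ever converts an (E2) step into an (E1) step (or leaves it as (E2)) and so causes no difficulty.
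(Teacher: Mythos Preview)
Your argument is correct and follows essentially the same route as the paper's proof: invoke Lemma~\ref{lem52} to strongly contract $\tilde{\gamma}$, then push each elementary homotopy forward under $\rho$ using the observation recorded just before the lemma. Your extra care about the degenerate case $\rho(u)=\rho(w)$ in an (E2) step is harmless but in fact unnecessary, since $u$ and $w$ are adjacent in $\mathscr{F}_\mathbb{Z}$ and $\rho$ preserves edges, forcing $\rho(u)\neq\rho(w)$.
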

\begin{proof}
By Lemma~\ref{lem52}, $\tilde{\gamma}$ is strongly contractible, so there is a finite sequence of elementary homotopies from $\tilde{\gamma}$ to a single point. Since these elementary homotopies are preserved under $\rho$ we see that there is a finite sequence of elementary homotopies from $\gamma$ to a single point. Hence $\gamma$ is strongly contractible in $\mathscr{F}_N$.
\end{proof}

In fact the converse to this lemma holds.

\begin{lemma}\label{lem54}
Suppose that $\gamma$ is a strongly contractible closed path in $\mathscr{F}_N$. Then there exists a closed path $\tilde{\gamma}$ in $\mathscr{F}_\mathbb{Z}$ with $\rho(\tilde{\gamma})=\gamma$.
\end{lemma}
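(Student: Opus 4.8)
The plan is to reverse the contraction of $\gamma$ one elementary homotopy at a time. Write the contraction as a finite sequence $\gamma=\gamma_0,\gamma_1,\dots,\gamma_k$ in which $\gamma_k$ is a point and each $\gamma_{i+1}$ is obtained from $\gamma_i$ by an (E1) or (E2) move, and argue by downward induction on $i$. The point $\gamma_k$ is a single vertex of $\mathscr{F}_N$, which lifts to a vertex of $\mathscr{F}_\mathbb{Z}$ (a closed path of length $0$) because $\rho\colon\mathscr{F}_\mathbb{Z}\to\mathscr{F}_N$ is surjective, $\theta\colon\text{SL}_2(\mathbb{Z})\to\text{SL}_2(\mathbb{Z}/N\mathbb{Z})$ being surjective. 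Given a closed path $\tilde\gamma_{i+1}$ in $\mathscr{F}_\mathbb{Z}$ with $\rho(\tilde\gamma_{i+1})=\gamma_{i+1}$ (vertex by vertex), I will produce a closed path $\tilde\gamma_i$ with $\rho(\tilde\gamma_i)=\gamma_i$ by inserting, at the corresponding location, the spur or the triangle detour that was deleted in passing from $\gamma_i$ to $\gamma_{i+1}$; since $\rho$ matches vertices positionwise, that location is unambiguous upstairs, and inserting preserves closedness.

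\textbf{Key steps.} Two lifting facts are needed. First: if $\tilde v$ is a vertex of $\mathscr{F}_\mathbb{Z}$ and $u$ is a neighbour of $\rho(\tilde v)$ in $\mathscr{F}_N$, then $u$ has a preimage $\tilde u$ adjacent to $\tilde v$. Indeed, writing $\tilde v=p/q$ with $(p,q)$ primitive and fixing $(r,s)$ with $ps-qr=1$, the neighbours of $\tilde v$ are the fractions $(r+\lambda p)/(s+\lambda q)$, $\lambda\in\mathbb{Z}$, and their $\rho$-images run over all neighbours $(\bar r+\mu\bar p)/(\bar s+\mu\bar q)$, $\mu\in\mathbb{Z}/N\mathbb{Z}$, of $\rho(\tilde v)$; this handles reverse-(E1). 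Second: if $\{\tilde u,\tilde w\}$ is an edge of $\mathscr{F}_\mathbb{Z}$ and $v$ is a common neighbour of $\rho(\tilde u)$ and $\rho(\tilde w)$ in $\mathscr{F}_N$, then $v$ lifts to a common neighbour of $\tilde u$ and $\tilde w$. Writing $\tilde u=a/b$, $\tilde w=c/d$ with $ad-bc=1$, the two common neighbours upstairs are $(a+c)/(b+d)$ and $(a-c)/(b-d)$ (the third vertices of the two triangles on the edge, as recorded just before the lemma), and performing the same computation in $\mathbb{Z}/N\mathbb{Z}$ shows that the common neighbours of $\rho(\tilde u)$ and $\rho(\tilde w)$ are exactly the $\rho$-images of those two; hence $v$, being a common neighbour, equals one of them, and we take $\tilde v$ to be the corresponding fraction. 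This handles reverse-(E2). Iterating down to $\gamma_0=\gamma$ gives the required lift, and in each step one checks routinely that $\rho$ of the enlarged path is $\gamma_i$.

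\textbf{Main obstacle.} I expect the crux to be the second lifting fact: one must verify that reducing modulo $N$ creates no new common neighbour of $\rho(\tilde u)$ and $\rho(\tilde w)$ beyond the images of the two that exist over $\mathbb{Z}$. This follows from the explicit parametrisation of the neighbours of a vertex, namely $(c+\lambda a)/(d+\lambda b)$ with $\lambda$ ranging over the ring, together with the fact that such a vertex is adjacent to $c/d$ precisely when $\lambda=\pm1$ in the ring — over $\mathbb{Z}/N\mathbb{Z}$ this still yields at most two solutions (a single one when $N=2$, which causes no difficulty). The remaining verifications are bookkeeping with positions along the paths under $\rho$.
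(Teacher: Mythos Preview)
Your proof is correct and follows essentially the same approach as the paper's: both reverse the contraction one elementary homotopy at a time, lifting the trivial path first and then reinserting spurs and triangles. The paper phrases this as induction on the length of $\gamma$ rather than downward induction on the contraction sequence, and it only spells out the reverse-(E2) step (invoking the discussion preceding the lemma that the two common neighbours in $\mathscr{F}_\mathbb{Z}$ map under $\rho$ to the common neighbours in $\mathscr{F}_N$), leaving reverse-(E1) as ``handled similarly''; your version makes both lifting facts explicit, but the substance is the same.
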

\begin{proof}
We prove this by induction on the length of $\gamma$. Observe first that any closed path of length $0$ (a single point) is strongly contractible and lifts to a closed path of length $0$ in $\mathscr{F}_\mathbb{Z}$. 

Suppose now that each strongly contractible closed path of length $k$ in $\mathscr{F}_N$ lifts to a closed path of length $k$ in $\mathscr{F}_\mathbb{Z}$, for $k=0,1,\dots,m$. Let $\gamma$ be a strongly contractible closed path of length $m+1$ in $\mathscr{F}_N$. Since $\gamma$ is strongly contractible, we can apply an elementary homotopy to $\gamma$ to obtain another strongly contractible closed path $\gamma'$, which has length either $m-1$ or $m$, depending on whether (E1) or (E2) was applied. Let us suppose that (E2) was applied (the other case is handled similarly). Then we can write $\gamma'=\langle v_0,v_1,\dots,v_m\rangle$, where $v_m=v_0$. The path $\gamma$ differs from $\gamma'$ in that there is an additional vertex $v$ inserted between $v_{j-1}$ and $v_j$, where $1\leq j\leq m$. By the induction hypothesis, there is a lift $\tilde{\gamma}'=\langle \tilde{v}_0,\tilde{v}_1,\dots,\tilde{v}_m\rangle$ of $\gamma'$ to $\mathscr{F}_\mathbb{Z}$, where $\tilde{v}_m=\tilde{v}_0$. Let $\tilde{v}$ be the unique vertex in $\mathscr{F}_\mathbb{Z}$ that is adjacent to both $\tilde{v}_{j-1}$ and $\tilde{v}_j$ and satisfies $\rho(\tilde{v})=v$ (in the exceptional case $N=2$ there are two choices for $\tilde{v}$). We define $\tilde{\gamma}$ to be the closed path obtained from $\tilde{\gamma}'$ by inserting $\tilde{v}$ between $\tilde{v}_{j-1}$ and $\tilde{v}_j$. Then $\rho(\tilde{\gamma})=\gamma$, and this completes the argument by induction.
\end{proof}

In Section~\ref{section8} we introduced the surjective map $\widetilde{\Psi}_U$ from the set $\mathscr{C}_{n,U}$ of paths of length $n$ between equivalent vertices in $\mathscr{F}_{R,U}$ to the set $U\backslash\FRS$, where $\FRS$ is the collection of tame semiregular  friezes over $R$ of width $n$, and where $U$ acts on $\FRS$ by the rule $m_{i,j}\longmapsto \lambda^{(-1)^i+(-1)^j}m_{i,j}$, for $\lambda \in U$. Here we restrict our attention to $U=\{\pm 1\}$, in which case the action of $U$ is trivial, so the image of $\widetilde{\Psi}_U$ is $\FRS$. We will describe a semiregular frieze as \emph{quasiregular} if its second-last row comprises all $1$s or all $-1$s (that is, $m_{i+n-1,i}$ is equal to (one of) 1 or $-1$ for $i\in\mathbb{Z}$). 

\begin{lemma}\label{lem86}
Let $\mathbf{F}=\widetilde{\Psi}_U(\gamma)$, for $U=\{\pm 1\}$, where $\gamma\in\mathscr{C}_{n,U}$ and $\mathbf{F}\in\FRS$. Then $\gamma$ is a closed path if and only if $\mathbf{F}$ is quasiregular. 
\end{lemma}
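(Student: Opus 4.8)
The plan is to pull the statement back to a bi-infinite lift of $\gamma$ in $\mathscr{E}_R$ and simply read off the second-last row of $\mathbf{F}$. First I would fix a lift $\tilde\gamma$ of $\gamma$ to $\mathscr{E}_R$ with vertices $v_i=(a_i,b_i)$. Since $U=\{\pm1\}$, the exponent $(-1)^i+(-1)^j$ in the action defining $\tau_U$ is always even, so $\tau_U$ is the identity on $\FRS$; hence $\mathbf{F}=\widetilde{\Psi}_U(\gamma)=\widetilde{\Psi}(\tilde\gamma)$ has entries $m_{i,j}=a_jb_i-b_ja_i$, and by Lemma~\ref{lem61} this does not depend on the choice of lift.

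Next I would record the periodicity forced by $\gamma\in\mathscr{C}_{n,U}$: exactly as in the discussion preceding Lemma~\ref{lem51}, there is $\lambda\in R^\times$ with $(a_{i+n},b_{i+n})=\lambda^{(-1)^i}(a_i,b_i)$ for all $i$; $\lambda$ is unique because $(a_i,b_i)$ is unimodular (the cancellation argument of Lemma~\ref{lem66}), its sign is pinned down by requiring consecutive vertices to form directed edges, and a brief check shows $\lambda$ is the same for every lift. With this notation $\gamma$ is a closed path, i.e.\ $v_0$ and $v_n$ agree in $\mathscr{F}_{R,U}$, precisely when $\lambda\in\{1,-1\}$. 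Then I would compute the second-last row: using $(a_{i+n-1},b_{i+n-1})=\lambda^{(-1)^{i-1}}(a_{i-1},b_{i-1})$ together with the edge relation $a_{i-1}b_i-b_{i-1}a_i=1$,
\[
m_{i+n-1,i}=a_ib_{i+n-1}-b_ia_{i+n-1}=\lambda^{(-1)^{i-1}}(a_ib_{i-1}-b_ia_{i-1})=-\lambda^{-(-1)^i},
\]
so the second-last row alternates between the values $-\lambda^{-1}$ and $-\lambda$ (consistent with Lemma~\ref{lem96}, which gives $\beta=m_{n-1,0}=-\lambda^{-1}$).

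Finally I would conclude. The frieze $\mathbf{F}$ is quasiregular iff this row is constantly $1$ or constantly $-1$, i.e.\ iff both $-\lambda^{-1}$ and $-\lambda$ lie in $\{1,-1\}$. But $-\lambda^{-1}\in\{1,-1\}$ already forces $\lambda\in\{1,-1\}$, and then $\lambda^{-1}=\lambda$, so the two alternating values automatically coincide; conversely $\lambda\in\{1,-1\}$ obviously makes the row constant. Hence $\mathbf{F}$ is quasiregular iff $\lambda\in\{1,-1\}$, which by the previous paragraph is iff $\gamma$ is closed. There is no real obstacle here: the only points needing care are the well-definedness of $\lambda$ (uniqueness via unimodularity, and independence of the lift) and making precise what ``closed path'' means in $\mathscr{F}_{R,U}$; the remaining steps are immediate determinant computations.
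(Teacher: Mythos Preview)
Your proof is correct and follows essentially the same approach as the paper: lift $\gamma$ to $\mathscr{E}_R$, use the periodicity $v_{i+n}=\lambda^{(-1)^i}v_i$, compute $m_{i+n-1,i}=-\lambda^{(-1)^{i-1}}$, and observe that this lies in $\{\pm1\}$ precisely when $\lambda\in\{\pm1\}$. Your extra remarks on the triviality of $\tau_U$ for $U=\{\pm1\}$ and the well-definedness of $\lambda$ are sound but not strictly needed, and the paper omits them.
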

\begin{proof}
Let $\tilde{\gamma}$ be a lift of $\gamma$ to $\mathscr{C}_n$. Then $\tilde{\gamma}=\langle v_0,v_1,\dots,v_n\rangle$, where $v_i=(a_i,b_i)$ and $v_n=\lambda v_0$ for $\lambda\in R^\times$. Using the formula $v_{i+n}=\lambda^{(-1)^i}v_i$ for the extension of $\gamma$ to a bi-infinite path, we see that the second-last row of $\mathbf{F}$ has entries 
\[
m_{i+n-1,i}=a_ib_{i+n-1}-b_ia_{i+n-1}=\lambda^{(-1)^{i-1}}(a_ib_{i-1}-b_ia_{i-1})=-\lambda^{(-1)^{i-1}}.
\]
Therefore $\mathbf{F}$ is quasiregular if and only if $\lambda\in \{\pm 1\}$. The result follows, because $\lambda\in \{\pm 1\}$ if and only if $\gamma$ is a closed path in $\mathscr{F}_R$.
\end{proof}

We denote by $\widetilde{\Psi}_\mathbb{Z}$ and $\widetilde{\Psi}_N$ the two versions of the map $\widetilde{\Psi}_{\{\pm 1\}}$ for the rings $R=\mathbb{Z}$ and $R=\mathbb{Z}/N\mathbb{Z}$, respectively. With this terminology, it is straightforward to check that the following diagram commutes.

\begin{tikzcd}[row sep=4.5em,column sep=5.5em]
\mleft\{\parbox{2.4cm}{\centering\textnormal{closed~paths~of length~$n$~in~$\mathscr{F}_{\mathbb{Z}}$}}\mright\} \arrow[r, "\widetilde{\Psi}_\mathbb{Z}",pos=0.5,shorten= 0ex,shorten >=0ex] \arrow[d,"\rho" left]
& \mleft\{\parbox{3.95cm}{\centering\textnormal{tame~quasiregular~friezes over~$\mathbb{Z}$~of~width~$n$}}\mright\} \arrow[d, "\Theta"] \\
\mleft\{\parbox{2.4cm}{\centering\textnormal{closed~paths~of length~$n$~in~$\mathscr{F}_N$}}\mright\} \arrow[r,"\widetilde{\Psi}_{N}" below]
& \mleft\{\parbox{3.8cm}{\centering\textnormal{tame~quasiregular~friezes over~$\mathbb{Z}/N\mathbb{Z}$~of~width~$n$}}\mright\}
\end{tikzcd}

We are now ready to prove Theorem~\ref{thm100}.

\begin{proof}[Proof of Theorem~\ref{thm100}]
Suppose first that $\mathbf{F}$ is a tame semiregular frieze of width $n$ over $\mathbb{Z}/N\mathbb{Z}$ that lifts to a tame frieze $\widetilde{\mathbf{F}}$ of width $n$ over $\mathbb{Z}$. Then $\widetilde{\mathbf{F}}$ is quasiregular, since it is defined over $\mathbb{Z}$. By Lemma~\ref{lem86}, we can find a closed path $\tilde{\gamma}$ of length $n$ in $\mathscr{F}_\mathbb{Z}$ with $\widetilde{\Psi}_\mathbb{Z}(\tilde{\gamma})=\widetilde{\mathbf{F}}$. Let $\gamma=\rho(\tilde{\gamma})$. Then $\gamma$ is strongly contractible, by Lemma~\ref{lem53}. Also, we have
\[
\widetilde{\Psi}_N(\gamma)=\widetilde{\Psi}_N(\rho(\tilde{\gamma}))=\Theta(\widetilde{\Psi}_\mathbb{Z}(\tilde{\gamma}))=\mathbf{F}.
\]
Thus $\gamma$ is a strongly contractible closed path corresponding to $\mathbf{F}$. Furthermore, any other path corresponding to $\mathbf{F}$ is also strongly contractible and closed because these properties are preserved under the action of $\text{SL}_2(\mathbb{Z}/N\mathbb{Z})$.

For the converse, suppose that $\gamma$ is a strongly contractible closed path corresponding to a tame semiregular frieze $\mathbf{F}$. Then $\widetilde{\Psi}_N(\gamma)=\mathbf{F}$.  By Lemma~\ref{lem54} there is closed path $\tilde{\gamma}$ in $\mathscr{F}_\mathbb{Z}$ with $\rho(\tilde{\gamma})=\gamma$. Let $\widetilde{\mathbf{F}}=\widetilde{\Psi}_\mathbb{Z}(\tilde{\gamma})$. Then
\[
\Theta(\widetilde{\mathbf{F}})=\Theta(\widetilde{\Psi}_\mathbb{Z}(\tilde{\gamma}))=\widetilde{\Psi}_N(\rho(\tilde{\gamma}))=\mathbf{F},
\]
so $\mathbf{F}$ does indeed lift to a tame frieze $\widetilde{\mathbf{F}}$ of width $n$ over $\mathbb{Z}$, as required.
\end{proof}

 %%%%%%%%%%%%%%%%%%%%%%%%%%
%%%%%%%%%%%%%%%%%%%%%%%%%%
%%%%%%%%%%%%%%%%%%%%%%%%%%
%%%%%%%%%%%%%%%%%%%%%%%%%%	

\begin{bibdiv}
\begin{biblist}

\bib{AsReSm2010}{article}{
   author={Assem, Ibrahim},
   author={Reutenauer, Christophe},
   author={Smith, David},
   title={\href{https://doi.org/10.1016/j.aim.2010.05.019}{Friezes}},
   journal={Adv. Math.},
   volume={225},
   date={2010},
   number={6},
   pages={3134--3165},
}

\bib{BaRe2014}{article}{
   author={Baker, Mark D.},
   author={Reid, Alan W.},
   title={\href{https://doi.org/10.5802/afst.1436}{Principal congruence link complements}},
   journal={Ann. Fac. Sci. Toulouse Math. (6)},
   volume={23},
   date={2014},
   number={5},
   pages={1063--1092},
   issn={0240-2963},
}

\bib{Ba2021}{article}{
   author={Baur, Karin},
   title={\href{https://doi.org/10.1007/s00283-021-10065-x}{Frieze patterns of integers}},
   journal={Math. Intelligencer},
   volume={43},
   date={2021},
   number={2},
   pages={47--54},
}

\bib{BaPaTs2016}{article}{
   author={Baur, Karin},
   author={Parsons, Mark J.},
   author={Tschabold, Manuela},
   title={\href{https://doi.org/10.1016/j.ejc.2015.12.015}{Infinite friezes}},
   journal={European J. Combin.},
   volume={54},
   date={2016},
   pages={220--237},
}

\bib{BeHoSh2012}{article}{
   author={Beardon, A. F.},
   author={Hockman, M.},
   author={Short, I.},
   title={\href{https://doi.org/10.1307/mmj/1331222851}{Geodesic continued fractions}},
   journal={Michigan Math. J.},
   volume={61},
   date={2012},
   number={1},
   pages={133--150},
}

\bib{BeShSoZa2024}{article}{
      title={Enumerating tame friezes over $\mathbb{Z}/n\mathbb{Z}$}, 
      author={Benzaira, S.},
      author={Short, I.}, 
      author={van Son, M.}, 
      author={Zabolotskii, A.},                
      year={2024},
      eprint={https://arxiv.org/abs/2410.23400},
}

\bib{BeRe2010}{article}{
   author={Bergeron, Fran\c cois},
   author={Reutenauer, Christophe},
   title={\href{https://doi.org/10.1215/ijm/1299679749}{$SL_k$-tilings of the plane}},
   journal={Illinois J. Math.},
   volume={54},
   date={2010},
   number={1},
   pages={263--300},
}

\bib{BeHoJo2017}{article}{
   author={Bessenrodt, Christine},
   author={Holm, Thorsten},
   author={J\o rgensen, Peter},
   title={\href{https://doi.org/10.1016/j.aim.2017.05.019}{All ${\rm SL}_2$-tilings come from infinite triangulations}},
   journal={Adv. Math.},
   volume={315},
   date={2017},
   pages={194--245},
}

\bib{BiFl2002}{book}{
   author={Bini, Gilberto},
   author={Flamini, Flaminio},
   title={Finite commutative rings and their applications},
   series={The Kluwer International Series in Engineering and Computer
   Science},
   volume={680},
   publisher={Kluwer Academic Publishers, Boston, MA},
   date={2002},
   pages={x+176},
}

\bib{BoCu2024}{article}{
      title={Frieze patterns over finite commutative local rings}, 
      author={B\"ohmler, B.},
      author={Cuntz, M.},
      year={2024},
      eprint={https://arxiv.org/abs/2407.12596},
}

\bib{CaCh2006}{article}{
   author={Caldero, Philippe},
   author={Chapoton, Fr\'ed\'eric},
   title={\href{https://doi.org/10.4171/cmh/65}{Cluster algebras as Hall algebras of quiver representations}},
   journal={Comment. Math. Helv.},
   volume={81},
   date={2006},
   number={3},
   pages={595--616},
}

\bib{CoGrKuZi1998}{book}{
   author={Collins, D. J.},
   author={Grigorchuk, R. I.},
   author={Kurchanov, P. F.},
   author={Zieschang, H.},
   title={Combinatorial group theory and applications to geometry},
   publisher={Springer-Verlag, Berlin},
   date={1998},
   pages={vi+240},
}

\bib{CoCo1973}{article}{
   author={Conway, J. H.},
   author={Coxeter, H. S. M.},
   title={\href{https://doi.org/10.2307/3615344}{Triangulated polygons and frieze patterns}},
   journal={Math. Gaz.},
   volume={57},
   date={1973},
   pages={87--94, 175--183},
}

\bib{Co1971}{article}{
   author={Coxeter, H. S. M.},
   title={\href{https://doi.org/10.4064/aa-18-1-297-310}{Frieze patterns}},
   journal={Acta Arith.},
   volume={18},
   date={1971},
   pages={297--310},
}

\bib{CuHoJo2020}{article}{
   author={Cuntz, Michael},
   author={Holm, Thorsten},
   author={J\o rgensen, Peter},
   title={\href{https://doi.org/10.1017/fms.2020.13}{Frieze patterns with coefficients}},
   journal={Forum Math. Sigma},
   volume={8},
   date={2020},
   pages={Paper No. e17, 36},
}

\bib{CuHoPa2023}{article}{
      title={\href{https://doi.org/10.1112/blms.13003}{Frieze patterns over algebraic numbers}},
      author={Cuntz, M.},
      author={Holm, T.},
      author={Pagano, C.},
      year={2024},
      journal={Bull. Lond. Math. Soc.},
      volume={56},
      pages={1417--1432},
}

\bib{CuMa2023}{article}{
      title={Comptage des quiddit\'{e}s sur les corps finis et sur quelques anneaux $\mathbb{Z}/N\mathbb{Z}$}, 
      author={Cuntz, M.},
      author={Mabilat, F.},
      journal={Annales de la Facult\'{e} des sciences de Toulouse},
      year={2024},
      eprint={https://arxiv.org/abs/2304.03071},
}

\bib{DiSh2005}{book}{
	author={Diamond, Fred},
	author={Shurman, Jerry},
	title={A first course in modular forms},
	series={Graduate Texts in Mathematics},
	volume={228},
	publisher={Springer-Verlag, New York},
	date={2005},
	pages={xvi+436},
}

\bib{FeKaSeTu2023}{article}{
      title={\href{https://doi.org/10.1007/s10711-025-00997-5}{$3$D Farey graph, lambda lengths and $SL_2$-tilings}}, 
      author={Felikson, Anna},
      author={Karpenkov, Oleg},
      author={Serhiyenko, Khrystyna},
      author={Tumarkin, Pavel},
      journal={Geom. Dedicata},
      	volume={219},
      	year={2025},
      	pages={Article 33},
}

\bib{FoZe2003}{article}{
   author={Fomin, Sergey},
   author={Zelevinsky, Andrei},
   title={\href{https://doi.org/10.1007/s00222-003-0302-y}{Cluster algebras. II. Finite type classification}},
   journal={Invent. Math.},
   volume={154},
   date={2003},
   number={1},
   pages={63--121},
}

\bib{Gu2013}{article}{
   author={Guo, L.},
   title={\href{https://doi.org/10.1093/imrn/rns176}{On tropical friezes associated with Dynkin diagrams}},
   journal={Int. Math. Res. Not. IMRN},
   date={2013},
   number={18},
   pages={4243--4284},
}

\bib{Ha1983}{article}{
   author={Hatcher, Allen},
   title={\href{https://doi.org/10.1112/jlms/s2-27.2.345}{Hyperbolic structures of arithmetic type on some link complements}},
   journal={J. London Math. Soc. (2)},
   volume={27},
   date={1983},
   number={2},
   pages={345--355},
   %doi={10.1112/jlms/s2-27.2.345},
}

\bib{Ha2022}{book}{
   author={Hatcher, Allen},
   title={Topology of numbers},
   publisher={American Mathematical Society, Providence, RI},
   date={2022},
   pages={ix+341},
   isbn={978-1-4704-5611-5},
 %  review={\MR{4567712}},
}

\bib{HoJo2020}{article}{
   author={Holm, Thorsten},
   author={J\o rgensen, Peter},
   title={\href{https://doi.org/10.1093/imrn/rny020}{A $p$-angulated generalisation of Conway and Coxeter's theorem on frieze patterns}},
   journal={Int. Math. Res. Not. IMRN},
   date={2020},
   number={1},
   pages={71--90},
}

\bib{IvSi2005}{article}{
	author={Ivrissimtzis, Ioannis},
	author={Singerman, David},
	title={\href{https://doi.org/10.1016/j.ejc.2004.01.010}{Regular maps and principal congruence subgroups of Hecke groups}},
	journal={European J. Combin.},
	volume={26},
	date={2005},
	number={3-4},
	pages={437--456},
}

\bib{Ma2024}{article}{
      title={Number of roots of the continuant over a finite local ring}, 
      author={Mabilat, Flavien},
      year={2024},
      eprint={https://arxiv.org/abs/2407.19317},
}

\bib{Mi1971}{book}{
   author={Milnor, John},
   title={Introduction to algebraic $K$-theory},
   series={Annals of Mathematics Studies},
   volume={No. 72},
   publisher={Princeton University Press, Princeton, NJ; University of Tokyo
   Press, Tokyo},
   date={1971},
   pages={xiii+184},
}

\bib{Mo2015}{article}{
   author={Morier-Genoud, Sophie},
   title={\href{https://doi.org/10.1112/blms/bdv070}{Coxeter's frieze patterns at the crossroads of algebra, geometry and combinatorics}},
   journal={Bull. Lond. Math. Soc.},
   volume={47},
   date={2015},
   number={6},
   pages={895--938},
}

\bib{Mo2021}{article}{
   author={Morier-Genoud, Sophie},
   title={\href{https://doi.org/10.5802/alco.140}{Counting Coxeter's friezes over a finite field via moduli spaces}},
   journal={Algebr. Comb.},
   volume={4},
   date={2021},
   number={2},
   pages={225--240},
}

\bib{MoOv2020}{article}{
   author={Morier-Genoud, Sophie},
   author={Ovsienko, Valentin},
   title={\href{https://doi.org/10.1017/fms.2020.9}{$q$-deformed rationals and $q$-continued fractions}},
   journal={Forum Math. Sigma},
   volume={8},
   date={2020},
   pages={Paper No. e13, 55},
}

\bib{MoOvScTa2014}{article}{
   author={Morier-Genoud, Sophie},
   author={Ovsienko, Valentin},
   author={Schwartz, Richard Evan},
   author={Tabachnikov, Serge},
   title={\href{https://doi.org/10.1017/fms.2014.20}{Linear difference equations, frieze patterns, and the
   combinatorial Gale transform}},
   journal={Forum Math. Sigma},
   volume={2},
   date={2014},
}

\bib{MoOvTa2012}{article}{
   author={Morier-Genoud, Sophie},
   author={Ovsienko, Valentin},
   author={Tabachnikov, Serge},
   title={\href{http://www.numdam.org/articles/10.5802/aif.2713/}{2-frieze patterns and the cluster structure of the space of
   polygons}},   
   journal={Ann. Inst. Fourier (Grenoble)},
   volume={62},
   date={2012},
   number={3},
   pages={937--987},
}

\bib{MoOvTa2015}{article}{
   author={Morier-Genoud, Sophie},
   author={Ovsienko, Valentin},
   author={Tabachnikov, Serge},
   title={\href{https://doi.org/10.4171/LEM/61-1/2-4}{${\rm SL}_2(\mathbb{Z})$-tilings of the torus, Coxeter-Conway friezes
   and Farey triangulations}},
   journal={Enseign. Math.},
   volume={61},
   date={2015},
   number={1-2},
   pages={71--92},
}

\bib{MoOvVe2024}{article}{
   author={Morier-Genoud, Sophie},
   author={Ovsienko, Valentin},
   author={Veselov, Alexander P.},
   title={\href{https://doi.org/10.1093/imrn/rnad318}{Burau representation of braid groups and $q$-rationals}},
   journal={Int. Math. Res. Not. IMRN},
   date={2024},
   number={10},
   pages={8618--8627},
}

\bib{Sh2023}{article}{
   author={Short, Ian},
   title={\href{https://doi.org/10.1090/tran/8296}{Classifying ${\rm SL}_2$-tilings}},
   journal={Trans. Amer. Math. Soc.},
   volume={376},
   date={2023},
   number={1},
   pages={1--38},
}

\bib{SiSt2020}{article}{
   author={Singerman, D.},
   author={Strudwick, J.},
   title={\href{http://www.iam.fmph.uniba.sk/amuc/ojs/index.php/amuc/article/view/913}{The Farey maps modulo $n$}},
   journal={Acta Math. Univ. Comenian. (N.S.)},
   volume={89},
   date={2020},
   number={1},
   pages={39--52},
}

\bib{Vu1999}{article}{
   author={Vulakh, L. Ya.},
   title={\href{https://doi.org/10.1090/S0002-9947-99-02151-0}{Farey polytopes and continued fractions associated with discrete hyperbolic groups}},
    journal={Trans. Amer. Math. Soc.},
  volume={351},
   date={1999},
   number={6},
   pages={2295--2323},
}

\end{biblist}
\end{bibdiv}

\end{document}